\definecolor{tg}{rgb}{0.7,0.1,0.2}
\definecolor{bt}{rgb}{0, 0, 0.6}
\newcommand{\ZZ}{\mathbb{Z}}
\newcommand{\NN}{\mathbb{N}}
\newcommand{\RR}{\mathbb{R}}
\newcommand{\PP}{\mathbb{P}}
\newcommand{\EE}{\mathbb{E}}
\newcommand{\calC}{\mathcal{C}}
\newcommand{\calL}{\mathcal{L}}
\newcommand{\calF}{\mathcal{F}}
\newcommand{\calB}{\mathcal{B}}
\newcommand{\calD}{\mathcal{D}}
\newcommand{\calT}{\mathcal{T}}
\newcommand{\calE}{\mathcal{E}}
\newcommand{\Id}{\operatorname{Id}}
\newcommand{\WUSC}[3]{\operatorname{WUSC}_0({#1}, {#2}, {#3})}
\newcommand{\WLSC}[3]{\operatorname{WLSC}_0({#1}, {#2}, {#3})}
\newcommand{\WUSCINF}[3]{\operatorname{WUSC}_\infty({#1}, {#2}, {#3})}
\newcommand{\WLSCINF}[3]{\operatorname{WLSC}_\infty({#1}, {#2}, {#3})}
\newcommand{\pl}[1]{\foreignlanguage{polish}{#1}}
\newcommand{\norm}[1]{\lvert {#1} \rvert}
\newcommand{\abs}[1]{\lvert {#1} \rvert}
\newcommand{\sprod}[2]{\langle {#1}, {#2} \rangle}
\newcommand{\ind}[1]{{\mathds{1}_{{#1}}}}
\newcommand{\vphi}{\varphi}
\newcommand{\supp}{\operatornamewithlimits{supp}}
\newtheorem{claim}{Claim}
\newtheorem*{theorem*}{Theorem}
\newtheorem{theorem}{Theorem}[section]
\newtheorem{proposition}[theorem]{Proposition}
\newtheorem{lemma}[theorem]{Lemma}
\newtheorem{corollary}[theorem]{Corollary}
\numberwithin{equation}{section}
\theoremstyle{definition}
\newtheorem{remark}[theorem]{Remark}
\title{Subordinated Markov processes: sharp estimates for heat kernels and the Green functions}
\date{\today}
\author{Tomasz Grzywny}
\address{
        \pl{
        Tomasz Grzywny\\
        Wydzia\l{} Matematyki,
        Politechnika Wroc\l{}awska\\
        Wyb. Wyspia\'{n}skiego 27\\
        50-370 Wroc\l{}aw\\
        Poland}
}
\email{tomasz.grzywny@pwr.edu.pl}
\author{Bartosz Trojan}
\address{
	\pl{
		Bartosz Trojan\\
        Instytut Matematyczny\\
        Polskiej Akademii Nauk\\
        ul. \'Sniadeckich 8\\
        00-696 Warszawa\\
        Poland}
}
\email{btrojan@impan.pl}
\thanks{The research was supported in part by the National Science Centre, Poland, Grant 2016/23/B/ST1/01665}
\keywords{stability of heat kernel estimates, parabolic Harnack inequality, cut-off Sobolev inequality, Markov chain}
\begin{document}
\selectlanguage{english}

\begin{abstract}
	We prove sharp estimates on heat kernels and Green functions for subordinate Markov processes with both 
	discrete an continuous time, under relatively weak assumptions about original processes as well as Laplace exponents
	of subordinators. We also show stability of heat kernel estimates in the discrete settings.
\end{abstract}

\maketitle

\section{Introduction}

The concept of subordination for continuous time Markov processes and corresponding semigroups was introduced by Bochner 
in \cite{bochner1} (see also \cite[Chapter 4.4]{bochner2}). Subordination provides method of constructing a large class of
semigroups from the original one. To be more precise: given $(P_t : t\geq 0)$ a strongly continuous semigroup of contraction
on a Banach space with infinitesimal generator $(\mathcal{A}, D(\mathcal{A}))$, and $(\mu_t : t\geq 0)$ a vaguely continuous
semigroup of sub-probability measures on the half-line $[0,\infty)$ with the Laplace exponent $e^{-t\varphi}$, the subordinated
semigroup $(P_t^\varphi : t\geq 0)$ is defined by the Bochner integral
\[
	P^\varphi_t(f)=\int_{[0,\infty)} P_s(f) \, \mu_t({\rm d} s).
\]
Furthermore, the Bochner--Phillips functional calculus implies that the generator of the subordinated semigroup has a form
$-\varphi(-\mathcal{A})$, see e.g. \cite{phillips}. From the probabilistic point of view, if $(P_t : t \geq 0)$ gives rise to
a transition probability of a Markov process $(X(t) : t \geq 0)$, the subordinated semigroup corresponds to Markov process
$(X(T(t)) : t \geq 0)$ where $T(t)$ is a L\'{e}vy process on the real line with nonnegative increments such that $\mu_t$
is a distribution of $T(t)$. Therefore the subordinated semigroups have an independent interest in potential theory
\cite{berg_forst}, functional analysis \cite{carasso} and financial analysis \cite{cont}.

Similarly, if $( P_n : n\in \NN_0)$ is a semigroup and $(\mu_n :  n\in\NN_0)$ is a semigroup on $\NN$, one can consider a
new semigroup by 
\[
	Q_n(f) = \sum_{k=1}^\infty P_k(f) \mu_n(\{k\}).
\] 
In discrete time settings, a counterpart of the generator of the semigroup is an operator $P_1-\Id$. In fact in 
\cite{bendikov_saloff}, it is proved that if $\vphi(1) = 1$, and
\[
	\mu_1(\{k\})=\frac{1}{k!} |\varphi^{(k)}(1)|,
\]
then the discrete generator of $(Q_n : n \in \NN_0)$ has a form $-\varphi(\Id-P_1)$.

The aim of this article is to obtain estimates for the heat kernels (i.e., $n$-step transition densities) for a large class of
subordinated Markov processes with both continuous and discrete time. Transition densities of stochastic processes
have been studied for many years resulting in a large number of articles, see e.g. \cite{Barlow2017,Boutayeb2015,MR2039952,MR1736868,MR2569498,MR1872526,MR2508847}. The most of them concern diffusion
processes or random walks with finite range, however there is an increasing interest in studying continuous time jump 
processes see e.g. \cite{bae2019,Bendikova2014,MR2357678,MR2443765,MR3165234,Chen2016,MR4249776,MR4157572,Grigoryan2014,GrigoryanHuLau2014,GrigoryanHuHu2018,MR4039012,MR3965398,Kulik}. Lastly, there are relatively few articles on heavy tailed discrete time Markov chains
\cite{Murugan2015,MuruganSaloff2015, Murugan2019, Cygan2021}.

Our main result provide estimates for the transition density and the Green function of subordinated continuous and discrete
time Markov processes on a metric measure space $(M, d, \mu)$. Recall, that $(M, d, \mu)$ is a metric measure space if 
$(M, d)$ is a metric space and $(M, \calB, \mu)$ is a measure space where $\calB$ denotes $\sigma$-algebra of Borel sets and 
$\mu$ is a Radon measure on $(M, \calB)$. By elementary methods and under relatively weak assumptions on the Markov process
and the Laplace transform of subordinator we prove upper and lower bounds for transition density, see Theorems \ref{thm:100},
\ref{thm:101} and for the Green function, see Propositions \ref{prop:102} and \ref{prop:103}. A standard procedure to obtain
such results uses bounds for the heat kernel of the original process together with estimates for transition density of
subordinator (see e.g. \cite{MR2013738,TGLLBT,bae2019}). In our approach we only use properties of the Laplace exponent
of subordinator. Moreover, the upper bounds hold even for trivial subordinators that is deterministic. Since very often
the space possess a Markov process which heat kernel has sub-Gaussian or stable-like estimates we used our general results
in this case to obtain sharp estimates for examined functions, see  Theorems \ref{thm:1}, \ref{thm:2} and \ref{thm:4}.
Furthermore, we show that estimates obtained in Theorem \ref{thm:1} are equivalent with estimates of the first step in the
discrete time case (and the jumping kernel in continuous time case) or scaling conditions for the Laplace exponent of
subordinator, see Theorems \ref{thm:6} and \ref{thm:7}. Similar equivalence was previously known only for unimodal L\'{e}vy
processes on the Euclidean space \cite{MR3165234}.

Since discrete time Markov chains with heavy tailed distribution are less studied we concentrate on this case. In Section
\ref{sec:3} we investigate Markov processes defined on a discrete metric measure space $(M, d, \mu)$. We assume that $(M, d)$
is uniformly discrete, that is there is $r_0 > 0$ such that for all $r_0 > r > 0$, and $x \in M$,
\[
	B(x, r) = \{x\}
\]
where $B(x, r)$ denote the closed ball in $M$ centered at $x$ and with radius $r > 0$. Moreover, we impose that the function
$V(x, r) = \mu(B(x, r))$ has both doubling and reverse doubling property, see \eqref{eq:A:44} and \eqref{eq:A:42}, respectively.
In this context, from estimates on the heat kernel of subordinate process we would like to deduce a cut-off Sobolev
inequality \eqref{cutoff} for a certain function $\psi$. In the case of continuous time Markov processes
it is a consequence of \cite[Theorem 1.13]{Chen2016}. In Section \ref{sec:3} we study discrete time case by showing
that heat kernel estimates of the form \eqref{Dheat} imply the cut-off Sobolev inequality \eqref{cutoff}.
This lets us to consider Markov chains such that the first step satisfies
\[
	h(1; x, y) \approx \frac{1}{V(x, d(x, y)) \psi(d(x, y))} \qquad \text{for all } x, y \in M.
\]
To do so we introduce new continuous time process having transition density of the form
\[
	p(t; x, y) = \sum_{k = 0}^\infty h(k; x, y) \frac{e^{-t} t^k}{k!}, \qquad x, y \in M,\, t > 0.
\]
Since the state space $M$ is discrete we cannot apply to the heat kernel $p(t; x, y)$ the main results of 
\cite{Chen2016} nor \cite{GrigoryanHuHu2018}. However, we managed to adapt arguments from 
\cite{Chen2016,Boutayeb2015,Grigoryan2014} to conclude that $p$ satisfies \eqref{eq:A:32}, 
see Theorem \ref{thm:A:1}. This allows us to show the upper near diagonal estimates on $h$,
see Theorem \ref{thm:A:2}. Then by modifying the arguments from \cite{BassLevin}, we prove the parabolic Harnack inequality on
$\NN_0 \times M$, see Theorem \ref{thm:A:3}. The latter allows us to deduce the sharp heat kernel estimates \eqref{Dheat},
see Theorems \ref{thm:A:5}, \ref{thm:A:4}, \ref{thm:A:6}. The final result is summarized in Theorem \ref{thm:A:7}.
Let us point out that the proof of Theorem \ref{thm:A:3} is the only place where the assumption of the uniform discreteness
of the space $(M, d)$ is used. It is interesting if the parabolic Harnack inequality \eqref{eq:40} holds true in the discrete
time case but without uniform discreteness of the state space. Let us also observe that the main result of Section \ref{sec:3}
is the stability of the heat kernel estimates in the discrete settings. Having proved Theorem \ref{thm:A:7} we use it together
with Theorem \ref{thm:6} or Theorem \ref{thm:7} to show that the existence of a Markov chain having heat kernel which satisfies
sub-Gaussian or stable-like estimates is sufficient for \eqref{Dheat}, see  Theorems \ref{thm:5} and \ref{thm:3}. 
By Theorem \ref{thm:A:7} together with Theorem 
\ref{thm:5}, or \cite[Theorem 1.13]{Chen2016} together with Theorem \ref{thm:tg5}, we can deduce that the decay at zero as well
as the growth at infinity of the function $\psi$ which appears in the cut-off Sobolev inequality \eqref{cutoff} is limited
by the scale in sub-Gaussian heat kernel estimates. For instance, in the Euclidean space with $\psi(r)=r^{\alpha}$
we get the well-known limitation $\alpha<2$. In Section \ref{sec:5} we present possible applications of 
Theorems \ref{thm:5} and \ref{thm:3}, which improves results obtained previously in
\cite{Murugan2015,MuruganSaloff2015, Murugan2019, Cygan2021}.

\section{Preliminaries}

\subsection{Markov processes}
\label{sec:1}
Let $(M, d, \mu)$ be a metric measure space which means that $(M, d)$ is a metric space, and $(M, \calB, \mu)$ is a measure
space where $\calB$ denotes the $\sigma$-algebra of Borel sets and $\mu$ is a Radon measure on $(M, \calB)$. Let us denote by
$B(x, r)$ the closed metric ball in $M$ centered at $x$ and having radius $r$, that is
\[
	B(x, r) = \left\{y \in M : d(x, y) \leq r \right\}.
\]
We set
\[
	V(x, r) = \mu\big(B(x, r)\big).
\]
To treat at the same time discrete and continuous time Markov processes we set $\calT$ to be the half-line of positive reals
$(0, \infty)$ or positive integers $\NN$. Let $\calT_0 = \calT \cup \{0\}$.

Let  $\mathbf{S} = (S_t : t \in \calT_0)$ be a Markov process on $M$, i.e. a stochastic process that satisfies
Markov property. Given a set $A \in \calB$, we define the first exit time of $\mathbf{S}$ from $A$ as
\[
	\tau_{\mathbf{S}}(A) = \inf\big\{t \in \calT : S_t \notin A \big\}.
\]
In this article we assume that the transition function of $\mathbf{S}$ is absolutely continuous with respect to $\mu$. Let
$p: \mathcal{T}\times M\times M \rightarrow [0,\infty)$ be its density, that is
\[
	\PP^x(S_t \in A) = \int_A p(t; x, y) \, \mu({\rm d} y)
\]
for all $x \in M$ and $A \in \calB$.

\subsection{Weak scaling}
We say that a function $f: (0, a) \rightarrow (0, \infty)$ has the weak upper scaling property at zero of
index $\beta \in \RR$, if there are $c \in (0, 1]$ and $\theta_0 \in (0, a]$ such that
\[
	f(\lambda \theta) \geq c \lambda^\beta f(\theta)
\]
for all $\lambda \in (0, 1]$ and $\theta \in (0, \theta_0)$. We shortly write that $f$ belongs to 
$\WUSC{\beta}{c}{\theta_0}$. Similarly, $f$ has the lower scaling property at
zero of index $\alpha \in \RR$, if there is $C \in [1, \infty)$ and $\theta_0 \in (0, a]$ such that
\[
	f(\lambda \theta) \leq C \lambda^\alpha f(\theta)
\]
for all $\lambda \in (0, 1]$ and $\theta \in (0, \theta_0)$. We write that $f$ belongs to $\WLSC{\alpha}{C}{\theta_0}$.

By \cite[Lemma 1.1]{Mimica2019}, it stems that if $\phi$ is a Bernstein function (see Section \ref{sec:6} for the definition)
with the weak scaling property for certain finite  $\theta_0>0$, then it has the same property with $\theta_0'$ larger than
$\theta_0$, the same exponent and modified constant $c$. Therefore with no lose of generality it is enough to consider two cases:
$\theta_0=1$ and $\theta_0=\infty$.

In this article we also need notion of the weak scaling at infinity. A function $f: [A, \infty) \rightarrow (0, \infty)$
has the weak upper scaling property at infinity of index $\beta \in \RR$, denoted as $f \in \WUSCINF{\beta}{c}{\theta_0}$,
if the function
\[
	g(x) = \frac{1}{f\left(\frac{1}{x}\right)}, \qquad x \in (0, A^{-1}],
\]
belongs to $\WUSC{\beta}{c^{-1}}{\theta_0^{-1}}$. Similarly, $f$ belongs to $\WLSCINF{\alpha}{C}{\theta_0}$ if
the function $g$ belongs to $\WLSC{\alpha}{C^{-1}}{\theta_0^{-1}}$.

\subsection{Subordinators}
\label{sec:6}
A smooth function $\varphi: [0, \infty) \rightarrow [0, \infty)$ is called a Bernstein function if for all $n \in \NN$,
\[
	(-1)^n \varphi^{(n)}(u) \leq 0 \qquad\text{for all } u > 0.
\]
In view of \cite[Theorem 3.2]{ssv}, there are constants $a$ and $b$, a positive Radon measure $\nu$ on $[0, \infty)$, so
that $\nu(\{0\}) = 0$, and
\[
	\int_{[0, \infty)} \min\{1, t\} \, \nu({\rm d} t) < \infty,
\]
such that
\[
	\varphi(u) = a + b u + \int_{[0, \infty)} \big(1 - e^{-u t} \big) \, \nu({\rm d} t) \qquad u\geq 0.
\]
We assume that $\varphi(0) = 0$, thus $a=0$. Since $\varphi$ is concave (see e.g. \cite[Lemma 1.1]{Mimica2019})
\begin{equation}
	\label{eq:10}
	\phi(\lambda \theta) \geq \lambda \phi(\theta) \qquad\text{for all } \theta>0 \text{ and } \lambda \in [0, 1].
\end{equation}
The Laplace transform provides one-to-one correspondence between convolution semigroups of probability measures 
$(\nu_t : t\geq 0)$ on $[0,\infty)$ and the set of Bernstein functions. Namely,
\[
	\mathcal{L}\{ \nu_t \}(\lambda) = e^{-t\varphi(\lambda)}, \qquad \lambda \geq 0,\, t > 0.
\]
Let $T_t$ be a random variable with a distribution $\nu_t$. 

There is also a concept of discrete subordination. In this case we additionally assume that $\varphi(1) = 1$, thus
$0 \leq b \leq 1$, and
\[
	0 < \varphi(u) < 1
	\qquad\text{for all } u \in (0, 1).
\]
For $k \in \NN$, we set
\begin{align*}
	c(\varphi, k) 
	&= \frac{1}{k!}| \varphi^{(k)}(1) |\\
	&= \int_{[0, \infty)} t^k e^{-t} \, \nu({\rm d} t) + b\ind{\{1\}}(k).
\end{align*}
Observe that
\[
	\sum_{k=1}^\infty c(\varphi,k)=\varphi(1)=1.
\]
Let us consider a sequence of independent and identically distributed integer-valued random variables $(R_k : k \in \NN)$
also independent of $\mathbf{S}$, and such that $\PP(R_j = k) = c(\phi, k)$. Then for every  $\lambda\geq 0$, we have
\begin{align*} 
	\EE e^{-\lambda R_1}
	&=\sum_{k = 1}^\infty c(\varphi, k) e^{-\lambda k} 
	=\sum_{k = 1}^\infty \frac{1}{k!} e^{-\lambda k}
	\int_{(0, \infty)} t^k e^{-t} \, \nu({\rm d} t)+ b e^{-\lambda} \\
	&= 1 - b - \int_{(0, \infty)} 1 - e^{-t} \, \nu({\rm d} t) + \sum_{k = 1}^\infty \frac{1}{k!} e^{-\lambda k}
	\int_{(0, \infty)} t^k e^{-t} \, \nu({\rm d} t) + b e^{-\lambda} \\
	& = 1 - b - \int_{(0, \infty)} 1 - e^{-t} \sum_{k = 0}^\infty \frac{1}{k!} t^k e^{-\lambda k} \, \nu({\rm d} t)
	+ b e^{-\lambda} \\
	& = 1 - b - \int_{(0, \infty)} 1 - e^{-t} e^{t e^{-\lambda}} \, \nu({\rm d} t) + b e^{-\lambda} \\
	& = 1 - b (1 - e^{-\lambda}) - \int_{(0, \infty)} 
	\Big(1 - e^{-(1 - e^{-\lambda}) t}\Big) \,
	\nu({\rm d} t) \\
	& = 1 - \varphi(1 - e^{-\lambda}).
\end{align*}
Therefore, for $T_n = R_1 + \ldots + R_n$, we obtain
\begin{align}
	\nonumber
	\EE e^{-\lambda T_n}
	&=\left(\EE e^{-\lambda R_1}\right)^n\\
	\label{eq:50}
	&=\left(1-\varphi\left(1-e^{-\lambda}\right)\right)^n, \qquad \lambda \geq 0.
\end{align}

\begin{lemma}
	\label{lem:4}
	For all $C > 0$,
	\begin{equation}
		\label{eq:46}
		\EE\min\big\{1, C T_t\big\} \leq \frac{e}{e-1} \min\big\{ 1, t\varphi(C)\big\}, \qquad t\in\calT.
	\end{equation}
	In particular, we have 
	\begin{equation}
		\label{eq:45}
		\PP(T_t\geq r)\leq \min\left\{1,\frac{e}{e-1} t \varphi(1/r)\right\}, \qquad t\in \mathcal{T},\, r>0.
	\end{equation}
	If we additionally assume that $\varphi$ belongs to $\WUSC{\beta}{c}{1/\inf\mathcal{T}}$ for certain
	$\beta \in [0, 1)$ and $c \in (0, 1]$, then
	\begin{equation}
		\label{eq:44}
		\PP(T_t\geq r)\geq  C\min\left\{1,t \varphi(1/r)\right\}, \qquad t\in \mathcal{T},\, r>0
	\end{equation}
	where $C$ depends on $c$ and $\beta$.
\end{lemma}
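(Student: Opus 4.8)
The plan is to prove the three estimates in order, exploiting the Laplace transform identities \eqref{eq:50} (discrete case) and $\mathcal L\{\nu_t\}(\lambda)=e^{-t\varphi(\lambda)}$ (continuous case), together with the concavity inequality \eqref{eq:10}. For \eqref{eq:46}, the starting observation is the elementary pointwise bound $\min\{1,s\}\le \frac{e}{e-1}(1-e^{-s})$ valid for all $s\ge 0$: indeed the ratio $(1-e^{-s})/\min\{1,s\}$ is decreasing, equals $1$ at $s=0^+$, and equals $1-e^{-1}$ in the limit $s\to\infty$, so its infimum is $1-e^{-1}$. Applying this with $s=CT_t$ and taking expectations gives
\[
\EE\min\{1,CT_t\}\le \frac{e}{e-1}\,\EE\bigl(1-e^{-CT_t}\bigr)
=\frac{e}{e-1}\bigl(1-\EE e^{-CT_t}\bigr).
\]
In the continuous-time case $1-\EE e^{-CT_t}=1-e^{-t\varphi(C)}\le \min\{1,t\varphi(C)\}$, the last step using $1-e^{-u}\le\min\{1,u\}$. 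In the discrete-time case \eqref{eq:50} gives $\EE e^{-CT_n}=\bigl(1-\varphi(1-e^{-C})\bigr)^n$, so $1-\EE e^{-CT_n}=1-(1-\varphi(1-e^{-C}))^n\le \min\{1,n\varphi(1-e^{-C})\}\le\min\{1,n\varphi(C)\}$, using $1-(1-x)^n\le\min\{1,nx\}$ for $x\in[0,1]$ and then $\varphi(1-e^{-C})\le\varphi(C)$ by monotonicity of the Bernstein function $\varphi$. Combining the two cases yields \eqref{eq:46}. Then \eqref{eq:45} follows by Markov's inequality: $\PP(T_t\ge r)=\PP(r^{-1}T_t\ge 1)=\EE\ind{\{r^{-1}T_t\ge1\}}\le\EE\min\{1,r^{-1}T_t\}$, and applying \eqref{eq:46} with $C=1/r$ gives the claim.

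For the lower bound \eqref{eq:44} I would argue by a truncated first-moment (Paley--Zygmund-type) estimate, which is the technical heart of the lemma. Write $U=\min\{1,T_t/r\}$, so that $0\le U\le1$ and $\PP(T_t\ge r)=\PP(U=1)$. The idea is to bound $\EE U$ from above by $\PP(U=1)+\EE[U;U<1]$ and to show that on $\{U<1\}$, i.e. on $\{T_t<r\}$, the quantity $\EE[\min\{1,T_t/r\};T_t<r]$ is \emph{strictly smaller} than $\EE\min\{1,T_t/r\}$ by a definite fraction; equivalently one compares $\EE\min\{1,T_t/r\}$ with $\EE\min\{1,T_t/(2r)\}$, say, and uses the weak upper scaling $\varphi\in\WUSC{\beta}{c}{1/\inf\calT}$ with $\beta<1$. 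Concretely, the upper-bound mechanism of the first part applied at scale $2r$ controls $\EE[\min\{1,T_t/(2r)\}]$ from above by $\frac{e}{e-1}\min\{1,t\varphi(1/(2r))\}$, while a matching \emph{lower} bound for $\EE[1-e^{-T_t/r}]$ (hence for $\EE\min\{1,T_t/r\}$ up to the constant $\frac{e}{e-1}$) comes from $1-e^{-t\varphi(1/r)}\ge (1-e^{-1})\min\{1,t\varphi(1/r)\}$ in the continuous case and the analogous inequality with $1-(1-x)^n\ge(1-e^{-1})\min\{1,nx\}$ in the discrete case. The scaling hypothesis enters precisely to convert these into a bound of the form $\PP(T_t\ge r)\ge c'\,\EE\min\{1,T_t/r\}$: from $\beta<1$ and \eqref{eq:10} we get that $\varphi(1/r)$ and $\varphi(1/(2r))$ are comparable with a constant depending only on $c,\beta$, so that $\min\{1,t\varphi(1/r)\}\approx\min\{1,t\varphi(1/(2r))\}$, and splitting $T_t$ across the level $2r$ then forces a positive lower bound on $\PP(T_t\ge 2r)$ in terms of $\min\{1,t\varphi(1/r)\}$, which after relabeling gives \eqref{eq:44}.

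The main obstacle I anticipate is making the last truncation argument quantitative and uniform over $t\in\calT$ and $r>0$ while treating discrete and continuous time simultaneously; in particular one must be careful that the weak scaling is posited at the point $1/\inf\calT$ (which is $\infty$ in continuous time, $1$ for $\calT=\NN$) so that the comparison of $\varphi$ at $1/r$ and $1/(2r)$ is valid across the whole relevant range of $r$. An alternative, perhaps cleaner route for \eqref{eq:44} is to avoid the truncation and instead use the exponential Chebyshev/Markov inequality in the other direction: from $\EE e^{-\lambda T_t/r}\le e^{-c_1 t\varphi(\lambda/r)}$ with $\lambda$ fixed appropriately (e.g. $\lambda=1$), combined with $\PP(T_t\ge r)\ge \PP(T_t\ge r)$ bounded below via $\EE[e^{-T_t/r}]\le e^{-1}\PP(T_t<r)+\PP(T_t\ge r)\cdot 1$ rearranged to $\PP(T_t\ge r)\ge\frac{\EE e^{-T_t/r}-e^{-1}}{1-e^{-1}}$ — but this only helps when $\EE e^{-T_t/r}>e^{-1}$, i.e. in the regime $t\varphi(1/r)$ small, so one still needs a separate argument (the scaling and a second-moment or direct tail computation) for the regime $t\varphi(1/r)\gtrsim 1$ where the claimed lower bound is just a constant. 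Either way, the role of $\beta<1$ is to guarantee $t\mapsto\varphi$ does not decay too slowly at the relevant scales, ensuring the two regimes overlap and patch together with constants depending only on $c$ and $\beta$.
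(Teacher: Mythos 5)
Your treatment of \eqref{eq:46} and \eqref{eq:45} is correct and essentially identical in spirit to the paper's: you pass from $\min\{1,s\}\leq\frac{e}{e-1}(1-e^{-s})$ to $\EE e^{-CT_t}$, then in the discrete case use $1-(1-x)^n\leq nx$ and $\varphi(1-e^{-C})\leq\varphi(C)$, and in the continuous case $1-e^{-u}\leq u$; Markov's inequality then gives \eqref{eq:45}. The paper packages the same computation slightly differently, first observing that $\lambda\calL\{U_t\}(\lambda)=1-\EE e^{-\lambda T_t}$ (its \eqref{eq:100}) and then using the same elementary inequalities, but the content is the same.

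For the lower bound \eqref{eq:44}, however, your proposal is not a proof and the two routes you sketch both have real gaps. The paper's argument is a genuine Tauberian inversion: it first proves the matching lower bound on the transform, $\lambda\calL\{U_t\}(\lambda)\geq(1-e^{-1})^2\min\{1,t\varphi(\lambda)\}$ (using $1-(1-s)^k\geq(1-e^{-1})ks$ for $s\leq 1/k$ in the discrete case), then shows via the weak upper scaling of $\varphi$ that $\calL\{U_t\}(\lambda s)/\calL\{U_t\}(s)\leq c^{-1}(1-e^{-1})^{-2}\lambda^{\beta-1}$ for $\lambda\geq 1$, and finally invokes a ready-made inversion result (\cite[Lemma~13]{MR3165234}) which converts a power-type decay of the transform ratio into the pointwise lower bound $U_t(r)\gtrsim\min\{1,t\varphi(1/r)\}$. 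Your Paley--Zygmund sketch would amount to showing $\frac{1}{r}\int_0^r U_t(u)\,du\leq C\,U_t(r)$, but this needs a \emph{lower} bound on $U_t(r)$ which is exactly what you are trying to prove, so the argument is circular as stated; the ``definite fraction'' you assert on $\{U<1\}$ is not established, and the comparison $\varphi(1/r)\approx\varphi(1/(2r))$ (which in fact holds for every Bernstein function by \eqref{eq:10}, regardless of $\beta$) does not by itself produce it. Your second (exponential-Chebyshev) route, as you acknowledge, only covers the regime $t\varphi(1/r)$ small, and you do not supply the complementary argument. The missing ingredient in both sketches is precisely the Tauberian step: from $\beta<1$ one gets a strictly negative exponent $\beta-1$ in the scaling of $\lambda\calL\{U_t\}(\lambda)$, and it is this nondegeneracy, fed into \cite[Lemma~13]{MR3165234}, that converts the two-sided Laplace transform estimate into the pointwise lower tail bound. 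Without that (or an equivalent inversion lemma), \eqref{eq:44} is not proved.
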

\begin{proof}
	Let $U_t(r)=\PP(T_t\geq r)$. We first estimate the Laplace transform of $U_t$. By Tonelli's theorem we have
	\begin{align}
		\lambda \mathcal{L}\{U_t\}(\lambda)
		&=\lambda\int^\infty_0 e^{-\lambda r}\int_{[r,\infty)}\PP(T_t\in {\rm d}u){\: \rm d} r\nonumber\\
		&= \lambda \int^\infty_0 \int_0^ue^{-\lambda r}{\rm d} r \: \PP(T_t\in {\rm d}u)\nonumber\\
		&= 1-\EE e^{-\lambda T_t}.\label{eq:100}
	\end{align}
	In particular, the Laplace transform $\calL \{U_t\}(\lambda)$ is bounded by $1/\lambda$. If $\mathcal{T} = \NN$, 
	the Bernoulli inequality and monotonicity of $\varphi$ easily leads to
	\begin{equation}
		\label{eq:20}
		\lambda\mathcal{L}\{U_t\}(\lambda)\leq \min\{1,t \varphi(\lambda)\}.
	\end{equation}
	In the case when $\mathcal{T}=(0,\infty)$, it is enough to apply the inequality $1-e^{-s}\leq s$. 

	Next, we infer
	\begin{align*}
		\EE\min\big\{1, C T_t\big\}
		&=
		\int_{\mathcal{T}} \min\{1,s C\} \: \PP(T_t\in {\rm d}s)\\
		&\leq  
		\frac{e}{e-1}\int_{\mathcal{T}} \left(1-e^{-sC}\right)\PP(T_t\in {\rm d} u) \\
		&\leq 
		\frac{e}{e-1} t \varphi(C)
	\end{align*}
	where in the last estimate we have used \eqref{eq:100} and \eqref{eq:20} proving \eqref{eq:46}.
	Since $\PP(T_t\geq r) \leq \EE \min\big\{1,T_t/r\big\}$, the inequality \eqref{eq:45} is a direct consequence of 
	\eqref{eq:46}.

	Now, to show the lower estimate \eqref{eq:44}, we it enough to consider $t \vphi(\lambda) \leq 1$. If
	$\mathcal{T}=(0,\infty)$, then
	\[
		\lambda\mathcal{L}\{U_t\}(\lambda) \geq (1-e^{-1}) t\varphi(\lambda). 
	\]
	If $\mathcal{T}=\NN$, then $t \geq 1$ thus $\lambda \leq 1$. Notice that for $k\in\NN$ and $0\leq s\leq k^{-1}$,
	\begin{align*}
		1-(1-s)^k 
		&=
		s\sum^k_{m=1}(1-s)^{m-1} \\
		&\geq 
		ks (1-k^{-1})^{k-1} \\
		&\geq (1-e^{-1})ks.
	\end{align*}
	Hence,
	\begin{align*}
		\lambda\mathcal{L}\{U_t\}(\lambda) 
		&\geq (1-e^{-1}) t\varphi(1-e^{-\lambda}) \\
		&\geq  (1-e^{-1}) t\varphi\big( (1-e^{-1})\lambda\big) \\
		&\geq  (1-e^{-1})^2 t\varphi(\lambda)
	\end{align*}
	where in the last inequality we have used \eqref{eq:10}. Lastly, since $U_t$ is non-increasing, the function
	$\lambda\mapsto \lambda\mathcal{L}\{U_t\}(\lambda)$ is also non-increasing. Therefore, for any $\lambda > 0$ and 
	$t \in \mathcal{T}$,
	\[
		(1-e^{-1})^2 \min\left\{1,t \varphi(\lambda)\right\}
		\leq \lambda\mathcal{L}\{U_t\}(\lambda) \leq \min\left\{1,t \varphi(\lambda)\right\}.
	\]
	By the monotonicity of $U_t$, \cite[Lemma 5]{MR3165234} implies that
	\[
		U_t(r) \leq \min\left\{1,\frac{e}{e-1}t \varphi(1/r)\right\}.
	\]
	For the lower bound we use the scaling property of $\varphi$: We first observe that for $s > 0$ and
	$\lambda \geq 1$,
	\begin{align*}
		\frac{\mathcal{L}\{U_t\}(\lambda s)}{\mathcal{L}\{U_t\}(s)}
		&\leq (1-e^{-1})^{-2} \lambda^{-1}\frac{ \min\{1,t \varphi(\lambda s)\}}{ \min\{1,t \varphi(s)\}}\\
		&=  (1-e^{-1})^{-2} \lambda^{-1}
		\begin{cases}
			1 &  \varphi^{-1}(1/t) \leq s,\\
			\frac{\varphi(\varphi^{-1}(1/t))}{\varphi(s)} &
			\varphi^{-1}(1/t)/\lambda < s < \varphi^{-1}(1/t),\\
			\frac{\varphi(\lambda s)}{\varphi(s)} &   
			0 < s \leq \varphi^{-1}(1/t)/\lambda.
		\end{cases}
	\end{align*}
	If $\varphi^{-1}(1/t)\lambda < s \leq \varphi^{-1}(1/t)$, then by the upper scaling property of $\varphi$, we get
	\begin{align*}
		\frac{\varphi(\varphi^{-1}(1/t))}{\varphi(s)}
		&\leq 
		c^{-1} \left(\frac{\varphi^{-1}(1/t)}{s}\right)^\beta \\
		&< c^{-1}\lambda^\beta.
	\end{align*}
	Analogously, if $s \leq \varphi^{-1}(1/t)/\lambda$ then
	\[
		\frac{\varphi(\lambda s)}{\varphi(s)} \leq c^{-1} \lambda^{\beta}.
	\]
	Hence,
	\[
		\frac{\mathcal{L}\{U_t\}(\lambda s)}{\mathcal{L}\{U_t\}(s)} 
		\leq 
		c^{-1}  (1-e^{-1})^{-2} \lambda^{\beta-1}.
	\]
	Since $\beta-1 < 0$, by \cite[Lemma 13]{MR3165234} we get
	\[
		U_t(r) \geq C_1 \min\{1,t \varphi(1/r)\} \qquad \text{for all } r>0,\, t\in\mathcal{T}
	\]
	where $C_1$ depends only on $c$ and $\beta$. This completes the proof.
\end{proof}

In the following proposition we prove that the exponent in scaling conditions for jump kernels are limited by
natural diffusion on the space.
\begin{proposition}
	\label{prop:1}
	Assume that the L\'{e}vy measure of the subordinator is absolutely continuous with non-increasing density $\nu$.
	Then there exists an absolute constant $c_1 > 0$ such that
	\[
		c(\varphi,k) \geq c_1 \nu(k), \qquad k\in\NN.
	\]
	If we additionally assume that $\nu$ satisfies the lower scaling condition at infinity then there is $c_2 > 0$,
	such that 
	\[
		c(\varphi,k)\leq c_2 \nu(k), \qquad k\in\NN.
	\]
\end{proposition}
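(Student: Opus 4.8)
The plan is to exploit the representation
\[
	c(\varphi, k) = \int_{(0, \infty)} t^k e^{-t} \, \nu({\rm d} t) + b \ind{\{1\}}(k)
\]
together with the assumption that $\nu$ has a non-increasing density, which I also denote $\nu$. For the lower bound one discards the $b$-term (it only helps) and restricts the integral to an interval on which $t^k e^{-t}$ is comparable to its maximum. First I would recall that $t \mapsto t^k e^{-t}$ is maximized at $t = k$, so I would integrate over, say, $[k, 2k]$ (or $[k/2, k]$): on such an interval $t^k e^{-t} \geq c\, k^k e^{-k}$ for an absolute constant $c$, and by monotonicity $\nu(t) \geq \nu(2k)$ there. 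This gives
\[
	c(\varphi, k) \geq \int_{[k, 2k]} t^k e^{-t} \nu(t) \, {\rm d} t \geq c\, k \cdot k^k e^{-k} \nu(2k).
\]
By Stirling, $k^k e^{-k} \geq c' k!/\sqrt{k} \cdot k^{-1/2}$-type bounds are not quite what is needed; rather the cleaner route is to compare directly with $\nu(k)$. Since I want $\nu(k)$ on the right and I only have $\nu(2k)$, I would instead integrate over $[k/2, k]$: there $\nu(t) \geq \nu(k)$ by monotonicity, and $t^k e^{-t} \geq (k/2)^k e^{-k} = 2^{-k} k^k e^{-k}$, so
\[
	c(\varphi, k) \geq \frac{k}{2} \cdot 2^{-k} k^k e^{-k} \nu(k).
\]
Then $2^{-k} k^k e^{-k} \cdot k/2$ should be bounded below by an absolute constant; in fact one should not use the full interval $[k/2,k]$ but a short interval around $k$ of length $\sqrt k$, where $t^k e^{-t} \asymp k^k e^{-k}$, giving $c(\varphi,k) \geq c\sqrt{k}\, k^k e^{-k} \nu(k) \asymp c\, k! /\sqrt{k}\cdot\ldots$; the point is that after Stirling $\sqrt k\, k^k e^{-k} \asymp k!$, and since $\int_{(0,\infty)} t^k e^{-t}\,{\rm d}t = k!$, this shows $\int_{[k-\sqrt k, k+\sqrt k]} t^k e^{-t}\,{\rm d}t \asymp k!$, hence the stated inequality with an absolute $c_1$.

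For the upper bound, assume $\nu \in \WLSCINF{\alpha}{C}{\theta_0}$ for some $\alpha$ and split
\[
	\int_{(0,\infty)} t^k e^{-t} \nu(t)\, {\rm d} t = \int_{(0, k]} + \int_{(k, \infty)}.
\]
On $(0, k]$ monotonicity gives $\nu(t) \geq \nu(k)$, which is the wrong direction, so here I would instead use that $t^k e^{-t}$ concentrates near $t = k$ and that the lower scaling condition at infinity forces $\nu$ not to decay too fast, i.e. $\nu(t) \leq C (k/t)^{-\alpha}\nu(k) = C(t/k)^{\alpha}\nu(k)$ for $t \geq k \geq \theta_0$ — wait, the lower scaling at infinity is the statement controlling the decay; concretely it yields, for $\lambda \geq 1$ and $t$ large, $\nu(\lambda t) \leq C\lambda^{-\alpha}\nu(t)$ (decay \emph{at least} polynomial) on one side and for $\lambda \le 1$, $\nu(\lambda t) \le C \lambda^{-\alpha}\nu(t)$ meaning $\nu(s) \leq C (s/t)^{-\alpha} \nu(t)$ for $s \le t$. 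Using the appropriate instance around the base point $t=k$, one gets $\nu(t) \leq C \max\{(t/k)^{\alpha'}, (t/k)^{\alpha''}\}\nu(k)$ for all $t > 0$ (with $k$ exceeding the scaling threshold, finitely many small $k$ handled separately), and then
\[
	c(\varphi, k) \leq C\nu(k) k^{-\alpha''}\!\!\int_{(0,\infty)} t^{k+\alpha''} e^{-t}\,{\rm d}t + (\text{similar}) = C\nu(k)\Big( k^{-\alpha''}\Gamma(k+\alpha''+1) + \cdots\Big),
\]
and $k^{-s}\Gamma(k+s+1)/k! \to 1$ as $k \to \infty$ by Stirling, so the ratio stays bounded, giving $c(\varphi,k) \leq c_2 \nu(k)$ after absorbing the $b\ind{\{1\}}$ term.

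The main obstacle is bookkeeping in the upper bound: turning the weak lower scaling at infinity into a clean two-sided polynomial envelope $\nu(t) \leq C(t/k)^{\pm}\nu(k)$ valid for \emph{all} $t > 0$ (not just $t$ beyond the threshold $\theta_0$), and then checking that the resulting Gamma-function ratios $\Gamma(k+s+1)/(k^{s} k!)$ are bounded uniformly in $k$ — this is where Stirling's formula does the real work. The finitely many small values of $k$ below the scaling threshold are dealt with trivially since both sides are positive and finite. I would present the lower bound fully (it is short) and for the upper bound carry out the split, the scaling envelope, and the Stirling estimate, leaving the elementary Gamma-ratio bound as a one-line remark.
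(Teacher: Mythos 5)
Your lower-bound argument is correct, though it takes a somewhat different route from the paper. You localize the mass of $t\mapsto t^k e^{-t}$ to a window of width $\asymp\sqrt{k}$ near its mode $t=k$ and invoke Stirling to compare with $k!$, whereas the paper simply integrates over all of $[0,k]$ (where monotonicity gives $\nu(t)\geq\nu(k)$ at once) and appeals to the regularized incomplete gamma asymptotic $\gamma(k+1,k)/\Gamma(k+1)\to 1/2$. Your version is more self-contained; the paper's is shorter because it outsources the Stirling work to a tabulated formula. One small correction to your sketch: since $\nu$ is non-increasing you need the window to sit to the \emph{left} of $k$ (e.g.\ $[k-\sqrt{k},\,k]$) so that $\nu(t)\geq\nu(k)$ on it; a window $[k,2k]$ or a symmetric window $[k-\sqrt{k},k+\sqrt{k}]$ only gives $\nu(t)\geq\nu(2k)$ or $\nu(t)\geq\nu(k+\sqrt{k})$, respectively.

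The upper bound, however, has a genuine gap. You present as "bookkeeping" the step of turning the lower scaling condition at infinity into a two-sided polynomial envelope $\nu(t)\leq C\max\{(t/k)^{\alpha'},(t/k)^{\alpha''}\}\nu(k)$ valid for \emph{all} $t>0$. No such envelope exists in general. The scaling hypothesis is the estimate $\nu(t)\leq c(t/s)^\eta\nu(s)$ for $1\leq t\leq s$ with some fixed $\eta<0$; it says nothing whatever about $\nu$ on $(0,1)$, where a L\'evy density is only constrained by $\int_0^1 t\,\nu(t)\,{\rm d}t<\infty$ and may blow up like $t^{-1-\varepsilon}$ for any $\varepsilon<1$, at a rate entirely unrelated to the given $\eta$. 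For such $\nu$ the proposed envelope fails near $0$ for every choice of constant. The correct fix, and what the paper does, is a \emph{three}-way split $\int_{(0,\infty)}=\int_{(0,1]}+\int_{[1,k]}+\int_{[k,\infty)}$. The middle piece is handled by the scaling envelope and the right piece by plain monotonicity $\nu(t)\leq\nu(k)$, producing the gamma-function ratios you describe, which Stirling tames. The left piece is bounded \emph{unconditionally}: for $k\geq 1$ and $t\in(0,1]$ one has $t^k e^{-t}\leq t e^{-t}\leq 1-e^{-t}$, hence $\int_{(0,1]}t^k e^{-t}\nu(t)\,{\rm d}t\leq\varphi(1)$. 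It then remains to absorb the resulting $\varphi(1)/k!$ into $c_2\nu(k)$, and this is where the scaling condition earns its keep a second time: taking $s=k$, $t=1$ gives $\nu(k)\geq c^{-1}\nu(1)k^{\eta}$, so $\varphi(1)/(k!\,\nu(k))\lesssim k^{-\eta}/k!\to 0$. That final step, rather than any uniform envelope, is what closes the argument, and your outline omits it.
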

\begin{proof}
	Since $\nu$ is non-increasing
	\begin{align*}
		c(\varphi,k)
		&\geq
		\frac{1}{k!} \int^k_0 e^{-t}t^k\nu(t) {\: \rm d} t \\
		&\geq 
		\nu(k) \frac{\Gamma(k+1) -\Gamma(k+1,k)}{\Gamma(k+1)}.
	\end{align*}
	Hence, the first statement is a consequence of \cite[Formula 6.5.35]{MR0167642}. Next, let us assume that
	\[
		\frac{\nu(t)}{\nu(s)} \leq c\left(\frac{t}{s}\right)^\eta, \qquad 1\leq t\leq s,
	\]
	for some $c>0$ and $\eta<0$. Then, for $k>-\eta-1$,
	\begin{align*}
		c(\varphi,k)
		&\leq
		\frac{1}{k!}\left( \int^1_0 e^{-t}t^k\nu(t){\: \rm d} t + 
		c \nu(k) \int_1^k e^{-t}t^k \left(\frac{t}{k}\right)^\eta {\: \rm d} t 
		+ \nu(k) \int^\infty_k e^{-t}t^k {\: \rm d} t\right) \\
		&\leq
		\frac{1}{k!}\left(\varphi(1)+c\nu(k)k^{-\eta}\Gamma(k+\eta+1)+\nu(k)\Gamma(k+1)\right).
	\end{align*}
	Since $\nu(k)\geq c^{-1}\nu(1)k^{\eta}$, the claim is a consequence of the Stirling formula.
\end{proof}
Let us denote by $U$ the potential measure of the subordinator. Namely, for $x \geq 0$,
\[
	\begin{aligned}
	U([0, x]) 
	&= \sum_{n = 0}^\infty \PP(T_n \in [0, x]), &\text{if } \calT &= \NN, \\
	&=
	\int_0^\infty \PP(T_t \in [0, x]) {\: \rm d} t,  &\text{if } \calT &= (0, \infty).
	\end{aligned}
\]
\begin{lemma}
	\label{lem:22}
	We have
	\[
		U\big([0,x]\big) \approx \frac{1}{\varphi(1/x)}
	\]
	uniformly with respect to $x \geq \inf\mathcal{T}$.
\end{lemma}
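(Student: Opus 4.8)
The plan is to estimate the Laplace transform of the potential measure $U$ and then transfer the bounds back to $U([0,x])$ itself by monotonicity, exactly as in the proof of Lemma~\ref{lem:4}. First I would compute $\mathcal{L}\{U\}(\lambda)$. In the continuous-time case $\calT=(0,\infty)$, Tonelli's theorem and \eqref{eq:50}-type identities give
\[
	\lambda \mathcal{L}\{U\}(\lambda)
	= \lambda \int_0^\infty e^{-\lambda x} \int_0^\infty \PP(T_t \in [0,x]) \, {\rm d}t \, {\rm d}x
	= \int_0^\infty \EE e^{-\lambda T_t} \, {\rm d}t
	= \int_0^\infty e^{-t\varphi(\lambda)} \, {\rm d}t
	= \frac{1}{\varphi(\lambda)}.
\]
In the discrete case $\calT=\NN$, the analogous computation using $\EE e^{-\lambda T_n} = (1-\varphi(1-e^{-\lambda}))^n$ from \eqref{eq:50} yields a geometric series,
\[
	\lambda \mathcal{L}\{U\}(\lambda)
	= \sum_{n=0}^\infty \EE e^{-\lambda T_n}
	= \sum_{n=0}^\infty \big(1-\varphi(1-e^{-\lambda})\big)^n
	= \frac{1}{\varphi(1-e^{-\lambda})},
\]
which is valid since $0 < \varphi(1-e^{-\lambda}) \leq 1$; moreover $\mathcal{L}\{U\}(\lambda) = \infty$ is impossible for $\lambda > 0$ because the summand is strictly less than $1$. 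The elementary bounds $(1-e^{-1})\lambda \leq 1-e^{-\lambda} \leq \lambda$ for $\lambda \in (0,1]$, together with the concavity inequality \eqref{eq:10} and monotonicity of $\varphi$, then give in both cases
\[
	\varphi(\lambda)^{-1} \leq \lambda \mathcal{L}\{U\}(\lambda) \leq (1-e^{-1})^{-1}\varphi(\lambda)^{-1}
\]
for $\lambda$ in a suitable range (for $\calT = \NN$ one restricts to $\lambda \leq 1$, noting $\inf \calT = 1$ there; for $\calT=(0,\infty)$ all $\lambda>0$ work).

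Next I would pass from the Laplace transform to the measure. Since $x \mapsto U([0,x])$ is non-decreasing, the function $\lambda \mapsto \lambda \mathcal{L}\{U\}(\lambda)$ need not be monotone, but the standard Tauberian comparison lemmas used already in Lemma~\ref{lem:4} apply. For the upper bound on $U([0,x])$ I would use the elementary estimate $U([0,x]) \leq e \int_0^\infty e^{-r/x} \, U({\rm d}r) = e \lambda \mathcal{L}\{U\}(\lambda)\big|_{\lambda = 1/x}$ (valid because $e^{-r/x} \geq e^{-1}$ on $[0,x]$), which directly gives $U([0,x]) \lesssim 1/\varphi(1/x)$. For the lower bound, I would invoke the comparison principle for Laplace transforms of monotone functions — specifically \cite[Lemma 13]{MR3165234} or an analogous Karamata-type argument — using that $1/\varphi$ itself enjoys a weak scaling bound at the relevant endpoint coming from the concavity of $\varphi$ (indeed \eqref{eq:10} gives $\varphi(\lambda s)/\varphi(s) \leq \lambda^{-1} \cdot \lambda = 1$ scaling of $1/\varphi$ with a usable exponent), to conclude $U([0,x]) \gtrsim 1/\varphi(1/x)$.

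The main obstacle is the lower bound, and specifically ensuring uniformity in $x \geq \inf\calT$ and handling the discrete case cleanly. In the discrete setting the restriction to $\lambda \leq 1$ (equivalently $x \geq 1 = \inf\calT$) is essential because $1-e^{-\lambda}$ is not comparable to $\lambda$ for large $\lambda$, and one must check the Tauberian lemma only needs the Laplace-transform comparison on the range $\lambda \in (0,1]$, which corresponds precisely to the claimed range $x \geq \inf\calT$. I would also need to confirm that the scaling hypothesis required by \cite[Lemma 13]{MR3165234} is met here by $1/\varphi$ rather than by $\varphi$: concavity of $\varphi$ gives $\varphi(\lambda\theta)\geq\lambda\varphi(\theta)$, hence $1/\varphi(\lambda\theta) \leq \lambda^{-1}/\varphi(\theta)$, an upper scaling of index $-1 < 0$ for the (monotone decreasing) function $s\mapsto \mathcal{L}\{U\}(s)$, which is exactly the hypothesis needed to deduce a lower bound on $U([0,x])$. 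Everything else is routine bookkeeping with the inequalities $1-e^{-s}\leq s$, $1-e^{-s}\geq(1-e^{-1})s$ on $(0,1]$, and \eqref{eq:10}.
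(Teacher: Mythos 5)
Your computation of the Laplace transform agrees with the paper's, and your elementary upper bound
\[
	U\big([0,x]\big) \;\leq\; e\int_{[0,x]} e^{-r/x}\, U({\rm d}r)\;\leq\; e\,\big(\lambda\mathcal{L}\{U\}(\lambda)\big)\big|_{\lambda=1/x}
	\;=\;\frac{e}{\varphi(1-e^{-1/x})}\;\leq\;\frac{e}{1-e^{-1}}\cdot\frac{1}{\varphi(1/x)}
\]
is correct and slightly more self-contained than the paper's (which instead reduces continuous time to discrete time via \cite[Proposition~III.1]{MR1406564} and then appeals to \cite[Propositions~2.2 and 2.3]{MR3098066}).

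The lower bound is where the argument has a real gap. You invoke \cite[Lemma~13]{MR3165234}, but as it is used in the proof of Lemma~\ref{lem:4} in this paper, that lemma produces a lower bound on a \emph{non-increasing} function (there, the tail $U_t(r)=\PP(T_t\geq r)$) from scaling of its Laplace transform. Here $F(x)=U([0,x])$ is \emph{non-decreasing}. For a non-decreasing $F$ with $F(0)=0$, the bound
\[
	\lambda\mathcal{L}\{F\}(\lambda)\;\geq\;(1-e^{-1})\,F(1/\lambda)
\]
comes for free and reproduces your upper bound, but in the reverse direction the tail $\lambda\int_{1/\lambda}^{\infty}e^{-\lambda s}F(s)\,{\rm d}s$ is not automatically controlled by $F(1/\lambda)$ — one needs a growth (doubling) estimate on $F$, and the scaling of $1/\varphi$ alone does not provide it. The paper supplies exactly this missing ingredient: the Markov (renewal) property makes $F$ subadditive, hence $F(\lambda x)\leq 2\lambda F(x)$ for $\lambda\geq1$, and it is this doubling of $F$ (fed into \cite[Propositions~2.2 and 2.3]{MR3098066}) that yields the two-sided estimate, in particular the lower bound $F(x)\gtrsim 1/\varphi(1-e^{-1/x})$. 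Your proposal never observes or uses the subadditivity of $F$, and the hedged reference to ``an analogous Karamata-type argument'' does not substitute for it. So the lower bound, as written, does not go through.
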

\begin{proof}
	Thanks to \cite[Proposition III.1]{MR1406564} it is enough to consider $\mathcal{T}=\NN$. Let $F(x)= U([0,x])$, $x\geq 0$.
	By the Tonelli's theorem and \eqref{eq:50}, for $\lambda > 0$ we have
	\begin{align*}
		\lambda \calL \{F\}(\lambda)
		&=
		\sum_{n=0}^\infty \lambda\int^\infty_0 e^{-\lambda x} \int_{[0,x]}\PP(T_n \in {\rm d} u){\: \rm d} x \\
		&= 
		\sum_{n=0}^\infty \EE e^{-\lambda T_n} \\
		&=
		\sum_{n=0}^\infty \big(1-\varphi(1-e^{-\lambda})\big)^n \\
		&=\frac{1}{\varphi(1-e^{-\lambda})}.
	\end{align*}
	Observe that the Markov property implies that $F$ is subadditive, thus $F(\lambda x)\leq 2\lambda F(x)$ for all $x \geq 0$ 
	and $\lambda\geq1$. Hence, by \cite[Propositions 2.2 and 2.3]{MR3098066} we obtain
	\[
		\frac{1}{2(1+e^{-1})} \frac{1}{\varphi(1-e^{-1/x} )} \leq 
		F(x)
		\leq e \frac{1}{\varphi(1-e^{-1/x})},
		\qquad x>0.
	\]
	Since for $x \geq 1$, 
	\[
		\frac{1-e^{-1}}{x}
		\leq 
		1-e^{-\frac{1}{x}}
		\leq \frac{1}{x},
	\]
	by \eqref{eq:10} we get
	\[
		\frac{1}{2(1+e^{-1})} \frac{1}{\varphi(1/x)}
		\leq 
		F(x)
		\leq 
		\frac{e}{1-e^{-1}} \frac{1}{\varphi(1/x)},
		\qquad x\geq 1
	\]
	which completes the proof.
\end{proof}

\section{Subordinate processes}
\label{sec:2}
In this section we concentrate on subordinate processes on a metric measure space $(M, d, \mu)$. Let 
$\mathbf{S} = (S_t : t \in \calT_0)$ be a Markov process and let $(T_t : t \in \calT_0)$ be a subordinator independent
of $\mathbf{S}$ and corresponding to a Bernstein function $\varphi$. Then the subordinate process
$\mathbf{S}^\varphi = (S^\varphi_t : t \in \calT_0)$ is defined as
\[
	 S^\varphi_t=S_{T_t} \qquad\text{for } t\in\mathcal{T}.
\]
In particular, for all $t \in \calT$, $A \in \calB$ and $x \in M$,
\[
	\PP^x(S^\varphi_t \in A) = \int_A \PP^x(S_s \in A) \PP(T_t \in {\rm d} s).
\]
We assume that $\mathbf{S}$ has transition function which is absolutely continuous with respect to $\mu$. Let $p: \calT \times
M \times M \rightarrow [0, \infty)$ denote its density. Then there is a function $p_\varphi: \calT \times M \times M
\rightarrow [0, \infty)$, such that for all $t \in \calT$, $A \in \calB$ and $x \in M$,
\[ 
	\PP^x(S^\varphi_t \in A) = \int_A p_\varphi(t; x, y) \, \mu({\rm d} y) 
	+
	\lim_{\lambda \to +\infty} e^{-t\varphi(\lambda)} \delta_x(A) \ind{\calT=(0,\infty)}.
\]
The aim of this section is to find the sharp lower and upper estimates on $p_\varphi$ under certain
scaling conditions provided that some information about $p(t; \cdot, \cdot)$ is known. Then we study the estimate on the
corresponding Green function.

\subsection{Heat kernel: Upper bounds}

\begin{proposition}
	\label{prop:100}
	Fix $x_0, y_0 \in M$, and assume that there are two positive constants $C$ and $c$ such that
	\begin{equation}
		\label{eq:21}
		p(s; x_0, y_0) \leq C \min\left\{1, c s \right\} \qquad \text{for all } s\in\mathcal{T}.
	\end{equation}
	Then
	\[
		p_\varphi(t; x_0, y_0) \leq 
		\frac{e}{e-1} C \min\left\{1, t \varphi(c) \right\}
		\qquad \text{for all } t \in \mathcal{T}.
	\]
\end{proposition}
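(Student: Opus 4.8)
The plan is to integrate the pointwise bound \eqref{eq:21} against the distribution of the subordinator $T_t$ and then invoke Lemma~\ref{lem:4}. Write, for $t \in \calT$,
\[
	p_\varphi(t; x_0, y_0) = \int_{\calT} p(s; x_0, y_0) \, \PP(T_t \in {\rm d} s),
\]
which holds because the atom at $\lambda \to \infty$ contributes only the Dirac part that is separated off in the definition of $p_\varphi$, and in any case $p_\varphi(t; x_0, y_0) \le \int_{\calT} p(s; x_0, y_0)\,\PP(T_t \in {\rm d}s)$ suffices for an upper bound. Applying \eqref{eq:21} inside the integral gives
\[
	p_\varphi(t; x_0, y_0) \le C \int_{\calT} \min\{1, c s\} \, \PP(T_t \in {\rm d} s) = C \, \EE \min\{1, c T_t\}.
\]

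Now I would apply \eqref{eq:46} from Lemma~\ref{lem:4} with the constant $C$ there taken to be $c$ (the constant in \eqref{eq:21}): this yields
\[
	\EE \min\{1, c T_t\} \le \frac{e}{e-1} \min\{1, t\varphi(c)\},
\]
and combining the two displays gives exactly the claimed bound
\[
	p_\varphi(t; x_0, y_0) \le \frac{e}{e-1} C \min\{1, t\varphi(c)\} \qquad \text{for all } t \in \calT.
\]

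There is essentially no analytic obstacle here; the work has already been front-loaded into Lemma~\ref{lem:4}. The only point requiring a line of justification is the first identity (or inequality): one should note that the measure $\PP(T_t \in \cdot)$ may have an atom at $0$ when $\calT = (0,\infty)$ (corresponding to $\lim_{\lambda\to\infty} e^{-t\varphi(\lambda)}$, which is positive precisely when $\varphi$ is bounded, i.e.\ when the drift $b=0$ and $\nu$ is finite), but that atom is exactly the Dirac term removed in the definition of $p_\varphi$, so integrating $p(s;x_0,y_0)$ — understood as the density part — against $\PP(T_t \in {\rm d}s)$ restricted to $\calT$ reproduces $p_\varphi(t;x_0,y_0)$; and even if one is cavalier about this, the inequality direction needed for the upper bound is trivially valid since $p \ge 0$. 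If desired one can also observe that when $\calT = \NN$ the subordinator $T_t$ is integer-valued and the identity is immediate. So the main step is simply to recognize that the statement is the combination of the trivial integral representation with \eqref{eq:46}, and to handle the atom-at-zero bookkeeping cleanly.
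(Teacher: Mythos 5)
Your proof is correct and follows exactly the route the paper intends: integrate the pointwise bound \eqref{eq:21} against the law of $T_t$ and invoke \eqref{eq:46} of Lemma~\ref{lem:4} with the constant there taken to be $c$. The paper's own proof is just the one-line remark that Lemma~\ref{lem:4} plus \eqref{eq:21} gives the claim; your write-up is the same argument spelled out, including the (correct) observation that any atom of $\PP(T_t\in\cdot)$ at $0$ only helps the upper bound.
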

\begin{proof}
	In view of Lemma \ref{lem:4}, the statement is a consequence of \eqref{eq:21}.
\end{proof}

In the following lemma we study the subordinate Markov chain obtained from the Markov chain given by the sequence of independent
random variables having the standard normal distribution. 
\begin{lemma}
	\label{lem:7}
	Let $(X_n : n \in \NN)$ be a sequence of independent random variables with the standard normal distribution on $\RR^d$. Let
	$(S_n : n \in \NN_0)$ where $S_n = X_1 + \ldots + X_n$. Assume that $\varphi$ belongs to $\WLSC{\beta}{C}{1}$ for
	certain $\beta \in (0, 1]$ and $C \in [1, \infty)$. Then there is $C_0 > 0$ such that for all $n \in \NN$ and
	$x,y \in \RR^d$,
	\[
		p_{\varphi}(n;x,y) \leq C_0 \big(\varphi^{-1}(n^{-1})\big)^{\frac{d}{2}}.
	\]
\end{lemma}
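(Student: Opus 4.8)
The plan is to reduce the statement about $p_\varphi$ to a one-dimensional computation involving the subordinator $T_n$ and the Gaussian heat kernel, and then to exploit the lower scaling of $\varphi$ to control the integral. Recall that for the Gaussian random walk on $\RR^d$, the density of $S_k$ is
\[
	p(k; x, y) = (2\pi k)^{-d/2} \exp\Bigl(-\frac{|x-y|^2}{2k}\Bigr) \leq (2\pi k)^{-d/2},
\]
and more generally $p(k; x, y) \leq p(k; x, x) = (2\pi k)^{-d/2}$. Since subordination just averages over the random time, we have
\[
	p_\varphi(n; x, y) = \sum_{k=1}^\infty p(k; x, y)\, \PP(T_n = k) \leq (2\pi)^{-d/2} \sum_{k=1}^\infty k^{-d/2}\, \PP(T_n = k) = (2\pi)^{-d/2}\, \EE\bigl[T_n^{-d/2}\bigr].
\]
So the whole statement comes down to the bound $\EE\bigl[T_n^{-d/2}\bigr] \lesssim \bigl(\varphi^{-1}(n^{-1})\bigr)^{d/2}$, with the implicit constant depending only on $d$, $\beta$, $C$.

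First I would handle this moment bound via the layer-cake formula: writing $\gamma = d/2$,
\[
	\EE\bigl[T_n^{-\gamma}\bigr] = \gamma \int_0^\infty r^{-\gamma - 1}\, \PP(T_n \leq r)\, {\rm d}r.
\]
Now I need an \emph{upper} bound on $\PP(T_n \leq r) = \PP(T_n < r) + \PP(T_n = r)$, i.e.\ essentially $1 - U_n(r)$ in the notation of Lemma \ref{lem:4}; but Lemma \ref{lem:4} gives bounds on $\PP(T_n \geq r)$, so instead I would go through the Laplace transform directly. By a standard Chebyshev/exponential-moment estimate, for any $\lambda > 0$,
\[
	\PP(T_n \leq r) \leq e^{\lambda r}\, \EE e^{-\lambda T_n} = e^{\lambda r} \bigl(1 - \varphi(1 - e^{-\lambda})\bigr)^n \leq \exp\bigl(\lambda r - n\, \varphi(1 - e^{-\lambda})\bigr),
\]
using \eqref{eq:50} and $1 - x \leq e^{-x}$. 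For $\lambda \in (0,1]$ one has $1 - e^{-\lambda} \geq (1 - e^{-1})\lambda$, and then by \eqref{eq:10} (concavity of $\varphi$), $\varphi(1 - e^{-\lambda}) \geq (1-e^{-1})\varphi(\lambda)$; so $\PP(T_n \leq r) \leq \exp\bigl(\lambda r - (1 - e^{-1}) n\, \varphi(\lambda)\bigr)$ for all $\lambda \in (0,1]$. Choosing $\lambda = \lambda_n := \varphi^{-1}(n^{-1})$ (which lies in $(0,1]$ because $\varphi(1) \geq \varphi(1 - e^{-1}) \cdot$ something... more simply, since $\varphi$ is increasing with $\varphi(1) = 1$, $\varphi^{-1}(n^{-1}) \leq 1$ for $n \geq 1$), this reads $\PP(T_n \leq r) \leq \exp\bigl(\lambda_n r - (1-e^{-1})\bigr)$, which is only useful for $r \lesssim 1/\lambda_n$.

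Next I would split the layer-cake integral at $r_0 := 1/\lambda_n$. On $(r_0, \infty)$ we just use $\PP(T_n \leq r) \leq 1$, giving a contribution $\lesssim r_0^{-\gamma} = \lambda_n^\gamma = \bigl(\varphi^{-1}(n^{-1})\bigr)^\gamma$, as desired. On $(0, r_0)$ I would \emph{not} use the crude exponential bound at the single scale $\lambda_n$, but rather optimize $\lambda$ in $r$, or equivalently iterate the Laplace bound at scale $\lambda = s/r$; here is where the \emph{lower scaling} $\varphi \in \WLSC{\beta}{C}{1}$ enters: it gives $\varphi(\lambda_n) / \varphi(\lambda) \leq C (\lambda_n/\lambda)^\beta$ for $\lambda \leq \lambda_n$, hence $n\,\varphi(\lambda) \geq C^{-1} (\lambda/\lambda_n)^\beta$ for $\lambda \in (0, \lambda_n]$; choosing $\lambda = 1/r$ (valid since $r > r_0$ means $1/r < \lambda_n \leq 1$) yields $\PP(T_n \leq r) \leq \exp\bigl(1 - (1-e^{-1}) C^{-1} (r_0/r)^\beta\bigr)$ for $r \in (0, r_0)$, which decays stretched-exponentially as $r \to 0$. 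Plugging this into $\gamma \int_0^{r_0} r^{-\gamma-1} \PP(T_n \leq r)\,{\rm d}r$ and substituting $u = r_0/r$ turns it into $\gamma\, r_0^{-\gamma} \int_1^\infty u^{\gamma - 1} e^{1 - c u^\beta}\,{\rm d}u$, a finite constant depending only on $\gamma = d/2$ and $\beta$ (and $C$, $e$). Collecting both pieces gives $\EE[T_n^{-d/2}] \leq C_0' \bigl(\varphi^{-1}(n^{-1})\bigr)^{d/2}$, and multiplying by $(2\pi)^{-d/2}$ finishes the proof.

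\medskip
\noindent\textbf{Main obstacle.} The delicate point is the small-$r$ regime: a single-scale Chebyshev bound is not enough to make the integral $\int_0 r^{-d/2-1}\PP(T_n\le r)\,{\rm d}r$ converge near $r=0$ when $d$ is large, so one genuinely needs the stretched-exponential decay coming from optimizing the Laplace-transform bound over $\lambda$, and this is exactly the step that forces the lower scaling hypothesis $\beta > 0$ — without it (i.e.\ $\beta = 0$), the exponent $(r_0/r)^\beta$ would not grow and the bound would fail. One should also double-check that $\varphi^{-1}$ is well-defined on the relevant range (strict monotonicity of $\varphi$ near $0$, which follows from $0 < \varphi(u) < 1$ on $(0,1)$ together with $\varphi$ being a nonzero Bernstein function) and that $\lambda_n = \varphi^{-1}(n^{-1}) \in (0,1]$ for all $n \in \NN$, so that \eqref{eq:10} and the $\WLSC{\beta}{C}{1}$ scaling (valid on $(0,1)$) are both applicable.
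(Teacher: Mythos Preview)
Your argument is correct and genuinely different from the paper's. The paper proves Lemma~\ref{lem:7} by Fourier inversion: it writes $p_\varphi(n;x,y)$ as an integral of $\big(1-\varphi(1-e^{-|\xi|^2/2})\big)^n$ over $\RR^d$, splits at $|\xi|=1$ and at $|\xi|=a_n:=\sqrt{\varphi^{-1}(1/n)}$, and uses concavity of $\varphi$ together with $\WLSC{\beta}{C}{1}$ to bound the integrand by $\exp(-c\,\min\{|\xi/a_n|^2,|\xi/a_n|^{2\beta}\})$. Your route bypasses Fourier entirely: you reduce to the purely probabilistic estimate $\EE[T_n^{-d/2}]\lesssim(\varphi^{-1}(1/n))^{d/2}$ and obtain it by layer-cake plus an optimized Chernoff bound on $\PP(T_n\le r)$. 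This is more elementary and in effect proves Corollary~\ref{cor:3} directly, from which Lemma~\ref{lem:7} follows; the paper goes the other way round, deducing Corollary~\ref{cor:3} from Lemma~\ref{lem:7}. Either order works; your approach makes clearer that the Gaussian structure is irrelevant beyond the bound $p(k;x,y)\le Ck^{-d/2}$.

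One slip to fix: in the small-$r$ regime you write ``for $\lambda\le\lambda_n$'' and ``valid since $r>r_0$ means $1/r<\lambda_n$'', but you are on $r\in(0,r_0)$, so $\lambda=1/r>\lambda_n$, not $\lambda<\lambda_n$. The inequality you actually need, $\varphi(\lambda_n)/\varphi(\lambda)\le C(\lambda_n/\lambda)^\beta$, is the $\mathrm{WLSC}$ bound applied with base point $\theta=\lambda$ and contraction factor $\lambda_n/\lambda\le 1$, hence valid precisely for $\lambda_n\le\lambda<1$. To ensure $\lambda=1/r\le 1$ you should also note that $T_n\ge n\ge 1$ almost surely (each $R_j$ is integer-valued with $R_j\ge 1$), so $\PP(T_n\le r)=0$ for $r<1$ and the layer-cake integral effectively starts at $r=n$; this keeps $1/r\in(\lambda_n,1]$ throughout and makes the scaling hypothesis applicable. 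With this correction the argument is complete.
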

\begin{proof}
	Using the Fourier inversion formula, we have
	\begin{align*}
		{p}_\varphi(n; x,y) &= (2\pi)^{-\frac{d}{2}}  
		\int_{\RR^d} \bigg(1 - \varphi\Big(1 - e^{-\frac{\norm{\xi}^2}{2}}\Big)\bigg)^n 
		e^{-i \sprod{\xi}{y-x}} {\: \rm d} \xi   \\&\leq(2\pi)^{-\frac{d}{2}}
		\int_{\RR^d} \bigg(1 - \varphi\Big(1 - e^{-\frac{\norm{\xi}^2}{2}}\Big)\bigg)^n 
		 {\: \rm d} \xi.
	\end{align*}
	By \eqref{eq:10}, we get
	\[
		\varphi\Big(1 - e^{-\frac{\norm{\xi}^2}{2}}\Big)
		\geq
		1 - e^{-\frac{\norm{\xi}^2}{2}},
	\]
	thus
	\[
		1 - \varphi\Big(1 - e^{-\frac{\norm{\xi}^2}{2}}\Big)
		\leq
		e^{-\frac{\norm{\xi}^2}{2}}.
	\]
	Hence,	
	\begin{align*}
		\int_{\{\norm{\xi} \geq 1\}}
		\bigg(1 - \varphi\Big(1 - e^{-\frac{\norm{\xi}^2}{2}}\Big)\bigg)^n
		 {\: \rm d} \xi
			&\leq
		\int_{\{\norm{\xi} \geq 1\}}
		e^{-n \frac{\norm{\xi}^2}{2}}
		{\: \rm d}\xi \\
		&\leq
		C_1
		e^{-\frac{n}{4}}.
	\end{align*}
	Next, we estimate the integral in the neighborhood of the origin. If $\norm{\xi} \leq 1$, then
	\[
		1 - e^{-\frac{\norm{\xi}^2}{2}} \geq \frac{\sqrt{e} - 1}{2 \sqrt{e}} \norm{\xi}^2,
	\]
	thus by \eqref{eq:10}
	\[
		\varphi\Big(1 - e^{-\frac{\norm{\xi}^2}{2}}\Big) \geq \frac{\sqrt{e} - 1}{2 \sqrt{e}} \varphi(\norm{\xi}^2).
	\]
	Let
	\[
		a_n = \sqrt{\varphi^{-1}(1/n)}.
	\]
	If $\norm{\xi} \leq a_n$, then again by \eqref{eq:10},
	\begin{align*}
		\varphi\Big(1 - e^{-\frac{\norm{\xi}^2}{2}}\Big) 
		&\geq 
		\frac{\sqrt{e} - 1}{2 \sqrt{e}} \varphi\big(a_n^2 a_n^{-2} \norm{\xi}^2\big) \\
		&\geq
		\frac{\sqrt{e} - 1}{2 \sqrt{e}} n^{-1} a_n^{-2} \norm{\xi}^2. 
	\end{align*}
	If $a_n < \norm{\xi} \leq 1$, then using the weak lower scaling property of $\varphi$ we get
	\[
		n^{-1} = \varphi(a_n^2)  \leq C \big(a_n^2 \norm{\xi}^{-2}\big)^\beta \varphi(\norm{\xi}^2).
	\]
	Hence,
	\begin{align*}
		\varphi\Big(1 - e^{-\frac{\norm{\xi}^2}{2}}\Big)
		&\geq
		\frac{\sqrt{e} - 1}{2C\sqrt{e}} n^{-1} \big(a_n^{-2} \norm{\xi}^2\big)^\beta. 
	\end{align*}
	Therefore,
	\begin{align*}
		\int_{\{\norm{\xi} \leq 1\}}
		\bigg(1 - \varphi\Big(1 - e^{-\frac{\norm{\xi}^2}{2}}\Big)\bigg)^n
	 	{\: \rm d} \xi
		&\leq
		a_n^{-d}
		\int_{\{\norm{\xi} \leq 1/a_n\}}
		\exp\Big(-n \varphi\Big(1 - e^{-\frac{\norm{a_n \xi}^2}{2}}\Big) \Big)
		{\: \rm d} \xi \\
		&\leq
		a_n^{-d}
		\int_{\RR^d}
		\exp\Big(-C_2 \min\big\{\norm{\xi}^2, \norm{\xi}^{2\beta} \big\}\Big)
		{\: \rm d} \xi
	\end{align*}
	and the claim follows, since $\varphi$ decays at most polynomially.
\end{proof}
As an immediate consequence of Lemma \ref{lem:7} we obtain the following corollary.
\begin{corollary}
	\label{cor:3}
 	Assume that $\varphi$ belongs to $\WLSC{\beta}{C}{1}$ for certain $\beta \in (0, 1]$ and $C \in [1, \infty)$.
	Then for each $d\in\NN$ there is $C_d > 0$ such that for all $n \in \NN$,
	\[
		\sum_{k = 1}^\infty \PP(T_n=k) k^{-\frac{d}{2}} \leq C_d \big(\varphi^{-1}(n^{-1})\big)^{\frac{d}{2}}.
	\]
\end{corollary}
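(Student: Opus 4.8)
The plan is to deduce the estimate from Lemma~\ref{lem:7} by restricting the heat kernel bound there to the diagonal $x=y$. Recall that for the Gaussian random walk $(S_k : k \in \NN_0)$ with $S_k = X_1 + \cdots + X_k$, the transition density with respect to Lebesgue measure on $\RR^d$ is the density of the centred Gaussian law with covariance $k\,\Id$, that is
\[
	p(k; x, y) = (2\pi k)^{-\frac{d}{2}} \exp\Big( -\frac{\norm{y-x}^2}{2k} \Big), \qquad k \in \NN,
\]
so that $p(k; x, x) = (2\pi k)^{-\frac{d}{2}}$ for every $k$.

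First I would record the exact form of the subordinate kernel in the discrete-time case. Since here $\calT = \NN$, the definition of $p_\varphi$ in Section~\ref{sec:2} carries no extra atom, and since $T_n = R_1 + \cdots + R_n$ is supported on integers (in fact on $\{k : k \geq n\}$, as each $R_j \geq 1$), we have for all $n \in \NN$ and $x \in \RR^d$
\[
	p_\varphi(n; x, x) = \sum_{k=1}^\infty \PP(T_n = k)\, p(k; x, x) = (2\pi)^{-\frac{d}{2}} \sum_{k=1}^\infty \PP(T_n = k)\, k^{-\frac{d}{2}};
\]
the series converges since $k^{-d/2} \leq 1$ and $\sum_{k} \PP(T_n = k) = 1$. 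Now apply Lemma~\ref{lem:7}: because $\varphi \in \WLSC{\beta}{C}{1}$ with $\beta \in (0,1]$, there is $C_0 > 0$ such that $p_\varphi(n; x, x) \leq C_0\big(\varphi^{-1}(n^{-1})\big)^{\frac{d}{2}}$ for all $n$. Combining the two displays and setting $C_d = (2\pi)^{\frac{d}{2}} C_0$ yields the asserted bound
\[
	\sum_{k=1}^\infty \PP(T_n = k)\, k^{-\frac{d}{2}} \leq C_d \big(\varphi^{-1}(n^{-1})\big)^{\frac{d}{2}}.
\]

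There is no real obstacle here: the entire content is already carried by Lemma~\ref{lem:7}, and the corollary is just its specialization to $y = x$ together with the explicit value $p(k;x,x) = (2\pi k)^{-d/2}$ of the Gaussian diagonal density. The only points deserving a word are that in discrete time $p_\varphi$ equals exactly the series $\sum_{k \geq 1} \PP(T_n = k)\, p(k; \cdot, \cdot)$ with no survival term, and that this series is finite, so the manipulations above are legitimate.
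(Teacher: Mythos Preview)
Your proof is correct and is exactly the approach the paper has in mind: the corollary is stated as an immediate consequence of Lemma~\ref{lem:7}, and you have spelled out precisely how, namely by evaluating $p_\varphi(n;x,x)$ for the Gaussian random walk using $p(k;x,x)=(2\pi k)^{-d/2}$ and reading off the series identity. Your remarks about the absence of the atom in the discrete-time case and the convergence of the series are accurate and complete the argument.
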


\begin{lemma}
	\label{lem:12}
	Assume that $\varphi$ belongs to $\WLSC{\beta}{C}{0}$ for certain $\beta \in (0, 1]$ and $C \in [1, \infty)$. 
	Then for each $d \in \NN$ there is $C_d > 0$ such that for all $t>0$,
	\[
		\int_{[0, \infty)} s^{-\frac{d}{2}}\: \PP(T_t \in {\rm d} s) \leq 
		C_d \big(\varphi^{-1}(t^{-1})\big)^{\frac{d}{2}}.
	\]
\end{lemma}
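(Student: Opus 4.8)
The plan is to prove the continuous-time estimate in Lemma~\ref{lem:12} by the same Fourier-analytic route used for the discrete-time Lemma~\ref{lem:7}, but working directly with the subordinator on $[0,\infty)$ rather than passing through the Poissonization. Concretely, I would use the fact that the standard heat semigroup on $\RR^d$ has transition density $(2\pi s)^{-d/2} e^{-|x-y|^2/(2s)}$, so that subordinating by $(T_t)$ gives, at $x=y$,
\[
	\int_{[0,\infty)} (2\pi s)^{-\frac d2}\,\PP(T_t\in{\rm d}s)
	= (2\pi)^{-\frac d2}\int_{[0,\infty)} s^{-\frac d2}\,\PP(T_t\in{\rm d}s),
\]
which is exactly (up to the harmless constant $(2\pi)^{-d/2}$, and after discarding the possible atom of $T_t$ at $0$ which contributes the deterministic part and is controlled separately) the quantity we must bound. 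On the other hand, by the Fourier inversion formula and \eqref{eq:50}'s continuous-time analogue $\EE e^{-\lambda T_t}=e^{-t\varphi(\lambda)}$, the subordinate heat kernel at the diagonal equals
\[
	(2\pi)^{-\frac d2}\int_{\RR^d} e^{-t\varphi(\norm{\xi}^2/2)}\,{\rm d}\xi.
\]
So the lemma reduces to the elementary estimate $\int_{\RR^d} e^{-t\varphi(\norm{\xi}^2/2)}\,{\rm d}\xi \leq C_d\,(\varphi^{-1}(t^{-1}))^{d/2}$.

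The key steps, in order: First, justify the identity above, i.e. express $\int s^{-d/2}\,\PP(T_t\in{\rm d}s)$ as a constant multiple of $\int_{\RR^d} e^{-t\varphi(\norm{\xi}^2/2)}\,{\rm d}\xi$; the atom at $s=0$ (present only when $\calT=(0,\infty)$ and $b>0$ or in degenerate cases) forces $s^{-d/2}$ to blow up, so I would first argue that $\PP(T_t=0)=0$ unless $\varphi$ is linear, and in the linear case $T_t$ is deterministic and the bound is immediate; alternatively one restricts the Gaussian subordination argument to the absolutely continuous part. Second, split the $\xi$-integral at $\norm{\xi}=1$: on $\{\norm{\xi}\geq 1\}$ use $\varphi(1-e^{-\norm{\xi}^2/2})\geq 1-e^{-\norm{\xi}^2/2}$ (or directly $\varphi(\norm{\xi}^2/2)\geq \varphi(1/2)>0$ together with monotonicity) to get an exponentially small term $\leq C_1 e^{-c t}$, which is dominated by $(\varphi^{-1}(t^{-1}))^{d/2}$ because $\varphi$, being Bernstein and satisfying $\WLSC{\beta}{C}{0}$, grows at most polynomially, so $\varphi^{-1}(t^{-1})$ decays at most polynomially in $t$. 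Third, on $\{\norm{\xi}\leq 1\}$, set $a_t=\sqrt{\varphi^{-1}(1/t)}$, substitute $\xi=a_t\eta$, and use \eqref{eq:10} together with the weak lower scaling of $\varphi$ exactly as in Lemma~\ref{lem:7}: for $\norm{\eta}\leq 1$ one has $t\varphi(a_t^2\norm{\eta}^2)\geq t\,\varphi(a_t^2)\,\norm{\eta}^2 = \norm{\eta}^2$ by concavity, and for $\norm{\eta}>1$ the lower scaling gives $t\varphi(a_t^2\norm{\eta}^2)\geq C^{-1}\norm{\eta}^{2\beta}$, yielding
\[
	\int_{\{\norm{\xi}\leq 1\}} e^{-t\varphi(\norm{\xi}^2/2)}\,{\rm d}\xi
	\leq a_t^{d}\int_{\RR^d} \exp\!\big(-C_2\min\{\norm{\eta}^2,\norm{\eta}^{2\beta}\}\big)\,{\rm d}\eta
	= C_3\,a_t^d = C_3\,(\varphi^{-1}(t^{-1}))^{\frac d2}.
\]
Wait --- I should note $a_t^{+d}$ appears here because the substitution in $\RR^d$ with $\xi=a_t\eta$ produces a Jacobian $a_t^d$, and this is what gives the correct power $(\varphi^{-1}(t^{-1}))^{d/2}$; this matches the statement. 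Combining the two pieces finishes the proof.

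The main obstacle I anticipate is not the Fourier estimate, which is a direct transcription of the computation in Lemma~\ref{lem:7} with sums replaced by integrals and with the simplification that no Poissonization correction of the form $1-\varphi(1-e^{-\cdot})$ is needed, but rather the bookkeeping around the atom of $T_t$ at the origin: one must be careful that $\int s^{-d/2}\PP(T_t\in{\rm d}s)$ is even finite, and the cleanest way is to observe that the subordinate Brownian motion heat kernel $p_\varphi(t;x,x)$ is manifestly finite (it equals the absolutely continuous part of the subordinated law integrated against the Gaussian, and the deterministic drift part contributes a separate $\delta_x$ term as in the definition of $p_\varphi$ given in Section~\ref{sec:2}), so the quantity in the statement is automatically the finite integral over $(0,\infty)$ and equals the Fourier integral above. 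A minor secondary point is checking that $\WLSC{\beta}{C}{0}$ (scaling valid for all $\theta>0$) really does force $\varphi$ to grow at most polynomially at infinity, hence $\varphi^{-1}$ to decay at most polynomially, so that the exponentially small high-frequency contribution is absorbed; this follows from unwinding the definition of lower scaling with $\theta_0=\infty$ after the normalization remarks following \eqref{eq:10}.
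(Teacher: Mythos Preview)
Your identification of $\int_{[0,\infty)} s^{-d/2}\,\PP(T_t\in{\rm d}s)$ with $(2\pi)^{d/2}\,p_\varphi(t;0,0)$ for the subordinate Brownian motion is exactly what the paper does; the paper then simply invokes \cite[Theorem~3.1]{MR4140542} for the on-diagonal bound, whereas you spell the Fourier estimate out by hand, adapting Lemma~\ref{lem:7}. The strategy is the same.

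There is, however, a genuine gap in your treatment of the region $\{\norm{\xi}\geq 1\}$. You claim this contributes at most $C_1 e^{-ct}$, but that is false for small $t$: neither of your suggested bounds works. The discrete-time trick $1-\varphi(1-e^{-\norm{\xi}^2/2})\leq e^{-\norm{\xi}^2/2}$ is irrelevant here (there is no $1-e^{-\cdot}$ in the continuous Laplace exponent), and the crude bound $\varphi(\norm{\xi}^2/2)\geq\varphi(1/2)$ leaves you integrating a constant over an infinite region. Even using the lower scaling to get $\varphi(\norm{\xi}^2/2)\gtrsim\norm{\xi}^{2\beta}$, the integral $\int_{\norm{\xi}\geq 1}e^{-ct\norm{\xi}^{2\beta}}\,{\rm d}\xi$ is of order $t^{-d/(2\beta)}$ as $t\to 0$, not $e^{-ct}$. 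The reason the analogous step in Lemma~\ref{lem:7} succeeds is that there $n\geq 1$ and the integrand $(1-\varphi(1-e^{-\norm{\xi}^2/2}))^n\leq e^{-n\norm{\xi}^2/2}$ carries its own $\norm{\xi}$-decay uniformly in $n$; in the continuous case you lose this for small $t$.

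The fix is already implicit in your own third step: since here the lower scaling is \emph{global} (this is the point of the hypothesis $\WLSC{\beta}{C}{0}$, in contrast to $\WLSC{\beta}{C}{1}$ in Lemma~\ref{lem:7}), you should drop the split at $\norm{\xi}=1$ altogether and perform the substitution $\xi=a_t\eta$ on all of $\RR^d$. Your own display
\[
\int_{\RR^d} e^{-t\varphi(\norm{\xi}^2/2)}\,{\rm d}\xi
\leq a_t^{d}\int_{\RR^d} \exp\!\big(-C_2\min\{\norm{\eta}^2,\norm{\eta}^{2\beta}\}\big)\,{\rm d}\eta
= C_3\,(\varphi^{-1}(t^{-1}))^{d/2}
\]
is then valid as written, with the concavity \eqref{eq:10} handling $\norm{\eta}\leq 1$ and the global lower scaling handling $\norm{\eta}>1$.

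Finally, your worry about an atom of $T_t$ at $0$ is unnecessary: global lower scaling with $\beta>0$ forces $\varphi(\lambda)\to\infty$, so $\PP(T_t=0)=\lim_{\lambda\to\infty}e^{-t\varphi(\lambda)}=0$ for every $t>0$.
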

\begin{proof}
	Let $(S_t : t \geq 0)$ be a Brownian motion on $\RR^d$. Then $(S^\varphi_t : t > 0)$ is a L\'{e}vy process with
	the characteristic exponent $\psi(x)=\varphi(|x|^2)$. In particular, $\psi\in \WLSC{2\beta}{C}{0}$. Since
	\[
		p_\varphi(t; 0,0) = (2\pi)^{-\frac{d}{2}} \int_{[0, \infty)} s^{-\frac{d}{2}}\: \PP(T_t\in {\rm d} s)
	\]
	the lemma is a consequence of \cite[Theorem 3.1]{MR4140542}.
\end{proof}

\begin{proposition}
	\label{prop:101}
	Fix $x_0, y_0 \in M$, and assume that there exists a non-decreasing function $W:[\inf\mathcal{T},\infty) \rightarrow
	[0,\infty)$ belonging to $\WUSCINF{\delta}{C_1}{\inf\mathcal{T}}$ for certain $C_1 \in [1, \infty)$ and $\delta > 0$,
	such that 
	\[
		p(s; x_0, y_0) \leq  \frac{1}{W(s)} \qquad\text{for all } s\in\mathcal{T}.
	\]
	Suppose that $\varphi$ belongs to $\WLSC{\beta}{C_2}{1/\inf\mathcal{T}}$ for some $\beta \in (0, 1]$ and
	$C_2 \in [1, \infty)$. Then there is $C > 0$, such that
	\[
		p_\varphi(t; x_0, y_0) \leq \frac{C}{ W\left(\frac{1}{\varphi^{-1}(t^{-1})}\right)}
		\qquad\text{for all } t \in \calT.
	\]
	The constant $C$ depends only on $C_1$, $C_2$ and $\delta$, $\beta$.
\end{proposition}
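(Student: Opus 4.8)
The plan is to bound $p_\varphi(t;x_0,y_0)$ by a suitable negative moment of the subordinator at time $t$ and then to exploit that $W$ grows at most polynomially, which is exactly the content of the $\WUSCINF{\delta}{C_1}{\inf\mathcal{T}}$ assumption. Write $R = 1/\varphi^{-1}(t^{-1})$; one checks that $R \ge \inf\mathcal{T}$ for every $t \in \mathcal{T}$ (trivially when $\mathcal{T} = (0,\infty)$, and in the discrete case because $t \ge 1$ and $\varphi(1) = 1$). By the construction of $p_\varphi$ at the beginning of Section \ref{sec:2} together with the hypothesis $p(s;x_0,y_0) \le 1/W(s)$,
\[
	p_\varphi(t;x_0,y_0) = \int_{\mathcal{T}} p(s;x_0,y_0)\,\PP(T_t \in {\rm d}s) \le \EE\Big[\frac{1}{W(T_t)}\ind{T_t \in \mathcal{T}}\Big],
\]
so the task reduces to bounding the right-hand side by $C/W(R)$.

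First I would unwind the scaling assumption on $W$: by the definition of $\WUSCINF{\delta}{C_1}{\inf\mathcal{T}}$ it is equivalent to $W(v) \le C_1 (v/u)^\delta W(u)$ for all $v \ge u > \inf\mathcal{T}$. I then split the expectation according to whether $T_t \ge R$ or $T_t < R$. On $\{T_t \in \mathcal{T},\, T_t \ge R\}$ monotonicity of $W$ gives $1/W(T_t) \le 1/W(R)$; on $\{T_t \in \mathcal{T},\, T_t < R\}$ one necessarily has $T_t > \inf\mathcal{T}$ (automatic when $\mathcal{T} = (0,\infty)$, and when $\mathcal{T} = \NN$ the event is empty unless $t \ge 2$, in which case $T_t \ge t \ge 2$), so the previous inequality with $v = R$ and $u = T_t$ gives $1/W(T_t) \le C_1 (R/T_t)^\delta/W(R)$. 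Hence
\[
	\EE\Big[\frac{1}{W(T_t)}\ind{T_t \in \mathcal{T}}\Big] \le \frac{1}{W(R)} + \frac{C_1 R^\delta}{W(R)}\,\EE\big[T_t^{-\delta}\ind{T_t \in \mathcal{T},\, T_t < R}\big],
\]
and it remains to show that $R^\delta\,\EE[T_t^{-\delta}\ind{T_t \in \mathcal{T},\, T_t < R}]$ is bounded by a constant.

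For this last point I would appeal to the negative-moment estimates already available. Fix an integer $d \ge 2\delta$; on $\{T_t < R\}$ we have $T_t^{-\delta} = T_t^{-d/2}\,T_t^{d/2-\delta} \le R^{d/2-\delta}\,T_t^{-d/2}$ since $d/2 - \delta \ge 0$, so
\[
	\EE\big[T_t^{-\delta}\ind{T_t \in \mathcal{T},\, T_t < R}\big] \le R^{\frac{d}{2}-\delta}\,\EE\big[T_t^{-d/2}\ind{T_t \in \mathcal{T}}\big].
\]
The hypothesis $\varphi \in \WLSC{\beta}{C_2}{1/\inf\mathcal{T}}$ is precisely what is needed to apply Corollary \ref{cor:3} (when $\mathcal{T} = \NN$) or Lemma \ref{lem:12} (when $\mathcal{T} = (0,\infty)$), which yield $\EE[T_t^{-d/2}\ind{T_t \in \mathcal{T}}] \le C_d\,(\varphi^{-1}(t^{-1}))^{d/2} = C_d\,R^{-d/2}$ with $C_d$ depending only on $d,\beta,C_2$. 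Therefore $\EE[T_t^{-\delta}\ind{T_t \in \mathcal{T},\, T_t < R}] \le C_d\,R^{-\delta}$, and combining the two displays gives $p_\varphi(t;x_0,y_0) \le (1 + C_1 C_d)/W(R)$; since $d$ can be chosen depending only on $\delta$, the resulting constant depends only on $C_1, C_2, \delta, \beta$.

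I expect the last step to be the real substance: a mere tail bound for $T_t$ such as \eqref{eq:45} is not enough here — one genuinely needs that $T_t$ is \emph{not too small} with high probability, i.e., that it has a finite negative moment of order at least $\delta$ with the correct dependence on $t$, and this is exactly where the lower scaling of $\varphi$ is used, through the Brownian heat-kernel computation behind Corollary \ref{cor:3} and Lemma \ref{lem:12}. Passing from the half-integer exponents $d/2$ available there to the arbitrary positive exponent $\delta$ is then a routine interpolation on the event $\{T_t < R\}$, which is why only one sufficiently large integer $d$ is required.
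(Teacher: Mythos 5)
Your proof is correct and follows essentially the same route as the paper's: both reduce to the negative-moment bound $\EE[T_t^{-d/2}]\lesssim(\varphi^{-1}(t^{-1}))^{d/2}$ supplied by Corollary \ref{cor:3} or Lemma \ref{lem:12}, with $d=\lceil 2\delta\rceil$. The paper simply writes the pointwise estimate $W(R)/W(s)\le C_1\max\{1,(R/s)^\delta\}\le C_1(1+(R/s)^{d/2})$ and integrates at once, whereas you split according to $T_t\gtrless R$ and interpolate on $\{T_t<R\}$; these are the same computation in two presentations.
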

\begin{proof} 
	Notice that if $W(r)=0$ for certain $r>0$, then $W \equiv 0$. Therefore, we may and do assume that $W(s) > 0$ for
	all $s \in \mathcal{T}$. By the scaling condition on $W$, for all $s, t \in \mathcal{T}$,
	\begin{align*} 
		W\left(\frac{1}{\varphi^{-1}(t^{-1})}\right) p(s; x_0, y_0) 
		&\leq  
		\frac{W\left(\frac{1}{\varphi^{-1}(t^{-1})}\right)}{W(s)}\\
		&\leq
		C_1\max\left\{1,\left(\frac{1}{s \varphi^{-1}(t^{-1})}\right)^{\delta}\right\}\\
		&\leq
		C_1\left(1+\left(\frac{1}{s \varphi^{-1}(t^{-1})}\right)^{d/2}\right)
	\end{align*}
	where we have set $d = \lceil 2\delta\rceil$. By Corollary \ref{cor:3} in the case $\mathcal{T}=\NN$, or Lemma \ref{lem:12} 
	in the case $\mathcal{T}=(0,\infty)$, we obtain
	\begin{align*}
 		W\left(\frac{1}{\varphi^{-1}(t^{-1})}\right)p_\varphi(t; x_0, y_0)
		&\leq 
		C_1\int_{\mathcal{T}}\left(1+\left(\frac{1}{s \varphi^{-1}(t^{-1})}\right)^{d/2}\right)\PP(T_t\in {\rm d} s) \\
		&\leq  C_1(1+C_d)
	\end{align*}
	proving the lemma.
\end{proof}

\begin{theorem}
	\label{thm:100}
	Fix $x_0, y_0 \in M$ and assume that there exist $c > 0$ and a non-decreasing function
	$W: [\inf\mathcal{T},\infty) \rightarrow [0,\infty)$ belonging to $\WUSCINF{\delta}{C_1}{\inf \mathcal{T}}$ for some
	$C_1 \in [1, \infty)$ and $\delta > 0$, such that
	\[
		p(s; x_0, y_0)
		\leq 
		\min\left\{\frac{1}{W(s)}, s \frac{c}{W(1/c)}\right\} \qquad\text{for all } s\in\mathcal{T}.
	\]
	Suppose that $\varphi$ belongs to $\WLSC{\beta}{C_2}{1/\inf\mathcal{T}}$ for some 
	$\beta \in (0, 1]$ and $C_2 \in [1, \infty)$. Then there is $C \geq 1$, such that
	\[
		p_\varphi(t; x_0, y_0) 
		\leq  
		C \min\left\{\frac{1}{ W\left(\frac{1}{\varphi^{-1}(t^{-1})}\right)},
		\, t \frac{\varphi(c)}{W(1/c)}\right\}
		\qquad\text{for all } t \in \calT.
	\]
	The constant $C$ depends only on $C_1$, $C_2$ and $\delta$, $\beta$.
\end{theorem}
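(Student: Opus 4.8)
The plan is to recover the two entries of the minimum in the conclusion separately, the linear-in-$t$ bound from Proposition \ref{prop:100} and the bound involving $W\bigl(1/\varphi^{-1}(t^{-1})\bigr)$ from Proposition \ref{prop:101}, and then simply take the smaller of the two resulting estimates. So the theorem should follow by assembling results already at hand, with only one short elementary comparison to supply.

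\emph{Preliminary reduction.} As in the proof of Proposition \ref{prop:101}, I would first note that if $W$ vanishes at one point of its domain then, being non-decreasing and $\WUSCINF{\delta}{C_1}{\inf\mathcal{T}}$, it vanishes identically; in that case the hypothesis imposes no constraint on $p(s;x_0,y_0)$ and the asserted bound is $+\infty$, so there is nothing to prove. Hence we may assume $W(s)>0$ for all $s\in[\inf\mathcal{T},\infty)$ (in particular $W(1/c)>0$, the point $1/c$ being necessarily in the domain of $W$ for the hypothesis to make sense).

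\emph{The linear term.} The key observation is the elementary inequality
\[
	\min\left\{\frac{1}{W(s)},\, \frac{sc}{W(1/c)}\right\} \leq \frac{1}{W(1/c)}\min\{1,cs\} \qquad\text{for all } s\in\mathcal{T}.
\]
Indeed, when $cs\leq 1$ this is immediate from the second entry on the left, and when $cs\geq 1$ it follows from the first entry together with the monotonicity of $W$, since then $s\geq 1/c$ forces $1/W(s)\leq 1/W(1/c)$. Thus the hypothesis gives $p(s;x_0,y_0)\leq \frac{1}{W(1/c)}\min\{1,cs\}$, and Proposition \ref{prop:100}, applied with its constant equal to $1/W(1/c)$ and with the same $c$, yields
\[
	p_\varphi(t;x_0,y_0) \leq \frac{e}{e-1}\,\frac{1}{W(1/c)}\min\{1,t\varphi(c)\} \leq \frac{e}{e-1}\,\frac{t\varphi(c)}{W(1/c)}.
\]

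\emph{The other term and conclusion.} Since the hypothesized bound is also $\leq 1/W(s)$, and since $W$ is non-decreasing and $\WUSCINF{\delta}{C_1}{\inf\mathcal{T}}$ while $\varphi\in\WLSC{\beta}{C_2}{1/\inf\mathcal{T}}$ with $\beta\in(0,1]$, Proposition \ref{prop:101} applies verbatim and gives $p_\varphi(t;x_0,y_0)\leq C'/W\bigl(1/\varphi^{-1}(t^{-1})\bigr)$ for a constant $C'$ depending only on $C_1,C_2,\delta,\beta$. Taking $C=\max\{C',\,e/(e-1)\}$ and combining the two estimates finishes the proof. I do not anticipate any genuine obstacle: all the analytic content is already packaged in Propositions \ref{prop:100} and \ref{prop:101} (hence in Lemma \ref{lem:4}, Corollary \ref{cor:3} and Lemma \ref{lem:12}), and what remains is the one-line comparison above plus tracking of constants. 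The only point deserving a moment's care is that in the discrete case $\mathcal{T}=\NN$ the argument $1/\varphi^{-1}(t^{-1})$ genuinely lies in the domain $[1,\infty)$ of $W$, which holds because $t\geq 1$ and $\varphi(1)=1$ force $\varphi^{-1}(t^{-1})\leq 1$.
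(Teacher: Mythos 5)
Your proof is correct and essentially reproduces the paper's own argument: the paper likewise observes (via monotonicity of $W$) that the hypothesis implies $p(s;x_0,y_0)\leq \tfrac{1}{W(1/c)}\min\{1,cs\}$ and then combines Propositions \ref{prop:100} and \ref{prop:101}. Your extra remarks — the degenerate case $W\equiv 0$, and the check that $1/\varphi^{-1}(t^{-1})$ lies in the domain of $W$ when $\mathcal{T}=\NN$ (which indeed relies on the standing normalization $\varphi(1)=1$ from the discrete subordination setup) — are minor points the paper leaves implicit, but they do not change the route taken.
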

\begin{proof}
	In view of the monotonicity of $W$, we have
	\[
		p(s; x_0, y_0)
		\leq 
		\frac{1}{W(1/c)} \min\left\{1,\,c s \right\} \qquad\text{for all } s\in\mathcal{T}.
	\]
	Hence, the theorem is the consequence of Propositions \ref{prop:100} and \ref{prop:101}.
\end{proof}

\begin{corollary}
	\label{cor:2}
	Assume that there exists a function $W: M \times M \times [\inf \mathcal{T}, \infty) \rightarrow [0, \infty)$,
	such that for certain $\alpha > 0$,
	\[
		p(s; x, y)
		\leq 
		\min\left\{\frac{1}{ W\left(x,y,s^{1/\alpha}\right)},\, \frac{s}{W\left(x,y,d(x,y)\right) (d(x,y))^\alpha}\right\}
		\qquad\text{for all } x, y\in M,\, s \in \calT.
	\]
	Suppose that there are $C_1 \in [1, \infty)$ and $\delta > 0$ such that for each $x, y \in M$, the function
	$W(x, y, \, \cdot \,)$ is non-decreasing and belongs to $\WUSCINF{\delta}{C_1}{\inf \mathcal{T}}$.
	If $\varphi$ belongs to $\WLSC{\beta}{C_2}{1/\inf\mathcal{T}}$ for certain $\beta \in (0, 1]$ and 
	$C_2 \in [1, \infty)$, then there is $C \geq 1$ such that
	\[
		p_\varphi(t; x,y) 
		\leq  
		C \min\left\{\frac{1}{ W\left(x,y,\frac{1}{(\varphi^{-1}(t^{-1}))^{1/\alpha}}\right)},\, 
		\frac{t \varphi\left(\frac{1}{(d(x,y))^\alpha}\right)}{W\left(x,y,d(x,y)\right)}\right\}
		\qquad\text{for all } x,y \in M,\, t \in \calT.
	\]
	The constant $C$ depends only on $C_1$, $C_2$ and $\delta$, $\beta$.
\end{corollary}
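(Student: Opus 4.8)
The plan is to reduce everything to Theorem~\ref{thm:100} by freezing the spatial points $x,y$ and rescaling the time variable by the exponent $1/\alpha$. Fix $x, y \in M$; we may assume $x \neq y$, since the second entry of the minimum is vacuous when $d(x, y) = 0$ while the first entry is unaffected. Set
\[
	\widetilde{W}(s) = W\big(x, y, s^{1/\alpha}\big), \qquad s \in [\inf\mathcal{T}, \infty),
\]
which is legitimate because $(\inf\mathcal{T})^\alpha = \inf\mathcal{T}$ in both admissible cases $\mathcal{T} = \NN$ and $\mathcal{T} = (0, \infty)$, and put $c_0 = (d(x, y))^{-\alpha} > 0$.

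First I would check that $\widetilde{W}$ fulfils the hypotheses placed on $W$ in Theorem~\ref{thm:100}. Monotonicity is clear since $W(x, y, \cdot\,)$ is non-decreasing and $s \mapsto s^{1/\alpha}$ is increasing. Unwinding the definition of $\WUSCINF{\delta}{C_1}{\inf\mathcal{T}}$ shows it amounts to $W(x, y, R) \leq C_1 (R/r)^{\delta} W(x, y, r)$ for all $R \geq r \geq \inf\mathcal{T}$; substituting $R = S^{1/\alpha}$, $r = s^{1/\alpha}$ turns this into $\widetilde{W}(S) \leq C_1 (S/s)^{\delta/\alpha} \widetilde{W}(s)$, i.e. $\widetilde{W} \in \WUSCINF{\delta/\alpha}{C_1}{\inf\mathcal{T}}$. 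Since $d(x, y) = c_0^{-1/\alpha}$, we have $W(x, y, d(x, y)) = \widetilde{W}(1/c_0)$, and the assumed bound on $p$ reads
\[
	p(s; x, y) \leq \min\left\{ \frac{1}{\widetilde{W}(s)},\; \frac{s\, c_0}{\widetilde{W}(1/c_0)} \right\}, \qquad s \in \mathcal{T},
\]
which is precisely the hypothesis of Theorem~\ref{thm:100} with $W$, $c$, $\delta$ replaced by $\widetilde{W}$, $c_0$, $\delta/\alpha$.

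Applying Theorem~\ref{thm:100} (the condition on $\varphi$, $\beta$, $C_2$ being left untouched) produces $C \geq 1$ such that
\[
	p_\varphi(t; x, y) \leq C \min\left\{ \frac{1}{\widetilde{W}\big(1/\varphi^{-1}(t^{-1})\big)},\; \frac{t\, \varphi(c_0)}{\widetilde{W}(1/c_0)} \right\}, \qquad t \in \mathcal{T},
\]
and translating back via $\widetilde{W}\big(1/\varphi^{-1}(t^{-1})\big) = W\big(x, y, (\varphi^{-1}(t^{-1}))^{-1/\alpha}\big)$, $\widetilde{W}(1/c_0) = W(x, y, d(x, y))$ and $\varphi(c_0) = \varphi\big((d(x, y))^{-\alpha}\big)$ gives exactly the asserted estimate. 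Since the constant delivered by Theorem~\ref{thm:100} depends on $\widetilde{W}$ only through its scaling data $C_1$ and $\delta/\alpha$ (together with $C_2$, $\beta$), and none of these depend on $x, y$, the resulting $C$ is uniform over $M \times M$; for fixed $\alpha$ it depends only on $C_1$, $C_2$, $\delta$, $\beta$.

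There is no genuine obstacle: the corollary is a change of variables plus an appeal to Theorem~\ref{thm:100}. The two points deserving care are (i) verifying that the weak upper scaling exponent transforms as $\delta \mapsto \delta/\alpha$ (rather than being shifted) under the substitution $s \mapsto s^{1/\alpha}$, and (ii) confirming that the constant in Theorem~\ref{thm:100} is insensitive to the particular function $\widetilde{W}$ and the particular value $c_0$ beyond the scaling constants, which is exactly what makes the final estimate hold uniformly in $(x, y)$.
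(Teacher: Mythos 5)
Your proof is correct and is precisely the argument the paper intends: the corollary is stated without proof, and the natural route is exactly your substitution $\widetilde{W}(s) = W(x,y,s^{1/\alpha})$ and $c_0 = d(x,y)^{-\alpha}$, followed by a direct application of Theorem~\ref{thm:100} with the rescaled exponent $\delta/\alpha$. The two points you single out for care --- that the scaling exponent transforms multiplicatively under $s\mapsto s^{1/\alpha}$ and that the resulting constant depends on $(x,y)$ only through the $(x,y)$-independent scaling data --- are indeed the only places where anything could go wrong, and you verify both.
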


\begin{corollary}
	\label{cor:11}
	Assume that there exists a function $W : M \times M \times [\inf \mathcal{T}, \infty) \rightarrow [0, \infty)$,
	such that
	\[
		p(s; x,y) 
		\leq
		\frac{1}{W(x,y,s)}e^{-g\left(\frac{f(d(x,y))}{s}\right)}
		\qquad\text{for all } x, y \in M, \, s \in \calT
	\]
	where $f$ is positive, and $g$ is positive and increasing function belonging to $\WLSCINF{\alpha}{c_1}{\inf \mathcal{T}}$
	for certain $c_1 \in (0, 1]$ and $\alpha > 0$. Suppose that there are $C_2 \in [1, \infty)$ and $\delta > 0$
	such that for every $x, y \in M$, the function $W(x, y, \, \cdot \,)$ is non-decreasing and belongs to 
	$\WUSCINF{\delta}{C_2}{\inf\mathcal{T}}$. If $\varphi$ belongs to $\WLSC{\beta}{C_3}{1/\inf\mathcal{T}}$ for certain
	$\beta \in (0, 1]$ and $C_3 \in [1, \infty)$, then there is $C \geq 1$ such that 
	\[
		p_\varphi(t; x,y)
		\leq
		C\min\left\{\frac{1}{ W\left(x,y,\frac{1}{\varphi^{-1}(t^{-1})}\right)},\, 
		\frac{t \varphi\left(\frac{1}{f(d(x,y))}\right)}{W\left(x,y,f(d(x,y))\right)}\right\}
		\qquad\text{for all } x,y \in M, \, t \in \calT.
	\]
	The constant $C$ depends only on $c_1, C_2, C_3$ and $\delta, \alpha, \beta$.
\end{corollary}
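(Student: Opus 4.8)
The plan is to reduce the claim to Corollary \ref{cor:2}, whose hypotheses are tailored precisely to this situation. The key observation is that the Gaussian-type factor $e^{-g(f(d(x,y))/s)}$ should, after absorbing it into a redefined profile function, produce exactly the polynomial-type bound required by Corollary \ref{cor:2} with $\alpha$ replaced by $1$. Concretely, I would define for $x,y\in M$ and $r\geq \inf\calT$ the function
\[
	\widetilde W(x,y,r) = W\bigl(x,y,r\bigr)\, e^{g\left(\frac{f(d(x,y))}{r}\right)}\big/ \ (\text{suitable normalization}),
\]
and verify two things: first, that the original hypothesis on $p$ translates into
\[
	p(s;x,y)\leq \min\left\{\frac{1}{\widetilde W(x,y,s)},\ \frac{s}{\widetilde W(x,y,f(d(x,y)))\, f(d(x,y))}\right\}
\]
after checking that at the scale $s=f(d(x,y))$ the exponential factor is comparable to a constant (so the near-diagonal term matches), and that for $s\leq f(d(x,y))$ the ratio $f(d(x,y))/s\geq 1$ keeps us in the regime governed by $\WLSCINF{\alpha}{c_1}{\inf\calT}$; second, that $\widetilde W(x,y,\cdot)$ is non-decreasing and satisfies a weak upper scaling condition at infinity with some index $\widetilde\delta$ and constant $\widetilde C_1$ depending only on $c_1$, $C_2$, $\delta$, $\alpha$.

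The main technical step is the scaling property of $\widetilde W$. Here I would use that $g$ is increasing, so $r\mapsto g(f(d(x,y))/r)$ is decreasing, hence $r\mapsto e^{g(f(d(x,y))/r)}$ is decreasing --- this is the wrong direction for preserving monotonicity of a product, so instead I should argue more carefully. The correct route: rather than literally multiplying $W$ by the exponential, I would split the estimate at the scale $r_0(x,y) = f(d(x,y))$. For $s\geq r_0$ the exponential is bounded below by $e^{g(\text{something}\leq 1)}$, i.e. by a constant, so one simply uses the bound $p(s;x,y)\leq 1/W(x,y,s)$ and the given scaling of $W$; for $s\leq r_0$ one writes $e^{-g(r_0/s)}\leq c_1^{-1}(s/r_0)^{\alpha}\cdot(\text{const})$ by the lower scaling of $g$ at infinity, combined with the trivial bound $s/W(x,y,s)\leq s/W(x,y,r_0)\cdot C_1 (r_0/s)^{\delta}$ from the scaling of $W$ --- wait, this needs the reverse; I would instead bound $e^{-g(r_0/s)}$ against an arbitrarily large negative power of $s/r_0$ by iterating the lower scaling condition on $g$, which gives $g(r_0/s)\geq c_1 (r_0/s)^{\alpha}g(1)\to\infty$, so in particular $e^{-g(r_0/s)}\leq C_\delta (s/r_0)^{\delta'}$ for any prescribed $\delta'$. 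Choosing $\delta'$ large enough relative to $\delta$ then makes the product $\frac{1}{W(x,y,s)}e^{-g(r_0/s)}$ dominated by $\frac{s}{W(x,y,r_0)\, r_0}$ times a constant, which is precisely the off-diagonal term of Corollary \ref{cor:2} with $\alpha=1$ and profile $W(x,y,\cdot)$. In other words, I claim
\[
	p(s;x,y)\leq C\min\left\{\frac{1}{W(x,y,s)},\ \frac{s}{W(x,y,f(d(x,y)))\, f(d(x,y))}\right\}\qquad\text{for all }x,y\in M,\ s\in\calT,
\]
with $C$ depending only on $c_1,C_2,\delta,\alpha$; note this reduction discards the precise Gaussian decay but retains enough for the upper bound.

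Granting this, Corollary \ref{cor:2} (applied with $\alpha=1$, the profile $W$, scaling index $\delta$ and constant $\max\{C_1,C_2\}$ appropriately inflated by $C$, and the scaling hypothesis $\varphi\in\WLSC{\beta}{C_3}{1/\inf\calT}$) yields immediately
\[
	p_\varphi(t;x,y)\leq C'\min\left\{\frac{1}{W\bigl(x,y,\tfrac{1}{\varphi^{-1}(t^{-1})}\bigr)},\ \frac{t\,\varphi\bigl(\tfrac{1}{f(d(x,y))}\bigr)}{W\bigl(x,y,f(d(x,y))\bigr)}\right\},
\]
which is exactly the asserted bound, with the constant depending only on $c_1,C_2,C_3$ and $\delta,\alpha,\beta$ as required. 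The one subtlety to watch is the domain issue: Corollary \ref{cor:2} expects the profile $W(x,y,\cdot)$ defined on $[\inf\calT,\infty)$, and the reduction above keeps exactly that profile, so no extension is needed. The hardest part of the write-up will be pinning down the constant $C$ in the reduction step uniformly in $x,y$ --- i.e. making sure the split at $r_0(x,y)$ and the power-decay estimate for $e^{-g}$ produce constants that do not secretly depend on $d(x,y)$; this is handled because all the scaling constants $c_1$, $C_2$, and the exponent manipulations are uniform by hypothesis.
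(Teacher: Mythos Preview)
Your reduction is exactly the paper's argument: split at $r_0=f(d(x,y))$, use the lower scaling of $g$ to turn $e^{-g(r_0/s)}$ into a power $(s/r_0)^{1+\delta}$, combine with the upper scaling of $W$ to get
\[
	p(s;x,y)\leq C\min\left\{\frac{1}{W(x,y,s)},\ \frac{s}{W(x,y,f(d(x,y)))\, f(d(x,y))}\right\},
\]
and then feed this into the subordination machinery. The detour through the abandoned $\widetilde W$ idea is unnecessary but harmless.

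One correction: you cannot literally invoke Corollary~\ref{cor:2} with $\alpha=1$, because that corollary has $d(x,y)$ hard-wired into the off-diagonal term, not $f(d(x,y))$. The paper instead invokes Theorem~\ref{thm:100}, which is stated pointwise in $x_0,y_0$ with a free parameter $c$; taking $c=1/f(d(x_0,y_0))$ gives precisely the desired conclusion. Corollary~\ref{cor:2} is just the special case $c=(d(x,y))^{-\alpha}$ of that theorem, so the fix is to cite Theorem~\ref{thm:100} directly.
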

\begin{proof}
	If $ s \leq f(d(x,y))$ then by the upper scaling property of $W$ we have
	\[
		\frac{W(x,y,f(d(x,y)))}{W(x,y,s)} \leq C_2 \left(\frac{f(d(x,y))}{s}\right)^{\delta}.
	\]
	Furthermore, for $u\geq 1$, by the lower scaling property of $g$,
	\begin{align*}
		e^{-g(u)} 
		&\leq e^{-c_1 g(1)u^{\alpha}} \\
		&\leq \frac{C_4}{u^{1+\delta}}
	\end{align*}
	where $C_4 \geq 1$ depends on $c_1$, $g(1)$ and $\delta$. Hence, for $ s \leq f(d(x,y))$,
	\[
		p(s; x,y) \leq C_2 C_4 \frac{s}{W(x,y,f(d(x,y)))f(d(x,y))}.
	\]
	Now, by monotonicity of $W(x,y,\, \cdot \,)$ and $g$,
	\begin{equation}
		\label{eq:101}
		p(s; x,y)
		\leq  
		\min\left\{\frac{1}{(C_2 C_4)^{-1}W(x,y,s)},\frac{s }{(C_2 C_4)^{-1}W(x,y,f(d(x,y)))f(d(x,y))}\right\}
	\end{equation}
	for all $s \in \mathcal{T}$ and $x, y \in M$. To conclude the proof, it is enough to invoke Theorem \ref{thm:100}.
\end{proof}

\subsection{Heat kernel: Lower bounds}
\label{sec:4}

\begin{lemma}
	\label{lem:21}
	Fix $x_0 \in M$ and assume that there exist a non-increasing function $f:(0,\infty) \rightarrow (0,\infty)$, and
	a constant $C_1 \geq 1$ such that
	\[
		\PP^{x_0}(d(S_s, x_0)\geq r) \leq C_1 s f(r) \qquad\text{for all } r > 0, \, s\in \mathcal{T}.
	\]
	Then
	\[
		\PP^{x_0}(d(S^\varphi_t, x_0)\geq r)
		\leq 
		\frac{ e}{e-1} C_1 \,t\, \varphi\left(f(r)\right)
		\qquad\text{for all } r>0, \, t\in\mathcal{T}.
	\]
\end{lemma}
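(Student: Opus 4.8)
The plan is to condition on the value of the subordinator and then reduce everything to Lemma \ref{lem:4}. Since $(T_t : t \in \calT_0)$ is independent of $\mathbf{S}$ and $S^\varphi_t = S_{T_t}$, for fixed $r > 0$ and $t \in \calT$ we have
\[
	\PP^{x_0}\big(d(S^\varphi_t, x_0) \geq r\big)
	= \int_{[0,\infty)} \PP^{x_0}\big(d(S_s, x_0) \geq r\big) \, \PP(T_t \in {\rm d} s).
\]
The atom of $\PP(T_t \in \cdot\,)$ at $s = 0$ (present only when $\calT = (0,\infty)$) contributes nothing, because $S_0 = x_0$ under $\PP^{x_0}$, so $d(S_0, x_0) = 0 < r$; hence the integral effectively runs over $\calT$.

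Next I would combine the hypothesis with the trivial bound $\PP^{x_0}(d(S_s,x_0)\geq r) \leq 1$ to get
\[
	\PP^{x_0}\big(d(S_s, x_0) \geq r\big) \leq \min\{1,\, C_1 s f(r)\} \leq C_1 \min\{1,\, s f(r)\}
	\qquad\text{for all } s \in \calT,
\]
where the last inequality uses $C_1 \geq 1$. Integrating against $\PP(T_t \in {\rm d} s)$ and recognizing the resulting integral as an expectation, this yields
\[
	\PP^{x_0}\big(d(S^\varphi_t, x_0) \geq r\big)
	\leq C_1 \int_{\calT} \min\{1,\, s f(r)\} \, \PP(T_t \in {\rm d} s)
	= C_1 \, \EE \min\{1,\, f(r)\, T_t\}.
\]

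Finally, I would apply Lemma \ref{lem:4}, namely the estimate \eqref{eq:46} with the choice $C = f(r) > 0$, to obtain
\[
	\EE \min\{1,\, f(r)\, T_t\} \leq \frac{e}{e-1} \min\{1,\, t\varphi(f(r))\} \leq \frac{e}{e-1}\, t\, \varphi(f(r)),
\]
and combining the last two displays gives the claimed bound. There is essentially no serious obstacle here: the argument is a clean conditioning step followed by an invocation of \eqref{eq:46}. The only minor point requiring a word of justification is the handling of the possible mass of $T_t$ at the origin in the continuous-time case, which is harmless as noted above; everything else is a direct computation.
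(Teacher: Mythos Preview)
Your proof is correct and follows exactly the approach the paper intends: the paper's proof consists of the single line ``The statement easily follows from Lemma \ref{lem:4},'' and your argument is precisely the natural unpacking of that reference via conditioning on $T_t$ and applying \eqref{eq:46}.
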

\begin{proof}
	The statement easily follows from Lemma \ref{lem:4}.
\end{proof}

\begin{theorem}
	\label{thm:101}
	Fix $x_0 \in M$, and assume that there exist a non-increasing function $f:(0,\infty) \rightarrow (0, 1/\inf \calT)$,
	and a constant $C_1 \geq 1$ such that
	\begin{equation}
		\label{eq:28}
		\PP^{x_0}(d(S_s, x_0) \geq r) \leq C_1 s f(r) \qquad \text{for all } r > 0, \, s \in \mathcal{T}.
	\end{equation}
	Suppose that $f$ belongs to $\WUSCINF{-\delta}{C_2}{0}$ for certain $C_2 \in [1, \infty)$ and $\delta > 0$. Let
	$g: \mathcal{T}\times (0,\infty) \rightarrow [0,\infty)$ be such that for all $t \in \mathcal{T}$, the function
	$g(t, \cdot)$ is non-increasing and satisfies
	\begin{equation}
		\label{eq:22}
		p(s; x_0, y) \geq g(s, d(x_0,y) ) \qquad \text{for all } y \in M, \, s\in\mathcal{T},
	\end{equation}
	and for certain $c_3 > 0$,
	\begin{equation}
		\label{eq:29}
		\int_{B(x_0,r)^c} g(s, d(x_0,y)) \, \mu({\rm d} y) \geq c_3 \qquad\text{whenever } s f(r)\geq 1.
	\end{equation}
	If $\varphi$ belongs to $\WUSC{\beta_2}{c_4}{1/\inf\mathcal{T}} \cap
	\WLSC{\beta_1}{C_5}{1/\inf\mathcal{T}}$ for some $1 > \beta_2 > \beta_1 > 0$, $c_4 \in (0, 1]$ and $C_5 \in [1, \infty)$,
	then there are $c > 0$ and $\lambda > 1$, such that
	\[
		p_\varphi(t; x_0, y)
		\geq 
		c \min\left\{\frac{1}{V(x_0,\lambda f^{-1}(\varphi^{-1}(1/t)))}, 
		\,\frac{t\, \varphi\left(f(d(x_0, y))\right)}{V(x_0, \lambda d(x_0,y))}\right\}
		\qquad\text{for all } y \in M,\, t\in\mathcal{T}.
	\]
	The constants $c$ and $\lambda$ depends only on $C_1, C_2, c_3, c_4, C_5$ and $\alpha, \beta$.
\end{theorem}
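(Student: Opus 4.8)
The plan is to bound $p_\varphi(t;x_0,\cdot)$ below through a one‑variable comparison function. As the density of $S^\varphi_t$ started at $x_0$ is at least $\int_{\mathcal{T}}p(s;x_0,y)\,\PP(T_t\in\ud s)$ (the atom at $x_0$ contributing non‑negatively), hypothesis \eqref{eq:22} gives $p_\varphi(t;x_0,y)\geq G(d(x_0,y))$, where $G(r):=\int_{\mathcal{T}}g(s,r)\,\PP(T_t\in\ud s)$ is non‑increasing in $r$ because each $g(s,\cdot)$ is. It therefore suffices, writing $R_t=f^{-1}(\varphi^{-1}(1/t))$, to prove that $G(R_t)\geq c\,V(x_0,\lambda R_t)^{-1}$ and that $G(r)\geq c\,t\varphi(f(r))\,V(x_0,\lambda r)^{-1}$ for all $r\geq R_t$: the first inequality, together with the monotonicity of $G$, covers every $y$ with $d(x_0,y)\leq R_t$, the second covers the rest, and in either case the minimum in the assertion is no larger than the term we have bounded.

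The heart of the argument is a pointwise lower bound on $g$ extracted from the escape hypothesis \eqref{eq:29}. Fix a large $K\geq 1$ (specified below). We claim that $g(s,r)\geq\tfrac{c_3}{2}V(x_0,Kr)^{-1}$ whenever $r>0$ and $\tfrac1{f(r)}\leq s\leq\tfrac{c_3}{2C_1f(Kr)}$. This interval is non‑empty once $K$ is large, because the weak upper scaling of $f$ at infinity gives $f(Kr)\leq C_2K^{-\delta}f(r)$. For such an $s$ one has $sf(r)\geq1$, so \eqref{eq:29} yields $\int_{B(x_0,r)^c}g(s,d(x_0,z))\,\mu(\ud z)\geq c_3$; using $g\leq p$, \eqref{eq:28} and $sf(Kr)\leq\tfrac{c_3}{2C_1}$, the part of this integral over $B(x_0,Kr)^c$ is at most $C_1sf(Kr)\leq\tfrac{c_3}{2}$, whence the integral over the annulus $B(x_0,Kr)\setminus B(x_0,r)$ is at least $\tfrac{c_3}{2}$; since $g(s,d(x_0,z))\leq g(s,r)$ on that annulus, bounding the annulus by $B(x_0,Kr)$ proves the claim.

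Next we use Lemma \ref{lem:4}. Since $f$ takes values in $(0,1/\inf\mathcal{T})$, for $a=1/f(r)$ the point $1/a=f(r)$ lies in the range where the scaling of $\varphi$ applies, so $\varphi(1/(Na))\leq C_5N^{-\beta_1}\varphi(1/a)$; combining this with \eqref{eq:44} and \eqref{eq:45} gives
\[
	\PP(a\leq T_t\leq Na)\geq\PP(T_t\geq a)-\PP(T_t\geq Na)\geq C\min\{1,t\varphi(1/a)\}-\tfrac{e}{e-1}C_5N^{-\beta_1}t\varphi(1/a),
\]
with $C$ the constant from \eqref{eq:44}. Choosing $N$ large, depending only on $\beta_1,C_5$ and $C$, makes this at least $c'\,t\varphi(1/a)$ as long as $t\varphi(1/a)$ is bounded above by an absolute constant; enlarging $K$ so that in addition $K^\delta\geq 2C_1C_2N/c_3$ forces $[a,Na]\subseteq[\tfrac1{f(r)},\tfrac{c_3}{2C_1f(Kr)}]$ when $a=1/f(r)$, and therefore, by the claim of the previous paragraph, $G(r)\geq\tfrac{c_3}{2}V(x_0,Kr)^{-1}\,\PP(\tfrac1{f(r)}\leq T_t\leq\tfrac N{f(r)})$.

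It remains to specialize to the two scales. The scaling of $f$ gives $f(R_t)\asymp\varphi^{-1}(1/t)$, hence, using the concavity \eqref{eq:10} and the lower scaling of $\varphi$, $t\varphi(f(R_t))\asymp1$, and for $r\geq R_t$ the quantity $t\varphi(f(r))\leq t\varphi(f(R_t))$ stays bounded. Substituting $r\geq R_t$ into the last display and using the estimate above with $a=1/f(r)$ yields $G(r)\geq c\,t\varphi(f(r))\,V(x_0,Kr)^{-1}$, which is the off‑diagonal bound with $\lambda=K$; substituting $r=R_t$ and using $t\varphi(1/a)\asymp1$ gives $\PP(\tfrac1{f(R_t)}\leq T_t\leq\tfrac N{f(R_t)})\geq c'$ and hence $G(R_t)\geq c\,V(x_0,KR_t)^{-1}$, after which the bounded discrepancy between $R_t$ and the scale actually used is absorbed into $\lambda$ via the monotonicity of $r\mapsto V(x_0,r)$. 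We expect the one genuinely non‑routine step to be the pointwise lower bound on $g$ in the second paragraph --- the passage from the integrated escape estimate \eqref{eq:29} to a pointwise one; the remainder is careful bookkeeping with the scaling hypotheses and with Lemma \ref{lem:4}.
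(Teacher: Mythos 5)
Your argument is correct and reaches the paper's conclusion by a somewhat different arrangement of the same ingredients. The paper localizes the escape estimate \eqref{eq:29} to the annulus $A=B(x_0,\lambda r)\setminus B(x_0,r)$ only \emph{after} integrating against $\PP(T_t\in\ud s)$: it bounds $\mu(A)\tilde g(t,r)\geq c_3\,\PP(T_t\geq 1/f(r))-\PP^{x_0}(d(S^\varphi_t,x_0)\geq\lambda r)$, and the subtracted tail is controlled through Lemma~\ref{lem:21}, which transfers \eqref{eq:28} to the subordinated process. You instead perform the localization at the level of the \emph{unsubordinated} kernel: for each fixed $s$ in the window $\bigl[\tfrac1{f(r)},\tfrac N{f(r)}\bigr]$ you combine \eqref{eq:29} with \eqref{eq:28} to extract the pointwise lower bound $g(s,r)\geq\tfrac{c_3}{2}V(x_0,Kr)^{-1}$, and only then integrate against $\PP(T_t\in\ud s)$, so that the paper's difference of probabilities is replaced by the single quantity $\PP\bigl(T_t\in[\tfrac1{f(r)},\tfrac N{f(r)}]\bigr)$, estimated from \eqref{eq:44}--\eqref{eq:45}. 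The underlying ideas and scaling manipulations coincide, but your version avoids Lemma~\ref{lem:21} and isolates a reusable pointwise near‑diagonal bound for $p(s;x_0,\cdot)$ when $s$ is comparable to $f(d(x_0,\cdot))$. Two small points to record if you were to polish this: (i) when $\calT=\NN$ the window must contain an integer, which holds because $f<1/\inf\calT=1$ gives $1/f(r)>1$, so the window has length $(N-1)/f(r)>1$ for $N\geq2$; (ii) your phrase ``$t\varphi(1/a)\asymp 1$'' at $r=R_t$ is in fact an exact equality $t\varphi(f(R_t))=1$, and for $r>R_t$ one has $t\varphi(f(r))\leq 1$ by monotonicity, so the boundedness required to tune $N$ is automatic in both regimes you use.
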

\begin{proof} 
	Let
	\[
		\tilde{g}(t,r) = \int_{\mathcal{T}} g(s, r) \: \PP(T_t \in {\rm d} s), \qquad t\in\mathcal{T}, \, r\geq 0.
	\]
	For $r > 0$ and $\lambda > 1$, we define $A=B(x_0, \lambda r)\setminus B(x_0, r)$. Since $g(s, \, \cdot \,)$ is 
	non-increasing, by \eqref{eq:22} and \eqref{eq:29}, we have
	\begin{align*}
		\mu(A) \tilde{g}(t,r)
		&\geq 
		\int_{\mathcal{T}}\int_{A} g(s, d(x_0, y)) \, \mu({\rm d} y) \: \PP(T_t \in {\rm d} s)\\
		&= 
		\int_{\mathcal{T}}\int_{B(x_0, r)^c} g(s, d(x_0, y)) \, \mu({\rm d} y) \: \PP(T_t \in {\rm d} s) \\
		&\phantom{=}- \int_{\mathcal{T}}\int_{B(x_0, \lambda r)^c} g(s, d(x_0, y)) \, \mu({\rm d} y)\: \PP(T_t\in {\rm d} s) \\
		&\geq
		c_3 \PP(T_t\geq 1/f(r))-\PP^{x_0}(d(S^\varphi_t, x_0)\geq \lambda r).
	\end{align*}
	Hence, thanks to the upper scaling property of $\varphi$, by Lemmas \ref{lem:4} and \ref{lem:21}, we obtain
	\[
		\mu(A) \tilde{g}(t,r)
		\geq  
		c_6\min\left\{1,t\varphi\big(f(r)\big)\right\} -c_7\frac{e}{e-1}t\varphi\big(f(\lambda r)\big).
	\]
	Now, let us consider the case when $t\varphi(f(r))\leq 1$. Since $\varphi \in \WLSC{\beta_1}{C_5}{1/{\inf \mathcal{T}}}$
	and $f \in \WUSCINF{-\delta}{C_2}{0}$, we obtain
	\begin{align*}
		\tilde{g}(t,r)
		&\geq \frac{t\varphi(f(r))}{\mu(A)}\left(c_6-c_7 \frac{e}{e-1}\frac{\varphi(f(\lambda r))}{\varphi(f( r))}\right)\\
		&\geq \frac{t\varphi(f(r))}{\mu(A)}\left(c_6-c_7 C_5 \frac{e}{e-1}\left(\frac{f(\lambda r)}{f( r)}
		\right)^{\beta_1}\right)\\
		&\geq \frac{t\varphi(f(r))}{\mu(A)}\left(c_6-c_7 C_5 \frac{e}{e-1}\left(C_2 \lambda^{-\delta}
		\right)^{\beta_1}\right).
	\end{align*}
	Taking
	\[
		\lambda = \max\left\{1, \left(\frac{c_6 (1-e^{-1})}{2 c_7 C_5 C_2^{\beta_1}}\right)^{-1/(\beta_1\delta)} \right\},
	\]
	we get
	\begin{align}
		\nonumber
		\tilde{g}(t,r)
		&\geq  \frac{c_6}{2}\frac{t\varphi(f(r))}{\mu(A)} \\
		\label{eq:24}
		&\geq \frac{c_6}{2}\frac{t\varphi(f(r))}{V(x_0, \lambda r)}.
	\end{align}
	If $t \varphi(f(r)) > 1$, then $0 < r < r_t$ where $r_t = f^{-1}(\varphi^{-1}(1/t))$. Hence, by the monotonicity of 
	$g(t, \,\cdot\,)$ we obtain
	\begin{align}
		\nonumber
		\tilde{g}(t,r)
		&\geq  
		\tilde{g}(t,r_t) \\
		\label{eq:23}
		&\geq 
		\frac{c_6}{2} \frac{1}{V(x_0, \lambda r_t)}.
	\end{align}
	By \eqref{eq:22}, we easily see that $p_\varphi(t; x_0, y) \geq \tilde{g}(t, d(x_0, y))$ which together with
	\eqref{eq:24} and \eqref{eq:23} concludes the proof.
\end{proof}

\subsection{Heat kernel estimates: Conclusion}

\begin{theorem}
	\label{thm:1}
	Assume that there exist $C_1 \geq 1$ and $\gamma_2 \geq \gamma_1 > 0$ such that for all $x\in M$, $\lambda > 1$ and
	$r>\frac{1}{2}\inf \mathcal{T}$,
	\begin{equation}
		\label{eq:27}
		C_1^{-1} \lambda^{\gamma_1} \leq \frac{V(x,\lambda r)}{V(x,r)}\leq C_1 \lambda^{\gamma_2}.
	\end{equation}
	Suppose that there exists $C_2 \geq 1$ such that for all $x,y\in M$ and $s\in \mathcal{T}$,
	\[
		C_2^{-1}
		\frac{1}{V(x,f^{-1}(t))} 
		\ind{\{f(d(x,y))\leq s\}} 
		\leq 
		p(s; x,y)
		\leq
		C_2
		\frac{1}{V(x,f^{-1}(s))} e^{-g\left(\frac{f(d(x,y))}{s}\right)}
	\]
	where $f$ is positive increasing function belonging to $\WLSCINF{\delta_1}{c_3}{0} \cap \WUSCINF{\delta_2}{C_4}{0}$
	for certain $c_3 \in (0, 1]$, $C_4 \in [1, \infty)$, and $\delta_2 \geq \delta_1 > 0$, and $g$ is positive increasing
	function belonging to $\WLSCINF{\alpha}{c_5}{\inf \calT}$ for some $c_5 \in (0, 1]$ and $\alpha > 0$. If $\varphi$ belongs
	to $\WLSC{\beta_1}{C_6}{1/\inf\mathcal{T}} \cap \WUSC{\beta_2}{c_7}{1/\inf\mathcal{T}}$ for some
	$1 > \beta_2 > \beta_1 > 0$, $C_6 \in [1, \infty)$ and $c_7 \in (0, 1]$, then
	\[
		p_\varphi(t; x,y) 
		\approx
		\min\left\{\frac{1}{V\left(x,f^{-1}\left(\frac{1}{\varphi^{-1}(1/t)}\right)\right)},
		t \frac{\varphi\left(\frac{1}{f(d(x,y))}\right)}{V(x,d(x,y))}\right\}
	\]
	uniformly with respect to $x, y \in M$ and $t \in \mathcal{T}$. 
\end{theorem}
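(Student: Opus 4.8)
The plan is to establish the upper and lower estimates separately, each by specializing one of the general results proved earlier in this section, and then to read off the claimed two-sided bound.

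\emph{Upper bound.} I would set $W(x,y,s)=C_2^{-1}V(x,f^{-1}(s))$. Since $f$ is increasing and $V(x,\,\cdot\,)$ is non-decreasing, $W(x,y,\,\cdot\,)$ is non-decreasing; and since $f\in\WLSCINF{\delta_1}{c_3}{0}$ forces $f^{-1}(\mu s)\le C\mu^{1/\delta_1}f^{-1}(s)$ for $\mu\ge1$, the right half of \eqref{eq:27} gives $W(x,y,\,\cdot\,)\in\WUSCINF{\gamma_2/\delta_1}{C'}{\inf\mathcal{T}}$. The assumed upper bound on $p$ then reads exactly $p(s;x,y)\le W(x,y,s)^{-1}e^{-g(f(d(x,y))/s)}$, so Corollary \ref{cor:11} applies (its hypotheses on $g$ and $\varphi$ are among those assumed here) and yields $p_\varphi(t;x,y)\le C\min\{W(x,y,1/\varphi^{-1}(1/t))^{-1},\,t\varphi(1/f(d(x,y)))/W(x,y,f(d(x,y)))\}$. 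Using $W(x,y,f(d(x,y)))=C_2^{-1}V(x,d(x,y))$ and $W(x,y,1/\varphi^{-1}(1/t))=C_2^{-1}V(x,f^{-1}(1/\varphi^{-1}(1/t)))$, this is precisely the claimed upper bound, with a constant depending only on the structural data.

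\emph{Lower bound.} Fix $x_0=x$ and verify the hypotheses of Theorem \ref{thm:101}. First, from the Gaussian-type upper bound on $p$ one extracts a tail estimate $\PP^{x_0}(d(S_s,x_0)\ge r)\le C_8\,s/f(r)$: it is trivial for $f(r)<s$, while for $f(r)\ge s$ one splits $B(x_0,r)^c$ into dyadic annuli, bounds $p$ on the $j$-th one by $C_2V(x_0,f^{-1}(s))^{-1}e^{-g(f(2^jr)/s)}$, controls its volume against $V(x_0,f^{-1}(s))$ via \eqref{eq:27} and the scaling of $f^{-1}$, and uses that $g\in\WLSCINF{\alpha}{c_5}{\inf\mathcal{T}}$ makes $u\mapsto u^{\gamma_2/\delta_1}e^{-g(u)}$ decay faster than any power, so the resulting series is $\lesssim(f(r)/s)^{-1}$. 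Then I would fix a large $K\ge1$, put $\tilde f(r)=(Kf(r))^{-1}$ so that $\PP^{x_0}(d(S_s,x_0)\ge r)\le C_1 s\tilde f(r)$ with $C_1=KC_8$ and $\tilde f\in\WUSCINF{-\delta_1}{1/c_3}{0}$, and put $g_\ast(s,\rho)=C_2^{-1}V(x_0,f^{-1}(s))^{-1}\ind{\{f(\rho)\le s\}}$, which is non-increasing in $\rho$ and dominated by $p(s;x_0,\,\cdot\,)$ thanks to the near-diagonal lower bound. Condition \eqref{eq:29} then reads: if $r\le f^{-1}(s/K)$ then $\int_{B(x_0,r)^c}g_\ast(s,d(x_0,y))\,\mu({\rm d}y)=C_2^{-1}(1-V(x_0,r)/V(x_0,f^{-1}(s)))\ge C_2^{-1}(1-V(x_0,f^{-1}(s/K))/V(x_0,f^{-1}(s)))$, and since $f^{-1}(s/K)\le c''K^{-1/\delta_2}f^{-1}(s)$ (from $f\in\WUSCINF{\delta_2}{C_4}{0}$), the reverse-doubling half of \eqref{eq:27} makes this at least $C_2^{-1}/2=:c_3$ once $K$ is large enough, uniformly in $x_0$ and $s$. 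Theorem \ref{thm:101}, whose $\varphi$-hypotheses are exactly those assumed, then gives $c>0$ and $\lambda>1$ with $p_\varphi(t;x_0,y)\ge c\min\{V(x_0,\lambda\tilde f^{-1}(\varphi^{-1}(1/t)))^{-1},\,t\varphi(\tilde f(d(x_0,y)))/V(x_0,\lambda d(x_0,y))\}$. Since $\tilde f^{-1}(u)=f^{-1}(1/(Ku))$, the scaling of $f^{-1}$ bounds $\lambda\tilde f^{-1}(\varphi^{-1}(1/t))$ by a constant multiple of $f^{-1}(1/\varphi^{-1}(1/t))$, while concavity \eqref{eq:10} gives $\varphi(1/(Kf(d)))\ge K^{-1}\varphi(1/f(d))$ and \eqref{eq:27} gives $V(x_0,\lambda d)\le C_1\lambda^{\gamma_2}V(x_0,d)$; absorbing these geometric factors with \eqref{eq:27} turns this into the asserted lower bound, again with constants depending only on the structural data. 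Comparing it with the upper bound finishes the proof.

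The main obstacle is the first half of the lower-bound step: deriving the tail estimate \eqref{eq:28} from the Gaussian-type heat-kernel bound with a prefactor that can be made as large as desired, and then pinning down the scale $K$ so that the annulus integral \eqref{eq:29} stays bounded below. This forces one to combine carefully volume doubling, reverse doubling, the two-sided scaling of $f$ (hence of $f^{-1}$), and the super-polynomial tail coming from the lower scaling of $g$, all while keeping every constant independent of the base point $x_0$. After that, matching the two bounds is routine absorption of the factors $\lambda$ and $K$ through the doubling property.
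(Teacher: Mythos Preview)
Your proposal is correct and follows essentially the same route as the paper: the upper bound via Corollary~\ref{cor:11} with $W(x,y,s)=C_2^{-1}V(x,f^{-1}(s))$, and the lower bound via Theorem~\ref{thm:101} with the near-diagonal indicator lower bound playing the role of $g$. The only cosmetic differences are that the paper writes $\tilde f(r)=1/f\bigl((2C_1)^{1/\gamma_1}r\bigr)$ (a multiplicative shift inside $f$, which makes the verification of \eqref{eq:29} a one-line application of reverse doubling) rather than your $\tilde f(r)=1/(Kf(r))$, and that for the tail bound \eqref{eq:28} the paper invokes the stable-like estimate \eqref{eq:101} already obtained in the proof of Corollary~\ref{cor:11} before doing the dyadic sum, whereas you redo the dyadic decomposition directly from the sub-Gaussian upper bound; both work.
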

\begin{proof}
	For $x, y \in M$ and $s \in \mathcal{T}$ we set
	\[
		W(x, y, s) = C_2^{-1} V\big(x, f^{-1}(s)\big).
	\]
	Since $f$ is increasing and belongs to $\WLSCINF{\delta_1}{c_3}{0}$, its inverse is increasing function which
	belongs to $\WUSCINF{1/\delta_1}{c_3'}{0}$. Hence, to get the upper estimate it is enough to invoke Corollary \ref{cor:11}.

	For the proof of the lower estimate, we fix $x_0 \in M$. We are going to apply Theorem \ref{thm:101} for
	\[
		g(s, r) = \frac{1}{C_2 V(x_0, f^{-1}(s))} \ind{\{f(r) \leq s\}}, \qquad s \in \mathcal{T}, \, r > 0,
	\]
	and
	\[
		\tilde{f}(r) = \frac{1}{f((2C_1)^{1/\gamma_1} r)}, \qquad r > 0.
	\]
	Let us start by checking \eqref{eq:29}. Suppose that $s \tilde{f}(r) \geq 1$. If $r > \inf \calT$, then by \eqref{eq:27},
	\begin{align*}
		\int_{B(x_0, r)^c} C_2 g(s,d(x_0, y)) \, \mu({\rm d} y)
		&\geq 1-\frac{V(x_0, r)}{V(x_0, f^{-1}(s))} \\
		&\geq 1-C_1\left(\frac{r}{f^{-1}(s)}\right)^{\gamma_1}\geq \frac{1}{2}
	\end{align*}
	which can be extended to all $r > 0$ with a help of monotonicity. 

	To show \eqref{eq:28}, it is enough to consider $s \tilde{f}(r) \leq \tilde{f}(1)$. If $\calT = \NN$, then we easily see
	that $r \geq 1$. Hence, in both cases $r > \frac{1}{2} \inf \calT$. Now, by \eqref{eq:101}
	\begin{align*}
		\PP^{x_0}(d(S_s, x_0) > r) 
		&=
		\int_{B(x_0, r)^c} p(s; x_0, y) \, \mu({\rm d} y) \\
		&
		\leq 
		C_8 \int_{B(x_0, r)^c} \frac{s}{V(x_0, d(x_0, y))f(d(x_0, y))} \, \mu({\rm d} y)\\
		&\leq 
		C_8 s \sum_{n=0}^\infty \int_{B(x_0, 2^{n+1}r)\setminus B(x_0, 2^nr)} \frac{1}{V(x_0, 2^n r) f(2^{n}r)}
		\, \mu({\rm d} y).
	\end{align*}
	Since
	\[
		\frac{V(x_0, 2^{n+1}r )}{V(x_0, 2^n r)} 
		\leq 
		C_1 2^{\gamma_2}
	\]
	and
	\[
		\frac{f((2C_1)^{1/\gamma_2} r)}{f(2^n r)} \leq (2C_1)^{\delta_1/\gamma_2} c_3^{-1} 2^{-n \delta_1},
	\]
	we get
	\begin{align}
		\PP^{x_0}(d(S_s, x_0) > r)  \nonumber
		&\leq 
		2^{\gamma_2+\delta_1/\gamma_2} C_1^{1+\delta_1/\gamma_2} c_3^{-1} C_8 s \tilde{f}(r) 
		\sum_{n=0}^\infty 2^{-n \delta_1} \\
		&\leq
		C_9 s \tilde{f}(r)\label{eq:tg1}
	\end{align}
	proving \eqref{eq:28}. 
	
	Having checked \eqref{eq:28}--\eqref{eq:29}, we are in the position to use Theorem \ref{thm:101}. We obtain
	\[
		p_\varphi(t; x_0, y)
		\geq 
		c 
		\min\left\{\frac{1}{V(x_0, \lambda \tilde{f}^{-1}(1/\varphi^{-1}(1/t)))}, 
		\,\frac{t\, \varphi\left(\tilde{f}(d(x_0, y))\right)}{V(x_0, \lambda d(x_0,y))}\right\}
	\]
	for certain $\lambda > 0$ and $c > 0$ which are independent of $x_0$. To complete the proof it is enough to use
	scaling properties of $V$, $f$ and $\varphi$.
\end{proof}
We show analogously the following theorem.
\begin{theorem}
	\label{thm:2}
	Assume that there are $C_1 \geq 1$ and $\gamma_2 \geq \gamma_1 > 0$ such that for all $x \in M$, $\lambda > 1$ and
	$r > \frac{1}{2} \inf \calT$,
	\[
		C_1^{-1} \lambda^{\gamma_1} \leq \frac{V(x, \lambda r)}{V(x, r)} \leq C_1 \lambda^{\gamma_2}.
	\]
	Suppose that there are $C_1 > 0$ and $\alpha > 0$ such that 
	\begin{equation}
		\label{eq:31}
		C_2^{-1} \frac{1}{V(x, s^{1/\alpha})} \ind{\{(d(x, y))^\alpha \leq s\}}
		\leq
		p(s; x, y) 
		\leq 
		C_2 \min\left\{\frac{1}{V(x, s^{1/\alpha})}, \frac{s}{V(x, d(x, y)) (d(x, y))^\alpha} \right\}
	\end{equation}
	for all $s \in \calT$ and $x, y \in M$.
	If $\varphi$ belongs to $\WLSC{\beta_1}{C_3}{1/\inf\mathcal{T}} \cap \WUSC{\beta_2}{c_4}{1/\inf\mathcal{T}}$
	for some $1 > \beta_2 \geq \beta_1 > 0$, $C_3 \in [1, \infty)$ and
	$c_4 \in (0, 1]$, then
	\[
		p_\varphi(t; x,y) 
		\approx
		\min\left\{\frac{1}{V\left(x, (\varphi^{-1}(t^{-1}))^{-1/\alpha}\right)}
		, 
		\frac{t \varphi\left((d(x, y))^{-\alpha}\right)}{V(x, d(x, y))}
		\right\}
	\]
	uniformly with respect to $t \in \calT$ and $x, y \in M$.
\end{theorem}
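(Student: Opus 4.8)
The plan is to transcribe the proof of Theorem~\ref{thm:1}, with the profile function $f$ there specialized to the power $r \mapsto r^\alpha$; under this specialization the hypotheses of Corollary~\ref{cor:2} and of Theorem~\ref{thm:101} hold, and the stable-like estimates follow. As in Theorem~\ref{thm:1}, the upper bound comes from the general subordination estimate for heat kernels, the lower bound from the general lower estimate, and the two are matched at the end using the scaling of $V$, of $r\mapsto r^\alpha$, and of $\varphi$.

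For the upper bound I would set $W(x,y,s) = C_2^{-1} V(x,s)$. The volume hypothesis $V(x,\lambda r) \leq C_1 \lambda^{\gamma_2} V(x,r)$ (for $\lambda>1$ and $r>\frac{1}{2}\inf\calT$) shows that each $W(x,y,\,\cdot\,)$ is non-decreasing and belongs to $\WUSCINF{\gamma_2}{C_1}{\inf\calT}$, while the right-hand side of \eqref{eq:31} is exactly the hypothesis of Corollary~\ref{cor:2} for this $W$ and the given $\alpha$. Since $\varphi \in \WLSC{\beta_1}{C_3}{1/\inf\calT}$ with $\beta_1\in(0,1)$, Corollary~\ref{cor:2} yields
\[
	p_\varphi(t;x,y) \leq C\, C_2 \min\left\{\frac{1}{V\big(x,(\varphi^{-1}(t^{-1}))^{-1/\alpha}\big)}, \, \frac{t\,\varphi\big((d(x,y))^{-\alpha}\big)}{V(x,d(x,y))}\right\},
\]
which is the claimed upper bound.

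For the lower bound I would fix $x_0\in M$ and apply Theorem~\ref{thm:101} with
\[
	g(s,r) = \frac{1}{C_2\, V(x_0,s^{1/\alpha})}\,\ind{\{r^\alpha \leq s\}}
	\qquad\text{and}\qquad
	\tilde f(r) = \frac{1}{\big((2C_1)^{1/\gamma_1} r\big)^\alpha},
\]
the analogues of the choices made in the proof of Theorem~\ref{thm:1}. The lower bound in \eqref{eq:31} gives \eqref{eq:22}. For \eqref{eq:29}: if $s\,\tilde f(r)\geq 1$ then $s^{1/\alpha}\geq (2C_1)^{1/\gamma_1} r$ and $\{y:\,d(x_0,y)^\alpha\leq s\} = B(x_0,s^{1/\alpha})\supseteq B(x_0,r)$, so by the reverse-doubling bound
\[
	\int_{B(x_0,r)^c} C_2\, g(s,d(x_0,y))\,\mu({\rm d} y) = 1 - \frac{V(x_0,r)}{V(x_0,s^{1/\alpha})} \geq 1 - C_1(2C_1)^{-1} = \frac{1}{2}.
\]
For \eqref{eq:28} I would integrate the off-diagonal part of the upper bound in \eqref{eq:31} over $B(x_0,r)^c$ via a dyadic decomposition into annuli $B(x_0,2^{n+1}r)\setminus B(x_0,2^n r)$, bounding $V(x_0,d(x_0,y))^{-1}(d(x_0,y))^{-\alpha}$ by $V(x_0,2^n r)^{-1}(2^n r)^{-\alpha}$ and using doubling together with $(2^n r)^{-\alpha}=2^{-n\alpha}r^{-\alpha}$ to sum the resulting geometric series to a multiple of $s\,\tilde f(r)$, exactly as in \eqref{eq:tg1}; note that $\tilde f\in\WUSCINF{-\alpha}{1}{0}$ automatically, being a pure power. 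Theorem~\ref{thm:101} then gives the lower bound, which becomes the asserted one after absorbing (by doubling) the constant in $\tilde f^{-1}\big(1/\varphi^{-1}(1/t)\big)=(2C_1)^{-1/\gamma_1}\big(\varphi^{-1}(1/t)\big)^{-1/\alpha}$, and using the scaling of $\varphi$ (which for Bernstein functions may be taken with $\theta_0=\infty$) to replace $\varphi\big(\tilde f(d(x_0,y))\big)$ by $\varphi\big((d(x_0,y))^{-\alpha}\big)$. If $\beta_1=\beta_2$ one first picks $0<\beta_1'<\beta_2'<1$ straddling the common value; this only weakens the scaling hypotheses, so Theorem~\ref{thm:101} still applies. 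The only mildly delicate point is the discrete regime $\calT=\NN$ in the verification of \eqref{eq:28}, where $\tilde f$ should be read as truncated at $1/\inf\calT$, so that \eqref{eq:28} need only be checked when $s\,\tilde f(r)\leq\tilde f(\inf\calT)$ and hence $r>\frac{1}{2}\inf\calT$ — exactly the device used in the proof of Theorem~\ref{thm:1}. I expect this book-keeping, not any conceptual issue, to be the only obstacle.
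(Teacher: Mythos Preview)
Your proposal is correct and follows essentially the same approach as the paper: the upper bound via Corollary~\ref{cor:2} with $W(x,y,s)=C_2^{-1}V(x,s)$, and the lower bound via Theorem~\ref{thm:101} with the same choice of $g$ and the non-increasing $\tilde f(r)=\big((2C_1)^{1/\gamma_1}r\big)^{-\alpha}$, verifying \eqref{eq:29} and \eqref{eq:28} by the reverse-doubling and dyadic-annulus arguments respectively. Your additional remark on passing from $\beta_1=\beta_2$ to strict inequality before invoking Theorem~\ref{thm:101} is a correct and necessary bookkeeping step that the paper leaves implicit.
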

\begin{proof}
	For $x, y \in M$ and $s \in \calT$, we set
	\[
		W(x, y, s) = C_2^{-1} V(x, s).
	\]
	Then $W(x, y, \cdot)$ is non-decreasing function belonging to $\WUSCINF{\gamma_2}{C_1}{\inf \calT}$. Hence, the upper
	estimate is a consequence of Corollary \ref{cor:2}.

	To show the lower estimate we fix $x_0 \in M$ and use Theorem \ref{thm:101} with
	\[
		g(s, r) = \frac{1}{C_2 V(x_0, s^{1/\alpha})} \ind{\{r^\alpha \leq s \}},
		\qquad s \in \calT, \, r > 0,
	\]
	and
	\[
		\tilde{f}(r) = (2C_1)^\frac{\alpha}{\gamma_1} r^\alpha, \qquad r > 0.
	\]
	To check \eqref{eq:29} we reason as in the proof of Theorem \ref{thm:1}. To show \eqref{eq:28}, it is enough to consider
	$s \tilde{f}(r) \leq \tilde{f}(1)$. If $\calT = \NN$, then we easily
	see that $r \geq 1$. Hence, in both cases $r > \frac{1}{2} \inf \calT$. Now, by \eqref{eq:31}
	\begin{align*}
		\PP^{x_0}(d(S_s, x_0) > r) 
		&=
		\int_{B^c(x_0, r)} p(s; x_0, y) \, \mu({\rm d} y) \\
		&
		\leq 
		C_5 \int_{B^c(x_0, r)} \frac{s}{V(x_0, d(x_0, y))(d(x_0, y))^\alpha} \, \mu({\rm d} y)\\
		&\leq
		C_5 \frac{s}{r^\alpha} 
		\sum_{n=0}^\infty \int_{B(x_0, 2^{n+1}r)\setminus B(x_0, 2^nr)} \frac{1}{V(x_0, 2^n r)} 2^{-\alpha n}
		\, \mu({\rm d} y).
	\end{align*}
	Since
	\[
		\frac{V(x_0, 2^{n+1}r )}{V(x_0, 2^n r)} 
		\leq 
		C_1 2^{\gamma_2}
	\]
	we get
	\begin{align*}
		\PP^{x_0}(d(S_s, x_0) > r) 
		&\leq 
		C_1 C_5 
		2^{\gamma_2}
		\frac{s}{r^\alpha}
		\sum_{n=0}^\infty 2^{-n \alpha} \\
		&\leq
		C_6 s \tilde{f}(r)
	\end{align*}
	proving \eqref{eq:28}. 

	Now by Theorem \ref{thm:101}, there are $c > 0$ and $\lambda > 0$ such that
	\[
		p_\varphi(t; x_0, y)
		\geq 
		c 
		\min\left\{\frac{1}{V(x_0, \lambda \tilde{f}^{-1}(1/\varphi^{-1}(1/t)))}, 
		\,\frac{t\, \varphi\left(\tilde{f}(d(x_0, y))\right)}{V(x_0, \lambda d(x_0,y))}\right\}
	\]
	for all $t \in \calT$ and $y \in M$. Since $c$ and $\lambda$ are independent of $x_0$, to finish the proof it is enough
	to invoke scaling properties of $V$ and $\varphi$. 
\end{proof}

\subsection{Equivalence of estimates}
In this section we show a consequence of heat kernel estimates of the form as in Theorems \ref{thm:1} and \ref{thm:2}. 
\begin{lemma}
	\label{lem:301}
	Fix $x_0 \in M$ and assume that there exist an increasing function $f: [0,\infty) \rightarrow [0,\infty)$ and
	a constant $C_1 \geq 1$ such that 
	\[
		\PP^{x_0}(d(S_s, x_0)\geq r) \leq C_1 \frac{s^2}{ f^2(r)} \qquad \text{for all } r > 0,\, s\in \mathcal{T}.
	\]
	Then
	\[
		\EE^{x_0} \big[ f(d(x_0,S_t^\varphi))\ind{B(x_0,r)}(S^\varphi_t) \big] 
		\leq C_1 \frac{2e}{e-1}t f(r)\varphi\big(1/f(r)\big)
		\qquad \text{for all } r > 0, \, t \in \calT.
	\]
\end{lemma}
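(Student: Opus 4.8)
The plan is to rewrite the truncated expectation as an integral over level sets of the increasing function $f$, and then apply Lemma \ref{lem:4} to each level set. Concretely, since $f$ is increasing, for $y\in B(x_0,r)$ we have $f(d(x_0,y))\le f(r)$, so by the layer-cake formula
\[
	\EE^{x_0}\big[f(d(x_0,S^\varphi_t))\ind{B(x_0,r)}(S^\varphi_t)\big]
	= \int_0^{f(r)} \PP^{x_0}\big(f(d(x_0,S^\varphi_t))\ge \rho,\ d(x_0,S^\varphi_t)\le r\big)\ud\rho
	\le \int_0^{f(r)} \PP^{x_0}\big(d(x_0,S^\varphi_t)\ge f^{-1}(\rho)\big)\ud\rho,
\]
where $f^{-1}$ is the generalized inverse. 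Substituting $\rho=f(u)$ turns this into $\int_0^r \PP^{x_0}(d(x_0,S^\varphi_t)\ge u)\ud f(u)$, but it is cleaner to keep the $\rho$-integral and bound the probability directly.

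The second step is to control $\PP^{x_0}(d(S^\varphi_t,x_0)\ge \varrho)$ for $\varrho>0$. From the hypothesis $\PP^{x_0}(d(S_s,x_0)\ge \varrho)\le C_1 s^2/f^2(\varrho)$ one gets, by conditioning on $T_t$ and using the subordination formula together with Lemma \ref{lem:4} applied with $C = 1/f^2(\varrho)$ in a suitable form (or directly estimating $\EE\min\{1, T_t^2/f^2(\varrho)\}$ via $\EE\min\{1, T_t/f(\varrho)\}$ since $\min\{1,a^2\}\le\min\{1,a\}$), the bound
\[
	\PP^{x_0}(d(S^\varphi_t,x_0)\ge \varrho)\le C_1 \,\EE\min\Big\{1,\tfrac{T_t^2}{f^2(\varrho)}\Big\}
	\le C_1\,\EE\min\Big\{1,\tfrac{T_t}{f(\varrho)}\Big\}
	\le C_1\frac{e}{e-1}\min\big\{1, t\,\varphi(1/f(\varrho))\big\}.
\]
Here one must be a little careful: the hypothesis only gives a useful (i.e. $\le 1$) bound when $s\le f(\varrho)$, so one splits the expectation over $\{T_t\le f(\varrho)\}$ and $\{T_t> f(\varrho)\}$, using the quadratic bound on the first set and the trivial bound $1$ on the second, and then recognizes both pieces as dominated by $\EE\min\{1,T_t/f(\varrho)\}$; this is exactly what \eqref{eq:46} estimates.

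The final step is to insert this into the $\rho$-integral with $\varrho=f^{-1}(\rho)$, so $1/f(f^{-1}(\rho))=1/\rho$, giving
\[
	\EE^{x_0}\big[f(d(x_0,S^\varphi_t))\ind{B(x_0,r)}(S^\varphi_t)\big]
	\le C_1\frac{e}{e-1}\int_0^{f(r)}\min\big\{1, t\,\varphi(1/\rho)\big\}\ud\rho.
\]
Now split at $\rho=f(r)$ trivially since that is the endpoint, and instead split the integral at the value where $t\varphi(1/\rho)=1$: on the part where $t\varphi(1/\rho)\ge1$ the integrand is $1$ and the length is at most $f(r)$, contributing $\le f(r)$; on the part where $t\varphi(1/\rho)\le 1$ we bound $\min\{\cdot\}\le t\varphi(1/\rho)\le t\varphi(1/f(r))\cdot(\text{something})$—more simply, since $\varphi(1/\rho)$ is increasing in $\rho$, on $(0,f(r))$ we have $\varphi(1/\rho)\le\varphi(1/f(r))$ only when $\rho\ge f(r)$, which is false, so instead use $\min\{1,t\varphi(1/\rho)\}\le \min\{1, t\varphi(1/f(r))\}\cdot\max\{1,\varphi(1/\rho)/\varphi(1/f(r))\}$ together with concavity \eqref{eq:10}: $\varphi(1/\rho)=\varphi\big(\tfrac{f(r)}{\rho}\cdot\tfrac1{f(r)}\big)\le \tfrac{f(r)}{\rho}\varphi(1/f(r))$ for $\rho\le f(r)$, hence $\min\{1,t\varphi(1/\rho)\}\le \tfrac{f(r)}{\rho}\min\{1,t\varphi(1/f(r))\}$ — but $\int_0^{f(r)}\rho^{-1}\ud\rho$ diverges, so this crude bound is too lossy. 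The correct route is: $\int_0^{f(r)}\min\{1,t\varphi(1/\rho)\}\ud\rho\le \int_0^{f(r)} t\varphi(1/\rho)\ud\rho$ is not integrable either in general, so one genuinely keeps the $\min$ and argues that $\int_0^{f(r)}\min\{1,t\varphi(1/\rho)\}\ud\rho \le 2 f(r)\min\{1,t\varphi(1/f(r))\}$ by comparing, via concavity, the area under $\rho\mapsto\min\{1,t\varphi(1/\rho)\}$ on $(0,f(r))$ with the rectangle of height $\min\{1,t\varphi(1/f(r))\}$: indeed writing $a=f(r)$, the substitution $\rho = a u$ gives $a\int_0^1\min\{1,t\varphi(1/(au))\}\ud u$ and $\varphi(1/(au))\le u^{-1}\varphi(1/a)$ by \eqref{eq:10}, so $\min\{1,t\varphi(1/(au))\}\le\min\{1,u^{-1}t\varphi(1/a)\}$, whose integral over $u\in(0,1)$ is $\le 2\min\{1,t\varphi(1/a)\}$ (split at $u=t\varphi(1/a)\wedge1$: on $(0,u_0)$ the integrand is $1$ contributing $u_0\le\min\{1,t\varphi(1/a)\}$, on $(u_0,1)$ it is $u^{-1}t\varphi(1/a)$ contributing $t\varphi(1/a)\log(1/u_0)$—hmm, the log reappears; instead bound $\min\{1,u^{-1}t\varphi(1/a)\}\le (t\varphi(1/a))^{1/2} u^{-1/2}$ when... ). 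Rather than chase this, the cleanest finish is to not split the outer integral at all: directly, $f(r)\varphi(1/f(r))\ge \int_0^{f(r)}\varphi(1/\rho)\,\tfrac{\rho}{f(r)}\cdot\tfrac{f(r)}{\rho}$... The main obstacle, then, is precisely this last elementary-but-delicate integral estimate showing $\int_0^{f(r)}\min\{1,t\varphi(1/\rho)\}\ud\rho\le C\, t\, f(r)\,\varphi(1/f(r))$ with an absolute constant; the right tool is again \cite[Lemma 5]{MR3165234} or the concavity bound $\varphi(1/\rho)\le (f(r)/\rho)\varphi(1/f(r))$ combined with the fact that $\int_0^{f(r)}\min\{1,t(f(r)/\rho)\varphi(1/f(r))\}\ud\rho = f(r)\cdot\big(t\varphi(1/f(r)) + t\varphi(1/f(r))\log^+\!\tfrac1{t\varphi(1/f(r))}\big)$, and since $x\log^+(1/x)\le 1$ this is $\le 2 f(r)$ when $t\varphi(1/f(r))\le 1$ and equals the obvious bound otherwise — giving the stated constant $\frac{2e}{e-1}$ after combining with the factor from the second step. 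Everything apart from this final integral manipulation is a direct application of Lemma \ref{lem:4} and the layer-cake formula.
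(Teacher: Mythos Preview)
Your layer-cake start is correct and matches the paper, but the order in which you apply the two main estimates is wrong, and this is not a cosmetic issue: it produces an unavoidable logarithmic loss.

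You bound $\PP^{x_0}(d(S^\varphi_t,x_0)\ge \varrho)$ first, throwing away the square via $\min\{1,a^2\}\le\min\{1,a\}$ and applying Lemma~\ref{lem:4}, and only then integrate in $\rho$. After the substitution $\rho=f(\varrho)$ you are left with
\[
\int_0^{f(r)}\min\{1,t\varphi(1/\rho)\}\,{\rm d}\rho,
\]
and, as you eventually discover, concavity only gives $\varphi(1/\rho)\le (f(r)/\rho)\,\varphi(1/f(r))$, so the best you can get is of order $f(r)\,t\varphi(1/f(r))\big(1+\log^+\tfrac{1}{t\varphi(1/f(r))}\big)$. Your last line (``since $x\log^+(1/x)\le1$ this is $\le 2f(r)$'') gives a bound by $2f(r)$, not by $2f(r)\,t\varphi(1/f(r))$; the log factor is genuinely there and the stated inequality does not follow from your route.

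The paper reverses the order: after the layer-cake step, use Fubini to condition on $T_t=s$ \emph{before} integrating in $u$, obtaining
\[
\int_0^{f(r)} \PP^{x_0}\big(f^{-1}(u)\le d(S_s,x_0)\le r\big)\,{\rm d}u
\;\le\; C_1\int_0^{f(r)}\Big(1\wedge \tfrac{s^2}{u^2}\Big)\,{\rm d}u
\;\le\; 2C_1\,(f(r)\wedge s).
\]
Here the \emph{square} in the hypothesis is essential: $\int_0^a(1\wedge s^2/u^2)\,{\rm d}u\le 2(a\wedge s)$ with no log, whereas $\int_0^a(1\wedge s/u)\,{\rm d}u$ already diverges logarithmically. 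Only after this do you take the expectation in $T_t$, which is exactly $2C_1 f(r)\,\EE\min\{1,T_t/f(r)\}$, and now a single application of \eqref{eq:46} from Lemma~\ref{lem:4} gives the claimed bound $C_1\tfrac{2e}{e-1}\,t\,f(r)\,\varphi(1/f(r))$.

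In short: do the $u$-integral first (using the quadratic decay), then apply Lemma~\ref{lem:4}; doing it in your order discards the square too early and costs a logarithm.
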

\begin{proof}
	We have
	\begin{align*}
		\EE^{x_0} \big[ f(d(x_0,S_t^\varphi)) \ind{B(x_0,r)}(S^\varphi_t) \big]
		&=
		\EE^{x_0} \bigg[ \ind{B(x_0,r)}(S^\varphi_t) 
		\int_0^{f(d(x_0,S_t^\varphi))} {\: \rm d} u\bigg] \\
		&= 
		\int_0^{f(r)} \PP^{x_0} \big(f^{-1}(u) \leq d(S^\varphi_t, x_0)\leq r\big) {\: \rm d} u\\
		&=
		\int_{[0,\infty)} \int_0^{f(r)} \PP^{x_0}\big(f^{-1}(u) \leq d(S_s, x_0)\leq r\big) {\: \rm d} u \: 
		\PP(T_t\in {\: \rm d}s).
	\end{align*}
	Now, the claim is a consequence of Lemma \ref{lem:4} and the following estimate
	\begin{align}
		\int_0^{f(r)} \PP^{x_0} \big(f^{-1}(u) \leq d(S_s, x_0)\leq r\big) {\: \rm d} u 
		&
		\leq C_1 \int_0^{f(r)} \left(1\wedge\frac{s^2}{u^2}\right) {\: \rm d} u \nonumber\\
		&
		\leq 2 C_1  (f(r)\wedge s ). \label{eq:tg3}
	\end{align}
\end{proof}

\begin{theorem}
	\label{thm:6}
	Assume that there exist $C_1 \geq 1$ and $\gamma_2 \geq \gamma_1 > 0$ such that for all $x\in M$, $\lambda > 1$ and
	$r > \frac{1}{2}$,
	\[
		C_1^{-1} \lambda^{\gamma_1} \leq \frac{V(x,\lambda r)}{V(x,r)}\leq C_1 \lambda^{\gamma_2}.
	\]
	Suppose that there exists $C_2 \geq 1$ such that for all $x,y\in M$ and $n \in \NN$,
	\[
		C_2^{-1}
		\frac{1}{V(x,f^{-1}(n))} 
		\ind{\{f(d(x,y))\leq n\}} 
		\leq 
		p(n; x,y)
		\leq
		C_2
		\frac{1}{V(x,f^{-1}(n))} e^{-g\left(\frac{f(d(x,y))}{n}\right)}
	\]
	where $f$ is positive increasing function belonging to $\WLSCINF{\delta_1}{c_3}{0} \cap \WUSCINF{\delta_2}{C_4}{0}$
	for certain $c_3 \in (0, 1]$, $C_4 \in [1, \infty)$, and $\delta_2 \geq \delta_1 > 0$, and $g$ is positive increasing
	function belonging to $\WLSCINF{\alpha}{c_5}{1}$ for some $c_5 \in (0, 1]$ and $\alpha > 0$. Let $\vphi$ be a Bernstein
	function such that $\vphi(0) = 0$ and $\vphi(1) = 1$. The following are equivalent:
	\begin{enumerate}[i)]
		\item
		\label{thm:6:1}
		$\vphi$ belongs to $\WLSC{\beta_1}{C_7}{1} \cap \WUSC{\beta_2}{c_6}{1}$ for certain $1 > \beta_2 \geq \beta_1 > 0$,
		$c_6 \in (0, 1]$ and $C_7 \in [1, \infty)$;
		\item we have
		\label{thm:6:2}
		\[
			p_\varphi(n; x,y)
			\approx
			\min\left\{\frac{1}{V\left(x,f^{-1}\left(\frac{1}{\varphi^{-1}(1/n)}\right)\right)},
			n \frac{\varphi\left(\frac{1}{f(d(x,y))}\right)}{V(x,d(x,y))}\right\}
		\]
		uniformly with respect to $n \in \NN$ and $x, y \in M$;
		\item we have
		\label{thm:6:3}
		\[
			p_\varphi(1; x,y) 
			\approx
			\min\left\{\frac{1}{V(x, f^{-1}(1))},
			\frac{\varphi\left(\frac{1}{f(d(x,y))}\right)}{V(x,d(x,y))}\right\}
		\]
		uniformly with respect to $x, y \in M$.
	\end{enumerate}
\end{theorem}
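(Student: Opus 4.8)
The plan is to prove the cycle of implications $\ref{thm:6:1} \Rightarrow \ref{thm:6:2} \Rightarrow \ref{thm:6:3} \Rightarrow \ref{thm:6:1}$. The first implication $\ref{thm:6:1} \Rightarrow \ref{thm:6:2}$ is immediate: it is exactly Theorem \ref{thm:1} specialized to the discrete-time case $\calT = \NN$ (so that $\inf\calT = 1$, $1/\inf\calT = 1$, and the scaling conditions at $1$ match), since under $\ref{thm:6:1}$ the Bernstein function $\varphi$ automatically satisfies the required weak scaling at $1$ with $0 < \beta_2 < 1$. The implication $\ref{thm:6:2} \Rightarrow \ref{thm:6:3}$ is trivial: just put $n = 1$ in \ref{thm:6:2}, use $\varphi^{-1}(1) = 1$ (because $\varphi(1) = 1$ and $\varphi$ is strictly increasing on a neighbourhood of $1$), and observe $f^{-1}(1/\varphi^{-1}(1)) = f^{-1}(1)$.

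The real content is the reverse implication $\ref{thm:6:3} \Rightarrow \ref{thm:6:1}$, i.e.\ extracting both scaling bounds on $\varphi$ from the one-step estimate. First I would relate the off-diagonal part of \ref{thm:6:3} to the tail of the jump distribution: integrating the lower bound $p_\varphi(1;x,y) \gtrsim \varphi(1/f(d(x,y)))/V(x,d(x,y))$ over annuli $B(x_0,2^{k+1}r)\setminus B(x_0,2^kr)$ and using the volume doubling/reverse-doubling \eqref{eq:27}, one gets a lower bound on $\PP^{x_0}(d(S^\varphi_1,x_0)\ge r)$ of the order of $\varphi(1/f(r))$, while the upper half of \ref{thm:6:3} together with the same annulus decomposition gives a matching upper bound; hence
\[
	\PP^{x_0}\big(d(S^\varphi_1,x_0)\ge r\big) \approx \varphi\big(1/f(r)\big), \qquad r > 0.
\]
On the other hand, by Lemma \ref{lem:4} (the case $t = 1 \in \calT = \NN$) applied with $C = 1/f(r)$, the left side is $\le \frac{e}{e-1}\min\{1,\varphi(1/f(r))\}$ automatically, so the substance is the lower matching; combined with the fact that $f \in \WLSCINF{\delta_1}{c_3}{0}\cap\WUSCINF{\delta_2}{C_4}{0}$ has a genuine two-sided power behaviour, the equivalence $\PP^{x_0}(d(S^\varphi_1,x_0)\ge r)\approx \varphi(1/f(r))$ transfers the scaling properties of the tail function $r\mapsto \PP^{x_0}(d(S^\varphi_1,x_0)\ge r)$ into two-sided scaling of $\varphi$ near $\infty$ in the argument $1/f(r)$, i.e.\ near $0$ in $\varphi$'s argument, which after the change of variables $u = 1/f(r)$ and using that $f^{-1}$ has two-sided scaling is precisely $\varphi\in\WLSC{\beta_1}{C_7}{1}\cap\WUSC{\beta_2}{c_6}{1}$.

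To make the previous paragraph rigorous I would argue as follows. For the upper scaling of $\varphi$: the near-diagonal term in \ref{thm:6:3} forces $\varphi(1/f(d(x,y)))/V(x,d(x,y)) \lesssim 1/V(x,f^{-1}(1))$ whenever $d(x,y)$ is comparable to $f^{-1}(1)$ is not enough by itself, so instead I use the already-established two-sided relation $\PP^{x_0}(d(S^\varphi_1,x_0)\ge r)\approx\varphi(1/f(r))$ and note that the tail of any random variable, here $d(S^\varphi_1,x_0)$, is non-increasing in $r$, so $r\mapsto\varphi(1/f(r))$ is non-increasing (which we knew) but more importantly comparing the relation at $r$ and at $\lambda r$ with $\lambda>1$ and using reverse doubling of $V$ to bound ratios of tails from below yields $\varphi(1/f(r))/\varphi(1/f(\lambda r)) \gtrsim \lambda^{\gamma_1}$-type lower bounds, hence—via the power behaviour of $f$—a lower bound $\varphi(s)/\varphi(\lambda' s)\ge c\,(\lambda')^{\beta_1}$ for an appropriate $\beta_1 > 0$, which is the weak \emph{lower} scaling; symmetrically doubling of $V$ gives the weak \emph{upper} scaling with some $\beta_2$, and the constraint $\beta_2 < 1$ comes from concavity \eqref{eq:10} of $\varphi$ (so the upper scaling exponent is automatically $\le 1$, and strict inequality $\beta_2 < 1$ follows because otherwise the mass estimates would be incompatible with $\varphi$ being genuinely sublinear as forced by $b \le 1$ and the structure of the jump term). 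Finally one checks $\beta_2 \ge \beta_1$, which is forced since the same function cannot have a lower scaling exponent exceeding an upper one.

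The main obstacle I anticipate is the bookkeeping in the annulus/tail comparison: one must transfer \emph{both} inequalities in \ref{thm:6:3} cleanly to a two-sided comparison $\PP^{x_0}(d(S^\varphi_1,x_0)\ge r)\approx\varphi(1/f(r))$ uniformly in $x_0$, handling the region $d(x,y) < f^{-1}(1)$ (where the "min" in \ref{thm:6:3} selects the first term and the off-diagonal term is not the relevant one) separately from $d(x,y) \ge f^{-1}(1)$, and then invoke a technical lemma—presumably in the spirit of \cite[Lemma 13]{MR3165234} used already in the proof of Lemma \ref{lem:4}—to convert a comparison of a monotone function with $\min\{1, n\varphi(\cdot)\}$-type expressions into genuine weak scaling of $\varphi$. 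The delicate point is ensuring the extracted exponents $\beta_1,\beta_2$ lie strictly between $0$ and $1$ and satisfy $\beta_2\ge\beta_1$, which is where the hypotheses on $f$ (with $\delta_2\ge\delta_1>0$) and on $g$ enter together with concavity of $\varphi$.
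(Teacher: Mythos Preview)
Your architecture is right and matches the paper: the chain $\ref{thm:6:1}\Rightarrow\ref{thm:6:2}\Rightarrow\ref{thm:6:3}$ is handled exactly this way, and all the work is in $\ref{thm:6:3}\Rightarrow\ref{thm:6:1}$. Your annulus computation giving $\PP^{x_0}(d(S_1^\varphi,x_0)\ge r)\approx\varphi(1/f(r))$ is also what the paper establishes (written there as $w(r)V(x,r)\approx\int_{B(x,r)^c}w(d(x,y))\,\mu({\rm d}y)$ with $w(r)V(x,r)=\varphi(1/f(r))$).

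The gap is in the step where you extract scaling of $\varphi$ from this tail relation. Your sentence ``comparing the relation at $r$ and at $\lambda r$ \dots\ and using reverse doubling of $V$ to bound ratios of tails from below yields $\varphi(1/f(r))/\varphi(1/f(\lambda r))\gtrsim\lambda^{\gamma_1}$'' is circular: the tail is $\approx\varphi(1/f(r))$, which does not involve $V$ at all, so reverse doubling of $V$ cannot produce a bound on $\varphi(1/f(r))/\varphi(1/f(\lambda r))$ --- that ratio is exactly the quantity you are trying to control. What the paper does instead is observe that the tail comparison together with the annulus decomposition yields the \emph{self-integral relation}
\[
	\varphi\big(1/f(r)\big)\;\approx\;\int_r^\infty \varphi\big(1/f(u)\big)\,\frac{{\rm d}u}{u},
\]
and then invoke \cite[Proposition~A.1]{MR3729529}, which says precisely that a positive function $h$ satisfying $h(r)\approx\int_r^\infty h(u)\,\frac{{\rm d}u}{u}$ must have weak upper scaling at infinity with a strictly negative exponent. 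Via the two-sided scaling of $f$ this becomes the lower scaling of $\varphi$ at zero with a strictly positive exponent. This is not a Lemma~13--type Tauberian statement; it is the integral characterization of scaling, and it is the missing idea.

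The argument for $\beta_2<1$ has the same deficiency. Concavity of $\varphi$ only gives $\beta_2\le 1$, and ``mass estimates would be incompatible with $\varphi$ being genuinely sublinear'' is not a proof that the exponent is strictly below $1$. The paper's mechanism is to use the truncated first moment bound of Lemma~\ref{lem:301} (bounding $\EE^{x_0}[f(d(x_0,S_1^\varphi))\ind{B(x_0,r)}]$) to obtain the dual self-integral relation
\[
	\varphi\big(1/f(r)\big)\,f(r)\;\approx\;\int_0^r \varphi\big(1/f(s)\big)\,f(s)\,\frac{{\rm d}s}{s},
\]
and then a second application of \cite[Proposition~A.1]{MR3729529} shows $r\mapsto\varphi(1/f(r))f(r)$ has weak lower scaling at infinity with a strictly positive exponent. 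That is exactly the statement that $\varphi$ has weak upper scaling at zero with an exponent strictly less than $1$.
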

\begin{proof}
	By Theorem \ref{thm:1}, \eqref{thm:6:1} implies \eqref{thm:6:2}. Since \eqref{thm:6:2} implies \eqref{thm:6:3}, it is enough
	to show that \eqref{thm:6:3} leads to \eqref{thm:6:1}. Let 
	\[
		w(r)=\min\left\{\frac{1}{V(x,f^{-1}(1))},\frac{\varphi(1/f(r))}{V(x,r)}\right\}, \qquad r>0.
	\]
	By the reverse doubling property of $V$ there exists $\lambda>1$ such that $V(x,\lambda r) - V(x,r)\geq V(x,r)$.
	Therefore, for $r \geq \max\{f^{-1}(1),1/2\}$,
	\begin{align*}
		w(\lambda r)V(x,r) 
		&\leq 
		w(\lambda r)\mu\big(B(x,\lambda r)\setminus B(x,r)\big)\\
		&\leq 
		\int_{B(x,r)^c} 
		w(d(x,y)) \: \mu({\rm d}y) \\
		&\leq
		c_1 \PP\big(|S^\varphi_1|>r\big) \\
		&\leq  
		c_2 \varphi(1/f(r))\\
		&\leq 
		c_3 w(r) V(x,r).
	\end{align*}
	Hence, by \eqref{eq:10} and the doubling property of $V$ and $f$, 
	\[
		w(r)V(x,r) \approx \int_{B(x,r)^c} w(d(x,y)) \, \mu({\rm d} y)
	\]
	uniformly with respect to $x \in M$ and $r \geq \max\{f^{-1}(1),1/2\}$. Furthermore 
	\begin{align*}
		\int_{B(x,r)^c} w(d(x,y)) \mu({\rm d} y)
		&\approx 
		\sum_{k \geq 0} 
		w(\lambda^k r) \mu\big(B(x,\lambda^k r)\setminus B( x,\lambda^{k-1}r)\big)\\
		&\approx 
		\sum_{k \geq 0} 
		w(\lambda^k r) V(x,\lambda^k r)\\
		&\approx 
		\int_r^\infty w(u)V(x,u)\frac{{\rm d} u}{u}.
	\end{align*}
	That is
	\begin{equation}
		\label{eq:41}
		w(r)V(x,r) \approx \int_r^\infty w(u)V(x,u)\frac{{\rm d}u}{u}.
	\end{equation}
	In view of \cite[Proposition A.1]{MR3729529}, the function $r \mapsto \varphi(1/f(r))=w(r)V(x,r)$ satisfies
	the upper scaling condition at infinity with a negative exponent which implies that $\vphi$ has lower scaling at zero
	with a positive exponent due to the doubling of $f$ and \eqref{eq:10}.

	Similarly, using Lemma \ref{lem:301}, for $r\geq \lambda \max\{f^{-1}(1),1/2\}$,
	\begin{align}
		w(r)V(x,r)f(r) 
		&\approx 
		\int^r_0 f(s)w(s)V(x,s)\frac{{\rm d} s}{s} \nonumber\\ 
		&\approx  
		\EE \Big[ f(d(x,S_1^\varphi)) \ind{\{d(x,S_1^\varphi)<r\}}\Big].\label{eq:tg2}
	\end{align}
	By \cite[Proposition A.1]{MR3729529}, $\varphi(1/f(r))f(r)=w(r)V(x,r)f(r)$ satisfies the weak lower scaling property
	at infinity with positive index, that is $\varphi$ satisfies the weak upper scaling at $0$ with index strictly smaller
	than $1$.
\end{proof}

\begin{theorem}
	\label{thm:7}
	Assume that there exist $C_1 \geq 1$ and $\gamma_2 \geq \gamma_1 > 0$ such that for all $x\in M$, $\lambda > 1$, and
	$r > 0$,
	\[
		C_1^{-1} \lambda^{\gamma_1} \leq \frac{V(x,\lambda r)}{V(x,r)}\leq C_1 \lambda^{\gamma_2}.
	\]
	Suppose that there exists $C_2 \geq 1$ such that for all $x,y\in M$ and $s > 0$,
	\[
		C_2^{-1}
		\frac{1}{V(x,f^{-1}(s))} 
		\ind{\{f(d(x,y))\leq s\}} 
		\leq 
		p(s; x,y)
		\leq
		C_2
		\frac{1}{V(x,f^{-1}(s))} e^{-g\left(\frac{f(d(x,y))}{s}\right)}
	\]
	where $f$ is positive increasing function belonging to $\WLSCINF{\delta_1}{c_3}{0} \cap \WUSCINF{\delta_2}{C_4}{0}$
	for certain $c_3 \in (0, 1]$, $C_4 \in [1, \infty)$, and $\delta_2 \geq \delta_1 > 0$, and $g$ is positive increasing
	function belonging to $\WLSCINF{\alpha}{c_5}{1}$ for some $c_5 \in (0, 1]$ and $\alpha > 0$. Let $\vphi$ be a Bernstein
	function such that $\vphi(0) = 0$. The following are equivalent:
	\begin{enumerate}[i)]
		\item
		\label{thm:7:1}
		$\vphi$ belongs to $\WLSCINF{\beta_1}{c}{0} \cap \WUSCINF{\beta_2}{C}{0}$ for certain $1 > \beta_2 \geq \beta_1 > 0$,
		$c \in (0, 1]$ and $C \in [1, \infty)$;
		\item 
		\label{thm:7:2}
		\[
			p_\varphi(t; x,y)
			\approx
			\min\left\{\frac{1}{V\left(x,f^{-1}\left(\frac{1}{\varphi^{-1}(1/t)}\right)\right)},
			t \frac{\varphi\left(\frac{1}{f(d(x,y))}\right)}{V(x,d(x,y))}\right\}
		\]
		uniformly with respect to $t > 0$  and $x, y \in M$;
		\item 
		\label{thm:7:3}
		\[
			\int_{(0, \infty)} p(s; x, y) \, \nu({\rm d} s)
			\approx
			\frac{\varphi\left(\frac{1}{f(d(x,y))}\right)}{V(x,d(x,y))}
		\]
		uniformly with respect to $x, y \in M$.
	\end{enumerate}
\end{theorem}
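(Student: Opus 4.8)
The strategy follows the template already used for Theorem \ref{thm:6}: prove the cycle \eqref{thm:7:1} $\Rightarrow$ \eqref{thm:7:2} $\Rightarrow$ \eqref{thm:7:3} $\Rightarrow$ \eqref{thm:7:1}. The first implication is immediate: assuming \eqref{thm:7:1}, the scaling hypotheses on $\varphi$ at zero (in the $\WLSC{}{}{}$/$\WUSC{}{}{}$ sense, with $\theta_0=\infty$ since $\inf\calT=0$) are exactly what Theorem \ref{thm:1} requires, so \eqref{thm:7:2} follows directly. For \eqref{thm:7:2} $\Rightarrow$ \eqref{thm:7:3}, I would integrate the heat kernel estimate against the L\'evy measure $\nu$. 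Using the potential measure identity from Lemma \ref{lem:22}, $U([0,x]) \approx 1/\varphi(1/x)$, together with the relation $\int_{(0,\infty)} p(s;x,y)\,\nu({\rm d}s)$ being comparable to the jumping kernel of the subordinate process, one evaluates the integral of the two-sided bound; the near-diagonal term $1/V(x,f^{-1}(1/\varphi^{-1}(1/t)))$ integrates to something negligible against $\nu$ near $\infty$, while the off-diagonal term $t\varphi(1/f(d(x,y)))/V(x,d(x,y))$ integrated in $t$ against the appropriate measure yields exactly $\varphi(1/f(d(x,y)))/V(x,d(x,y))$ after using $\int_0^\infty t\,{\rm something}$-type identities for the L\'evy measure. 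Alternatively, and more cleanly, one can use the fact that the jumping kernel of $\mathbf{S}^\varphi$ is $J_\varphi(x,y) = \int_{(0,\infty)} p(s;x,y)\,\nu({\rm d}s)$ and read off its size directly from \eqref{thm:7:2} via a limiting/Tauberian argument, exactly as $t\to 0$.

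The substantive implication is \eqref{thm:7:3} $\Rightarrow$ \eqref{thm:7:1}, and this is where I would follow the argument of Theorem \ref{thm:6} most closely. Set
\[
	w(r) = \frac{\varphi(1/f(r))}{V(x,r)}, \qquad r > 0.
\]
Using the reverse doubling property of $V$, pick $\lambda > 1$ with $V(x,\lambda r) - V(x,r) \geq V(x,r)$, and estimate $w(\lambda r) V(x,r)$ from above by $\int_{B(x,r)^c} w(d(x,y))\,\mu({\rm d}y)$, which by \eqref{thm:7:3} is comparable to $\int_{B(x,r)^c} J_\varphi(x,y)\,\mu({\rm d}y) = \PP^x(S^\varphi \text{ jumps out of } B(x,r))$-type quantity. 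Lemma \ref{lem:21} (applied with the tail bound coming from \eqref{eq:31}-type estimates, i.e. with $\tilde f(r)$ as in the proof of Theorem \ref{thm:1}) bounds this by $c\,\varphi(\tilde f(r)) \approx c\,\varphi(1/f(r)) = c\, w(r) V(x,r)$. This sandwiches and gives the self-improving integral identity
\[
	w(r) V(x,r) \approx \int_r^\infty w(u) V(x,u) \frac{{\rm d}u}{u},
\]
uniformly in $x \in M$ and \emph{all} $r > 0$ (here, unlike Theorem \ref{thm:6}, there is no lower cutoff $\max\{f^{-1}(1), 1/2\}$ since $\calT = (0,\infty)$ — this is the one structural simplification). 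By \cite[Proposition A.1]{MR3729529}, the map $r \mapsto \varphi(1/f(r)) = w(r) V(x,r)$ has the weak upper scaling property at infinity with a negative exponent, hence — using the doubling of $f$ and concavity \eqref{eq:10} — $\varphi$ satisfies $\WLSCINF{\beta_1}{c}{0}$ with $\beta_1 > 0$.

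For the matching upper scaling $\WUSCINF{\beta_2}{C}{0}$ with $\beta_2 < 1$, I would run the parallel argument with the weight $f(r) w(r) V(x,r)$: using Lemma \ref{lem:301} (whose hypothesis $\PP^{x_0}(d(S_s,x_0) \geq r) \leq C_1 s^2/f^2(r)$ must be verified from \eqref{eq:31} by the same dyadic decomposition as in \eqref{eq:tg1}, but now squaring) one obtains
\[
	\EE^x\big[f(d(x,S_t^\varphi)) \ind{\{d(x,S_t^\varphi) < r\}}\big] \approx f(r) w(r) V(x,r) \approx \int_0^r f(s) w(s) V(x,s) \frac{{\rm d}s}{s},
\]
and \cite[Proposition A.1]{MR3729529} then gives that $\varphi(1/f(r)) f(r)$ has weak lower scaling at infinity with positive index, i.e. $\varphi$ has weak upper scaling at zero with index $\beta_2 < 1$. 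The main obstacle, as in Theorem \ref{thm:6}, is ensuring that \eqref{thm:7:3} — which a priori only controls the jumping kernel — can be boosted to control the full tail $\PP^x(d(S_t^\varphi, x) \geq r)$ and the truncated moment $\EE^x[f(d(x,S_t^\varphi))\ind{\{\cdot < r\}}]$; this requires that the process $\mathbf{S}^\varphi$ has no "Gaussian part'' contaminating the exit behavior, which is exactly what the $\WUSC{}{}{}$ (equivalently $\beta_2 < 1$) conclusion encodes, so the argument is genuinely circular-looking but is made rigorous by the self-improving inequality \eqref{eq:41}. I also need to check the time-homogeneity/continuous-time analogue of \eqref{eq:tg2}, which is where Lemma \ref{lem:301} and the identity $\int_0^\infty \PP(T_t \in {\rm d}s)\,{\rm d}t = U({\rm d}s)$ combine to replace the discrete sum over $n$.
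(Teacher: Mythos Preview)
Your overall strategy is correct and matches the paper: the cycle \eqref{thm:7:1} $\Rightarrow$ \eqref{thm:7:2} $\Rightarrow$ \eqref{thm:7:3} $\Rightarrow$ \eqref{thm:7:1}, with the last step run through \eqref{eq:41} and the analogue of \eqref{eq:tg2}, then two appeals to \cite[Proposition~A.1]{MR3729529}. For \eqref{thm:7:2} $\Rightarrow$ \eqref{thm:7:3}, the paper takes exactly your second route: it cites \cite[Theorem~30.1]{Sato1999} to get $\lim_{t\to 0} t^{-1} p_\varphi(t;x,y) = \int_{(0,\infty)} p(s;x,y)\,\nu({\rm d}s)$, and then reads off the two-sided bound from \eqref{thm:7:2} for small $t$.

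The one place where your plan deviates from the paper is in \eqref{thm:7:3} $\Rightarrow$ \eqref{thm:7:1}, and this deviation is the source of your ``circularity'' worry. You propose to invoke Lemmas~\ref{lem:21} and~\ref{lem:301}, which bound $\PP^x(d(S^\varphi_t,x)\geq r)$ and $\EE^x[f(d(x,S^\varphi_t))\ind{B(x,r)}]$ for the subordinate process at a fixed time $t$. But hypothesis \eqref{thm:7:3} says nothing about $p_\varphi(t;\cdot,\cdot)$; it only controls the jumping kernel $J_\varphi(x,y)=\int_{(0,\infty)} p(s;x,y)\,\nu({\rm d}s)$. The paper sidesteps this cleanly via Fubini: using \eqref{thm:7:3},
\[
	\int_{B(x,r)^c} w(d(x,y))\,\mu({\rm d}y)
	\approx \int_{B(x,r)^c} J_\varphi(x,y)\,\mu({\rm d}y)
	= \int_{(0,\infty)} \PP^x\big(d(S_s,x)>r\big)\,\nu({\rm d}s),
\]
and then bounds the integrand by \eqref{eq:tg1} (a tail estimate for the \emph{original} process $S$), giving $\int_{(0,\infty)}\min\{1,Cs/f(r)\}\,\nu({\rm d}s)\leq c\,\varphi(1/f(r))$. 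The moment version is handled the same way: Fubini turns $\int_{B(x,r)} f(d(x,y)) J_\varphi(x,y)\,\mu({\rm d}y)$ into $\int_{(0,\infty)} \EE^x[f(d(x,S_s))\ind{B(x,r)}(S_s)]\,\nu({\rm d}s)$, and the inner expectation is bounded by \eqref{eq:tg3} (again a statement about $S$, not $S^\varphi$), yielding $\int_{(0,\infty)}C(f(r)\wedge s)\,\nu({\rm d}s)\leq c\,f(r)\varphi(1/f(r))$. Since \eqref{eq:tg1} and \eqref{eq:tg3} come from the assumed sub-Gaussian bounds on $p$, no information about $S^\varphi$ beyond \eqref{thm:7:3} is ever used, and there is no circularity at all. (Your remark that the quadratic tail hypothesis behind \eqref{eq:tg3} needs to be checked is correct; it follows from the exponential upper bound on $p$ by taking a sufficiently high power in $e^{-g(u)}\leq C_N u^{-N}$, not literally by ``squaring'' \eqref{eq:tg1}.)
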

\begin{proof}
	By Theorem \ref{thm:1}, \eqref{thm:7:1} implies \eqref{thm:7:2}. Next, let us observe that by \cite[Theorem 30.1]{Sato1999},
	we have
	\[
		\lim_{t \to 0} \frac{1}{t} p_\vphi(t; x, y) = \int_{(0, \infty)} p(s; x, y) \, \nu({\rm d} s).
	\]
	On the other hand, by \eqref{thm:7:2}, for small $t$,
	\[
		\frac{1}{t} p_\vphi(t; x, y) \approx
				\frac{\varphi\left(\frac{1}{f(d(x,y))}\right)}{V(x,d(x,y))}	
	\]
	which implies \eqref{thm:7:3}. Finally, let us assume that \eqref{thm:7:3} is satisfied. We proceed analogously to the proof
	of Theorem \ref{thm:6}. It suffices to justify \eqref{eq:41}, and $\eqref{eq:tg2}$ with
	\[
		w(r) = \frac{\varphi\left(\frac{1}{fr)}\right)}{V(x,r)}.
	\]
	By \eqref{eq:tg1}, we have
	\begin{align*}
		w(\lambda r) V(x, r) 
		&\leq w(\lambda r) \mu\big(B(x, \lambda r) \setminus B(x, r) \big) \\
		&\leq \int_{B(x, r)^c} w(d(x, y)) \, \mu({\rm d} y) \\
		&\approx \int_{(0,\infty)}\PP^x(d(S_s,x)>r)\, \nu({\rm d} s)\\
		&\leq c\varphi(1/f(r))
	\end{align*}
	which leads to \eqref{eq:41}. Similarly, we prove \eqref{eq:tg2} using \eqref{eq:tg3}. The rest of the proof is analogous.
\end{proof}

\subsection{Green function estimates}
In this section we study a behavior of the Green function $G_\varphi$ for the subordinate process
$(S^\varphi_t : t > 0)$, that is
\begin{align*}
	G_\varphi(x,y) 
	&= 
	\int_{\mathcal{T}}p^\varphi(t; x,y) \: {\rm d} t\\
	&= \int_{[0,\infty)}p(s; x,y) U({\rm d} s), \qquad x\neq y
\end{align*}
where $U$ is the potential measure of subordinator.

\begin{proposition}
	\label{prop:102}
	Fix $x_0, y_0 \in M$, $x_0 \neq y_0$, and assume that there exist $t \in \calT$ and a non-decreasing function
	$W: [\inf \calT, \infty) \rightarrow [0, \infty)$ belonging to $\WLSCINF{\delta}{c_1}{\inf \calT}$ for certain
	$c_1 \in (0, 1]$ and $\delta > 1$, such that
	\[
		p(s; x_0, y_0) \leq \min\left\{\frac{1}{W(s)}, \frac{1}{W(t)} \right\} \qquad\text{for all } s \in \calT.
	\]
	Then there is $c > 0$ such that
	\[
		G_\varphi(x_0, y_0) \leq c \frac{1}{W(t) \varphi(1/t)}.
	\]
	The constant $c$ depends only on $\delta$ and $c_1$.
\end{proposition}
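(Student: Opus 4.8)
The plan is to use the representation
\[
	G_\varphi(x_0, y_0) = \int_{[0,\infty)} p(s; x_0, y_0) \, U({\rm d} s),
\]
where $U$ is the potential measure of the subordinator, together with the two-sided bound $U([0, x]) \approx 1/\varphi(1/x)$ for $x \geq \inf \calT$ supplied by Lemma \ref{lem:22}, and to split the integral at the scale $s = t$. (Since $x_0 \neq y_0$, a possible atom of $U$ at $0$ contributes nothing, so the integral is unambiguous.)

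First I would dispose of the part $\int_{[0, t]} p(s; x_0, y_0) \, U({\rm d} s)$. Here the hypothesis gives $p(s; x_0, y_0) \leq 1/W(t)$ for every $s$, so this piece is at most $\frac{1}{W(t)} U([0, t])$, and since $t \in \calT$ we may invoke Lemma \ref{lem:22} to get $U([0, t]) \leq C/\varphi(1/t)$; thus this contribution is already $\lesssim \frac{1}{W(t) \varphi(1/t)}$.

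For the tail, I would decompose $(t, \infty) = \bigcup_{k \geq 0} (2^k t, 2^{k+1} t]$. On the $k$-th annulus, monotonicity of $W$ and the hypothesis give $p(s; x_0, y_0) \leq 1/W(s) \leq 1/W(2^k t)$. Unwinding the definition of $\WLSCINF{\delta}{c_1}{\inf \calT}$ yields $W(u) \geq c_1 (u/t)^\delta W(t)$ for $u \geq t$, hence $W(2^k t) \geq c_1 2^{k\delta} W(t)$. On the other hand, $U\big((2^k t, 2^{k+1} t]\big) \leq U([0, 2^{k+1} t]) \leq C/\varphi\big(2^{-(k+1)}/t\big)$ by Lemma \ref{lem:22}, and the concavity bound \eqref{eq:10} gives $\varphi\big(2^{-(k+1)}/t\big) \geq 2^{-(k+1)} \varphi(1/t)$, so $U\big((2^k t, 2^{k+1} t]\big) \leq 2^{k+1} C / \varphi(1/t)$. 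Multiplying, the $k$-th term of the tail is $\lesssim \frac{2^{-k(\delta - 1)}}{c_1 W(t) \varphi(1/t)}$, and since $\delta > 1$ the geometric series $\sum_{k \geq 0} 2^{-k(\delta - 1)}$ converges. Adding the $s \leq t$ piece gives $G_\varphi(x_0, y_0) \leq \frac{c}{W(t) \varphi(1/t)}$ with $c$ depending only on $\delta$ and $c_1$, the constants coming from Lemma \ref{lem:22} and \eqref{eq:10} being absolute.

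I do not expect a genuine obstacle: the argument is a routine dyadic summation. The two points deserving a little care are the correct translation of the weak lower scaling at infinity of $W$ into the polynomial growth bound $W(u) \gtrsim (u/t)^\delta W(t)$ — with a marginal nuisance in the discrete case when $t = \inf \calT = 1$ sits at the boundary of the scaling range, handled by treating the first annulus $(t, 2t]$ separately via monotonicity and Lemma \ref{lem:22} — and the observation that the assumption $\delta > 1$ is precisely what renders the tail integrable against the potential measure.
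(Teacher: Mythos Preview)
Your argument is correct and follows essentially the same approach as the paper: both use the representation $G_\varphi=\int p\,U({\rm d}s)$, the scaling bound $W(t)/W(s)\le c_1^{-1}(t/s)^\delta$ for $s\ge t$, Lemma~\ref{lem:22}, and the concavity estimate \eqref{eq:10}. The only cosmetic difference is that the paper rewrites $\int \min\{1,(t/s)^\delta\}\,U({\rm d}s)$ as $\delta t^\delta\int_t^\infty U((0,u])\,u^{-\delta-1}\,{\rm d}u$ via the layer-cake formula, whereas you perform the equivalent dyadic summation.
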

\begin{proof}
	We write
	\begin{align*}
		W(t) G_\varphi(x_0, y_0)
		&=
		W(t) \int_{(0,\infty)} p(s; x_0, y_0) \: U({\rm d} s)\\
		&\leq  
		\int_{(0,\infty)} \min\left\{1, \frac{W(t)}{W(s)}\right\} \: U({\rm d} s)\\
		&\leq 
		c_1^{-1} \int_{(0,\infty)} \min\left\{ 1, \left(\frac{t}{s} \right)^\delta\right\}\: U({\rm d} s)\\
		&=
		\delta c_1^{-1} t^\delta \int^\infty_t \frac{U((0,u])}{u^{\delta+1}} {\: \rm d} u.
	\end{align*}
	Hence, by Lemma \ref{lem:22},
	\[
		W(t) G_\varphi(x_0, y_0)
		\leq 
		C_2 t^\delta \int^\infty_t \frac{1}{\varphi(1/s)s^{\delta+1}} {\: \rm d} s
	\]
	which together with \eqref{eq:10} completes the proof.
\end{proof}

\begin{remark}
	\label{rem:tg1}
	If $\varphi$ belongs to $\WUSC{\alpha}{C}{1/\inf\mathcal{T}}$ for certain
	$C \in [1, \infty)$ and $\alpha <1$, then the claim of Proposition \ref{prop:102} holds for $\delta>\alpha$.
\end{remark}

\begin{proposition}
	\label{prop:103}
	Assume that there are $C_1 > 0$ and $\gamma > 0$ such that for all $x \in M$,  $\lambda > 1$, and
	$r > \frac{1}{2}\inf \mathcal{T}$,
	\begin{equation}
		\label{eq:33}
		\frac{V(x,\lambda r)}{V(x,r)}\leq C_1 \lambda^{\gamma}.
	\end{equation}
	Suppose that there is $c_2>0$ such that
	\[
		c_2
		\frac{1}{V(x,f^{-1}(t))} \ind{\{ f(d(x,y))\leq t \}} \leq 
		p(t; x,y)
		\qquad\text{for all } x, y \in M, \, t \in \calT
	\]
	where $f$ is positive increasing function with $f(0) = 0$ and $f(1) = 1$, belonging to $\WLSCINF{\delta}{c_3}{0}$ 
	for some $c_3 \in (0, 1]$, and $\delta > 0$. If $\varphi$ belongs to $\WLSC{\beta}{C_4}{1/\inf\mathcal{T}}$ for a certain
	$C_4 \in [1, \infty)$ and $\beta > 0$, then there is $c > 0$ such that
	\[
		G_\varphi(x,y)\geq c 
		\min\left\{\frac{1}{V(x,d(x,y))\varphi(1/f(d(x,y)))},
		\, \frac{1}{V(x,f^{-1}(\inf\mathcal{T}))}\right\}
	\]
	for all $x, y \in M$, $x \neq y$.
\end{proposition}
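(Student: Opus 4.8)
The idea is to bound $G_\varphi(x,y)$ from below by integrating the lower bound on $p$ against the potential measure $U$, then identify which part of the integral dominates depending on whether $d(x,y)$ is large or comparable to $\inf\calT$. Concretely, using the hypothesis,
\begin{align*}
	G_\varphi(x,y)
	&= \int_{[0,\infty)} p(s; x,y) \, U({\rm d} s)
	\geq c_2 \int_{[f(d(x,y)) \vee \inf\calT, \infty)} \frac{1}{V(x, f^{-1}(s))} \, U({\rm d} s).
\end{align*}
The plan is to split into the two regimes. When $d(x,y) \geq 1$ (so that $f(d(x,y)) \geq 1 \geq \inf\calT$ after possibly adjusting for the discrete case, using $f(1)=1$ and monotonicity), I restrict the integral to $s \in [f(d(x,y)), 2 f(d(x,y))]$; on this interval $f^{-1}(s) \leq f^{-1}(2 f(d(x,y)))$, which by the weak lower scaling of $f$ at infinity (its inverse has weak upper scaling, hence is doubling-like) is at most $\kappa\, d(x,y)$ for an absolute constant $\kappa$. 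Then the doubling property \eqref{eq:33} gives $V(x, f^{-1}(s)) \leq C_1 \kappa^\gamma V(x, d(x,y))$, so
\[
	G_\varphi(x,y) \geq \frac{c_2}{C_1 \kappa^\gamma} \frac{1}{V(x, d(x,y))}\, U\big([f(d(x,y)), 2f(d(x,y))]\big).
\]
It remains to bound $U\big([f(d(x,y)), 2f(d(x,y))]\big)$ from below by a constant multiple of $1/\varphi(1/f(d(x,y)))$. Here I use Lemma \ref{lem:22}: $U([0, 2a]) \approx 1/\varphi(1/(2a))$ and $U([0,a]) \approx 1/\varphi(1/a)$, and the weak \emph{lower} scaling $\WLSC{\beta}{C_4}{1/\inf\calT}$ of $\varphi$ forces $\varphi(1/(2a)) \geq c' \varphi(1/a)$ with a strictly smaller constant once $a$ is large enough, so that the difference $U([0,2a]) - U([0,a])$ is still comparable to $1/\varphi(1/a)$ — this is the step where one must be slightly careful, choosing the doubling factor (I wrote $2$ but may need a larger fixed $\lambda$, exactly as in the reverse-doubling trick in Theorem \ref{thm:6}) large enough that $U([0,\lambda a]) \geq 2 U([0,a])$, using $F(\lambda x) \leq 2\lambda F(x)$ and the two-sided bound from Lemma \ref{lem:22} together with the lower scaling of $\varphi$. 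That yields the first term in the minimum.

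For the second regime, when $d(x,y)$ is small (in the continuous-time case $d(x,y) < f^{-1}(\inf\calT)$, and in the discrete case $d(x,y)$ close to $0$), I instead restrict the $U$-integral to a fixed interval of length comparable to $\inf\calT$ near $\inf\calT$, say $s \in [\inf\calT, \lambda \inf\calT]$ for the same fixed $\lambda$. On that interval $f^{-1}(s) \le f^{-1}(\lambda\inf\calT) \leq \kappa' f^{-1}(\inf\calT)$ by the scaling of $f^{-1}$, so $V(x, f^{-1}(s)) \leq C_1 (\kappa')^\gamma V(x, f^{-1}(\inf\calT))$ by \eqref{eq:33}; and the indicator $\ind{\{f(d(x,y)) \leq s\}}$ is $1$ there precisely because $d(x,y)$ is small enough. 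Then $U([\inf\calT, \lambda\inf\calT]) = U([0,\lambda\inf\calT]) - U([0,\inf\calT])$, which by the same Lemma \ref{lem:22} plus lower-scaling argument is bounded below by a constant times $1/\varphi(1/\inf\calT)$ in the continuous case, and in the discrete case $U(\{1\}) \geq \PP(T_0 \in \{1\}) \vee \PP(T_1 = 1)$ is simply a positive constant; in both cases this gives $G_\varphi(x,y) \geq c/V(x, f^{-1}(\inf\calT))$ since $\varphi(1/\inf\calT)$ is a fixed constant (equal to $1$ when $\inf\calT=1$, and in the continuous case $\inf\calT = 0$ so $f^{-1}(\inf\calT) = 0$ and that term is vacuous — meaning the minimum is really just the first term, consistent with the statement).

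Taking the minimum of the two lower bounds produced in the two regimes gives the claim, with $c$ depending only on $c_2, c_3, C_1, C_4$ and $\gamma, \delta, \beta$. The main obstacle is the second bullet of the argument: extracting $U\big([a, \lambda a]\big) \gtrsim 1/\varphi(1/a)$ from Lemma \ref{lem:22}. Lemma \ref{lem:22} only gives the two-sided estimate for $U([0,\cdot])$, so one genuinely needs the lower scaling of $\varphi$ (hence the hypothesis $\varphi \in \WLSC{\beta}{C_4}{1/\inf\calT}$ with $\beta>0$) to guarantee that $U$ puts a definite proportion of its mass on the dyadic-type annulus rather than all of it near the origin; the subadditivity bound $F(\lambda x)\leq 2\lambda F(x)$ from the Markov property is what lets one pick a uniform $\lambda$. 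Everything else is bookkeeping with the scaling inequalities for $f$, $f^{-1}$ and $V$.
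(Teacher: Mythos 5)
Your plan is correct and is essentially the paper's argument, up to a cosmetic difference: you restrict the $U$-integral directly to an annulus $[a,\lambda a]$ with $a = f(d(x,y))$, whereas the paper first rewrites the integral via a Fubini trick as $\gamma\, d(x,y)^\gamma \int_{d(x,y)}^\infty U([f(d(x,y)), f(s)]) s^{-\gamma-1}\,ds$ and then effectively restricts to $s\in[C_6 d(x,y), 2C_6 d(x,y)]$; the key technical content in both is Lemma~\ref{lem:22} combined with the weak lower scaling of $\varphi$ to show $U([a,\lambda a]) \gtrsim 1/\varphi(1/a)$ for a fixed $\lambda$, and the same mechanism handles the discrete small-distance regime via $U([1,\lambda]) = U([0,\lambda]) - U([0,1))$. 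One small caution: your parenthetical fallback $U(\{1\}) \geq \PP(T_1=1) = c(\varphi,1)$ is positive but not uniformly bounded below over the admissible class of $\varphi$, so the Lemma~\ref{lem:22}-plus-scaling route you also describe is the one to use there.
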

\begin{proof}
	Let us first consider $f(d(x,y)) \geq \inf\mathcal{T}$, and $x\neq y$. Hence, $d(x, y) \geq \inf \calT$. Now,
	by \eqref{eq:33} we obtain
	\begin{align*}
		V(x,d(x,y)) G_\varphi(x,y)
		&=
		V(x,d(x,y)) \int_{[0,\infty)} p(u; x,y) U({\rm d} u) \\
		&\geq  
		c_2 \int_{[f(d(x,y)),\infty)} \frac{V(x,d(x,y))}{V(x,f^{-1}(u))} U({\rm d} u)\\
		&\geq 
		c_2 C_1^{-1}
		\int_{[f(d(x,y)),\infty)} \left(\frac{d(x,y)}{f^{-1}(u)}\right)^\gamma U({\rm d} u)\\
		&=
		\gamma c_2 C_1^{-1} d(x,y)^\gamma
		\int^\infty_{d(x,y)}\frac{U([f(d(x,y)),f(s)])}{s^{\gamma+1}}{\: \rm d} s.
	\end{align*}
	Next, by Lemma \ref{lem:22} for $s > d(x,y)$ we get
	\[
		U\big([f(d(x,y),f(s)]\big)
		\geq
		\frac{c_5}{\varphi(1/f(s))} - \frac{c_6}{\varphi(1/f(d(x, y)))}. 
	\]
	Since, by the lower scaling property of $\varphi$ and $f$, we have
	\begin{align*}
		\frac{\varphi(1/f(s))}{\varphi(1/f(d(x, y)))}
		&\leq C_4  \bigg(\frac{f(d(x, y))}{f(s)} \bigg)^\beta \\
		&\leq C_4 c_3^{-\beta} \bigg(\frac{d(x, y)}{s} \bigg)^{\beta \delta}
	\end{align*}
	we obtain
	\[
		U\big([f(d(x,y),f(s)]\big)
		\geq
		\frac{c_5}{\varphi(1/f(s))} \bigg(1 - C_4 c_3^{-\beta} \bigg(\frac{d(x, y)}{s} \bigg)^{\beta \delta} \bigg).
	\]
	Therefore, for $s \geq C_6 d(x, y)$, where
	\[
		C_6 = \big(2 C_4 c_3^{-\beta}\big)^{1/(\beta \delta)},
	\]
	we have
	\[
		U\big([f(d(x,y),f(s)]\big) \geq \frac{c_5}{2} \frac{1}{\varphi(1/f(s))}.
	\]
	Hence, by monotonicity of $f$ and $\varphi$, we obtain
	\begin{align*}
		V(x,d(x,y)) G_\varphi(x,y) 
		&\geq 
		c_7 d(x,y)^\gamma \int^\infty_{C_6 d(x,y)} \frac{1}{\varphi(1/f(s)) s^{\gamma+1}} {\: \rm d} s\\
		&\geq c_8 \frac{1}{\varphi(1/f(d(x,y)))}.
	\end{align*}
	In particular, this completes the proof in the case when $\calT = (0, \infty)$. If $\mathcal{T}=\mathbb{N}$, we need to
	consider $f(d(x,y))\leq 1$. In view of Lemma \ref{lem:22}, there is $C_9 \geq 1$, such that for all $\lambda \geq 1$,
	\[
		C_9^{-1} \frac{1}{\varphi(1/\lambda)} \leq U\big([0, \lambda]\big) \leq C_9
		\frac{1}{\varphi(1/\lambda)}.
	\]
	We select $\lambda \geq 1$ such that
	\[
		\varphi(1/\lambda) = \frac{2}{3} C_9^{-2}.
	\]
	Since $\varphi(1) = 1$, we get
	\begin{align*}
		U\big([1, \lambda]\big) 
		&=
		U\big([0, \lambda]\big) - U\big([0, 1)\big) \\
		&\geq 
		C_9^{-1} \frac{1}{\varphi(1/\lambda)} - C_9 \geq \frac{C_9}{2}.
	\end{align*}
	Hence,
	\begin{align*}
		G_\varphi(x,y)
		&\geq
		c_2\int_{[f(d(x,y)),\infty)} \frac{1}{V(x,f^{-1}(u))}U({\rm d}u)\\
		&\geq
		c_2\int_{[1, \lambda]} \frac{1}{V(x,f^{-1}(u))}U({\rm d}u)\\
		&\geq
		\frac{c_2 C_9}{2} \frac{1}{V(x,f^{-1}(\lambda))},
	\end{align*}
	which together with the scaling properties of $V$ and $f$ completes the proof.
\end{proof}

\begin{theorem}
	\label{thm:4}
	Assume that there exist $C_1 \geq 1$ and $\gamma_2 \geq \gamma_1 > 0$ such that for all $x \in M$,  $\lambda > 1$, and 
	$r > \frac{1}{2}\inf \mathcal{T}$,
	\[
		C_1^{-1} \lambda^{\gamma_1} \leq \frac{V(x,\lambda r)}{V(x,r)} \leq C_1 \lambda^{\gamma_2}
	\]
	Suppose that there exist $C_2 \geq 1$, such that for all $x, y \in M$ and $t \in \calT$,
	\[
		C_2^{-1} \frac{1}{V(x,f^{-1}(t))} \ind{\{ f(d(x,y)) \leq t\}} 
		\leq 
		p(t; x,y) 
		\leq
		C_2 \frac{1}{V(x, f^{-1}(t) \vee d(x, y) )} 
	\]
	where $f$ is positive increasing function with $f(0) = 0$, belonging to $\WLSCINF{\delta_1}{c_3}{0} \cap
	\WUSCINF{\delta_2}{C_4}{0}$ for certain $c_3 \in (0, 1]$, $C_4 \in [1, \infty)$ and $\delta_2 \geq \delta_1 > 0$.
	If $\gamma_1/\delta_2>1$ and $\varphi$ belongs to $\WLSC{\beta}{c_2}{1/\inf\mathcal{T}}$ for certain $C_5 \in [1, \infty)$
	and $\beta > 0$, then
	\[
		G_\varphi(x,y) 
		\approx
		\min\left\{\frac{1}{V(x,d(x,y))\varphi(1/f(d(x,y)))},\frac{1}{V(x,f^{-1}(\inf \calT))}\right\}.
	\]
\end{theorem}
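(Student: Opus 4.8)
The estimate is two-sided, and for the lower bound there is essentially nothing to do: after the harmless normalisation $f(1)=1$, the hypotheses of Theorem~\ref{thm:4} contain those of Proposition~\ref{prop:103} (with $\gamma=\gamma_2$, $c_2=C_2^{-1}$, $\delta=\delta_1$), whose conclusion is precisely
\[
	G_\varphi(x,y)\ge c\min\left\{\frac{1}{V(x,d(x,y))\varphi(1/f(d(x,y)))},\ \frac{1}{V(x,f^{-1}(\inf\calT))}\right\}.
\]
So all the work lies in the matching upper bound, which I would obtain from Proposition~\ref{prop:102}.

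Fix $x\neq y$, set $W(s)=C_2^{-1}V(x,f^{-1}(s))$ for $s\ge\inf\calT$, and let $t^\ast$ be the largest element of $\calT$ not exceeding $\max\{\inf\calT,f(d(x,y))\}$. Three points need checking. \emph{(a)} $p(s;x,y)\le 1/W(s)$ for all $s\in\calT$: immediate from $p(s;x,y)\le C_2\,V(x,f^{-1}(s)\vee d(x,y))^{-1}$ and monotonicity of $V(x,\cdot)$. \emph{(b)} $p(s;x,y)\le 1/W(t^\ast)$ for all $s\in\calT$: since $\sup_{s\in\calT}p(s;x,y)\le C_2\,V(x,f^{-1}(\inf\calT)\vee d(x,y))^{-1}$, this reduces to $f^{-1}(t^\ast)\le f^{-1}(\inf\calT)\vee d(x,y)$, which holds by the choice of $t^\ast$. \emph{(c)} $W$ is non-decreasing and lies in $\WLSCINF{\gamma_1/\delta_2}{c_1}{\inf\calT}$ for some $c_1>0$: as $f\in\WUSCINF{\delta_2}{C_4}{0}$ the inverse $f^{-1}$ has weak lower scaling at infinity of index $1/\delta_2$, and the left-hand doubling bound gives $V(x,\cdot)$ weak lower scaling of index $\gamma_1$, so $W$ has weak lower scaling of index $\gamma_1/\delta_2$, which is $>1$ by hypothesis --- exactly the requirement $\delta>1$ of Proposition~\ref{prop:102}. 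That proposition then yields
\[
	G_\varphi(x,y)\le\frac{c\,C_2}{V(x,f^{-1}(t^\ast))\,\varphi(1/t^\ast)}.
\]

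It remains to show the right-hand side is $\le C\min\{A,B\}$ with $A=V(x,d(x,y))^{-1}\varphi(1/f(d(x,y)))^{-1}$ and $B=V(x,f^{-1}(\inf\calT))^{-1}$. Since $t^\ast\asymp\max\{\inf\calT,f(d(x,y))\}$, the scaling of $f^{-1}$, of $V(x,\cdot)$ and of $\varphi$ (together with $\varphi(\lambda\theta)\ge\lambda\varphi(\theta)$) show that the displayed bound is $\asymp A$ when $f(d(x,y))\ge\inf\calT$ and $\asymp B$ when $f(d(x,y))<\inf\calT$; the latter, since $\varphi(1)=1$, occurs only for $\calT=\NN$, while for $\calT=(0,\infty)$ one has $f^{-1}(\inf\calT)=0$, $B=\infty$, and the minimum is simply $A$. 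To upgrade each of these to $\le C\min\{A,B\}$ one invokes $\gamma_1/\delta_2>1$ a second time: combining $V(x,d(x,y))\ge c\,(d(x,y)/f^{-1}(\inf\calT))^{\gamma_1}V(x,f^{-1}(\inf\calT))$, the concavity bound $\varphi(1/f(d(x,y)))\ge f(d(x,y))^{-1}\varphi(1)$, and $f(d(x,y))\le C_4(d(x,y)/f^{-1}(\inf\calT))^{\delta_2}\inf\calT$ shows that $V(x,d(x,y))\varphi(1/f(d(x,y)))$ differs from $V(x,f^{-1}(\inf\calT))$ only by a power of $d(x,y)/f^{-1}(\inf\calT)$ with the positive exponent $\gamma_1-\delta_2$; hence $A\le CB$ when $d(x,y)\ge f^{-1}(\inf\calT)$ and $B\le CA$ in the reverse range, and in either case $G_\varphi(x,y)\le C\min\{A,B\}$.

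The main obstacle is this last reconciliation of the single quantity produced by Proposition~\ref{prop:102} with the two-term minimum --- in particular the discrete-time corner $d(x,y)<f^{-1}(\inf\calT)$, where one needs the two terms of the minimum to be comparable. It is precisely here, beyond its role in the scaling of $W$, that the standing assumption $\gamma_1/\delta_2>1$ is essential, and it is also why the availability of volume doubling for $r>\tfrac12\inf\calT$ (hence the normalisation $f(1)=1$, and for $\calT=\NN$ the uniformly discrete setting, in which $d(x,y)$ stays bounded below) is what makes the bookkeeping go through.
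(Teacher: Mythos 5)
Your proposal follows the paper's own route exactly: the lower bound is delegated to Proposition~\ref{prop:103}, the upper bound comes from Proposition~\ref{prop:102} applied with $W(s)=C_2^{-1}V(x,f^{-1}(s))$ and $t$ chosen near $f(d(x,y))\vee\inf\calT$. Your verifications (a)--(c) of the hypotheses of Proposition~\ref{prop:102} are correct, and your choice of $t^\ast$ as the largest element of $\calT$ below $\max\{\inf\calT,f(d(x,y))\}$ is in fact \emph{more} careful than the paper, which simply takes $t=f(d(x_0,y_0))\vee\inf\calT$, a quantity that need not lie in $\calT$ when $\calT=\NN$ (Proposition~\ref{prop:102} requires $t\in\calT$). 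You are also right that the normalisation $f(1)=1$ must be added, since Proposition~\ref{prop:103} assumes it while the theorem statement does not.

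Where you go beyond the paper is the final ``reconciliation'' paragraph, and you correctly identify that some argument is required there: the paper stops after obtaining $G_\varphi\lesssim A$ when $f(d(x,y))>\inf\calT$ and $G_\varphi\lesssim B$ otherwise (in your notation), and tacitly treats the resulting bound as already of the form $\lesssim\min\{A,B\}$. Your explanation of why $\gamma_1/\delta_2>1$ makes the relevant term of the minimum dominant is the right idea. Two small bookkeeping slips there are worth flagging, though they do not affect the conclusion: for $d(x,y)<f^{-1}(\inf\calT)$ the reverse-doubling inequality you quote, $V(x,d)\ge c(d/f^{-1}(\inf\calT))^{\gamma_1}V(x,f^{-1}(\inf\calT))$, runs in the opposite direction (one needs the \emph{upper} doubling bound with $\gamma_2$ together with the lower scaling of $f$ with index $\delta_1$ and the subadditivity $\varphi(u)\le(1+u)\varphi(1)$, not concavity, giving an exponent $\gamma_1-\delta_1$, not $\gamma_1-\delta_2$); alternatively, since in the uniformly discrete setting $d(x,y)$ ranges over a compact subinterval of $[\tfrac12\inf\calT,f^{-1}(\inf\calT)]$ in this case, the comparison $B\lesssim A$ follows just from the boundedness of $V(x,f^{-1}(1))/V(x,d)$ and $\varphi(1/f(d))$ there, without any exponent counting at all. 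Your closing remark correctly notes that $\gamma_1/\delta_2>1$ plus discreteness (or, for $\calT=(0,\infty)$, the fact that reverse doubling down to $r>0$ forces $V(x,0)=0$ so $B=\infty$) is precisely what makes the bookkeeping close.
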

\begin{proof}
	Thanks to Proposition \ref{prop:103} it remains to prove the upper estimates only. Fix $x_0, y_0 \in M$, $x_0 \neq y_0$.
	Let $t = f(d(x_0, y_0)) \vee (\inf \calT)$ and $W(s) = V(x_0, f^{-1}(s))$. By Proposition \ref{prop:102}, we get
	\begin{equation}
		\label{eq:35}
		G_\varphi(x_0, y_0) \leq C_6 \frac{1}{V(x_0, f^{-1}(t)) \varphi(1/t)}.
	\end{equation}
	Hence, for $f(d(x_0, y_0)) > \inf \calT$, we obtain
	\[
		G_\varphi(x_0, y_0) \leq C_6 \frac{1}{V(x_0, d(x_0, y_0)) \varphi(1/f(d(x_0, y_0)))}.
	\]
	Since the constant $C_6$ is independent of $x_0$ and $y_0$, the theorem follows in the case when $\calT = (0, \infty)$. 
	If $\calT = \NN$, by \eqref{eq:35}, for $f(d(x_0, y_0)) \leq 1$ we get
	\[
		G_\varphi(x_0, y_0) \leq C_6 \frac{1}{V(x_0, f^{-1}(1))}
	\]
	which completes the proof.
\end{proof}

\section{Stability of heat kernel estimates}
\label{sec:3}
In this section we consider stability of heat kernel estimates for Markov chains, that is when $\calT = \NN$. In this case
we need to assume that the state space is also discrete. To be more precise, let $(M, d, \mu)$ be a discrete metric measure
space that is: $(M, d)$ is discrete metric space, and $(M, \calB, \mu)$ is measure space where $\calB$ denote $\sigma$-algebra
of Borel sets and $\mu$ is a Radon measure on $(M, \calB)$. We assume that the support of $\mu$ equals $M$.

For $x \in M$ and $r > 0$, we set
\[
	V(x, r) = \mu\big(B(x, r)\big)
\]
where
\[
	B(x, r) = \big\{y \in M : d(x, y) \leq r \big\}.
\]
We assume that there are $C_V \geq 1$, $r_0 > 0$, and $\gamma_2 \geq \gamma_1 > 0$, such that for all $x \in M$
\begin{subequations}
\begin{equation}
	\label{eq:A:44}
	\frac{V(x, R)}{V(x, r)} \leq C_V \bigg(\frac{R}{r}\bigg)^{\gamma_2}
	\qquad \text{for all } R \geq r > 0,
\end{equation}
and 
\begin{equation}
	\label{eq:A:42}
	C_V^{-1} \bigg(\frac{R}{r} \bigg)^{\gamma_1} \leq \frac{V(x, R)}{V(x, r)}
	\qquad\text{for all } R \geq r \geq r_0,
\end{equation}
and
\begin{equation}
	\label{eq:A:23}
	V(x, r) = \mu(\{x\})
	\qquad \text{for all } r_0 > r \geq 0.
\end{equation}
\end{subequations}
Let $\psi: (0, \infty) \rightarrow (0, \infty)$ be increasing function such that $\psi(1) = 1$ and $\psi(0) = 1/2$, and
there are $C_\psi \geq 1$ and $\beta_2 \geq \beta_1 > 0$,
\begin{subequations}
	\begin{equation}
		\label{eq:A:46}
		\frac{\psi(R)}{\psi(r)} \leq C_\psi \bigg(\frac{R}{r}\bigg)^{\beta_2}
		\qquad\text{for all } R \geq r > 0,
	\end{equation}
	and
	\begin{equation}
		\label{eq:A:45}
		C_\psi^{-1}  \bigg(\frac{R}{r}\bigg)^{\beta_1} \leq 
		\frac{\psi(R)}{\psi(r)}
		\qquad\text{for all } R \geq r \geq 1.
	\end{equation}
\end{subequations}
A jump kernel is a function $J: M \times M \rightarrow [0, \infty)$ such that $J(x, y) = J(y, x)$ for all $x, y \in M$, and 
\[
	\int_M J(x, y)\, \mu({\rm d} y) = 1.
\]
We say that the jump kernel $J$ satisfies \eqref{jump} if 
\begin{equation}
	\label{jump}
	\tag{$J_\psi$}
	J(x, y) \approx \frac{1}{V(x, d(x, y)) \psi(d(x, y))}
\end{equation}
uniformly with respect to $x, y \in M$. It is easy to see that \eqref{jump} implies that there is $c > 0$ such that 
for all $x_0 \in M$ and $r > 0$,
\begin{equation}
	\label{eq:A:7}
	\int_{B(x_0, r)^c} J(x_0, y) \, \mu({\rm d} y) \leq \frac{c}{\psi(r)}.
\end{equation}
Given the jump function $J$ we introduce a Markov chain $\mathbf{X} = (X_n : n \in \NN_0)$ as
\[
	\PP(X_{n+1} = y \mid X_n = x) = J(x, y) \mu(\{y\}).
\]
The $n$-step transition function, called the discrete time heat kernel is defined as
\begin{align*}
	h(1; x, y) &= J(x, y), \\
	h(n; x, y) &= \int_M h(n-1; x, z) J(z, y) \, \mu({\rm d} z), \qquad n \geq 2.
\end{align*}
For each bounded function $f \in L^2(M, \mu)$, we define the Borel measure $\Gamma[f]$ on $M$ by setting
\[
	\Gamma[f](\{x\}) = \frac{1}{2}\int_M (f(x) - f(y))^2 J(x, y) \, \mu({\rm d} y)\mu(\{x\}).
\]
We say that cut-off Sobolev inequality holds true if for each $\epsilon > 0$ there is $C > 0$ such that 
for all $x_0 \in M$, $R \geq r > 0$ and $f \in L^2(M, \mu)$ there is a function $\phi: M \rightarrow [0, 1]$,
called cut-off function for $R+r > R > 0$,
\[
	\ind{B(x_0, R)} \leq \phi \leq \ind{B(x_0, R+r)}
\]
such that
\begin{equation}
	\label{cutoff}
	\tag{$\text{CS}_\psi$}
	\int_{B(x_0, R+2 r)} f^2 {\: \rm d}\Gamma[\phi]
	\leq
	\epsilon \int_{U'} \int_U (f(x) - f(y))^2 J(x, y) \, \mu({\rm d} x) \, \mu({\rm d} y) 
	+\frac{C}{\psi(r)} \int_{B(x_0, R+2r)} f^2 {\: \rm d} \mu
\end{equation}
where
\[
	U = \big\{y \in M : R \leq d(x_0, y) \leq R+r\big\},
	\qquad
	U' = \big\{y \in M : R- r \leq d(x_0, y) \leq R+2r\big\}. 
\]
In this section we show that the jump function satisfies \eqref{jump} and the cut-off Sobolev inequality \eqref{cutoff}
holds true if and only if the discrete time heat kernel $h$ satisfies
\begin{equation}
	\label{Dheat}
	\tag{$\text{DHK}_\psi$}
	h(n; x, y) 
	\approx
	\min\bigg\{\frac{1}{V(x, \psi^{-1}(n))}, \frac{n}{V(x, d(x, y)) \psi(d(x, y))} \bigg\}
\end{equation}
uniformly with respect to $x, y \in M$ and $n \in \NN$.

Let us first assume that the discrete time heat kernel satisfies \eqref{Dheat}. We can repeat the proof of
\cite[Lemma 2.7]{Chen2016} to show that there is $c > 0$ such that for all $x_0 \in M$, $r > 0$ and $k\in\NN$,
\[
	\PP^{x_0}\big(\tau_{B(x_0, r)} \leq k\big) \leq c \frac{k}{\psi(r)}.
\]
Using fact that $\tau_{B(x_0, r)}\geq 1$ we obtain that for any $\lambda > 0$,
\[
	\PP^{x_0}\big(\tau_{B(x_0, r)} \leq \lambda\big) \leq c \frac{\lambda}{\psi(r)}.
\]
Hence, following the proof of \cite[Lemma 3.4]{Chen2016} we show that for all $R \geq r > 0$ the function
\[
	h(x) = \EE^x \Big(\sum_{0 \leq n \leq \tau_{D_0}} \ind{D_1}(X_n) e^{-\lambda n}\Big)
\]
where $\lambda = \frac{1}{\psi(r)}$,
\[
	\begin{aligned}
		D_0 &= \big\{y \in M : R+ \tfrac{1}{10} r < d(x_0, y) < R + \tfrac{9}{10}r \big\} \\
		D_1 &= \big\{y \in M : R+ \tfrac{1}{5} r < d(x_0, y) < R + \tfrac{4}{5}r \big\} \\
		D_2 &= \big\{y \in M : R+ \tfrac{2}{5} r < d(x_0, y) < R + \tfrac{3}{5}r \big\} \\
	\end{aligned}
\]
satisfies $\supp h \subseteq \overline{D_0}$, $h(x) \leq 3\psi(r)$ for all $x \in M$, and there is $c > 0$ such that
\[
	h(x) \geq c \psi(r) \qquad\text{for all } x \in D_2.
\]
Now, the proof of \cite[Proposition 3.6]{Chen2016} shows that there are $C_1, C_2 > 0$ such that for all 
$x_0 \in M$ and $R \geq r > 0$ there is $\phi$ satisfying
\begin{equation}
	\label{eq:A:10}
	\ind{B(x_0, R)} \leq \phi \leq \ind{B(x_0, R+r)},
\end{equation}
and such that for all $f \in L^2(M, \mu)$,
\begin{equation}
	\label{eq:A:8}
	\int_{U'} f^2 {\:\rm d} \Gamma[\phi] 
	\leq C_1 \int_{U'} \int_U (f(x) - f(y))^2 J(x, y) \, \mu({\rm d} x) \, \mu({\rm d} y) 
	+
	\frac{C_2}{\psi(r)} \int_{U'} f^2 {\: \rm d} \mu
\end{equation}
where
\[
	U = \big\{y \in M : R \leq d(x_0, y) < R+r \big\},
	\qquad
	U' = \big\{y \in M : R - \tfrac{1}{2} r \leq d(x_0, y) < R + \tfrac{3}{2}r \big\}.
\]
Next, by combining the proofs of \cite[Proposition 2.3 (2) and (4)]{Chen2016} we see that \eqref{eq:A:8} implies that
there are $C_1, C_2 > 0$ such that for all $x_0 \in M$ and $R \geq r > 0$, there is $\phi$ satisfying 
\eqref{eq:A:10} and
\[
	\int_{B(x_0, R + 2r)} f^2 {\: \rm d} \Gamma[\phi] 
	\leq
	C_1 \int_{U''} \int_U (f(x) - f(y))^2 J(x, y) \mu({\rm d} x) \mu({\rm d} y)
	+
	\frac{C_2}{\psi(r)} \int_{B(x_0, R+2r)} f^2 {\: \rm d} \mu
\]
where
\[
	U'' = \big\{y \in M : R - r \leq d(x_0, y) < R + 2 r \big\}.
\]
Lastly, the reasoning as in the proof of \cite[Proposition 2.4(1)]{Chen2016} and \cite[Proposition 2.3(1)]{Chen2016}
leads to \eqref{cutoff}.

Now, let us consider the reverse implication. Let $(N_t : t \geq 0)$ be the standard Poisson process independent of the Markov
chain $\mathbf{X}=(X_n : n \in \NN_0)$. Let $p$ be the density of the transition probability of the process $Y_t = X_{N_t}$
that is
\begin{equation}
	\label{eq:A:18}
	\begin{aligned}
	p(t; x, y)
	&= \sum_{k = 0}^\infty h(k; x, y) \PP(N_t = k) \\
	&= \sum_{k = 0}^\infty h(k; x, y) \frac{e^{-t} t^k}{k!}.
	\end{aligned}
\end{equation}
Then $(P_t : t > 0)$  where
\[
	P_t f(x) = \int_M p(t; x, y) f(y) \mu({\rm d} y),
	\qquad\text{for } f \in L^2(M, \mu),
\]
is the semigroup associated to the Dirichlet form $(\calE, \calD)$, $\calD = L^2(M, \mu)$ which is defined as
\[
	\calE(f, g) = \frac{1}{2}\int_M \int_M
	(f(x) - f(y))(g(x) - g(y)) J(x, y) \, \mu({\rm d} x) \, \mu({\rm d} y).
\]
We claim that there is $c > 0$ such that for all $x, y \in M$, $t > 0$,
\begin{equation}
	\label{eq:A:48}
	p(t; x, y) \leq c \min\left\{\frac{1}{V(x, \psi^{-1}(1/2 \vee t))}, \frac{1/2 \vee t}{V(x, d(x, y)) 
	\psi(d(x, y))} \right\}.
\end{equation}
The first term on the right hand-side requires restriction to the range of $\psi$ which is $[1/2, \infty)$. Hence, $t$ cannot
be arbitrary close to zero. Indeed, otherwise for fixed $x \in M$ and $0 < t < 1/2$,
\[
	p(t; x, x) \leq 2 c \frac{t}{V(x, 0)},
\]
thus taking $t$ approaching zero we conclude that
\[
	\limsup_{t \to 0^+} p(t; x, x) = 0.
\]
This contradicts the estimate $p(t; x, x) \geq e^{-t} J(x,x)$ which is a consequence of \eqref{eq:A:18}. For the same reason
the estimate in \cite[Theorem 3.2]{Murugan2015} cannot be valid for all $t > 0$.

The proof of \eqref{eq:A:48} is closely related to \cite{Chen2016} and we express only the necessary changes. Since the jump
function satisfies \eqref{jump}, by the proof of \cite[Lemma 4.1]{Chen2016} we obtain that there is $c > 0$ such that for all
$r > 0$, and $f \in L^1(M, \mu)$,
\begin{equation}
	\label{eq:A:1}
	\|f\|_{L^2}^2 \leq c \bigg(\frac{\|f\|_{L^1}^2}{\inf_{x \in \supp f} V(x, r)} + \psi(r) \calE(f, f) \bigg).
\end{equation}
The proof of \cite[Proposition 3.1.4]{Boutayeb2015} shows that \eqref{eq:A:1} implies that there are $c > 0$ and
$\gamma > 0$ such that for all $x_0 \in M$, $r > 0$ and any function $f$ with support contained in $B(x_0, r)$,
\begin{equation}
	\label{eq:A:2}
	\|f\|_{L^2}^{2+2\gamma} \leq \frac{c}{V(x_0, r)^\gamma} \|f\|_{L^1}^{2\gamma}
	\Big(\|f\|_{L^2}^2 + \psi(r) \calE(f, f)\Big).
\end{equation}
Next, \eqref{eq:A:2} implies that there are $c > 0$ and $\gamma > 0$ such that for all $x_0 \in M$, $r > 0$ and any function
$f$ with support contained in $B(x_0, r)$,
\begin{equation}
	\label{eq:A:3}
	\|f\|_{L^2}^{2+2\gamma} \leq c \frac{\psi(r)}{V(x_0, r)^\gamma} \|f\|_{L^1}^{2\gamma} \calE(f, f).
\end{equation}
Indeed, for $r \geq r_0$ it follows by the proof of \cite[Proposition 3.4.1]{Boutayeb2015}. For $0 < r \leq r_0$ we can use
monotonicity of $V$ and $\psi$ and $\psi(0) = \frac{1}{2}$.

Then by the proof of \cite[Proposition 3.3.2]{Boutayeb2015} we can deduce the following variant of the Faber--Krahn 
inequality: there are $c > 0$ and $\gamma > 0$ such for all $x_0 \in M$, $r > 0$ and relatively compact open
set $U \subset B(x_0, r)$,
\begin{equation}
	\label{eq:A:4}
	\lambda_1(U) \geq \frac{c}{\psi(r)} \bigg(\frac{V(x_0, r)}{\mu(U)}\bigg)^\gamma
\end{equation}
where
\[
	\lambda_1(U) = \inf\bigg\{\frac{\calE(f, f)}{\|f\|_{L^2}^2} : f \in \calD \cap \calC_c(U), f \neq 0\bigg\}.
\]
Given $x_0 \in M$ and $r > 0$, set $B = B(x_0, r)$ and consider Dirichlet form $(\calE, \calD_B)$ where $\calD_B = L^2(B, \mu_B)$
and $\mu_B$ is the measure $\mu$ restricted to $B$. Let $(P^B_t : t > 0)$ denote the corresponding
semigroup. By \cite[Lemmas 5.5]{Grigoryan2014} from \eqref{eq:A:3} we can deduce that there is $c > 0$ such that for all 
$x_0 \in M$ and $r > 0$, the heat kernel $p_B$ for the Dirichlet form $(\calE, \calD_B)$ satisfies
\begin{equation}
	\label{eq:A:5}
	\max_{x, y \in B} p_B(t; x, y) \leq c \frac{1}{V(x_0, r)} \bigg(\frac{\psi(r)}{t}\bigg)^{\frac{1}{\gamma}},
\end{equation}
for all $t > 0$. Next, the proof of \cite[Lemma 4.14]{Chen2016} shows that \eqref{eq:A:5} implies that there is $c > 0$
such that for all $x_0 \in M$, and $r > 0$,
\begin{equation}
	\label{eq:A:6}
	\EE^{x_0}\big[\tau_{\mathbf{Y}}(x_0, r)\big] \leq c \psi(r)
\end{equation}
where $\tau_{\mathbf{Y}}(x_0, r)$ denote the exit time from the ball $B(x_0, r)$, that is
\[
	\tau_{\mathbf{Y}}(x_0, r) = \inf\big\{t > 0  : Y_t \notin B(x_0, r) \big\}.
\]
Our next task is to show the lower estimate on $\EE^{x_0}[\tau_{\mathbf{Y}}(x_0, r)]$. Here the reasoning depends on variants
of \cite[Proposition 2.3 (5)]{Chen2016} and \cite[Corollary 4.12]{Chen2016}. Given $\rho > 0$, let $(\calE^{(\rho)}, \calD)$
be the Dirichlet form corresponding to the jump function
\[
	J^{(\rho)}(x, y) = 
	\begin{cases}
		J(x, y) & \text{if } d(x, y) \leq \rho, \\
		0 & \text{otherwise.}
	\end{cases}
\]
By repeating the arguments used in the proof of \cite[Proposition 2.3(1)]{Chen2016} we can deduce
from \eqref{cutoff} that for each $\epsilon > 0$ there is $C > 0$ such that for all $x_0 \in M$ and
$R \geq r > 0$ there is a function $\phi$ satisfying
\begin{equation}
	\label{eq:A:12}
	\ind{B(x_0, R)} \leq \phi \leq \ind{B(x_0, R+r)},
\end{equation}
such that for all $f \in L^2(M, \mu)$ and $\rho > 0$,
\begin{equation}
	\label{eq:A:14}
	\begin{aligned}
	\int_{B(x_0, R+2r)} f^2 {\: \rm d} \Gamma^{(\rho)}[\phi]
	&\leq \epsilon \int_{U'} \int_U (f(x) - f(y))^2 J^{(\rho)}(x, y) \, \mu({\rm d} x) \, \mu({\rm d} y) \\
	&\phantom{\leq}+
	\frac{C}{\psi(r \wedge \rho)} \int_{B(x_0, R+2r)} f^2 {\: \rm d} \mu.
	\end{aligned}
\end{equation}
Next, by the proof of \cite[Proposition 2.3(5)]{Chen2016} we see that there is $c > 0$ such that for all $x_0 \in M$, 
all $R \geq r > 0$ and $\rho > 0$, and every function $\phi$ satisfying \eqref{eq:A:12} and \eqref{eq:A:14} we have
\begin{equation}
	\label{eq:A:15}
	\calE^{(\rho)}(\phi, \phi) \leq c \frac{V(x_0, R+r)}{\psi(r)}.
\end{equation}
Now, let us justify the second statement, that is \cite[Corollary 4.12]{Chen2016}. To do so we need to reprove 
\cite[Section 4.2]{Chen2016} under our hypothesis. A function $u$ is $\calE^{(\rho)}$-subharmonic in $D \subset M$, if
there is $c\in \RR$ such that $u-c\in \calD$ and
\[
	\calE^{(\rho)}(u, f) \leq 0
\]
for all nonnegative $f \in L^2(D, \mu_D)$. 
\begin{lemma}
	\label{lem:A:1}
	Let $x_0 \in M$. For $s > 0$ we set $B_s = B(x_0, s)$. Let $\theta > 0$, $R \geq 2 r > 0$, and $\rho > 0$. 
	Let $u$ be $\calE^{(\rho)}$-subharmonic function in $B_{R+r}$. We set $v = (u - \theta)_+$. Let $\phi$ be a cut-off 
	function satisfying \eqref{eq:A:14} for $R \geq R-r > 0$. Then there $c > 0$ such that
	\[
		\int_{B_{R+r}} {\rm d} \Gamma[v \phi]
		\leq 
		\frac{c}{\psi(r \wedge \rho)} 
		\bigg\{
		1
		+
		\frac{1}{\theta} 
		\bigg(\frac{R+\rho}{r}\bigg)^{\gamma_2} \frac{1}{\mu(B_{R+\rho})} \int_{B_{R+\rho}} |u| {\: \rm d}\mu
		\bigg\}
		\int_{B_{R+r}} u^2 {\: \rm d} \mu.
	\]
	The constant $c$ is independent of $x_0$, $R$, $r$, $\rho$ and $\theta$.
\end{lemma}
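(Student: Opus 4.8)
The plan is to establish a Caccioppoli (energy) estimate by testing the $\mathcal{E}^{(\rho)}$-subharmonicity of $u$ against a truncation of $u$ multiplied by the square of a cut-off. I would introduce an auxiliary cut-off function $\eta$ equal to $1$ on $B_R$ and supported in $B_{R+r}$ (legitimate since $R \ge 2r$), and take $f = v\eta^2$ in the definition of subharmonicity. This $f$ is admissible: $f \ge 0$, and since $u - c \in \calD = L^2(M,\mu)$ while $\mu(B_{R+r}) < \infty$ by the doubling \eqref{eq:A:44}, we get $v\eta^2 \in L^2(B_{R+r}, \mu_{B_{R+r}})$; hence $\mathcal{E}^{(\rho)}(u, v\eta^2) \le 0$.

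Next I would expand the double integral defining $\mathcal{E}^{(\rho)}(u, v\eta^2)$ and split the region of integration according to whether $u(x)$ and $u(y)$ exceed $\theta$. On pairs with both $u(x), u(y) > \theta$ one has $u(x) - u(y) = v(x) - v(y)$, and the decomposition
\[
(v(x)-v(y))\big(v(x)\eta^2(x) - v(y)\eta^2(y)\big) = \tfrac12(v(x)-v(y))^2\big(\eta^2(x)+\eta^2(y)\big) + \tfrac12\big(v(x)^2-v(y)^2\big)\big(\eta^2(x)-\eta^2(y)\big)
\]
combined with Young's inequality bounds the integrand below by $\tfrac14(v(x)-v(y))^2(\eta^2(x)+\eta^2(y)) - C(v(x)^2+v(y)^2)(\eta(x)-\eta(y))^2$. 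On pairs where only one of $u(x), u(y)$ exceeds $\theta$ (say $u(x)>\theta\ge u(y)$) the integrand equals $(u(x)-u(y))v(x)\eta^2(x) \ge v(x)^2\eta^2(x) \ge 0$, which helps; when both are $\le\theta$ it vanishes. Feeding this into $\mathcal{E}^{(\rho)}(u,v\eta^2)\le0$, using $\eta\equiv1$ on $B_R\supseteq\supp(v\phi)$ and the elementary bound $(v(x)\phi(x)-v(y)\phi(y))^2 \le (v(x)-v(y))^2(\phi^2(x)+\phi^2(y)) + (v(x)^2+v(y)^2)(\phi(x)-\phi(y))^2$, I obtain a Caccioppoli-type inequality in which $\int_{B_{R+r}}\ud\Gamma^{(\rho)}[v\phi]$ together with a good energy of $v$ over $B_R$ are controlled by $C\int v^2\,\ud\Gamma^{(\rho)}[\phi]$, a nonlocal \emph{tail} term, and a long-jump term.

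The term $C\int v^2\,\ud\Gamma^{(\rho)}[\phi]$ is then disposed of by the cut-off Sobolev inequality \eqref{eq:A:14} with $f = v$: it is bounded by $\epsilon\int\!\int(v(x)-v(y))^2 J^{(\rho)}(x,y)\,\mu(\ud x)\,\mu(\ud y)$ over a pair of annuli lying near $\partial B_R$ — which, for $\epsilon$ small, is absorbed by the good energy of $v$ kept on the left — plus $\tfrac{C}{\psi(r\wedge\rho)}\int_{B_{R+r}}v^2\,\ud\mu$. The long jumps $d(x,y)>\rho$ are handled with \eqref{eq:A:7}, contributing at most $\tfrac{c}{\psi(\rho)}\int_{B_{R+r}}v^2\,\ud\mu$. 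Since $0\le v\le|u|$ and $\psi(\rho)^{-1}\le\psi(r\wedge\rho)^{-1}$, together these are $\le \tfrac{C}{\psi(r\wedge\rho)}\int_{B_{R+r}}u^2\,\ud\mu$, i.e.\ the first summand of the claimed bound.

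The remaining \emph{tail} term is the main obstacle. It collects the mixed pairs where $v(x)\eta^2(x)>0$ but $\eta(y)=0$ while $u(y)>\theta$ (and their reflections); there the sign of $(v(x)-v(y))v(x)\eta^2(x)$ is not controlled. The trick is to keep $\theta$ explicit: from $v(x)\eta^2(x)\le\theta^{-1}v(x)^2+\tfrac\theta4$, $v(y)\le|u(y)|$ and $\eta\le1$, this term is dominated by
\[
\frac1\theta\int_{B_{R+r}}v(x)^2\bigg(\int_{B(x,\rho)}|u(y)|\,J(x,y)\,\mu(\ud y)\bigg)\mu(\ud x)
\]
plus a lower-order piece. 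The inner integral is estimated via \eqref{jump}, the monotonicity of $\psi$ and of $V(x,\cdot)$, and the volume doubling \eqref{eq:A:44} — which lets one replace $V(x_0,r)^{-1}$ by $(\tfrac{R+\rho}{r})^{\gamma_2}\mu(B_{R+\rho})^{-1}$ and $B(x,\rho)$ by $B_{R+\rho}$ — giving the bound $\tfrac{c}{\psi(r\wedge\rho)}(\tfrac{R+\rho}{r})^{\gamma_2}\tfrac1{\mu(B_{R+\rho})}\int_{B_{R+\rho}}|u|\,\ud\mu$ times $\int_{B_{R+r}}v(x)^2\,\ud\mu$. Collecting all the pieces and using $\int_{B_{R+r}}v^2\le\int_{B_{R+r}}u^2$ yields the lemma. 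The two delicate points are the choice of the truncation $v=(u-\theta)_+$, which makes it possible to charge the unsigned cross term to $\theta^{-1}$ times an average of $|u|$, and the use of doubling to pass from the moving ball $B(x,\rho)$ to the fixed ball $B_{R+\rho}$.
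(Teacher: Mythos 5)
Your proposal follows the same overall strategy as the paper's proof: test the $\calE^{(\rho)}$-subharmonicity against a truncation $v=(u-\theta)_+$ multiplied by the square of a cut-off, obtain a Caccioppoli estimate via a case analysis in which the truncation makes the cross terms either nonnegative or identically zero, control the $\int v^2\,{\rm d}\Gamma^{(\rho)}[\phi]$ contribution by \eqref{eq:A:14} with $f=v$, account for the jumps of length $>\rho$ via \eqref{eq:A:7}, and bound the tail (the $x$-to-$B_{R+r}^c$ pairs) by combining $v(x)\le\theta^{-1}u(x)^2$ with \eqref{jump} and volume doubling to produce the factor $\theta^{-1}(\tfrac{R+\rho}{r})^{\gamma_2}\mu(B_{R+\rho})^{-1}\int_{B_{R+\rho}}|u|\,{\rm d}\mu$. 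All the right ingredients are there.

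The one place where you diverge, and where I'd push you to be more careful, is the introduction of a second cut-off $\eta$ (equal to $1$ on $B_R$, supported in $B_{R+r}$) as the test function, distinct from the given $\phi$ (which is $1$ on $B_{R-r}$ and supported in $B_R$). The paper avoids this entirely by testing directly against $\phi^2 v$, so that the ``good energy'' $\iint\phi(x)^2(v(x)-v(y))^2 J^{(\rho)}$ and the quadratic-form term $\int v^2\,{\rm d}\Gamma^{(\rho)}[\phi]$ produced by the expansion are exactly the objects for which \eqref{eq:A:14} is available. In your version, testing against $v\eta^2$ produces $\int v^2\,{\rm d}\Gamma^{(\rho)}[\eta]$ on the right, not $\int v^2\,{\rm d}\Gamma^{(\rho)}[\phi]$; you then also generate a separate $\int v^2\,{\rm d}\Gamma^{(\rho)}[\phi]$ when expanding $\int {\rm d}\Gamma^{(\rho)}[v\phi]$. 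So you actually have two such quadratic-form terms to dispose of, and you silently wrote only one. This is fixable — e.g.\ take $\eta$ to be a cut-off supplied by \eqref{eq:A:14} for the pair $(R,r)$, which costs only another $C/\psi(r\wedge\rho)$ — but it makes the bookkeeping noticeably heavier than the paper's, and it is an extra step you need to spell out. The cleanest repair is simply to take $\eta=\phi$; then your computation collapses onto the one in the paper.

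Two smaller remarks: your tail estimate ranges $x$ over $B_{R+r}$ (the support of $v\eta^2$) rather than over $B_R$ as in the paper, so the moving ball $B(x,\rho)$ sits inside $B_{R+r+\rho}$ rather than $B_{R+\rho}$; this is harmless by doubling but should be noted. Also, the long-jump correction gives a factor $\psi(\rho)^{-1}\le\psi(r\wedge\rho)^{-1}$ as you say, and the tail term is vacuous unless $r<\rho$ (for $x\in B_{R+r}$, $y\notin B_{R+r}$ one has $d(x,y)>r$ while $J^{(\rho)}$ vanishes for $d(x,y)>\rho$), which is what lets $\psi(r)^{-1}$ be replaced by $\psi(r\wedge\rho)^{-1}$; again you should say so.
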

\begin{proof}
	Since $M$ is discrete the function $v \phi^2$ is bounded. Moreover, $u$ is $\calE^{(\rho)}$-subharmonic function in
	$B_{R+r}$ thus $\supp v \phi^2 \subset B_R$. We have
	\[
		0 \geq 2\calE^{(\rho)}(u, \phi^2 v) = I_1 + 2 I_2
	\]
	where
	\[
		I_1 = \int_{B_{R+r}} \int_{B_{R+r}} \big(u(x) - u(y)\big) 
		\big(v(x) \phi(x)^2 - v(y)\phi(y)^2\big) J^{(\rho)}(x, y) \, \mu({\rm d} y) \, \mu({\rm d} x)
	\]
	and
	\[
		I_2 = 
		\int_{B_{R+r}} \int_{B_{R+r}^c} \big(u(x) \phi(x) - u(y)\phi(y)\big) \phi^2(x) v(x) J^{(\rho)}(x, y) 
		\, \mu({\rm d} y) \, \mu({\rm d} x).
	\]
	We notice that if $u(x) \geq u(y)$
	\[
		(u(x) - u(y)) v(y) = (v(x) - v(y)) v(y) \qquad\text{for all } x, y \in M.
	\]
	To estimate $I_1$ we observe that if $u(x) \geq u(y)$ then
	\begin{align*}
		&\big(u(x) - u(y)\big)\big(\phi(x)^2 v(x) - \phi(y)^2 v(y)\big) \\
		&\qquad\qquad=
		\big(u(x) - u(y)\big) \phi(x)^2 \big(v(x) - v(y) \big) + 
		\big(u(x) - u(y)\big) \big(\phi(x)^2 - \phi(y)^2\big) v(y) \\
		&\qquad\qquad\geq
		\phi(x)^2 \big(v(x) - v(y)\big)^2 +\big(v(x) - v(y)\big)\big(\phi(x)^2 - \phi(y)^2\big) v(y). 
	\end{align*}
	Since $ab \geq -t\frac{1}{8} a^2 - 2 b^2$ and $(a+b)^2 \leq 2 a^2 + 2 b^2$, we have
	\begin{align*}
		&\phi(x)^2 \big(v(x) - v(y)\big)^2 +\big(v(x) - v(y)\big)\big(\phi(x)^2 - \phi(y)^2\big) v(y) \\
		&\qquad\qquad\geq
		\phi(x)^2 \big(v(x) - v(y)\big)^2 - \frac{1}{8} \big(\phi(x) + \phi(y)\big)^2 \big(v(x) - v(y)\big)^2 
		- 2 v(y)^2 \big(\phi(x) - \phi(y)\big)^2 \\
		&\qquad\qquad\geq
		\frac{3}{4} \phi(x)^2 \big(v(x) - v(y)\big)^2 - \frac{1}{4} \phi(y)^2 \big(v(x) - v(y)\big)^2 
		-2 v(y)^2\big(\phi(x) - \phi(y)\big)^2,
	\end{align*}
	thus
	\begin{align}
		&\big(u(x) - u(y)\big)\big(\phi(x)^2 v(x) - \phi(y)^2 v(y)\big) \nonumber\\
		&\qquad\qquad\geq
		\frac{3}{4} \phi(x)^2 \big(v(x) - v(y)\big)^2 - \frac{1}{4} \phi(y)^2 \big(v(x) - v(y)\big)^2 
		-2 v(y)^2\big(\phi(x) - \phi(y)\big)^2.\label{eq:A:71}
	\end{align}
	if $u(y) \geq u(x)\geq \theta$ we have
	\begin{align*}
		&\big(u(x) - u(y)\big)\big(\phi(x)^2 v(x) - \phi(y)^2 v(y)\big) \\
		&\qquad\qquad=
		\phi(x)^2 \big(v(x) - v(y)\big)^2 +\big(v(x) - v(y)\big)\big(\phi(x)^2 - \phi(y)^2\big) v(y), 
	\end{align*}
	and therefore \eqref{eq:A:71} holds as well in this case. If $u(y) \geq u(x)$ and $u(x)<\theta$,
	\begin{align*}
		&\big(u(x) - u(y)\big)\big(\phi(x)^2 v(x) - \phi(y)^2 v(y)\big) \\
		&\qquad\qquad=
		\big(u(y) - u(x)\big) \phi(y)^2 v(y) \\
		&\qquad\qquad\geq
		v^2(y) \phi(y)^2. 
	\end{align*}
	Since $a^2 \geq \tfrac{3}{4}b^2-\tfrac{1}{4}a^2-2(a-b)^2$ we obtain \eqref{eq:A:71}. Thus
	\begin{align*}
		I_1 &\geq 
		\frac{1}{2} \int_{B_{R+r}} \int_{B_{R+r}} \phi(x)^2 \big(v(x) - v(y)\big)^2 J^{(\rho)}(x, y) 
		\, \mu({\rm d} y) \, \mu({\rm d} x) \\
		&\phantom{\geq}
		- 2  \int_{B_{R+r}} \int_{B_{R+r}} v(x)^2 \big(\phi(x) - \phi(y)\big)^2 J^{(\rho)}(x, y) 
		\, \mu({\rm d} y) \, \mu({\rm d} x).
	\end{align*}
	Next, we estimate $I_2$. Since
	\[
		\big(u(x) - u(y)\big) \phi^2(x) v(x) \geq
		\big(v(x) - v(y)\big) \phi^2(x) v(x) \geq - v(y) v(x).
	\]
	Let $x \in B_R$. Suppose that $r > 0$. Then by \eqref{jump}
	\begin{align*}
		\int_{B_{R+r}^c} v(y) J^{(\rho)}(x, y) \, \mu({\rm d} y)
		&\leq
		C_1
		\int_{B_{R+r}^c \cap B(x, \rho)} |u(y)| \frac{1}{V(x, r) \psi(r)} \, \mu({\rm d} y) \\
		&\leq
		C_1 
		\frac{1}{\psi(r \wedge \rho)} \frac{V(x_0, R+\rho)}{V(x, r)} \frac{1}{\mu(B_{R+\rho})}
		\int_{B_{R+\rho}} |u(y)| \, \mu({\rm d} y).
	\end{align*}
	Therefore, by \eqref{eq:A:44},
	\begin{align*}
		\frac{V(x_0, R+\rho)}{V(x, r)}
		&\leq
		\frac{V(x, 2R+\rho)}{V(x, r)} \\
		&\leq 
		C_V
		\bigg(\frac{2R+\rho}{r} \bigg)^{\gamma_2},
	\end{align*}
	and so
	\[
		 \int_{B_{R+r}^c} v(y) J^{(\rho)}(x, y) \, \mu({\rm d} y)
		 \leq
		 C_2
		 \frac{1}{\psi(r \wedge \rho)}  \bigg(\frac{R+\rho}{r} \bigg)^{\gamma_2} 
		 \frac{1}{\mu(B_{R+\rho})} \int_{B_{R+\rho}} |u(y)| \, \mu({\rm d} y).
	\]
	Consequently,
	\begin{align*}
		-I_2 &\leq 
		\int_{B_R} v(x) 
		\int_{B_{R+r}^c} v(y) J^{(\rho)}(x, y) \, \mu({\rm d} y) \, \mu({\rm d} x) \\
		&\leq
		\frac{C_4}{2\theta} \frac{1}{\psi(r \wedge \rho)}
		\bigg(\frac{R+\rho}{r}\bigg)^{\gamma_2}
		\bigg(\int_{B_R} u^2 {\: \rm d} \mu\bigg) 
		\frac{1}{\mu(B_{R+\rho})} \int_{B_{R+\rho}} \abs{u} {\: \rm d} \mu.
	\end{align*}
	Therefore,
	\begin{align}
		\nonumber
		0 &\leq 
		4 \int_{B_{R+r}} \int_{B_{R+r}} v(x)^2 \big(\phi(x) - \phi(y)\big)^2 J^{(\rho)}(x, y) 
		\, \mu({\rm d} y) \, \mu({\rm d} x) \\
		\nonumber
		&\phantom{\leq}-
		\int_{B_{R}} \int_{B_{R+r}} \phi(x)^2 \big(v(x) - v(y)\big)^2 J^{(\rho)}(x, y) 
		\, \mu({\rm d} y) \, \mu({\rm d} x) \\
		\nonumber
		&\phantom{\leq}
		+\frac{C_4}{\theta} \frac{1}{\psi(r \wedge \rho)}
		\bigg(\frac{R+\rho}{r}\bigg)^{\gamma_2}
		\bigg(\int_{B_R} u^2 {\: \rm d} \mu\bigg) 
		\frac{1}{\mu(B_{R+\rho})} \int_{B_{R+\rho}} \abs{u} {\: \rm d} \mu \\
		&
		\begin{aligned}
		\label{eq:A:73}
		&\leq
		4 \int_{B_{R+r}} v^2 {\: \rm d} \Gamma^{(\rho)}[\phi] 
		-
		\int_{B_{R}} \int_{B_{R+r}} \phi(x)^2 \big(v(x) - v(y)\big)^2 J^{(\rho)}(x, y) 
		\, \mu({\rm d} y) \, \mu({\rm d} x) \\
		&\phantom{\leq}
		+\frac{C_4}{\theta} \frac{1}{\psi(r \wedge \rho)}
		\bigg(\frac{R+\rho}{r}\bigg)^{\gamma_2}
		\bigg(\int_{B_R} u^2 {\: \rm d} \mu\bigg)
		\frac{1}{\mu(B_{R+\rho})} \int_{B_{R+\rho}} \abs{u} {\: \rm d} \mu.
		\end{aligned}
	\end{align}
	Next, we have
	\begin{align*}
		\int_{B_{R+r}} {\rm d}\Gamma[v \phi]
		&\leq
		\int_{B_{R+r}} \int_M \big(v(x) \phi(x) - v(y)\phi(y)\big)^2 J^{(\rho)}(x, y) \, \mu({\rm d} y) \, \mu({\rm d} x) \\
		&\phantom{\leq}+
		2 \int_{B_R} v^2(x) \phi^2(x) \bigg(\int_{B(x, \rho)^c} J(x, y) \, \mu({\rm d} y)\bigg) \, \mu({\rm d} x) \\
		&\phantom{\leq}+
		2 \int_M v^2(y) \phi^2(y) \bigg(\int_{B(y, \rho)^c} J(x, y) \, \mu({\rm d} x) \bigg) \, \mu({\rm d} y).
	\end{align*}
	Hence, by \eqref{eq:A:7},
	\begin{align}
		\nonumber
		\int_{B_{R+r}} {\rm d}\Gamma[v \phi]
		&\leq
		\int_{B_{R+r}} \int_{B_{R+r}}
		\big(v(x) \phi(x) - v(y)\phi(y)\big)^2 J^{(\rho)}(x, y) \, \mu({\rm d} y) \, \mu({\rm d} x) \\
		\nonumber
		&\phantom{\leq}+
		\int_{B_R} v^2(x) \phi^2(x) \bigg(\int_{B_{R+r}^c} J^{(\rho)}(x, y) \, \mu({\rm d} y)\bigg) \, \mu({\rm d} x)
		+
		\frac{C_5}{\psi(\rho)} \int_{B_R} v^2 {\: \rm d} \mu \\
		\label{eq:A:72}
		&
		\begin{aligned}
		&\leq
		2 \int_{B_{R+r}} v^2 {\rm d}\Gamma^{(\rho)}[\phi]
		+
		2 \int_{B_R} \int_{B_{R+r}} \phi^2(x) \big(v(x) - v(y)\big)^2 J^{(\rho)}(x, y)
		\, \mu({\rm d} y) \, \mu({\rm d} x) \\
		&\phantom{\leq}+
		\frac{2C_5}{\psi(r \wedge \rho)} \bigg(\int_{B_R} u^2 {\: \rm d} \mu\bigg). 
		\end{aligned}
	\end{align}
	By \eqref{eq:A:72} and \eqref{eq:A:73}, for any $\eta > 0$,
	\begin{align*}
		\eta \int_{B_{R+r}} {\rm d}\Gamma[v \phi]
		&\leq
		(2\eta+ 4) \int_{B_{R+r}} v^2 {\: \rm d}\Gamma^{(\rho)}[\phi] \\
		&\phantom{\leq}
		+(2\eta-1) \int_{B_{R}} \int_{B_{R+r}} \phi(x)^2 \big(v(x) - v(y)\big)^2 J^{(\rho)}(x, y) 
		\, \mu({\rm d} y) \, \mu({\rm d} x)\\
		&\phantom{\leq}+
		\frac{C_4}{\theta} \frac{1}{\psi(r \wedge \rho)}
		\bigg(\frac{R+\rho}{r}\bigg)^{\gamma_2}
		\bigg(\int_{B_R} u^2 {\: \rm d} \mu\bigg)
		\frac{1}{\mu(B_{R+\rho})} \int_{B_{R+\rho}} \abs{u} {\: \rm d} \mu \\
		&\phantom{\leq}+\frac{2C_5\eta}{\psi(r \wedge \rho)}
		\bigg(\int_{B_R} u^2 {\: \rm d} \mu\bigg).
	\end{align*}
	Now by \eqref{cutoff} we obtain
	\begin{align*}
		\int_{B_{R+r}} v^2 {\: \rm d} \Gamma[\phi]
		&\leq \frac{1}{8}
		\int_{B_{R}} \int_{B_{R+r}} \phi(x)^2 \big(v(x) - v(y)\big)^2 J^{(\rho)}(x, y) \, \mu({\rm d} y) \, \mu({\rm d} x) \\
		&\phantom{\leq}
		+\frac{C_6}{\psi(r \wedge \rho)} \bigg(\int_{B_R} u^2 {\: \rm d} \mu\bigg).
	\end{align*}
	Thus taking $\eta = \frac{2}{9}$ we get
	\begin{align*}
		\frac{2}{9} \int_{B_{R+r}} {\rm d}\Gamma[v \phi]
		&\leq
		\frac{C_4}{\theta} \frac{1}{\psi(r \wedge \rho)} \bigg(\frac{R+\rho}{r}\bigg)^{\gamma_2}
		\bigg(\int_{B_R} u^2 {\: \rm d} \mu\bigg)
		\frac{1}{\mu(B_{R+\rho})} \int_{B_{R+\rho}} \abs{u} {\: \rm d} \mu \\
		&\phantom{\leq}
		+\frac{C_7}{\psi(r \wedge \rho)} \bigg(\int_{B_R} u^2 {\: \rm d} \mu\bigg)
	\end{align*}
	proving the lemma.
\end{proof}
Next, we prove the following lemma.
\begin{lemma}
	\label{lem:A:2}
	Let $x_0 \in M$. For $s > 0$ we set $B_s = B(x_0, s)$. Let $\theta > 0$, $r_1 \geq r_2/2 > 0$, and $\rho > 0$. 
	Let $u$ be $\calE^{(\rho)}$-subharmonic function on $B_{r_1+r_2}$. We set $v = (u - \theta)_+$. Then there $c > 0$ such that
	\begin{align*}
		\int_{B_{r_1}} v^2 {\: \rm d} \mu
		&\leq
		\frac{c}{\theta^{2\gamma} \mu(B_{r_1+r_2})^{\gamma}}
		\bigg(\frac{r_1+r_2}{r_2 \wedge \rho}\bigg)^{\beta_2}
		\bigg\{1 + \frac{1}{\theta} \bigg(\frac{r_1+r_2/2+\rho}{r_2}\bigg)^{\gamma_2} 
		\bigg(\frac{1}{\mu(B_{r_1+r_2/2+\rho})} \int_{B_{r_1+r_2/2+\rho}} |u| {\: \rm d}\mu\bigg)\bigg\} \\
		&\phantom{\leq}\times
		\bigg(\int_{B_{r_1+r_2}} u^2 {\: \rm d} \mu\bigg)^{1+ \gamma}.
	\end{align*}
	The constant $c$ is independent of $x_0$, $r_1$, $r_2$, $\rho$ and $\theta$.
\end{lemma}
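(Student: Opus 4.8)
The plan is to carry out a single step of a De~Giorgi–type iteration: combine the energy estimate of Lemma~\ref{lem:A:1} with the Nash inequality \eqref{eq:A:3} applied to the function $w := v\phi$. First I would put $R = r_1 + r_2/2$ and $r = r_2/2$; the hypothesis $r_1 \geq r_2/2$ is exactly what gives $R \geq 2r$, and $B_{R+r} = B_{r_1+r_2}$, so $u$ is $\calE^{(\rho)}$-subharmonic on $B_{R+r}$ as Lemma~\ref{lem:A:1} demands. Under the standing assumption that \eqref{cutoff} holds there is a cut-off function $\phi$ with $\ind{B_{r_1}} \leq \phi \leq \ind{B_{r_1+r_2/2}}$ satisfying \eqref{eq:A:14} for the radii $R-r = r_1$ and $r$, uniformly in $\rho$. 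Since $\theta > 0$ one has $0 \leq v \leq \abs{u}$, hence $w^2 \leq u^2$ pointwise and $w$ is supported in $B_{r_1+r_2/2}$.

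Next I would bound the Dirichlet energy $\calE(w,w) = \int_M {\rm d}\Gamma[w]$. Lemma~\ref{lem:A:1} controls the part of this integral over $B_{r_1+r_2}$; for the complementary part, $w$ vanishes off $B_{r_1+r_2/2}$ and $d(x,y) > r_2/2$ whenever $y \in B_{r_1+r_2/2}$ and $x \notin B_{r_1+r_2}$, so by Tonelli's theorem and \eqref{eq:A:7},
\[
	\int_{B_{r_1+r_2}^c} {\rm d}\Gamma[w] \leq \frac{c}{\psi(r_2/2)} \int_{B_{r_1+r_2/2}} w^2 {\: \rm d}\mu \leq \frac{c}{\psi(r_2/2)} \int_{B_{r_1+r_2}} u^2 {\: \rm d}\mu.
\]
Adding the two contributions, using $\psi(r_2/2) \geq \psi\big((r_2/2)\wedge\rho\big)$ and that the bracket in Lemma~\ref{lem:A:1} is at least $1$, one gets $\calE(w,w) \leq c\,\psi\big((r_2/2)\wedge\rho\big)^{-1}\,\mathcal{J}\int_{B_{r_1+r_2}} u^2 {\: \rm d}\mu$, where $\mathcal{J}$ denotes the bracket appearing in the statement of the lemma (the factor $2^{\gamma_2}$ separating $\mathcal{J}$ from the bracket of Lemma~\ref{lem:A:1} is absorbed into $c$).

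Then I would apply \eqref{eq:A:3} to $w$ on the ball $B_{r_1+r_2/2}$, that is $\|w\|_{L^2}^{2+2\gamma} \leq c\,\psi(r_1+r_2/2)\,V(x_0,r_1+r_2/2)^{-\gamma}\,\|w\|_{L^1}^{2\gamma}\,\calE(w,w)$. The crucial move is to bound the $L^1$-norm through the $L^2$-norm rather than crudely: since $w$ is supported on $\{u > \theta\}\cap B_{r_1+r_2/2}$, the Cauchy--Schwarz and Chebyshev inequalities give
\[
	\|w\|_{L^1} \leq \|w\|_{L^2}\,\mu\big(\{u>\theta\}\cap B_{r_1+r_2/2}\big)^{1/2} \leq \frac{1}{\theta}\,\|w\|_{L^2}\,\Big(\int_{B_{r_1+r_2}} u^2 {\: \rm d}\mu\Big)^{1/2}.
\]
Substituting this into \eqref{eq:A:3}, cancelling $\|w\|_{L^2}^{2\gamma}$ (the case $w\equiv 0$ being trivial), and then inserting the bound for $\calE(w,w)$ leaves
\[
	\|w\|_{L^2}^2 \leq c\,\frac{\psi(r_1+r_2/2)}{\psi\big((r_2/2)\wedge\rho\big)}\,\frac{\mathcal{J}}{\theta^{2\gamma}\,V(x_0,r_1+r_2/2)^{\gamma}}\,\Big(\int_{B_{r_1+r_2}} u^2 {\: \rm d}\mu\Big)^{1+\gamma}.
\]
Since $\phi \equiv 1$ on $B_{r_1}$, the left-hand side is at least $\int_{B_{r_1}} v^2 {\: \rm d}\mu$. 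It then remains only to tidy the prefactor: replace $\psi(r_1+r_2/2)/\psi\big((r_2/2)\wedge\rho\big)$ by a constant multiple of $\big((r_1+r_2)/(r_2\wedge\rho)\big)^{\beta_2}$ via \eqref{eq:A:46} and $(r_2/2)\wedge\rho \geq \tfrac12(r_2\wedge\rho)$, and replace $V(x_0,r_1+r_2/2)$ by a constant multiple of $\mu(B_{r_1+r_2})$ via \eqref{eq:A:44} and $r_1+r_2 \leq 2(r_1+r_2/2)$; this gives exactly the asserted inequality.

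I expect the only genuinely delicate point to be matching the exponents: one must use \eqref{eq:A:3} in the self-improving Nash form and estimate $\|w\|_{L^1}$ by $\|w\|_{L^2}$ times the square root of the measure of the super-level set $\{u>\theta\}$ --- a naive $L^1$-bound would produce the wrong power of $\int_{B_{r_1+r_2}} u^2$ after the cancellation. Everything else is bookkeeping with the scaling hypotheses on $V$ and $\psi$, and the uniformity of the constant in $x_0, r_1, r_2, \rho, \theta$ is inherited from Lemma~\ref{lem:A:1}, \eqref{eq:A:3} and \eqref{eq:A:7}.
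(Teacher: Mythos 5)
Your proof is correct, and it is a close but genuinely distinct variant of the paper's argument. The paper applies the Faber--Krahn inequality \eqref{eq:A:4} to the super-level set $D = \{x \in B_{r_1+r_2/2} : u(x) > \theta\}$, bounds $\mu(D)$ by $\theta^{-2}\int_{B_{r_1+r_2}}u^2\,{\rm d}\mu$ via Markov, and feeds this into the Rayleigh quotient $\lambda_1(D)\int_D v^2\phi^2\,{\rm d}\mu \leq \calE(v\phi,v\phi) \leq 2\int_D {\rm d}\Gamma[v\phi]$, where the final inequality uses the symmetry of $J$ and the fact that $v\phi$ vanishes off $D$; thereby it never needs the off-ball contribution $\int_{B_{r_1+r_2}^c}{\rm d}\Gamma[v\phi]$. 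You instead go directly to the Nash-type inequality \eqref{eq:A:3}, bound $\|w\|_{L^1}$ by $\theta^{-1}\|w\|_{L^2}\big(\int u^2\big)^{1/2}$ via Cauchy--Schwarz and Chebyshev, and therefore must control the full Dirichlet energy $\calE(w,w)$, which you do by the additional split $\int_{B_{r_1+r_2}} + \int_{B_{r_1+r_2}^c}$, with the tail handled by \eqref{eq:A:7}. Both encode the same De~Giorgi step -- your Chebyshev on $\|w\|_{L^1}$ is the same dimensional information as the paper's Markov on $\mu(D)$, and \eqref{eq:A:4} is itself derived from \eqref{eq:A:3} -- so the exponents and prefactors come out identically. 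The trade-off: the paper's route, via $\lambda_1$ and the symmetry trick, spares one estimate, while yours is slightly more self-contained in that it bypasses the eigenvalue formulation. Your delicacy remark about the $L^1$-bound is apt; using $\|w\|_{L^1}\leq\|w\|_{L^2}\mu(B_{r_1+r_2/2})^{1/2}$ instead would indeed lose the $\theta^{-2\gamma}$ and the extra power of $\int u^2$ that make the iteration close.
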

\begin{proof}
	Let
	\[
		D = \{x \in B_{r_1+r_2/2} : u(x) > \theta\}.
	\]
	Let $\phi$ be a cut-off function satisfying \eqref{eq:A:14} for $r_1+r_2/2 > r_1 > 0$. Then, by Lemma \ref{lem:A:1},
	we get
	\[
		\int_{B_{r_1+r_2}} {\rm d} \Gamma[v \phi]
		\leq
		\frac{C_1}{\psi((r_2/2) \wedge \rho)} 
		\bigg\{
		1 + \frac{1}{\theta} \bigg(\frac{r_1+r_2/2+\rho}{r_2/2}\bigg)^{\gamma_2} 
		\frac{1}{\mu(B_{r_1+r_2/2+\rho})} \int_{B_{r_1+r_2/2+\rho}} |u| {\: \rm d} \mu
		\bigg\}
		I_0
	\]
	where
	\[
		I_0 = \int_{B_{r_1+r_2}} u^2 {\: \rm d} \mu.
	\]
	Moreover, by \eqref{eq:A:4},
	\[
		\lambda_1(D) \geq \frac{C_2}{\psi(r_1+r_2)} \bigg(\frac{V(x_0, r_1+r_2)}{\mu(D)} \bigg)^{\gamma},
	\]
	and since $v \phi$ is supported on $D$,
	\[
		\lambda_1(D) \int_D v^2 \phi^2 {\: \rm d} \mu
		\leq \calE(v \phi, v\phi) \leq 2 \int_D {\rm d}\Gamma[v \phi].
	\]
	Hence,
	\[
		\int_D {\rm d}\Gamma[v \phi]
		\geq
		\frac{C_2}{2} \frac{1}{\psi(r_1+r_2)} \mu(B_{r_1+r_2})^\gamma \mu(D)^{-\gamma}
		\int_D v^2 \phi^2 {\: \rm d} \mu.
	\]
	In view of the Markov inequality
	\[
		\mu(D) 
		\leq \frac{1}{\theta^2} \int_{B_{r_1+r_2/2}} u^2 {\: \rm d} \mu \leq \frac{I_0}{\theta^2}.
	\]
	Thus
	\begin{align*}
		\int_D {\rm d}\Gamma[v \phi]
		&\geq
		\frac{C_2}{2} \frac{1}{\psi(r_1+r_2)} \mu(B_{r_1+r_2})^\gamma \theta^{2\gamma} I_0^{-\gamma}
		\int_D v^2 \phi^2 {\: \rm d} \mu \\
		&\geq
		\frac{C_2}{2} \frac{1}{\psi(r_1+r_2)} \mu(B_{r_1+r_2})^\gamma \theta^{2\gamma} I_0^{-\gamma} 
		\int_{B_{r_1}} v^2 {\: \rm d} \mu.
	\end{align*}
	Since by \eqref{eq:A:46},
	\[
		\frac{\psi(r_1+r_2)}{\psi((r_2/2) \wedge \rho)} 
		\leq
		C'
		\bigg(\frac{r_1+r_2}{r_2 \wedge \rho} \bigg)^{\beta_2}
	\]
	the lemma follows.
\end{proof}

\begin{proposition}
	\label{prop:A:1}
	There is $c > 0$ such that for all $x_0 \in M$, $R > 0$, $\rho > 0$ and any $\calE^{(\rho)}$-subharmonic function on 
	$B(x_0, R)$,
	\begin{align*}
		\max_{B(x_0, R/2)}{u^2} 
		\leq
		c\bigg\{
		\bigg(&\frac{1}{\mu(B(x_0, R+\rho))} \int_{B(x_0, R+\rho)} |u| {\: \rm d} \mu\bigg)^2 \\
		&+
		\bigg(1 + \frac{R}{\rho}\bigg)^{\frac{\beta_2}{\gamma}} 
		\bigg(1+\frac{\rho}{R} \bigg)^{\frac{\gamma_2}{\gamma}}
		\frac{1}{\mu(B(x_0, R))} \int_{B(x_0, R)} u^2 {\: \rm d}\mu\bigg\}.
	\end{align*}
	The constant $\gamma$ is determined in \eqref{eq:A:4}.
\end{proposition}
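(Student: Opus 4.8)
This is a mean-value (local $L^\infty$) bound for $\calE^{(\rho)}$-subharmonic functions, and the plan is to derive it by a De~Giorgi--Moser iteration driven by Lemma~\ref{lem:A:2}. Throughout write $B_s=B(x_0,s)$. We may assume $R\ge r_0$: for $R<r_0$ the ball $B_{R/2}$ is the single point $\{x_0\}$ by \eqref{eq:A:23}, and the asserted inequality then holds trivially, its second term already dominating $u(x_0)^2$. It suffices to bound $\sup_{B_{R/2}}u$ from above, which is what the iteration delivers. Two preliminary observations: (i) $u-\theta$ is $\calE^{(\rho)}$-subharmonic in $B_R$ for every constant $\theta$, so Lemma~\ref{lem:A:2} applies to shifted copies of $u$ and to the truncations $(u-\theta)_+$; (ii) tracing the proofs of Lemmas~\ref{lem:A:1}--\ref{lem:A:2}, the quantity written $\int_{B_{r_1+r_2}}u^2\,{\rm d}\mu$ on the right-hand side of Lemma~\ref{lem:A:2} may be replaced by $\int_{B_{r_1+r_2}}(u-\theta)_+^2\,{\rm d}\mu$, every occurrence of $u^2$ there arising from bounding the non-negative truncation or its square from above.

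I would then fix interpolating data: radii $r_k=\tfrac R2(1+2^{-k})$, so that $r_0=R$, $r_k\downarrow R/2$, $r_k-r_{k+1}=R\,2^{-k-2}$; levels $\theta_k=M(1-2^{-k})$, where $M>0$ will be chosen subject to $M\ge\calI$ with $\calI=\mu(B_{R+\rho})^{-1}\int_{B_{R+\rho}}|u|\,{\rm d}\mu$; and the quantities
\[
   A_k=\frac1{\mu(B_{r_k})}\int_{B_{r_k}}(u-\theta_k)_+^2\,{\rm d}\mu .
\]
Applying Lemma~\ref{lem:A:2} with $u$ replaced by $u-\theta_k$, with $r_1=r_{k+1}$, $r_2=r_k-r_{k+1}$ and $\theta=\theta_{k+1}-\theta_k=M\,2^{-k-1}$, and using the doubling of $V$ to compare every ball that occurs with $B_R$ and $B_{R+\rho}$, one obtains a recursion. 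The decisive simplification comes from $M\ge\calI$: it permits the estimate $|u-\theta_k|\le|u|+M\le 2M$ in the $\int_{B_{r_1+r_2/2+\rho}}|u|$-term of the bracket in Lemma~\ref{lem:A:2}, so that, once our $r_1,r_2,\theta$ are inserted, that whole bracket collapses to a pure power $\lesssim 2^{k(1+\gamma_2)}(1+\tfrac\rho R)^{\gamma_2}$, while the outer factor $\bigl(\tfrac{r_1+r_2}{r_2\wedge\rho}\bigr)^{\beta_2}$ is $\lesssim 2^{k\beta_2}(1+\tfrac R\rho)^{\beta_2}$. The net effect is
\[
   A_{k+1}\ \le\ c\,2^{k\sigma}\,M^{-2\gamma}\Bigl(1+\tfrac R\rho\Bigr)^{\beta_2}\Bigl(1+\tfrac\rho R\Bigr)^{\gamma_2}A_k^{\,1+\gamma},
   \qquad\sigma=2\gamma+\beta_2+\gamma_2+1,
\]
with $c$ an absolute constant and $\gamma$ the exponent of the Faber--Krahn inequality \eqref{eq:A:4}.

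Next I would invoke the standard fast-decay lemma: if $A_{k+1}\le\Lambda\,b^{k}A_k^{1+\gamma}$ with $b>1$ and $A_0\le\Lambda^{-1/\gamma}b^{-1/\gamma^2}$, then $A_k\to0$. Here $\Lambda\approx M^{-2\gamma}(1+R/\rho)^{\beta_2}(1+\rho/R)^{\gamma_2}$, so the smallness threshold for
\[
   A_0\ =\ \frac1{\mu(B_R)}\int_{B_R}u_+^2\,{\rm d}\mu\ \le\ \frac1{\mu(B_R)}\int_{B_R}u^2\,{\rm d}\mu
\]
is met as soon as $M^2\gtrsim\bigl(1+\tfrac R\rho\bigr)^{\beta_2/\gamma}\bigl(1+\tfrac\rho R\bigr)^{\gamma_2/\gamma}\,\mu(B_R)^{-1}\int_{B_R}u^2\,{\rm d}\mu$. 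Choosing
\[
   M^2\ =\ \calI^2\ +\ c'\Bigl(1+\tfrac R\rho\Bigr)^{\beta_2/\gamma}\Bigl(1+\tfrac\rho R\Bigr)^{\gamma_2/\gamma}\frac1{\mu(B_R)}\int_{B_R}u^2\,{\rm d}\mu
\]
honours both constraints ($M\ge\calI$ and the threshold), so the iteration gives $A_k\to0$, that is $\sup_{B_{R/2}}(u-M)_+=0$; hence $\sup_{B_{R/2}}u\le M$ and, squaring, the claimed bound on $\max_{B_{R/2}}u^2$.

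The main obstacle is the bookkeeping in the recursion: one must route the two $\rho$-dependent prefactors --- $(1+R/\rho)^{\beta_2}$, produced by $\bigl(\tfrac{r_1+r_2}{r_2\wedge\rho}\bigr)^{\beta_2}$ and traceable to the tail bound \eqref{eq:A:7}, and $(1+\rho/R)^{\gamma_2}$, produced by $\bigl(\tfrac{r_1+r_2/2+\rho}{r_2}\bigr)^{\gamma_2}$ and traceable to the $\rho$-truncation in $J^{(\rho)}$ --- through the iteration so that, after the $k$-indexed powers are summed geometrically against $A_k^{1+\gamma}$, they emerge with exactly the exponents $\beta_2/\gamma$ and $\gamma_2/\gamma$; and the level increments together with the constraint $M\ge\calI$ must be arranged so that the bracket of Lemma~\ref{lem:A:2} really does reduce to a clean geometric factor in $k$ rather than one that spoils summability. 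Once the recursion is brought to the displayed form, the convergence $A_k\to0$ and the resulting $L^\infty$ bound are routine.
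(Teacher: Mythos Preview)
Your approach coincides with the paper's: same radii $r_k=\tfrac{R}{2}(1+2^{-k})$, same levels $\theta_k=(1-2^{-k})M$, the same recursion with exponent $\sigma=2\gamma+\beta_2+\gamma_2+1$, the fast-decay lemma (\cite[Lemma~4.9]{Chen2016} in the paper), and the same choice of $M$ (the paper's $\theta$).

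Two small points are worth flagging. First, your observation~(ii) as literally stated --- replacing $\int u^2$ by $\int(u-\theta)_+^2$ with $\theta$ the threshold of Lemma~\ref{lem:A:2} --- does not survive the Markov step $\mu(\{u>\theta\})\le\theta^{-2}\int u^2$ in that lemma's proof (take $u=\theta+\epsilon$ on a set of positive measure with $\epsilon\ll\theta$). The clean route is the paper's: apply Lemma~\ref{lem:A:2} not to the shift $u-\theta_k$ but to the truncation $(u-\theta_k)_+$, which is again $\calE^{(\rho)}$-subharmonic by \cite[Lemma~3.2]{GrigoryanHuHu2018}; then the ``$u^2$''-integral on the right is $I_k=\int(u-\theta_k)_+^2$ automatically, and no rewriting of the lemma is needed. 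Second, the pointwise bound $|u-\theta_k|\le|u|+M\le 2M$ is not available, since nothing controls $|u|$ pointwise; what you need, and what works, is the corresponding bound on \emph{averages}: $\mu(B)^{-1}\int_B|u-\theta_k|\,{\rm d}\mu\le \mu(B)^{-1}\int_B|u|\,{\rm d}\mu+\theta_k\lesssim\calI+M\le 2M$, using volume doubling to compare $B_{r_{k+1}+(r_k-r_{k+1})/2+\rho}$ with $B_{R+\rho}$.
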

\begin{proof}
	For $i \in \NN$ and $\theta > 0$ we set
	\[
		r_i = \frac{1}{2}(1 + 2^{-i}) R, \qquad
		\theta_i = (1 - 2^{-i}) \theta, \qquad
		B_{r_i} = B(x_0, r_i),
	\]
	and
	\[
		I_i = \int_{B_{r_i}} (u - \theta_i)^2_+ {\: \rm d} \mu.
	\]
	Since $u$ is subharmonic also $u-\theta_i$ is subharmonic. Hence, by \cite[Lemma 3.2]{GrigoryanHuHu2018} for each
	$i \in \NN$, the function $(u - \theta_i)_+$ is subharmonic. By Lemma \ref{lem:A:2},
	\begin{align*}
		&I_{i+1} 
		= \int_{B_{r_{i+1}}} \big(u - \theta_i - (\theta_{i+1}-\theta_i)\big)_+^2 {\: \rm d}\mu \\
		&\leq
		\frac{c}{(\theta_{i+1} - \theta_i)^{2\gamma} \mu(B_{r_i})^\gamma }
		\bigg(\frac{r_i}{\min\{r_i-r_{i+1}, \rho\}} \bigg)^{\beta_2}
		\bigg\{1 + \frac{1}{(\theta_{i+1} - \theta_i)} \bigg(\frac{r_i+\rho}{r_i - r_{i+1}}\bigg)^{\gamma_2}\
		\frac{1}{\mu(B_{r_i+\rho})} \int_{B_{r_i+\rho}} |u| {\: \rm d} \mu \bigg\}
		I_i^{1+\gamma} \\
		&\leq
		2^{2\gamma i} 
		\frac{c'}{\theta^{2\gamma} \mu(B_R)^\gamma}
		\bigg(\frac{R}{\min\{2^{-i} R, \rho\}}\bigg)^{\beta_2}
		\bigg\{1 + 2^{\gamma_2 i} \bigg(1 +2^i \frac{\rho}{R}\bigg)^{\gamma_2}
		\frac{A_{R+\rho}}{\theta} \bigg\} I_i^{1+\gamma} \\
		&\leq
		\frac{c''}{\theta^{2\gamma} \mu(B_R)^\gamma} 2^{(2\gamma+\beta_2+\gamma_2+1)i}
		\bigg(1+\frac{R}{\rho}\bigg)^{\beta_2} \bigg(1 + \frac{\rho}{R}\bigg)^{\gamma_2}
		\bigg(1 + \frac{A_{R+\rho}}{\theta}\bigg) I_i^{1+\gamma}
	\end{align*}
	where we have set
	\[
		A_{R+\rho} = \frac{1}{\mu(B_{R+\rho})} \int_{B_{R+\rho}} |u| {\: \rm d} \mu.
	\]
	Let
	\[
		b = 2^{2\gamma + \beta_2+\gamma_2+1},
	\]
	and
	\[
		\theta = 
		A_{R+\rho} + 
		(2c'')^{\frac{1}{2\gamma}} b^{\frac{1}{2\gamma^2}} \sqrt{\frac{I_0}{\mu(B_R)}}
		\bigg(1 + \frac{R}{\rho} \bigg)^{\frac{\beta_2}{2\gamma}}
		\bigg(1 + \frac{\rho}{R} \bigg)^{\frac{\gamma_2}{2\gamma}}.
	\]
	Then
	\[
		I_{i+1} \leq c_0 b^i I_i^{1+\gamma}, \qquad
		c_0 = b^{-\frac{1}{\gamma}} I_0^{-\gamma}.
	\]
	Therefore,  by \cite[Lemma 4.9]{Chen2016},
	\[
		I_i \leq b^{-\frac{i}{\gamma}} I_0,
	\]
	and since
	\[
		\int_{B_{R/2}} (u - \theta)_+^2 {\: \rm d}\mu \leq \inf_{i \in \NN} I_i,
	\]
	we obtain 
	\[
		\max_{B(x_0, R/2)} u^2 \leq \theta^2
		\leq
		c_0\bigg\{A_{R+\rho}^2 + 
		\bigg(1 + \frac{R}{\rho}\bigg)^{\frac{\beta_2}{\gamma}} \bigg(1+\frac{\rho}{R} \bigg)^{\frac{\gamma_2}{\gamma}}
		\frac{1}{\mu(B_R)} \int_{B_R} u^2 {\: \rm d}\mu\bigg\}
	\]
	which completes the proof.
\end{proof}
Having established Proposition \ref{prop:A:1} we can repeat the proof of \cite[Corollary 4.12]{Chen2016}. Consequently,  
there is $c > 0$ such that for all $x_0 \in M$, $R, \rho > 0$ and any non-negative $\calE^{(\rho)}$-subharmonic
function $u$ on $B(x_0, R)$,
\begin{equation}
	\label{eq:A:17}
	\max_{B(x_0, R/2)}{u}
	\leq
	\frac{c}{V(x_0, R)}
	\bigg(1 + \frac{R}{\rho}\bigg)^{\frac{\beta_2}{\gamma}} 
	\bigg(1+\frac{\rho}{R} \bigg)^{\frac{\gamma_2}{\gamma}}
	\int_{B(x_0, R+\rho)} u {\: \rm d}\mu.
\end{equation}
Let us denote by $\mathbf{Y}^{(\rho)}$ the process associated to $(\calE^{(\rho)}, \calD)$. Let $\xi_\lambda$ be the exponential
distributed random variable with mean $1/\lambda$ independent of $\mathbf{Y}^{(\rho)}$. Using \eqref{eq:A:15} and 
\eqref{eq:A:17}, we can repeat the arguments used in the proof of \cite[Lemma 4.15]{Chen2016} to show that there is $c > 0$
such that for all $x_0 \in M$ and $r > 0$,
\begin{equation}
	\label{eq:A:9}
	\EE^{x_0} \Big[\tau_{\mathbf{Y}^{(\rho)}}(x_0, r) \wedge \xi_\lambda\Big] \geq c \psi(r)
\end{equation}
where $\rho=r/2$, $\lambda = 1/\psi(r)$ and $\tau_{\mathbf{Y}^{(\rho)}}(x_0, r)$ is the exit time of $\mathbf{Y}^{(\rho)}$ from 
the ball $B(x_0, r)$. Finally, by following the proof of \cite[Lemma 4.17]{Chen2016} we can show that there is $c > 0$ such that
for all $x_0 \in M$, and $r > 0$,
\begin{equation}
	\label{eq:A:19}
	\EE^{x_0} \big[ \tau_{\mathbf{Y}}(x_0, r) \big] \geq c \psi(r).
\end{equation}
The reasoning in \cite[p. 553]{Grigoryan2014} proves that \eqref{eq:A:5} entails \eqref{eq:A:4}, thus one can show that 
\cite[Lemma 4.18]{Chen2016} holds true for all $r > 0$. Thanks to \eqref{eq:A:9} and \eqref{eq:A:19}, 
we can repeat the proofs of \cite[Lemmas 4.19, 4.20, 4.21 and Corollary 4.22]{Chen2016} to conclude that there are
$C, c_1, c_2 > 0$ such that for all $x_0 \in M$, $r, \rho > 0$, and $t > 0$,
\[
	\PP^{x_0}\big(\tau_{\mathbf{Y}^{(\rho)}}(x_0, r) \leq t\big)
	\leq C \exp\bigg(-c_1 \frac{r}{\rho} + c_2 \frac{t}{\psi(\rho)}\bigg).
\]
Now, we can follow the proof of \cite[Proposition 4.23]{Chen2016} to show that the semigroup
$(Q^{(\rho)}_t : t > 0)$ corresponding to the Dirichlet form $(\calE^{(\rho)}, \calD)$ has the heat kernel $q^{(\rho)}$.
Moreover, there is $c > 0$ such that for all $x_0 \in M$, and $\psi(\rho) \geq t > 0$,
\begin{equation}
	\label{eq:A:26}
	\max_{x, y \in B(x_0, \rho)} q^{(\rho)}(t; x, y) 
	\leq \frac{c}{V(x_0, \rho)} \bigg(\frac{\psi(\rho)}{t}\bigg)^{1/\gamma}.
\end{equation}
Next, by the Meyer's decomposition and \cite[Lemma 7.2(1)]{Chen2016}
\begin{equation}
	\label{eq:A:27}
	p(t; x, y) \leq q^{(\rho)}(t; x, y) + \int_0^t \int_M \EE^x\big[J_\rho(Y^{(\rho)}_s, z) p(t-s; z, y) \big]\, 
	\mu({\rm d} z) {\:\rm d} s
\end{equation}
for all $x, y \in M$, where
\[
	J_\rho(x, y) = J(x, y) \ind{\{d(x, y) > \rho\}}.
\]
Following the proof of \cite[Proposition 4.24]{Chen2016} we show that there is $c > 0$ such that for all 
$\rho, t > 0$ and $x \in M$,
\begin{equation}
	\label{eq:A:31}
	\int_0^t \int_M
	\EE^x\big[J_\rho(Y^{(\rho)}_s, z) p(t-s; z, y)\big] \, \mu({\rm d} z) {\: \rm d} s
	\leq
	\frac{c t}{V(x, \rho) \psi(\rho)} \exp\bigg(c\frac{t}{\psi(\rho)}\bigg).
\end{equation}
\begin{theorem}
	There is $c > 0$ such that for all $x \in M$ and $t > 0$,
	\[
		p(t; x, x) \leq \frac{c}{V(x, \psi^{-1}(1/2 \vee t))}.
	\]
\end{theorem}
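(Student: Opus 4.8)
The plan is to treat small and large $t$ separately: for small $t$ a crude consequence of $L^2$-contractivity already matches the right-hand side, while for large $t$ one feeds the Meyer-type decomposition \eqref{eq:A:27} into the on-diagonal bound \eqref{eq:A:26} and the perturbation estimate \eqref{eq:A:31} just established.

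First I would record the elementary bound valid for \emph{all} $t > 0$. Since $(P_t : t > 0)$ is the $L^2(M, \mu)$-contraction semigroup of the Dirichlet form $(\calE, \calD)$, putting $k_x = \ind{\{x\}}/\mu(\{x\}) \in L^2(M, \mu)$ one has $p(t; x, \cdot) = P_t k_x$ (as $P_t k_x(y) = \int p(t; y, z) k_x(z)\,\mu({\rm d} z) = p(t; y, x) = p(t; x, y)$ by symmetry), hence
\[
	p(t; x, x) = \langle P_t k_x, k_x \rangle_{L^2} \leq \|k_x\|_{L^2}^2 = \frac{1}{\mu(\{x\})}
	\qquad\text{for all } t > 0 .
\]
Because $\psi$ is increasing with $\psi(0) = 1/2$, for $t \leq \psi(r_0)$ we get $\psi^{-1}(1/2 \vee t) \leq r_0$, so \eqref{eq:A:23} gives $V\big(x, \psi^{-1}(1/2 \vee t)\big) = \mu(\{x\})$ and the asserted inequality holds with constant $1$. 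This is precisely the regime that forces the truncation $1/2 \vee t$ into the statement, since $\psi^{-1}$ is only meaningful on the range $[1/2, \infty)$ of $\psi$.

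It remains to consider $t > \psi(r_0)$. Here I would set $\rho = \psi^{-1}(t)$, so that $\rho > r_0$ and $\psi(\rho) \geq t$ (with $\psi(\rho) \approx t$ by the doubling of $\psi$). Taking $y = x$ in \eqref{eq:A:27} yields
\[
	p(t; x, x) \leq q^{(\rho)}(t; x, x)
	+ \int_0^t \int_M \EE^x\big[J_\rho(Y^{(\rho)}_s, z)\, p(t - s; z, x)\big] \, \mu({\rm d} z) \, {\rm d} s .
\]
For the first term I would invoke \eqref{eq:A:26}, applicable because $\psi(\rho) \geq t > 0$, giving $q^{(\rho)}(t; x, x) \leq c\, V(x, \rho)^{-1} \big(\psi(\rho)/t\big)^{1/\gamma} \leq c'\, V(x, \rho)^{-1}$. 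For the second term I would apply \eqref{eq:A:31}, which, since $\psi(\rho) \geq t$, becomes $\leq \frac{c t}{V(x, \rho)\psi(\rho)} \exp\big(c t/\psi(\rho)\big) \leq c e^c\, V(x, \rho)^{-1}$. Adding the two estimates gives $p(t; x, x) \leq C\, V(x, \rho)^{-1} = C\, V\big(x, \psi^{-1}(t)\big)^{-1}$, and since $t > \psi(r_0) \geq 1/2$ this is exactly the claimed bound; combining with the previous paragraph finishes the proof.

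The argument is mostly bookkeeping, since all the analytic content is already contained in \eqref{eq:A:26} and \eqref{eq:A:31}. The only points needing a little attention are that \eqref{eq:A:26} has to be used at the threshold $\psi(\rho) = t$ (the edge of its range of validity, so the factor $(\psi(\rho)/t)^{1/\gamma}$ must be controlled by the doubling of $\psi$), and that the small-$t$ range must be isolated so that the right-hand side degenerates to $\mu(\{x\})^{-1}$ consistently with \eqref{eq:A:23}; I do not anticipate any genuine obstacle.
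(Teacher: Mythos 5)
Your proof is correct, and for $t$ large it follows the paper's argument verbatim: set $\rho = \psi^{-1}(t)$, feed the Meyer decomposition \eqref{eq:A:27} into the on-diagonal bound \eqref{eq:A:26} (which is at the edge of its validity with $\psi(\rho) = t$, but that is fine) and the tail estimate \eqref{eq:A:31}.

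Where you genuinely diverge from the paper is in the small-$t$ regime. The paper first proves, by a direct Chapman--Kolmogorov step, that $h(n; x, x) \leq c/\mu(\{x\})$ for all $n \in \NN$ — the key inputs being that $\sup_y J(y, x) \leq c/\mu(\{x\})$ (which follows from \eqref{jump}, $x \in B(y, d(x,y))$ and $\psi \geq 1/2$) and that $h(n; x, \cdot)$ integrates to $1$ — and then transfers this to $p$ through the Poissonisation formula \eqref{eq:A:18}. You instead invoke the $L^2(M, \mu)$-contractivity of the Dirichlet semigroup $(P_t)$ and the symmetry $p(t; x, y) = p(t; y, x)$ to get $p(t; x, x) = \langle P_t k_x, k_x\rangle \leq \|k_x\|_{L^2}^2 = \mu(\{x\})^{-1}$ directly. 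Both are valid and one-liners; yours is the more abstract, functional-analytic route, requiring only that $(P_t)$ is a symmetric Markov contraction semigroup, while the paper's is the more hands-on probabilistic one, requiring the explicit bound on $J$.

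Two small bookkeeping remarks. First, you split at $t = \psi(r_0)$ and write ``for $t \leq \psi(r_0)$ we get $\psi^{-1}(1/2 \vee t) \leq r_0$, so \eqref{eq:A:23} gives $V(x, \psi^{-1}(1/2 \vee t)) = \mu(\{x\})$''; but \eqref{eq:A:23} requires the strict inequality $r < r_0$, so at the single point $t = \psi(r_0)$ the deduction fails. Since the Meyer-decomposition branch works for every $t \geq 1/2$ (and hence for $t = \psi(r_0)$), it suffices to take the second regime to be $t \geq \psi(r_0)$; the assertion $\rho > r_0$ that you make there is never actually used and can simply be dropped. Second, the remark ``$\psi(\rho) \geq t$ (with $\psi(\rho) \approx t$ by the doubling of $\psi$)'' is a mild red herring: with $\rho = \psi^{-1}(t)$ one has $\psi(\rho) = t$ exactly, so the factor $(\psi(\rho)/t)^{1/\gamma}$ in \eqref{eq:A:26} is identically $1$ and no appeal to doubling is needed. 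Neither point affects the validity of the argument.
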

\begin{proof}
	We start by proving the following claim.
	\begin{claim}
		\label{clm:A:2}
		There is $c > 0$ such that for all $x \in M$ and $n \in \NN$,
		\[
			h(n; x, x) \leq \frac{c}{V(x, \psi^{-1}(1/2))}.
		\]
	\end{claim}
	The proof is by induction on $n$. Since $V(x, \psi^{-1}(1/2)) = V(x, 0)$, in view of \eqref{jump}, the 
	statement holds true for $n = 1$. Assume that the estimate is true for $n \geq 1$. Then
	\begin{align*}
		h(n+1; x, x) 
		&= \int_M h(n; x, y) J(y, x) \, \mu({\rm d} y) \\
		&\leq 
		\frac{c}{V(x, \psi^{-1}(1/2))} \int_M h(n; x, y) \, \mu({\rm d} y) 
		= \frac{c}{V(x, \psi^{-1}(1/2))}
	\end{align*}
	and the claim follows.
	
	Now, observe that by \eqref{eq:A:18} and Claim \ref{clm:A:2}, we obtain
	\begin{align*}
		p(t; x, x) \leq \frac{c}{V(x, \psi^{-1}(1/2))}.
	\end{align*}
	For $t \geq 1/2$, we set
	\[
		\rho =  \psi^{-1}(t).
	\]
	Then by \eqref{eq:A:26}, we obtain
	\[
		q^{(\rho)}(t; x, x) \leq \frac{c}{V(x, \psi^{-1}(t))}.
	\]
	Now, the theorem is a consequence of \eqref{eq:A:27} and \eqref{eq:A:31}.
\end{proof}
Next, by repeating \cite[Lemma 5.1]{Chen2016} in the present context we can show that there is $c > 0$ such that for all 
$t > 0$ and $x, y \in M$, 
\begin{equation}
	\label{eq:A:29}
	p(t; x, y) \leq c \min\bigg\{\frac{1}{V(x, \psi^{-1}(1/2 \vee t))}, \frac{1}{V(y, \psi^{-1}(1/2 \vee t))} \bigg\}.
\end{equation}
Following the proof \cite[Lemma 5.2]{Chen2016} we show that there are $c_1, c_2, c_3 > 0$ such that for all
$x, y \in M$, $\rho, t > 0$,
\begin{equation}
	\label{eq:A:30}
	q^{(\rho)}(t; x, y) \leq c_1 \bigg(\frac{1}{V(x, \psi^{-1}(1/2 \vee t))} + \frac{1}{V(y, \psi^{-1}(1/2 \vee t))} \bigg)
	\exp\bigg(c_2\frac{t}{\psi(\rho)} - c_3 \frac{d(x, y)}{\rho} \bigg).
\end{equation}
Finally, we show the following theorem.
\begin{theorem}
	\label{thm:A:1}
	Suppose that a metric measure space $(M, d, \mu)$ satisfies \eqref{eq:A:44}--\eqref{eq:A:23}. Let $\psi: (0, \infty) 
	\rightarrow (0, \infty)$ be increasing function satisfying \eqref{eq:A:46} and \eqref{eq:A:45}. Assume that the jump
	function $J$ satisfies \eqref{jump} and the cut-off Sobolev inequality \eqref{cutoff} holds true. Then
	there is $c > 0$ such that for all $x, y \in M$ and $t > 0$,
	\begin{equation}
		\label{eq:A:32}
		p(t; x, y) \leq c
		\min\bigg\{\frac{1}{V(x, \psi^{-1}(1/2 \vee t))}, \frac{1/2 \vee t}{V(x, d(x, y)) \psi(d(x, y))}\bigg\}.
	\end{equation}
\end{theorem}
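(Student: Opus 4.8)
The first term inside the minimum in \eqref{eq:A:32} is precisely \eqref{eq:A:29}, so the whole content is the off-diagonal bound $p(t;x,y)\le c\,(1/2\vee t)/(V(x,d(x,y))\psi(d(x,y)))$, which I would prove only for $x\ne y$ (the case $x=y$ is already covered by \eqref{eq:A:29}). The plan is first to shrink the ranges of the parameters. Comparing the Poisson weights $e^{-s}s^k/k!$ appearing in \eqref{eq:A:18} with $e\cdot e^{-1}1^k/k!$ and using $h(0;x,y)=0$ for $x\ne y$ gives $p(s;x,y)\le e\,p(1;x,y)$ for $0<s\le1$; since $1/2\vee s\ge1/2$, it then suffices to treat $t\ge1$, so that $1/2\vee t=t$. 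Next, the doubling and reverse doubling of $V$ and $\psi$ produce a constant $\Lambda\ge1$, depending only on $C_V,C_\psi,\gamma_1,\gamma_2,\beta_1,\beta_2$, such that $\psi(d(x,y))\le\Lambda t$ forces $V(x,\psi^{-1}(t))^{-1}\le c_\Lambda\, t/(V(x,d(x,y))\psi(d(x,y)))$; in that range \eqref{eq:A:29} already gives the claim. So it remains to handle $t\ge1$ and $r:=d(x,y)$ with $\psi(r)>\Lambda t$.

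On this remaining range I would run a self-improving estimate for $\mathcal M=\sup\{p(t;x,y)\,V(x,d(x,y))\psi(d(x,y))/t: t\ge1,\ x\ne y,\ \psi(d(x,y))>\Lambda t\}$. A priori finiteness of $\mathcal M$ I would obtain by iterating Meyer's decomposition \eqref{eq:A:27} with the truncation level $\rho$ a small fixed fraction of $r$: the no-big-jump term $q^{(\rho)}(t;x,y)$ carries, by \eqref{eq:A:30}, an off-diagonal factor $e^{-c_3 r/\rho}$ that dominates any power of $r$; the one-big-jump term is of the right order $t/(V(x,r)\psi(r))$ thanks to \eqref{jump} (the jump covers distance $\gtrsim r$) and the conservativeness $\int_M p(s;z,y)\,\mu({\rm d}z)=1$; and the $n$-fold terms are bounded by $(ct/\psi(\rho))^n/n!$ times a diagonal bound, hence summable because $t/\psi(\rho)\lesssim t/\psi(r)<1/\Lambda$.

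To close the loop I would apply \eqref{eq:A:27} with $\rho=\psi^{-1}(\Lambda' t)$ for a second constant $1\ll\Lambda'\ll\Lambda$, so that $\rho<r/4$ and $t/\psi(\rho)=1/\Lambda'$, and split the jump term according to whether $\mathbf{Y}^{(\rho)}$ has left $B(x,r/4)$ before the first big jump and whether that jump lands in $B(y,r/2)$. On the good event ($\mathbf{Y}^{(\rho)}_s\in B(x,r/4)$, jump into $B(y,r/2)$) the jump has length $\ge r/4$, so $J_\rho(\mathbf{Y}^{(\rho)}_s,z)\lesssim (V(x,r)\psi(r))^{-1}$ by \eqref{jump} and doubling, and integrating $p(t-s;z,y)$ against this over $z$ and over $s\in[0,t]$ (conservativeness again) yields exactly the main term $\lesssim t/(V(x,r)\psi(r))$; likewise \eqref{eq:A:30} bounds $q^{(\rho)}(t;x,y)$ by $\bigl(V(x,\psi^{-1}(t))^{-1}+V(y,\psi^{-1}(t))^{-1}\bigr)e^{-c_3 r/\rho+c_2/\Lambda'}\lesssim t/(V(x,r)\psi(r))$, using $V(y,r)\approx V(x,r)$ and that the exponential dominates the polynomial ratio $V(x,r)\psi(r)/(t\,V(x,\psi^{-1}(t)))$. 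On the complementary bad events — $\mathbf{Y}^{(\rho)}$ has exited $B(x,r/4)$, controlled by the tail $\PP^x(\tau_{\mathbf{Y}^{(\rho)}}(x,r/4)\le t)\le C\exp(-c_1 r/(4\rho)+c_2 t/\psi(\rho))$ established earlier, or the jump reaches $z$ with $d(z,y)\ge r/2$, controlled through $\int_M J_\rho(w,z)\,\mu({\rm d}z)\le c/\psi(\rho)$ coming from \eqref{eq:A:7} — the process afterwards sits at distance $\gtrsim r$ from $y$, so that contribution is at most a constant times $\mathcal M\, t/(V(x,r)\psi(r))$, and this constant is forced below $\tfrac12$ by enlarging $\Lambda'$ (hence $r/\rho$): the smallness comes from the exit-time exponential and from the factor $t/\psi(\rho)=1/\Lambda'$. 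Summing the pieces gives $\mathcal M\le\tfrac12\mathcal M+C$, hence $\mathcal M\le 2C<\infty$, which is \eqref{eq:A:32}.

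The hard part will be precisely the two steps involving $\mathcal M$: proving it a priori finite, and — more delicate — arranging that every bad contribution genuinely comes with a coefficient strictly, and uniformly, below $1$. This forces the three-scale choice $\rho=\psi^{-1}(\Lambda' t)\ll\psi^{-1}(\Lambda t)\le r$ and careful bookkeeping with the scaling exponents of $V$ and $\psi$, so that $e^{-cr/\rho}$ really dominates the relevant polynomial ratios and so that $V$ and $\psi$ may be freely re-centred and re-scaled; the degenerate small scales, where $\psi(0)=\tfrac12$ and $V(x,\cdot)$ is constant below $r_0$ by \eqref{eq:A:23}, require a few extra lines but no new idea.
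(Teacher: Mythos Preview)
Your overall strategy --- a self-improving inequality for
\[
\mathcal M=\sup\Big\{p(t;x,y)\,\frac{V(x,d(x,y))\psi(d(x,y))}{t}:\ t\ge1,\ \psi(d(x,y))>\Lambda t\Big\}
\]
--- is a reasonable alternative in spirit, but the sketch has two genuine gaps, and the paper in fact proceeds quite differently.

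\textbf{First gap: a priori finiteness.} With $\rho$ a fixed \emph{fraction} of $r$ (say $\rho=r/N$), the ratio $r/\rho=N$ is constant, so the factor $e^{-c_3 r/\rho}$ in \eqref{eq:A:30} is a fixed number, not something that ``dominates any power of $r$''. Meanwhile the target $t/(V(x,r)\psi(r))$ tends to $0$ as $r\to\infty$ with $t$ fixed, so $q^{(\rho)}(t;x,y)\lesssim e^{-c_3N}/V(x,\psi^{-1}(t))$ does not control it. Likewise ``the one-big-jump term is of the right order because the jump covers distance $\gtrsim r$'' is false: with $\rho=r/N$ the big jump has length $>\rho=r/N$, not $\gtrsim r$. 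So neither piece gives the claimed finiteness.

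\textbf{Second gap: closing the loop.} In your split of the jump integral, the complement of the good event contains the sub-case ``$Y^{(\rho)}_s\notin B(x,r/4)$ \emph{and} $z\in B(y,r/2)$''. Here the process lands at $z$ which is \emph{close} to $y$, so the assertion ``the process afterwards sits at distance $\gtrsim r$ from $y$'' is simply wrong and the $\mathcal M$-bound does not apply to $p(t-s;z,y)$. One is then forced to use the on-diagonal bound \eqref{eq:A:29}, and the product (exit probability)${}\times{}$(on-diagonal bound) is, with $\rho=\psi^{-1}(\Lambda' t)$,
\[
\exp\Big(-c_1\frac{r}{4\rho}+\frac{c_2}{\Lambda'}\Big)\cdot\frac{c}{V(y,\psi^{-1}(1/2\vee(t-s)))},
\]
and since $r/\rho$ can be as small as a fixed constant (when $\psi(r)$ is only slightly larger than $\Lambda t$) while $V(y,\psi^{-1}(\cdot))$ need not be comparable to $V(x,r)\psi(r)/t$, this does not yield a term of the form $\varepsilon\,\mathcal M\,t/(V(x,r)\psi(r))$ with $\varepsilon<1$. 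The inequality $\mathcal M\le\tfrac12\mathcal M+C$ therefore does not close.

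\textbf{What the paper does instead.} The paper avoids any bootstrap on a pointwise quantity. It first proves an \emph{integrated} tail bound $\int_{B(x,r)^c}p(t;x,y)\,\mu({\rm d}y)\le c\big(\psi^{-1}(1/2\vee t)/r\big)^\delta$ by decomposing into dyadic annuli and applying \eqref{eq:A:27}, \eqref{eq:A:30}, \eqref{eq:A:31} with an annulus-dependent truncation $\rho_n$. This tail bound is then transferred to $q^{(\rho)}$ and iterated (via the Grigor'yan--Hu--Lau machinery) to produce, for any $k$, a pointwise estimate $q^{(\rho)}(t;x,y)\lesssim V^{-1}e^{ct/\psi(\rho)}(1+\rho/\psi^{-1}(1/2\vee t))^{-(k-1)\delta'}$. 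Finally one takes $\rho=d(x,y)/(8k)$ with $k$ large enough that this polynomial decay beats the volume and $\psi$ growth, and combines with \eqref{eq:A:27}, \eqref{eq:A:31}. No circular use of the desired estimate is involved.
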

\begin{proof}
	The proof follows the arguments used in \cite[Proposition 5.3]{Chen2016}. Let 
	\[
		N = 1 + \frac{\gamma_2}{\beta_1}+\frac{1}{2\gamma_2}.
	\]
	We have the following statement.
	\begin{claim}
		\label{clm:A:3}
		There are $c > 0$ and $\delta > 0$ such that for all $x \in M$, $r > 0$ and $t > 0$,
		\[
			\int_{B(x, r)^c} p(t; x, y) \, \mu({\rm d} y) \leq 
			c \bigg(\frac{\psi^{-1}(1/2 \vee t)}{r}\bigg)^{\delta}.
		\]
	\end{claim}
	We can assume that $r > \psi^{-1}(1/2 \vee t)$. Moreover, it is enough to consider $r \geq r_0$. Let 
	\begin{equation}
		\label{eq:A:43}
		\max\bigg\{\frac{\gamma_2}{\gamma_2 + \beta_1}, \frac{1}{2}\bigg\}  < \alpha < 1.
	\end{equation}
	For $n \in \NN$, we set
	\[
		\rho_n = 2^{n\alpha} r^{1-1/N} (\psi^{-1}(1/2 \vee t))^{1/N}.
	\]
	Since $N > 2$, and $2 \alpha > 1$, we have
	\[
		2^n r \geq \rho_n \geq \psi^{-1}(1/2 \vee t) 
		\bigg(\frac{r}{\psi^{-1}(1/2 \vee t)}\bigg)^{1 - 1/N} \geq \psi^{-1}(1/2 \vee t),
	\]
	and
	\[
		\frac{\rho_n}{\psi^{-1}(1/2 \vee t)} \geq \frac{2^n r}{\rho_n}.
	\]
	Now, if $2^n r \leq d(x, y) < 2^{n+1} r$, then by \eqref{eq:A:30} and \eqref{eq:A:44} 
	\[
		q^{(\rho_n)}(t; x, y) \leq 
		C_V \frac{1}{V(x, \psi^{-1}(1/2 \vee t))} \bigg(\frac{2^n r}{\psi^{-1}(1/2 \vee t)}\bigg)^{\gamma_2}
		\exp\bigg(-c_2 \frac{2^n r}{\rho_n} \bigg),
	\]
	hence by \eqref{eq:A:27} and \eqref{eq:A:31},
	\begin{align}
		\nonumber
		\int_{B(x, r)^c}
		p(t; x, y) \, \mu({\rm d} y)
		&=
		\sum_{n = 0}^\infty \int_{B(x, 2^{n+1} r) \setminus B(x, 2^n r)} p(t; x, y) \, \mu({\rm d} y) \\
		\label{eq:A:49}
		&
		\begin{aligned}
		&\leq
		C_2
		\sum_{n = 0}^\infty 
		\frac{V(x, 2^n r)}{V(x, \psi^{-1}(1/2 \vee t))} \bigg(\frac{2^n r}{\psi^{-1}(1/2 \vee t)}\bigg)^{\gamma_2}
		\exp\bigg(-c_2 \frac{2^n r}{\rho_n} \bigg)  \\
		&\phantom{\leq}+
		C_3
		\sum_{n = 0}^\infty t \frac{V(x, 2^n r)}{V(x, \rho_n) \psi(\rho_n)}.
		\end{aligned}
	\end{align}
	By \eqref{eq:A:44} and \eqref{eq:A:45}
	\begin{align*}
		\frac{V(x, 2^n r)}{V(x, \rho_n)}\cdot\frac{t}{\psi(\rho_n)}
		&\leq
		C_4
		\bigg(\frac{2^n r}{\rho_n}\bigg)^{\gamma_2} 
		\bigg(\frac{\psi^{-1}(1/2 \vee t)}{\rho_n}\bigg)^{\beta_1}  \\
		&=
		C_4
		\bigg(\frac{\psi^{-1}(1/2 \vee t)}{r}\bigg)^{\beta_1 -(\beta_1+\gamma_2)/N}
		2^{n(\gamma_2-\alpha\gamma_2 - \alpha \beta_1)} .
	\end{align*}
	In view of \eqref{eq:A:43},
	\[
		\sum_{n = 0 }^\infty \frac{V(x, 2^n r)}{V(x, \rho_n)}\cdot\frac{t}{\psi(\rho_n)}
		\leq
		C_5 \bigg(\frac{\psi^{-1}(1/2 \vee t)}{r}\bigg)^{\beta_1 -(\beta_1+\gamma_2)/N}.
	\]
	To bound the first sum in \eqref{eq:A:49}, by \eqref{eq:A:44} and \eqref{eq:A:45} we get
	\begin{align*}
		&
		\frac{V(x, 2^n r)}{V(x, \psi^{-1}(1/2 \vee t))} 
		\bigg(\frac{2^n r}{\psi^{-1}(1/2 \vee t)}\bigg)^{\gamma_2} \\
		&\qquad\qquad\leq
		C_V \bigg(\frac{2^n r}{\psi^{-1}(1/2 \vee t)}\bigg)^{2 \gamma_2} \\
		&\qquad\qquad=
		C_V \Bigg(2^{n(1-\alpha)} \bigg(\frac{\psi^{-1}(1/2 \vee t)}{r}\bigg)^{-\frac{1}{N}} \Bigg)^{2\gamma_2/(1-\alpha)}
		\bigg(\frac{\psi^{-1}(1/2 \vee t)}{r}\bigg)^{-2 \frac{\gamma_2}{1-\alpha}(1-\alpha-1/N)}.
	\end{align*}
	Since
	\[
		\frac{2^n r}{\rho_n} = 2^{n(1-\alpha)} \bigg(\frac{\psi^{-1}(1/2 \vee t)}{r}\bigg)^{-\frac{1}{N}}
	\]
	we obtain
	\[
		\sum_{n = 0}^\infty
		\frac{V(x, 2^n r)}{V(x, \psi^{-1}(1/2 \vee t))} \bigg(\frac{2^n r}{\psi^{-1}(1/2 \vee t)}\bigg)^{\gamma_2}
		\exp\bigg(-c_2 \frac{2^n r}{\rho_n} \bigg)
		\leq
		C_6
		\bigg(\frac{\psi^{-1}(1/2 \vee t)}{r}\bigg)^{-2 \frac{\gamma_2}{1-\alpha}(1-\alpha-1/N)+1/N}
	\]
	proving the claim for
	\[
		\delta = \min\left\{\beta_1-\frac{\beta_1+\gamma_2}{N}, -2 \gamma_2 + \frac{1}{N}\bigg(1-\frac{2\gamma_2}{1-\alpha}\bigg)
		\right\}.
	\]
	Now, let us fix $x_0 \in M$ and $r > 0$. If $d(x, x_0) < r/4$, then for all $t > 0$,
	\begin{align*}
		\PP^x\big(\tau_\mathbf{Y}(x_0, r) \leq t\big)
		\leq
		\PP^x\big(\tau_\mathbf{Y}(x, r/2) \leq t\big).
	\end{align*}
	If $0 < t < 1$, then 
	\begin{align}
		\label{eq:A:75}
		\PP^x\big(\tau_{\mathbf{Y^{(\rho)}}}(x_0, r) > t\big)
		=1
		\leq
		C_7 \bigg(\frac{\psi^{-1}(1/2 \vee t)}{r}\bigg)^{\delta}.
	\end{align}
	If $t \geq 1$, then an application of Claim \ref{clm:A:3} leads to
	\begin{align*}
		\PP^x\big(\tau_\mathbf{Y}(x, r/2) \leq t\big) 
		&=
		\PP^x\big(\tau_\mathbf{Y}(x, r/2) \leq t; Y_{2t} \in B(x, r/4)^c\big) 
		+ \PP^x\big(\tau_\mathbf{Y}(x, r/2) \leq t; Y_{2t} \in B(x, r/4) \big) \\
		&\leq
		\PP^x\big(Y_{2t} \in B(x, r/4)^c \big) + \sup_{0 < s \leq t} \max_{y \notin B(x, r/2)}
		\PP^y\big(Y_{2t -s} \in B(y, r/4)^c\big) \\
		&\leq
		C_7 \bigg(\frac{\psi^{-1}(2t)}{r}\bigg)^{\delta}.
	\end{align*}
	Hence, by \cite[Proposition 4.6]{GrigoryanHuLau2014}, for all $\rho > 0$,
	\begin{align}
		\nonumber
		\PP^x\big(\tau_{\mathbf{Y^{(\rho)}}}(x_0, r) > t\big)
		&\leq
		\PP^x\big(\tau_{\mathbf{Y}}(x_0, r) > t\big)
		+
		C_8
		\frac{t}{\psi(\rho)} \\
		\nonumber
		&\leq
		C_9 \bigg(\frac{\psi^{-1}(2 t)}{r}\bigg)^{\delta} + C_8 \frac{t}{\psi(\rho)} \\
		\label{eq:A:68}
		&\leq
		C_{10} \bigg(\frac{\psi^{-1}(t)}{r}\bigg)^{\delta} + C_8 \frac{t}{\psi(\rho)}
	\end{align}
	where in the last estimate we have used \eqref{eq:A:45}. We claim the following holds true.
	\begin{claim}
		\label{clm:A:4}
		There is $c > 0$ such that for each $k \in \NN$, there is $C > 0$ such that for all $t > 0$, $\rho > 0$, $x, y \in M$, if
		$d(x_0, y_0) > 4 k \rho$, then
		\[
			q^{(\rho)}(t; x, y) 
			\leq
			C
			\bigg(\frac{1}{V(x, \psi^{-1}(1/2 \vee t))} + \frac{1}{V(y, \psi^{-1}(1/2 \vee t))} \bigg)
			\exp\bigg(c \frac{t}{\psi(\rho)}\bigg) \bigg(1 + \frac{\rho}{\psi^{-1}(1/2 \vee t)}\bigg)^{-(k-1)\delta'}
		\]
		for all $x \in B(x_0, \rho)$ and $y \in B(y_0, \rho)$ where $\delta' = \delta \wedge \beta_2$.
	\end{claim}
	In view of \eqref{eq:A:30}, it is enough to consider $\rho \geq \psi^{-1}(1/2 \vee t)$. Let us fix $k \in \NN$, $t \geq 1$,
	$x_0, y_0 \in M$. Set $r = d(x_0, y_0)/2 > 2 k \rho$. Using \cite[Theorem 3.1]{GrigoryanHuLau2014} from \eqref{eq:A:68}
	we deduce that
	\[
		\int_{B(x_0, r)^c} q^{(\rho)}(t; x, z) \, \mu({\rm d} z)
		\leq
		C_1 \bigg\{\bigg(\frac{\psi^{-1}(t)}{\rho}\bigg)^{\delta} 
		+ \frac{t}{\psi(\rho)} \bigg\}^{k-1},
	\]
	for all $x \in B(x_0, \rho)$. Next, by \eqref{eq:A:45},
	\begin{equation}
		\label{eq:A:70}
		\frac{t}{\psi(\rho)} 
		\leq C_\psi \bigg(\frac{\psi^{-1}(t)}{\rho} \bigg)^{\beta_1}
	\end{equation}
	thus
	\begin{equation}
		\label{eq:A:69}
		\int_{B(x_0, r)^c} q^{(\rho)}(t; x, z) \, \mu({\rm d} z)
		\leq C_2 \bigg(\frac{\rho}{\psi^{-1}(1/2 \vee t)}\bigg)^{-(k-1)\delta'},
	\end{equation}
	for all $x \in B(x_0, \rho)$. To extend \eqref{eq:A:69} for all $t > 0$, we use \cite[Theorem 3.1]{GrigoryanHuLau2014} 
	and \eqref{eq:A:75}. Next, we write
	\begin{align*}
		q^{(\rho)}(2t; x, y) 
		&= \int_M q^{(\rho)}(t; x, z) q^{(\rho)}(t; z, y) \, \mu({\rm d} z) \\
		&\leq
		\int_{B(x_0, r)^c} q^{(\rho)}(t; x, z) q^{(\rho)}(t; z, y) \, \mu({\rm d} z) +
		\int_{B(y_0, r)^c} q^{(\rho)}(t; x, z) q^{(\rho)}(t; z, y) \, \mu({\rm d} z).
	\end{align*}
	Using by \eqref{eq:A:30} and \eqref{eq:A:69}, we obtain
	\begin{align*}
		&
		\int_{B(x_0, r)^c} q^{(\rho)}(t; x, z) q^{(\rho)}(t; z, y) \, \mu({\rm d} z) \\
		&\qquad\leq
		\frac{c_1}{V(y, \psi^{-1}(1/2 \vee t))} \exp\bigg(c_2 \frac{t}{\psi(\rho)} \bigg)
		\int_{B(x_0, r)^c} q^{(\rho)}(t; x, z) \, \mu({\rm d} z) \\
		&\qquad\leq
		\frac{C_3}{V(y, \psi^{-1}(1/2 \vee t))} 
		\exp\bigg(c_2 \frac{t}{\psi(\rho)}\bigg) \bigg(1 + \frac{\rho}{\psi^{-1}(1/2 \vee t)} \bigg)^{-(k-1)\delta'}
	\end{align*}
	and the claim follows.

	We are now in the position to finish the proof of the theorem. In view of \eqref{eq:A:29}, it is enough to consider
	$r > \psi^{-1}(1/2 \vee t)$. We take
	\[
		k \geq \frac{\gamma_2 + 2 \beta_2}{\delta'} + 1,
		\qquad\text{and}\qquad \rho  =\frac{r}{8 k}.
	\]
	By \eqref{eq:A:27}, \eqref{eq:A:31} and Claim \ref{clm:A:4}, for all $x \in B(x_0, \rho)$ and $y \in B(y_0, \rho)$,
	\begin{align*}
		p(t; x, y) 
		&\leq \frac{C_4}{V(x, \psi^{-1}(1/2 \vee t))} 
		\bigg(1 + \frac{d(x, y)}{\psi^{-1}(1/2 \vee t)} \bigg)^{\beta_2} 
		\bigg(1 + \frac{\rho}{\psi^{-1}(1/2 \vee t)}\bigg)^{-(k-1)\delta'}
		+
		\frac{C_5 t}{V(x, \rho) \psi(\rho)} \\
		&\leq
		\frac{C_6}{V(x, \psi^{-1}(1/2 \vee t))} \bigg(\frac{r}{\psi^{-1}(1/2 \vee t)}\bigg)^{\beta_2-(k-1)\delta'}
		+
		\frac{C_7 t}{V(x, r) \psi(r)}. 
	\end{align*}
	By \eqref{eq:A:44} and \eqref{eq:A:46}
	\[
		\frac{V(x, r)}{V(x, \psi^{-1}(1/2 \vee t))} \leq C_\psi \bigg(\frac{r}{\psi^{-1}(1/2 \vee t))}\bigg)^{\gamma_2}
	\]
	and
	\[
		\frac{\psi(r)}{1/2 \vee t} \leq C_\psi \bigg(\frac{r}{\psi^{-1}(1/2 \vee t)} \bigg)^{\beta_2},
	\]
	thus by \eqref{eq:A:70} and the choice of $k$, we get
	\begin{align*}
		p(t; x, y)
		&\leq
		C_8
		\frac{1/2 \vee t}{V(x, r) \psi(r)} \bigg(\frac{r}{\psi^{-1}(1/2 \vee t)}\bigg)^{\gamma_2 + 2 \beta_2-(k-1)\delta'}
		+
		\frac{C_7 t}{V(x, r) \psi(r)} \\
		&\leq
		C_9 \frac{1/2 \vee t}{V(x, r) \psi(r)},
	\end{align*}
	and the theorem follows.
\end{proof}
Having proved \eqref{eq:A:32}, we can follow the arguments in \cite[Lemma 2.7]{Chen2016} to show that there is $c \geq 1$
such that for all $x_0 \in M$, $r > 0$ and $t \geq 1/2$,
\begin{equation}
	\label{eq:A:20}
	\PP^{x_0}\big(\tau_{\mathbf{Y}}(x_0, r) \leq t\big) \leq c \frac{t}{\psi(r)}.
\end{equation}
Now let us return to study the Markov chain $\mathbf{X}$. For $x_0 \in M$ and $r > 0$ we set
\[
	\tau_{\mathbf{X}}(x_0, r) = \inf\big\{n \in \NN : X_n \notin B(x_0, r) \big\}.
\]
To find the estimates on exit times $\tau_{\mathbf{X}}$ we compare it with $\tau_{\mathbf{Y}}$.
\begin{lemma}
	\label{lem:A:4}
	There is $c > 0$ such that for all $x_0 \in M$, $r > 0$ and $\lambda > 0$,
	\[
		\PP^{x_0}\big(\tau_X(x_0, r) \leq \lambda \big) \leq c \frac{\lambda}{\psi(r)} + \frac{1}{16 \lambda}.
	\]
\end{lemma}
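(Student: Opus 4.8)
The plan is to transfer the exit-time bound \eqref{eq:A:20} for the continuous-time process $\mathbf{Y}$ to the discrete chain $\mathbf{X}$ through the time change $Y_t = X_{N_t}$. First I would dispose of the case $\lambda < 1$: since $X_0 = x_0 \in B(x_0, r)$, we have $\tau_{\mathbf{X}}(x_0, r) \geq 1$, so the event $\{\tau_{\mathbf{X}}(x_0, r) \leq \lambda\}$ is empty and the asserted inequality is vacuous. From now on assume $\lambda \geq 1$.

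Let $0 = \Gamma_0 < \Gamma_1 < \Gamma_2 < \cdots$ denote the jump times of the standard Poisson process $(N_t : t \geq 0)$, so that the increments $\Gamma_n - \Gamma_{n-1}$ are i.i.d.\ standard exponential variables independent of $\mathbf{X}$, with $\EE \Gamma_n = \operatorname{Var} \Gamma_n = n$. The first step is the exit-time identity
\[
	\tau_{\mathbf{Y}}(x_0, r) = \Gamma_{\tau_{\mathbf{X}}(x_0, r)}.
\]
Indeed, writing $m = \tau_{\mathbf{X}}(x_0, r)$, one has $X_0, \dots, X_{m-1} \in B(x_0, r)$ and $X_m \notin B(x_0, r)$; since $N_t \leq m-1$ for $t < \Gamma_m$ while $N_{\Gamma_m} = m$, this forces $Y_t = X_{N_t} \in B(x_0, r)$ for all $t < \Gamma_m$ and $Y_{\Gamma_m} = X_m \notin B(x_0, r)$, whence the infimum defining $\tau_{\mathbf{Y}}(x_0, r)$ equals $\Gamma_m$.

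Now fix $K = 7$ and split
\[
	\PP^{x_0}\big(\tau_{\mathbf{X}}(x_0, r) \leq \lambda\big)
	\leq
	\PP^{x_0}\big(\tau_{\mathbf{Y}}(x_0, r) \leq K\lambda\big)
	+
	\PP^{x_0}\big(\tau_{\mathbf{X}}(x_0, r) \leq \lambda, \ \tau_{\mathbf{Y}}(x_0, r) > K\lambda\big).
\]
Since $K\lambda \geq 7 \geq 1/2$, the bound \eqref{eq:A:20} controls the first term by $cK\lambda/\psi(r)$. On the event in the second term, $\tau_{\mathbf{X}}(x_0, r)$ is a positive integer not exceeding $\lfloor \lambda \rfloor$, so by monotonicity of $n \mapsto \Gamma_n$ and the identity above, $\Gamma_{\lfloor \lambda \rfloor} \geq \Gamma_{\tau_{\mathbf{X}}(x_0,r)} = \tau_{\mathbf{Y}}(x_0,r) > K\lambda \geq K\lfloor \lambda \rfloor$; hence that event is contained in $\{\Gamma_{\lfloor \lambda \rfloor} - \lfloor \lambda \rfloor > (K-1)\lfloor \lambda \rfloor\}$. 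Chebyshev's inequality bounds its probability by $\lfloor \lambda \rfloor / \big((K-1)^2 \lfloor \lambda \rfloor^2\big) = 1/\big((K-1)^2 \lfloor \lambda \rfloor\big)$, and using $\lfloor \lambda \rfloor \geq \lambda/2$ for $\lambda \geq 1$ together with $(K-1)^2 = 36 \geq 32$, this is at most $2/(36\lambda) \leq 1/(16\lambda)$. Adding the two estimates and renaming $cK$ as $c$ completes the proof.

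The only point requiring any care — and essentially the sole obstacle — is the exit-time identity $\tau_{\mathbf{Y}}(x_0, r) = \Gamma_{\tau_{\mathbf{X}}(x_0, r)}$: one must rule out an off-by-one discrepancy between the first index at which the chain $\mathbf{X}$ leaves $B(x_0, r)$ and the first time the time-changed process $\mathbf{Y}$ does, which is why I would make the membership of $X_0, \dots, X_{m-1}$ and $X_m$ explicit. Everything else (the union bound, the invocation of \eqref{eq:A:20}, and the Chebyshev estimate with the floor function) is routine.
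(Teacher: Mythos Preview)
Your proof is correct and follows essentially the same approach as the paper's: both introduce the Poisson arrival times (you write $\Gamma_n$, the paper writes $T_n$), split the event according to whether the relevant arrival time is moderate or abnormally large, apply \eqref{eq:A:20} to the first piece, and use Chebyshev's inequality on the second. Your version is slightly more explicit in two respects --- you state and justify the identity $\tau_{\mathbf{Y}}(x_0,r)=\Gamma_{\tau_{\mathbf{X}}(x_0,r)}$, and you handle non-integer $\lambda$ carefully via $\lfloor\lambda\rfloor$ --- but these are refinements of presentation rather than a different argument.
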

\begin{proof}
	Let us denote by $(T_k : k \in \NN)$ an increasing process on $\NN_0$ such that
	\[
		N_t = k \qquad\text{for all}\quad t \in [T_k, T_{k+1}).
	\]
	Observe that $T_k$ is Gamma distribution with scale $1$ and mean $k$, independent of $\mathbf{X}$. We have
	\begin{align*}
		\PP^{x_0}\Big(\tau_{\mathbf{X}}(x_0, r) \leq K\Big) 
		&=
		\PP^{x_0}\Big(\max_{1 \leq k \leq K} d(X_k, x_0) \geq r\Big) \\
		&=
		\PP^{x_0}\Big(\max_{1 \leq k \leq K} d(X_k, x_0) \geq r; T_K \leq 5K \Big)
		+
		\PP^{x_0}\Big(\max_{1 \leq k \leq K} d(X_k, x_0) \geq r; T_K > 5K\Big),
	\end{align*}
	thus
	\begin{align*}
		\PP^{x_0}\big(\tau_{\mathbf{X}}(x_0, r) \leq K\big)
		\leq 
		\PP^{x_0}\big(\tau_{\mathbf{Y}}(x_0, r) \leq 5K \big)
		+
		\PP^{x_0}(T_K - K > 4K).
	\end{align*}
	By the Markov inequality
	\[
		\PP^{x_0}(T_K - K > 4K) \leq \frac{1}{16 K},
	\]
	hence by \eqref{eq:A:20},
	\[
		\PP^{x_0}\big(\tau_{\mathbf{X}}(x_0, r) \leq K\big)
		\leq
		c \frac{5K}{\psi(r)} + \frac{1}{16 K}
	\]
	as stated.
\end{proof}

\begin{corollary}
	\label{cor:A:1}
	There is $\delta_0 \in (0, 1)$ such that for all $\delta \in (0, \delta_0]$, $x_0 \in M$ and $r > 0$,
	\[
		\PP^{x_0}\big(\tau_{\mathbf{X}}(x_0, r/2) \leq \delta \psi(r) \big) \leq \frac{1}{8}.
	\]
\end{corollary}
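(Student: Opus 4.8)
The plan is to apply Lemma~\ref{lem:A:4} with the radius $r/2$ in place of $r$ and with the choice $\lambda = \delta \psi(r)$, and then to exploit two elementary facts: that $\tau_{\mathbf{X}}$ takes values in $\NN$, hence $\tau_{\mathbf{X}} \geq 1$, and that $\psi$ is doubling by \eqref{eq:A:46}.

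First I would dispose of the degenerate regime. If $\delta \psi(r) < 1$, then since $\tau_{\mathbf{X}}(x_0, r/2)$ is a positive integer, the event $\{\tau_{\mathbf{X}}(x_0, r/2) \leq \delta \psi(r)\}$ is empty, so its probability equals $0 \leq \tfrac18$ and there is nothing to prove. It therefore remains to treat the case $\delta \psi(r) \geq 1$.

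In that case Lemma~\ref{lem:A:4} yields
\[
	\PP^{x_0}\big(\tau_{\mathbf{X}}(x_0, r/2) \leq \delta \psi(r)\big)
	\leq c \, \frac{\delta \psi(r)}{\psi(r/2)} + \frac{1}{16 \, \delta \psi(r)}.
\]
For the second summand, the assumption $\delta \psi(r) \geq 1$ gives $\frac{1}{16\delta\psi(r)} \leq \frac{1}{16}$. For the first summand, the upper scaling \eqref{eq:A:46} applied with $R = r$ and lower argument $r/2$ gives $\psi(r)/\psi(r/2) \leq C_\psi 2^{\beta_2}$, so the first summand is at most $c\, C_\psi 2^{\beta_2}\, \delta$. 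Choosing
\[
	\delta_0 = \min\Big\{\tfrac12,\; \big(16\, c\, C_\psi 2^{\beta_2}\big)^{-1}\Big\} \in (0,1),
\]
we obtain, for every $\delta \in (0,\delta_0]$, that the right-hand side is bounded by $\tfrac{1}{16} + \tfrac{1}{16} = \tfrac18$, which is the desired conclusion.

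There is essentially no obstacle here; the only point requiring attention is recognising that the term $\frac{1}{16\lambda}$ appearing in Lemma~\ref{lem:A:4} — which is \emph{not} small when $\lambda$ is small — is harmless precisely because it matters only in the range $\lambda \geq 1$, while for $\lambda = \delta\psi(r) < 1$ the event under consideration is empty by integrality of $\tau_{\mathbf{X}}$.
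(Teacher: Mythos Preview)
Your proof is correct and follows essentially the same approach as the paper: both apply Lemma~\ref{lem:A:4} with radius $r/2$ and $\lambda=\delta\psi(r)$, use \eqref{eq:A:46} to bound $\psi(r)/\psi(r/2)\le C_\psi 2^{\beta_2}$, and split according to whether $\delta\psi(r)\ge 1$ (yielding $\tfrac1{16}+\tfrac1{16}$) or $\delta\psi(r)<1$ (empty event by integrality of $\tau_{\mathbf{X}}$). The only cosmetic difference is the order in which the two cases are treated.
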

\begin{proof}
	By Lemma \ref{lem:A:4} and \eqref{eq:A:46}
	\begin{align*}
		\PP^{x_0}\big(\tau_{\mathbf{X}}(x_0, r/2) \leq \delta \psi(r) \big)
		&\leq
		c \delta \frac{\psi(r)}{\psi(r/2)} + \frac{1}{16 \delta \psi(r)} \\
		&\leq
		c C_\psi 2^{\beta_2} \delta + \frac{1}{16 \delta \psi(r)}.
	\end{align*}
	Select $\delta_0 = (2^{4+\beta_2} c C_\psi)^{-1}$ and let $\delta \in (0, \delta_0]$. If $\delta \psi(r) < 1$ then
	\[
		\PP^{x_0}\big(\tau_{\mathbf{X}}(x_0, r/2) \leq \delta \psi(r) \big) = 0
	\]
	otherwise
	\[
		\PP^{x_0}\big(\tau_{\mathbf{X}}(x_0, r/2) \leq \delta \psi(r) \big)
		\leq \frac{1}{16} + \frac{1}{16} = \frac{1}{8}. \qedhere
	\]
\end{proof}
Let us fix
\begin{equation}
	\label{eq:A:67}
	B = \frac{3}{r_0},
	\qquad
	\delta = \min\left\{C_\psi^{-1} B^{-\beta_2}, C_\psi^{-1} \eta^{-\beta_1} B^{-\beta_1}, \delta_0\right\},
	\qquad
	b = \max\left\{1 + \frac{2}{B}, (3 \delta C_\psi)^{1/\beta_1}\right\}
\end{equation}
where $\eta$ is determined in Lemme \ref{lem:A:10}. Next, we introduce the hitting times, that is
\[
	T_{\mathbf{X}}(x_0, r) = \inf\big\{n \in \NN_0 : X_k \in B(x_0, r)\big\}
\]
and
\[
	T_{\mathbf{Y}}(x_0, r) = \inf\big\{t > 0 : Y_t \in B(x_0, r)\big\}.
\]
\begin{lemma}
	\label{lem:A:6}
	There is $C > 0$ such that for all $x, y \in M$, $x \neq y$, and $r > 0$,
	\[
		\PP^x\big(T_{\mathbf{X}}(y, r) \leq \psi(r) \big)
		\leq
		C \frac{V(y, r)}{V(x, d(x, y))} \cdot \frac{\psi(r)}{\psi(d(x, y))}.
	\]
\end{lemma}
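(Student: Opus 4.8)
The plan is to bound $\PP^x(T_{\mathbf{X}}(y,r)\le\psi(r))$ by comparing two estimates for the expected number of visits of $\mathbf{X}$ to the enlarged ball $B(y,2r)$ during the first $K\approx\psi(r)$ steps: a lower bound from the strong Markov property together with Corollary~\ref{cor:A:1}, and an upper bound through the Poissonized heat kernel of Theorem~\ref{thm:A:1}. First I would dispose of two degenerate ranges. If $d(x,y)\le 4r$, then by \eqref{eq:A:44} and \eqref{eq:A:46} the right-hand side of the asserted inequality is bounded below by a positive constant depending only on $C_V,C_\psi,\gamma_2,\beta_2$ (since $V(x,d(x,y))\le V(y,8r)\le C_V 8^{\gamma_2}V(y,r)$ and $\psi(d(x,y))\le\psi(4r)\le C_\psi 4^{\beta_2}\psi(r)$), so the inequality holds once $C$ is large enough. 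If $\psi(r)<1$, then $T_{\mathbf{X}}(y,r)\le\psi(r)$ forces $T_{\mathbf{X}}(y,r)=0$, i.e. $x\in B(y,r)$, which contradicts $d(x,y)>4r$; hence the left-hand side vanishes. From now on I may assume $d(x,y)>4r$ and $\psi(r)\ge 1$.

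Put $K=\lceil\psi(r)\rceil+\lfloor\delta\psi(2r)\rfloor$ with $\delta$ as in \eqref{eq:A:67}; by \eqref{eq:A:46} and $\psi(r)\ge 1$ one has $K\le C\psi(r)$. For the lower bound I would use the strong Markov property at $T=T_{\mathbf{X}}(y,r)$: on $\{T=m\}$ with $m\le\psi(r)$ we have $X_m\in B(y,r)$ and $m+\lfloor\delta\psi(2r)\rfloor\le K$, while Corollary~\ref{cor:A:1} applied with radius $2r$ (using $\delta\le\delta_0$ and that $\tau_{\mathbf{X}}$ is integer-valued) gives
\[
	\PP^{X_m}\big(\tau_{\mathbf{X}}(X_m,r)>\lfloor\delta\psi(2r)\rfloor\big)\ge\tfrac{7}{8}.
\]
On that event $X_m,\dots,X_{m+\lfloor\delta\psi(2r)\rfloor}\in B(X_m,r)\subseteq B(y,2r)$, which contributes $\lfloor\delta\psi(2r)\rfloor+1\ge\delta\psi(r)$ to the occupation sum, so
\[
	\EE^x\Big[\sum_{k=0}^{K}\ind{B(y,2r)}(X_k)\Big]\ge\tfrac{7\delta}{8}\,\psi(r)\,\PP^x\big(T_{\mathbf{X}}(y,r)\le\psi(r)\big).
\]

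For the upper bound, write the same expectation as $\int_{B(y,2r)}\big(\sum_{k=0}^{K}h(k;x,z)\big)\,\mu({\rm d}z)$. The point is to estimate the truncated Green function $\sum_{k=0}^{K}h(k;x,z)$ without the spurious factor that the naive bound $h(n;x,z)\le C\sqrt{n}\,p(n;x,z)$ would produce; instead, from \eqref{eq:A:18} and $(1+1/K)^{-(k+1)}\ge(2e)^{-1}$ for $0\le k\le K$,
\[
	\sum_{k=0}^{K}h(k;x,z)\le 2e\sum_{k=0}^{\infty}h(k;x,z)(1+1/K)^{-(k+1)}=2e\int_0^\infty e^{-t/K}p(t;x,z)\,{\rm d}t.
\]
Inserting the second term of the minimum in \eqref{eq:A:32} and using $\int_0^\infty e^{-t/K}(1/2\vee t)\,{\rm d}t\le 2K^2$ gives $\sum_{k=0}^{K}h(k;x,z)\le C K^2/(V(x,d(x,z))\psi(d(x,z)))$. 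Since $d(x,y)>4r$, every $z\in B(y,2r)$ satisfies $d(x,z)>d(x,y)/2$, so \eqref{eq:A:44} and \eqref{eq:A:46} give $V(x,d(x,z))\psi(d(x,z))\ge c\,V(x,d(x,y))\psi(d(x,y))$; integrating over $B(y,2r)$ and using $V(y,2r)\le C_V 2^{\gamma_2}V(y,r)$,
\[
	\EE^x\Big[\sum_{k=0}^{K}\ind{B(y,2r)}(X_k)\Big]\le C K^2\,\frac{V(y,r)}{V(x,d(x,y))\psi(d(x,y))}\le C\psi(r)^2\,\frac{V(y,r)}{V(x,d(x,y))\psi(d(x,y))}.
\]
Comparing this with the lower bound and dividing by $\tfrac{7\delta}{8}\psi(r)$ yields exactly the claimed estimate. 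The one genuinely delicate step is the one I singled out: passing from the discrete kernel $h$ to the continuous-time bound \eqref{eq:A:32} via the resolvent identity so as to keep the correct power of $\psi(r)$; the remaining ingredients are the routine strong-Markov decomposition and volume/$\psi$-doubling bookkeeping.
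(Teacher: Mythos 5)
Your proof is correct but follows a genuinely different route from the paper's. The paper first establishes the analogous estimate for the Poissonized process $\mathbf{Y}$ (Claim~\ref{clm:A:1}): applying the strong Markov property at $T=T_{\mathbf{Y}}(y,r)$, it observes that $\{T\le t\}\cap\{\sup_{T\le s\le T+t}d(Y_s,Y_T)<r\}\subseteq\{d(Y_t,y)<2r\}$, bounds $\PP^x(d(Y_t,y)<2r)=\int_{B(y,2r)}p(t;x,w)\,\mu({\rm d}w)$ via \eqref{eq:A:32} at the single time $t=\psi(r)/(2c)$, and then transfers to $\mathbf{X}$ using the arrival-time concentration $\PP(T_n>2n)\le 1/n$. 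You instead run an occupation-time argument directly in discrete time: you compare two estimates of $\EE^x\big[\sum_{k=0}^K\ind{B(y,2r)}(X_k)\big]$ with $K\approx\psi(r)$, obtaining the lower bound from the strong Markov property at $T_{\mathbf{X}}(y,r)$ together with Corollary~\ref{cor:A:1}, and the upper bound from the resolvent identity $\sum_{k=0}^K h(k;x,z)\le 2e\int_0^\infty e^{-t/K}p(t;x,z)\,{\rm d}t$ followed by \eqref{eq:A:32}. Both routes rest on the same three ingredients (Theorem~\ref{thm:A:1}, the strong Markov property, and the exit-time control from \eqref{eq:A:20} or Corollary~\ref{cor:A:1}) but assemble them differently: your version stays in discrete time, entering the continuous-time world only through the Laplace-transform identity, and it sidesteps the paper's intermediate reduction to $r\ge r'=\max\{1,\psi^{-1}(2c)\}$; the paper's route is a little shorter once Claim~\ref{clm:A:1} is in hand. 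Your treatment of the degenerate regimes $d(x,y)\le 4r$ and $\psi(r)<1$ and the $V$/$\psi$-doubling bookkeeping all check out.
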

\begin{proof}
	Since $x \neq y$ and for all $r \in (0, r_0)$, $\psi(r) < 1$, we have
	\[
		\PP^x\big( T_{\mathbf{X}}(y, r) \leq \psi(r) \big) = 0.
	\]
	Next, let us consider $r_0 \leq r < r' = \max\{1, \psi^{-1}(2 c)\}$ where $c$ is the constant in \eqref{eq:A:20}. Since
	\[
		\left\{ T_{\mathbf{X}}(y, r) \leq \psi(r) \right\}
		\subseteq
		\left\{ T_{\mathbf{X}}(y, r') \leq \psi(r') \right\}
	\]
	and by \eqref{eq:A:44} and \eqref{eq:A:46}
	\begin{align*}
		V(y, r') \psi(r') 
		&= \frac{V(y, r')}{V(y, r)} \cdot \frac{\psi(r')}{\psi(r)} \\
		&\leq C_V C_\psi \bigg(\frac{r'}{r}\bigg)^{\beta_2+\gamma_2} V(y, r) \psi(r).
	\end{align*}
	it is enough to treat the case $r \geq r'$. We start by showing the corresponding statement for the continuous
	time process.
	\begin{claim}
		\label{clm:A:1}
		There is $C > 0$ such that for all $x, y \in M$ and $r \geq r'$,
		\[
			\PP^x\Big(T_{\mathbf{Y}}(y, r) \leq \psi(r)\Big) 
			\leq
			C \frac{V(y, r)}{V(x, d(x, y))} \cdot \frac{\psi(r)}{\psi(d(x, y))}.
		\]
	\end{claim}
	By the monotonicity, the inequality holds true if $d(x, y) \leq 4r$, thus it is enough to consider $d(x, y) > 4 r$. 
	Let $t \geq 1$. To simplify notation we set $T = T_{\mathbf{Y}}(y, r)$. By the strong Markov
	property and \eqref{eq:A:20}
	\begin{align*}
		\PP^x\Big(T \leq t; \sup_{T \leq s \leq T+t} d(Y_s, Y_T) < r \Big)
		&=
		\EE^x\Big[\ind{\{T \leq t\}} \cdot \PP^{Y_T}\big[\tau_{\mathbf{Y}}(Y_T, r) >t \big] \Big] \\
		&\geq 
		\bigg(1 - c \frac{t}{\psi(r)}\bigg) \PP^x\big(T \leq t \big).
	\end{align*}
	Hence,
	\begin{align*}
		\bigg(1 - c \frac{t}{\psi(r)}\bigg)
		\PP^{x}\big(T \leq t \big) 
		&\leq
		\PP^x\Big(T \leq t; \sup_{T \leq s \leq T+t} d(Y_s, Y_T) < r \Big) \\
		&\leq
		\PP^x\Big(d(Y_t, y) < 2r \Big) \\
		&=
		\int_{B(y, 2 r)} p(t; x, w) \, \mu({\rm d} w).
	\end{align*}
	By \eqref{eq:A:32},
	\[
		p(t; x, w) \leq \frac{C t}{V(x, d(x, w)) \psi(d(x, w))},
	\]
	and since $d(x, y) > 4r$ for $w \in B(y, 2r)$, we have
	\[
		d(y, w) < 2r < \tfrac{1}{2} d(x, y)
	\]
	thus
	\[
		d(x, w) \geq d(x, y) - d(y, w) > \tfrac{1}{2} d(x, y).
	\]
	Hence, by \eqref{eq:A:32}, \eqref{eq:A:44} and \eqref{eq:A:46},
	\begin{align*}
		p(t; x, w) 
		&\leq \frac{C t}{V(x, d(x, w)) \psi(d(x, w))} \\
		&\leq \frac{C' t}{V(x, d(x, y)) \psi(d(x, y))}.
	\end{align*}
	Therefore,
	\[
		\bigg(1 - c\frac{t}{\psi(r)}\bigg)
		\PP^{x}\big(T \leq t \big)
		\leq
		C'' \frac{V(y, 2r)}{V(x, d(x,y))} \cdot \frac{t}{\psi(d(x, y))}.
	\]
	Taking $t = \frac{\psi(r)}{2c} \geq 1$, we conclude the proof of the claim.
	
	Now, let us consider the Markov chain $\mathbf{X}$. Since the arrival time satisfies
	\[
		\PP(T_n > 2n) \leq \frac{1}{n},
	\]
	by taking $n = \lceil \psi(r) \rceil$, we can write
	\begin{align*}
		\frac{1}{2} \PP^x\big(T_{\mathbf{X}}(y, r) \leq \psi(r)\big)
		&\leq
		\PP^x\big(T_{\mathbf{X}}(y, r) \leq \psi(r); T_n \leq 2n \big) \\
		&\leq
		\PP^x\big(T_{\mathbf{Y}}(y, r) \leq 2 \psi(r)\big).
	\end{align*}
	In view of Claim \ref{clm:A:1}, we conclude the proof.
\end{proof}
We prove the Harnack inequality using idea from \cite{BassLevin}. We define a Markov chain $\mathbf{Z} = \big((V_k, X_k) : 
k \in \NN_0\big)$ where $V_k = V_0 + k$, $V_0 \in \NN_0$. Let $\calF_k$ denote $\sigma$-field generated by
$\big(Z_j : 0 \leq j \leq k\big)$. For $x \in M$, $r > 0$ and $k \in \NN_0$, we set
\[
	Q(k; x, r) = \big([k, k+\delta \psi(r)] \cap \NN_0\big) \times B(x, r)
\]
where $\delta > 0$ is defined in \eqref{eq:A:67}. Let
\[
	\tau_{\mathbf{Z}}(k; x, r) = \inf\big\{j \in \NN_0 : Z_j \notin Q(k; x, r)\big\},
\]
and $\tau_{\mathbf{Z}}(x, r) = \tau_{\mathbf{Z}}(0; x, r)$. Observe that
\[
	\tau_{\mathbf{Z}}(x, r) \leq \lfloor \delta \psi(r) \rfloor + 1.
\]
For $U \subset \NN_0 \times M$, we set
\[
	T_{\mathbf{Z}}(U) = \inf\big\{k \in \NN_0: Z_k \in U \big\}.
\]
\begin{lemma}
	\label{lem:A:8}
	There is $\theta_1 \in (0, \delta^{-1})$ such that for all $x \in M$, $r \geq \psi^{-1}(\delta^{-1})$,
	and all subset $U \subseteq Q(0; x, r)$ with $U(0) = \emptyset$,
	\[
		\PP^{(0, x)}\Big(T_{\mathbf{Z}}(U) < \tau_{\mathbf{Z}}(x, r)\Big)
		\geq
		\frac{\theta_1}{V(x, r) \psi(r)} \sum_{1 \leq k \leq \delta \psi(r)} \mu(U(k))
	\]
	where
	\[
		U(k) = \{u \in M : (k, u) \in U\}.
	\]
\end{lemma}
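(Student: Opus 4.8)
The plan is to run a Bass--Levin style argument directly on the space--time chain $\mathbf{Z}$, but to split into two regimes according to how likely $\mathbf{X}$ is to meet $U$ at all within time $m := \lfloor \delta \psi(r) \rfloor$ (note $m \ge 1$, since $r \ge \psi^{-1}(\delta^{-1})$ forces $\psi(r) \ge \delta^{-1}$). Write $\sigma = T_{\mathbf{Z}}(U)$ and $\tau = \tau_{\mathbf{Z}}(x, r)$. Since $Z_k = (k, X_k)$ and $U(0) = \emptyset$, one has $\sigma = \inf\{k \ge 1 : X_k \in U(k)\}$ and $\tau = \min\{m+1, \tau_{\mathbf{X}}(x, r)\}$, so that $\{\sigma < \tau\} = \{\sigma \le m\} \cap \{\sigma < \tau_{\mathbf{X}}(x, r)\}$. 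The engine will be a one-step lower bound: I first record that if $X_{k-1} \in B(x, r)$ then, using $d(X_{k-1}, u) \le 2r$ for $u \in B(x, r)$ together with the lower bound in \eqref{jump} and the doubling of $V$ and $\psi$,
\[
	\PP\big(X_k \in U(k) \mid \calF_{k-1}\big) = \sum_{u \in U(k)} J(X_{k-1}, u)\, \mu(\{u\}) \ \ge\ p_k := \frac{c_0\, \mu(U(k))}{V(x, r)\psi(r)}
\]
for some $c_0 \in (0, 1]$ depending only on the data. Put $P := \sum_{k=1}^m p_k$; since $\mu(U(k)) \le V(x,r)$ and $m \le \delta \psi(r)$, we have $P \le c_0 \delta < 1$, and since $\sum_{1\le k\le\delta\psi(r)}\mu(U(k)) = c_0^{-1} V(x,r)\psi(r)\,P$ the asserted inequality is exactly $\PP^{(0,x)}(\sigma < \tau) \ge (\theta_1/c_0)\,P$. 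So it suffices to produce $\theta > 0$, depending only on the data, with $\PP^{(0,x)}(\sigma < \tau) \ge \theta P$; then $\theta_1 := c_0 \theta$ works and will satisfy $\theta_1 < \delta^{-1}$.

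Next I would prove two complementary lower bounds on $\PP^{(0,x)}(\sigma < \tau)$, writing $q := \PP^{(0,x)}(\sigma \le m)$. \emph{(a)} From $\{\sigma < \tau\} = \{\sigma \le m\} \cap \{\sigma < \tau_{\mathbf{X}}(x,r)\}$ and $\{\sigma \le m\} \cap \{\sigma \ge \tau_{\mathbf{X}}(x,r)\} \subseteq \{\tau_{\mathbf{X}}(x,r) \le m\}$, together with Corollary \ref{cor:A:1} applied with radius $2r$ (so $m \le \delta\psi(r) \le \delta\psi(2r)$ and $\delta \le \delta_0$), which gives $\PP^{(0,x)}(\tau_{\mathbf{X}}(x,r) \le m) \le \tfrac18$, I get $\PP^{(0,x)}(\sigma < \tau) \ge q - \tfrac18$. \emph{(b)} Decomposing $\PP^{(0,x)}(\sigma < \tau) = \sum_{k=1}^m \PP^{(0,x)}(\sigma = k, \tau > k)$: on $\{\sigma = k\} \cap \{\tau > k\}$ one has $X_j \in B(x,r) \setminus U(j)$ for $1 \le j \le k-1$ and $X_k \in U(k)$, so conditioning on $\calF_{k-1}$ and invoking the one-step bound on $\{X_{k-1} \in B(x,r)\}$ gives $\PP^{(0,x)}(\sigma = k, \tau > k) \ge p_k\, \PP^{(0,x)}\big(\tau_{\mathbf{X}}(x,r) \wedge \sigma > k-1\big) \ge p_k\big(\tfrac78 - \PP^{(0,x)}(\sigma \le k-1)\big)$, the last step using $\PP^{(0,x)}(\tau_{\mathbf{X}}(x,r) \le k-1) \le \tfrac18$ again. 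Summing over $k$ and bounding $\PP^{(0,x)}(\sigma \le k-1) \le q$ yields $\PP^{(0,x)}(\sigma < \tau) \ge (\tfrac78 - q) P$.

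To conclude I would split on $q$: if $q \ge \tfrac14$ then (a) gives $\PP^{(0,x)}(\sigma < \tau) \ge \tfrac18 \ge (8c_0\delta)^{-1} P$ because $P \le c_0\delta$; if $q < \tfrac14$ then (b) gives $\PP^{(0,x)}(\sigma < \tau) \ge (\tfrac78 - \tfrac14)P = \tfrac58 P$. Taking $\theta = \min\{(8c_0\delta)^{-1}, \tfrac58\}$ finishes the proof, with $\theta_1 = c_0\theta = \min\{(8\delta)^{-1}, 5c_0/8\} < \delta^{-1}$ (using $\delta, c_0 < 1$).

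The step I expect to be the real obstacle — or rather, the idea that makes the whole thing go — is the two-regime split. A single step-by-step ``keep avoiding $U$'' argument is hopeless, because it pays the $\Theta(1)$ probability that $\mathbf{X}$ exits $B(x,r)$ within $m$ steps, and this swamps the main term $P$ precisely when $\sum_k \mu(U(k))$ is small; conversely, a Paley--Zygmund estimate on the occupation count of $U$ degrades badly when $U$ is ``fat'' (comparable to all of $B(x,r)$ at every time level, where $P$ is only of order $\delta$ but the occupation count has a large second moment). Bounding everything in terms of $q = \PP^{(0,x)}(\sigma \le m)$ and splitting on its size is what reconciles the two regimes, after which the doubling comparisons for the one-step bound and the exit-time bookkeeping are routine consequences of \eqref{jump} and Corollary \ref{cor:A:1}.
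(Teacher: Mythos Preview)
Your proof is correct and follows essentially the same strategy as the paper's: both split off a ``trivial'' regime (the paper assumes $\PP^{(0,x)}(T_{\mathbf Z}(U)<\tau_{\mathbf Z}(x,r))<\tfrac14$, you threshold on $q=\PP^{(0,x)}(\sigma\le m)$), and in the main regime both lower-bound the hitting probability by summing the one-step jump estimate from \eqref{jump} against the survival probability controlled via Corollary~\ref{cor:A:1}. The only cosmetic difference is that the paper packages the step-by-step sum as an optional-stopping argument for the compensated martingale $J_n=\ind{U}(Z_n)-\sum_{k<n}N(Z_k)$, whereas you write out the same computation directly as $\sum_k \PP(\sigma=k,\tau>k)\ge p_k\,\PP(\sigma\wedge\tau_{\mathbf X}>k-1)$.
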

\begin{proof}
	Notice that 
	\[
		\sum_{1 \leq k \leq \delta \psi(r)} \mu(U(k)) \leq \delta \psi(r) V(x, r),
	\]
	thus it is enough to consider the case when
	\begin{equation}
		\label{eq:A:21}
		\PP^{(0, x)}\Big(T_{\mathbf{Z}}(U) < \tau_{\mathbf{Z}}(x, r)\Big) < \frac{1}{4}.
	\end{equation}
	Let $S = \min\big\{ T_{\mathbf{Z}}(U), \tau_{\mathbf{Z}}(x, r)\big\}$. For $k \in \NN$ and $y \in M$, we set
	\[
		N(k, y)= 
		\begin{cases}
			\PP^{(k, y)}\big(X_1 \in U(k+1)\big) &\text{if } y \notin U(k), \\
			0 & \text{otherwise,}
		\end{cases}
	\]
	where $U(k) = \{u \in M : (k, u) \in U\}$. By \cite[Lemma 4.3]{Murugan2015}, $(J_{n \wedge T_{\mathbf{Z}}(U)} : n \in \NN_0)$
	where
	\[
		J_n = \ind{U}(Z_n) - \ind{U}(Z_0) - \sum_{k = 0}^{n-1} N(Z_k), 
	\]
	is a martingale with respect to $(\calF_n : n \in \NN_0)$. Hence, by the optional stopping theorem, we obtain
	\begin{align*}
		\PP^{(0, x)}\Big(T_{\mathbf{Z}}(U) < \tau_{\mathbf{Z}}(x, r)\Big)
		&=
		\EE^{(0, x)}\big[\ind{U}(S, X_S)\big] \\
		&\geq
		\EE^{(0, x)}\Big[
		\sum_{k = 0}^{S-1} N(k, X_k)
		\Big].
	\end{align*}
	Next, by \eqref{jump} for $y \in B(x, r) \setminus U(k)$,
	\begin{align*}
		N(k, y) 
		&= \int_{U(k+1)} J(y, u) \, \mu({\rm d} u) \\
		&\geq
		\int_{U(k+1)} \frac{C}{V(y, d(y, u)) \psi(d(y, u))} \, \mu({\rm d} u).
	\end{align*}
	Since for $U(k+1) \subset B(x, r)$, we have
	\begin{align*}
		N(k, y)
		&\geq
		C \frac{\mu(U(k+1))}{V(x, 2r) \psi(2r)} \\
		&\geq
		C' \frac{\mu(U(k+1))}{V(x, r) \psi(r)}
	\end{align*}
	where in the last estimate we have used \eqref{eq:A:44} and \eqref{eq:A:46}. Hence,
	\begin{align*}
		\ind{\big\{S \geq \lfloor \delta \psi(r) \rfloor \big\}} \sum_{k = 0}^{S - 1}
		N(k, X_k) \geq \frac{C''}{V(x, r) \psi(r)} \sum_{1 \leq k \leq \delta \psi(r)} \mu(U(k)).
	\end{align*}
	Since $\tau_{\mathbf{Z}}(x, r) \leq \lfloor \delta \psi(r) \rfloor + 1 $ and 
	$T_{\mathbf{Z}}(U) \neq \tau_{\mathbf{Z}}(x, r)$, by \eqref{eq:A:21} and Corollary \ref{cor:A:1} we obtain
	\begin{align*}
		\PP^{(0, x)}\big(S \geq \lfloor \delta\psi(r) \rfloor \big) 
		&\geq 
		1 - \PP^{(0, x)}\big(T_{\mathbf{Z}}(U) \leq \tau_{\mathbf{Z}}(x, r) \big) 
		- \PP^{(0, x)}\big(\tau_{\mathbf{Z}}(x, r) \leq \delta \psi(r) \big) \\
		&\geq
		\frac{5}{8}.
	\end{align*}
	Consequently,
	\begin{align*}
		\PP^{(0, x)}\Big(T_{\mathbf{Z}}(U) < \tau_{\mathbf{Z}}(x, r)\Big)
		&\geq
		\PP^{(0, x)} \big(S \geq \lfloor \delta\psi(r) \rfloor \big)
		\frac{C''}{V(x, r) \psi(r)} \sum_{1 \leq k \leq \delta \psi(r)} \mu(U(k)) \\
		&\geq 
		\frac{c_1}{V(x, r) \psi(r)} \sum_{1 \leq k \leq \delta \psi(r)} \mu(U(k))
	\end{align*}
	which completes the proof.
\end{proof}

\begin{lemma}
	\label{lem:A:7}
	There is $\theta_2 > 0$ such that for all $R > r_0$, $z \in M$, if $R/2 \geq r > 0$, $x \in B(z, R/2)$, and
	$k \in \NN \cap [\delta \psi(r) + 1, \delta \psi(R/2)]$ then
	\[
			\PP^{(0, z)}\Big(T_{\mathbf{Z}}(U(k; x, r)) < \tau_{\mathbf{Z}}(z, R)\Big) 
			\geq \theta_2 
			\frac{V(x, r/2)}{V(x, R)} \cdot \frac{\psi(r)}{\psi(R)}
	\]
	where
	\[
		U(k; x, r) = \{k\} \times B(x, r).
	\]
\end{lemma}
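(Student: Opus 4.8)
The plan is to obtain Lemma~\ref{lem:A:7} from Lemma~\ref{lem:A:8} applied to the \emph{large} space--time box $Q(0;z,R)$ with a time-thickened target, followed by the Bass--Levin ``holding'' trick based on Corollary~\ref{cor:A:1}. First I would record a preliminary remark: whenever the set $\NN\cap[\delta\psi(r)+1,\delta\psi(R/2)]$ is nonempty one has $\delta\psi(R/2)\geq k>1$, hence $R>2\psi^{-1}(\delta^{-1})\geq\psi^{-1}(\delta^{-1})$, so Lemma~\ref{lem:A:8} may legitimately be invoked with the box $Q(0;z,R)$ (if the set is empty there is nothing to prove). Put $m=\lfloor\delta\psi(r)\rfloor$ and take as target
\[
	U=\bigcup_{j=k-m}^{k}\{j\}\times B(x,r/2).
\]
Since $k-m\geq k-\delta\psi(r)\geq 1$ we get $U(0)=\emptyset$; moreover $B(x,r/2)\subseteq B(z,R)$ (because $d(x,z)\leq R/2$ and $r/2\leq R/2$) and every time $j$ occurring in $U$ lies in $[1,\delta\psi(R)]$ (as $k\leq\delta\psi(R/2)\leq\delta\psi(R)$), so $U\subseteq Q(0;z,R)$. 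Lemma~\ref{lem:A:8} then gives
\[
	\PP^{(0,z)}\big(T_{\mathbf{Z}}(U)<\tau_{\mathbf{Z}}(z,R)\big)
	\geq\frac{\theta_1}{V(z,R)\psi(R)}(m+1)V(x,r/2)
	\geq\frac{\theta_1\,\delta\,\psi(r)}{V(z,R)\psi(R)}\,V(x,r/2),
\]
using $m+1>\delta\psi(r)$.

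The main step is then the holding argument. On $\{T_{\mathbf{Z}}(U)<\tau_{\mathbf{Z}}(z,R)\}$ the chain enters $U$ for the first time at some $(j,y)$ with $k-m\leq j\leq k$ and $y\in B(x,r/2)$, hence with at most $m\leq\delta\psi(r)$ steps remaining until time $k$. By Corollary~\ref{cor:A:1} (applicable since $\delta\leq\delta_0$), $\PP^{y}(\tau_{\mathbf{X}}(y,r/2)>\delta\psi(r))\geq\tfrac{7}{8}$, and on that event $X_n\in B(y,r/2)\subseteq B(x,r)\subseteq B(z,R)$ for all $j\leq n\leq k$ (the ball nesting follows from $d(y,x)\leq r/2$, $d(x,z)\leq R/2$, $r\leq R/2$). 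Consequently the chain is still in $Q(0;z,R)$ at time $k$ and $Z_k=(k,X_k)\in\{k\}\times B(x,r)=U(k;x,r)$, so that $T_{\mathbf{Z}}(U(k;x,r))<\tau_{\mathbf{Z}}(z,R)$. An application of the strong Markov property at $T_{\mathbf{Z}}(U)$ then yields
\[
	\PP^{(0,z)}\big(T_{\mathbf{Z}}(U(k;x,r))<\tau_{\mathbf{Z}}(z,R)\big)
	\geq\tfrac{7}{8}\,\PP^{(0,z)}\big(T_{\mathbf{Z}}(U)<\tau_{\mathbf{Z}}(z,R)\big).
\]
Combining the two displays and using $B(z,R)\subseteq B(x,3R/2)$ together with the doubling property \eqref{eq:A:44} to replace $V(z,R)$ by a constant multiple of $V(x,R)$, the claim follows with $\theta_2=\tfrac{7\theta_1\delta}{8\,C_V2^{\gamma_2}}$.

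I do not expect a genuine obstacle here. The only delicate points are verifying that $U$ is an admissible target for Lemma~\ref{lem:A:8}, i.e.\ that the time window $[k-m,k]$ fits inside $[1,\delta\psi(R)]$ and that $U(0)=\emptyset$ --- this is exactly where the hypotheses $k\geq\delta\psi(r)+1$ and (implicitly) $R>2\psi^{-1}(\delta^{-1})$ enter --- and keeping careful track of the nesting of balls $B(y,r/2)\subseteq B(x,r)\subseteq B(z,R)$. Time-homogeneity of $\mathbf{Z}$ and the fact that the box is centered at $z$ rather than at $x$ cause no difficulty, since Lemma~\ref{lem:A:8} is used directly in its stated form with center $z$ and radius $R$.
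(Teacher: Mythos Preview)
Your proof is correct and follows essentially the same approach as the paper's: apply Lemma~\ref{lem:A:8} to the large box $Q(0;z,R)$ with the time-thickened target $\bigcup_{j=k-\lfloor\delta\psi(r)\rfloor}^{k}\{j\}\times B(x,r/2)$, then use Corollary~\ref{cor:A:1} and the strong Markov property to conclude that with probability at least $7/8$ the chain, once it enters the target, holds inside $B(x,r)\subset B(z,R)$ until time~$k$. Your treatment is in fact slightly more careful than the paper's in that you explicitly verify the applicability condition $R\geq\psi^{-1}(\delta^{-1})$ of Lemma~\ref{lem:A:8} and spell out the ball nestings.
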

\begin{proof}
	Given $k \in \NN \cap [\delta \psi(r) + 1, \delta \psi(R/2)]$ we set 
	$Q' = (\NN \cap [k - \delta\psi(r), k]) \times B(x, r/2)$. Since $B(x, r/2) \subset B(z, R)$, we have
	\[
		Q' \subset (\NN \cap [1, \delta \psi(R)]) \times B(z, R).
	\]
	Since
	\[
		\sum_{1 \leq j \leq \delta \psi(R)} \mu(Q'(j))= \lceil \delta \psi(r) \rceil \cdot V(x, r/2)
	\]
	by Lemma \ref{lem:A:8}, we obtain
	\begin{align}
		\nonumber
		\PP^{(0, z)}\big(T_{\mathbf{Z}}(Q') < \tau_{\mathbf{Z}}(z, R) \big)
		&\geq C' \frac{V(x, r/2)}{V(z, R)} \cdot \frac{\psi(r)}{\psi(R)} \\
		\label{eq:A:25}
		&\geq C'' \frac{V(x, r/2)}{V(x, R)} \cdot \frac{\psi(r)}{\psi(R)}
	\end{align}
	where in the last inequality we have used \eqref{eq:A:44} thanks to $B(z, R) \subset B(x, 3R/2)$. Now, the strong Markov
	property gives
	\begin{align}
		\nonumber
		\PP^{(0, z)}\big(T_{\mathbf{Z}}(U(k; x, r)) < \tau_{\mathbf{Z}}(z, R)\big)
		&\geq
		\PP^{(0, z)}\big(T_{\mathbf{Z}}(U(k; x, r)) < \tau_{\mathbf{Z}}(z, R);
		T_{\mathbf{Z}}(Q') < \tau_{\mathbf{Z}}(z, R)\big)\\
		\label{eq:A:24}
		&=
		\EE^{(0, z)}\Big[ \ind{\{T_{\mathbf{Z}}(Q') < \tau_{\mathbf{Z}}(z, R)\}}
		\PP^{Z_{T_{\mathbf{Z}}(Q')}}
		\big(T_{\mathbf{Z}}(U(k; x, r)) < \tau_{\mathbf{Z}}(z, R)\big)\Big].
	\end{align}
	Each path that stays in
	\[
		\NN \times B\big(X_{T_{\mathbf{Z}}(Q')}, r/2\big)
	\]
	for at least $\delta \psi(r)$ steps corresponds to a path of $\mathbf{Z}$ that starts at $Z_{T_{\mathbf{Z}}(Q')}$
	hits $Q(0; z, R)$ before leaving $U(k; x, r)$. Hence, by Corollary \ref{cor:A:1},
	\[
		\PP^{Z_{T_{\mathbf{Z}}(Q')}}
		\big( T_{\mathbf{Z}}(U(k; x, r)) < \tau_{\mathbf{Z}}(x, r)\big)
		\geq
		\PP^{X_{T_{\mathbf{Z}}(Q')}}
		\big(\tau_{\mathbf{X}}(X_{T_{\mathbf{Z}}(Q')}, r/2) \geq T_{\mathbf{Z}}(Q') + \delta \psi(r)\big) \geq \frac{7}{8}.
	\]
	Therefore, by \eqref{eq:A:24} and \eqref{eq:A:25}, we obtain
	\[
		\PP^{(0, z)}\big(T_{\mathbf{Z}}(U(k; x, r)) < \tau_{\mathbf{Z}}(x, R)\big)
		\geq 
		\frac{7}{8} C''\frac{V(x, r/2)}{V(x, R)} \cdot \frac{\psi(r)}{\psi(R)} 
	\]
	proving the lemma.
\end{proof}

\begin{lemma}
	\label{lem:A:5}
	Let $f$ be a nonnegative integrable function supported on $\NN_0 \times B(x, 2r)^c$ for certain $x \in M$ and $r > 0$.
	There is $\theta_3 > 0$, such that for all $y \in B(x, r/2)$,
	\[
		\EE^{(0, x)}
		\big[f(Z_{\tau_{\mathbf{Z}}(x, r)}) \big]
		\leq
		\theta_3
		\EE^{(0, y)}
		\big[
		f(Z_{\tau_{\mathbf{Z}}(x, r)})
		\big].
	\]
	The constant $c$ is independent of $x$, $r$, and $f$.
\end{lemma}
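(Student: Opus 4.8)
The plan is to decompose the exit from the cylinder $Q(0; x, r) = \{0, 1, \ldots, \lfloor \delta \psi(r)\rfloor\} \times B(x, r)$ according to the step at which the Markov chain $\mathbf{X}$ first leaves $B(x, r)$, using the fact that $f$ vanishes on $\NN_0 \times B(x, 2r)$: only trajectories whose exit lands in $B(x, 2r)^c$ can contribute. Write $\tau = \tau_{\mathbf{Z}}(x, r)$ and recall $\tau \le \lfloor\delta\psi(r)\rfloor + 1$. Conditioning on $\calF_{j-1}$ and using that, on $\{\tau > j-1\}$, the point $X_{j-1}$ lies in $B(x, r)$ while $X_j$ has conditional law $J(X_{j-1}, \cdot)\, \mu(\cdot)$, one gets for $z \in \{x, y\}$
\[
\EE^{(0,z)}\big[f(Z_\tau)\big]
= \sum_{j \ge 1} \EE^{(0,z)}\bigg[\ind{\{\tau > j-1\}} \int_{B(x, 2r)^c} f(j, w)\, J(X_{j-1}, w)\, \mu({\rm d} w)\bigg],
\]
a finite sum, since $\tau \le \lfloor\delta\psi(r)\rfloor + 1$ forces all terms with $j \ge \lfloor\delta\psi(r)\rfloor + 2$ to vanish.

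Next I would show that the jump kernel from a point of $B(x, r)$ to a point of $B(x, 2r)^c$ is, up to a multiplicative constant, independent of the base point. If $u \in B(x, r)$ and $d(x, w) > 2r$, the triangle inequality gives $\tfrac12 d(x, w) < d(u, w) < \tfrac32 d(x, w)$, so $\psi(d(u, w)) \approx \psi(d(x, w))$ by \eqref{eq:A:46} and, comparing balls centred at $u$ and at $x$ through \eqref{eq:A:44}, $V\big(u, d(u, w)\big) \approx V\big(x, d(x, w)\big)$; with \eqref{jump} this yields $J(u, w) \approx K(x, w)$, where $K(x, w) = \big(V(x, d(x, w))\, \psi(d(x, w))\big)^{-1}$ and the comparison constants depend only on the structural constants in \eqref{eq:A:44}, \eqref{eq:A:46} and \eqref{jump}. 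Substituting this into the identity above and putting $g(j) = \int_{B(x, 2r)^c} f(j, w)\, K(x, w)\, \mu({\rm d} w) \ge 0$ gives, uniformly in $z \in \{x, y\}$,
\[
\EE^{(0,z)}\big[f(Z_\tau)\big] \approx \sum_{j \ge 1} g(j)\, \PP^{(0,z)}\big(\tau > j-1\big).
\]

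It then remains to compare the survival probabilities $\PP^{(0,x)}(\tau > j-1)$ and $\PP^{(0,y)}(\tau > j-1)$. For the starting point $x$ one has the trivial bound $\PP^{(0,x)}(\tau > j-1) \le 1$, while only indices with $j - 1 \le \lfloor\delta\psi(r)\rfloor$ matter (the others contribute $0$). Since $y \in B(x, r/2)$ we have $B(y, r/2) \subseteq B(x, r)$, so on the event $\{\tau_{\mathbf{X}}(y, r/2) > \delta\psi(r)\}$ the trajectory satisfies $X_0, \ldots, X_{\lfloor\delta\psi(r)\rfloor} \in B(y, r/2) \subseteq B(x, r)$, hence $Z_0, \ldots, Z_{\lfloor\delta\psi(r)\rfloor}$ stay in $Q(0; x, r)$ and $\tau > \lfloor\delta\psi(r)\rfloor \ge j - 1$; Corollary \ref{cor:A:1} (applicable because $\delta \le \delta_0$, as fixed in \eqref{eq:A:67}) bounds the complementary event by $\tfrac18$, so $\PP^{(0,y)}(\tau > j-1) \ge \tfrac78$ for every relevant $j$. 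Combining the last two displays with these two bounds produces $\EE^{(0,x)}\big[f(Z_\tau)\big] \le \theta_3\, \EE^{(0,y)}\big[f(Z_\tau)\big]$ with $\theta_3$ depending only on the structural constants, which is the claim.

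The points requiring care are, first, making the conditional decomposition of $\EE^{(0,z)}[f(Z_\tau)]$ completely rigorous — in particular the boundary index $j = \lfloor\delta\psi(r)\rfloor + 1$, where the exit is forced by the time coordinate but the last jump may still land in $B(x, 2r)^c$ — and, second, the uniform geometric bookkeeping behind $J(u, w) \approx K(x, w)$, namely comparing $V$ and $\psi$ evaluated at balls with centres $u \in B(x, r)$ and $x$. Both are routine once \eqref{eq:A:44}, \eqref{eq:A:46}, \eqref{jump} and the triangle inequality are invoked, but they must be carried out uniformly in all the parameters $x$, $r$ and $f$.
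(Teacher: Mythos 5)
Your proposal follows essentially the same route as the paper's proof: decompose the expectation step-by-step over the exit time, condition on $\calF_{j-1}$ to surface $J(X_{j-1},\cdot)$, exploit the geometry (base point in $B(x,r)$, target in $B(x,2r)^c$) together with \eqref{eq:A:44}, \eqref{eq:A:46} and \eqref{jump} to replace $J(X_{j-1},w)$ by a kernel depending only on $x$ and $w$ up to constants, and then compare survival probabilities, bounding the one from $x$ by $1$ and the one from $y$ below by $\tfrac{7}{8}$ via $B(y,r/2)\subset B(x,r)$ and Corollary~\ref{cor:A:1}. The paper phrases this for indicator functions $f=\delta_k\otimes\ind U$ and appeals to monotone-class reasoning, whereas you sum over $j$ directly, but the content is identical.
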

\begin{proof}
	It is enough to consider simple functions. Since $f \geq 0$, we can also assume that $f$ is indicator of a 
	subset of $\NN_0 \times B(x, 2r)^c$. Let
	\[
		f = \delta_k \otimes \ind{U}
	\]
	for $k \in \NN_0$ and $U \subset M \setminus B(x, 2r)$. Since
	\[
		1 \leq \tau(x, r) \leq \lfloor \delta \psi(r) \rfloor + 1,
	\]
	we have $1 \leq k \leq \lfloor \delta \psi(r) \rfloor+1$. Next, we write 
	\begin{align*}
		\EE^{(0, y)} 
		\big[
		f(Z_{\tau_{\mathbf{Z}}(x, r)})
		\big]
		&=
		\EE^{(0, y)}
		\Big[\EE^{(0, y)}\big[
		\delta_k(V_{\tau_{\mathbf{Z}}(x, r)}) \ind{U}(X_{\tau_{\mathbf{Z}}(x, r)}) | \calF_{k-1} 
		\big]
		\Big] \\
		&=
		\int_U
		\EE^{(0, y)} \big[
		\ind{\{\tau_{\mathbf{Z}}(x, r) > k-1\}} J(X_{k-1}, u)
		\big]
		\, \mu({\rm d} u).
	\end{align*}
	Since $X_{k-1} \in B(x, r)$ on $\{\tau_{\mathbf{Z}}(x, r) > k-1\}$, we have
	\[
		d(X_{k-1}, u) \leq d(X_{k-1}, x) + d(x, u) \leq r + d(x, u)
	\]
	and
	\[
		d(X_{k-1}, u) \geq d(x, u) - d(X_{k-1}, x) \geq \frac{1}{2} d(x, u).
	\]
	Using $d(x, u) \geq 2r$, we obtain
	\[
		\frac{1}{2} d(x, u) \leq d(X_{k-1}, u) \leq \tfrac{3}{2} d(x, u).
	\]
	In view of \eqref{jump}, \eqref{eq:A:44} and \eqref{eq:A:46},
	\begin{align*}
		\EE^{(0, y)}
		\big[
		f(Z_{\tau_{\mathbf{Z}}(x, r)})
		\big]
		\approx
		\PP^{(0, y)}\big[\tau_{\mathbf{Z}}(x, r) > k-1\big] 
		\int_U \frac{1}{V(u, d(x, u)/2) \psi(d(x, u)/2)} 
		\, \mu({\rm d} u).
	\end{align*}
	Since $B(y, r/2) \subset B(x, r)$,
	\begin{align*}
		\PP^{(0, y)}\big( \tau_{\mathbf{Z}}(x, r) > k-1\big) 
		&\geq 
		\PP^y\big(\tau_{\mathbf{X}}(x, r) \geq k \big) \\
		&\geq
		\PP^y\big(\tau_{\mathbf{X}}(y, r/2) \geq k \big) \\
		&\geq
		\PP^y\big(\tau_{\mathbf{X}}(y, r/2) \geq \lfloor \delta \psi(r) \rfloor + 1\big) \\
		&=
		1 - \PP^y\big(\tau_{\mathbf{X}}(y, r/2) \leq \delta \psi(r) \big),
	\end{align*}
	and thus by Corollary \ref{cor:A:1},
	\[
		\PP^{(0, x)}\big( \tau_{\mathbf{Z}}(x, r) > k-1\big) \geq \frac{7}{8}
	\]
	and the lemma follows.
\end{proof}

\begin{lemma}
	\label{lem:A:10}
	There are $C > 0$ and $\eta \geq 2$ such that for all $x, y \in M$, if $d(x, y) \geq \eta \psi^{-1}(n)$
	then
	\[
		\PP^x\big(X_n \in B(y, \eta \psi^{-1}(n)\big) \geq 
		C
		\frac{V(y, \psi^{-1}(n))}{V(x, d(x, y))} \cdot \frac{n}{\psi(d(x, y))}.
	\]
\end{lemma}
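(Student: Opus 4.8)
The plan is to run a \emph{single big jump} argument. Write $r = \psi^{-1}(n)$; since $\psi(1) = 1$ and $\psi$ is increasing, $r \ge 1$ and $\psi(r) \ge n$, so all the scaling relations \eqref{eq:A:44}--\eqref{eq:A:45} are available with radii $\ge 1$. The idea is that, with $\eta$ large, a cheap way for $\mathbf{X}$ to be in $B(y, \eta r)$ at time $n$ is to stay in $B(x, r)$ during the first $k$ instants, perform one jump from $B(x, r)$ directly into $B(y, r)$ at step $k$, and then stay in $B(y, \eta r)$ for the remaining $n - k$ steps; summing over the $\asymp \delta n$ admissible values of $k$ produces the linear factor $n$ in the bound.

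First I would fix $\delta$ as in Corollary \ref{cor:A:1} and choose $\eta$, depending only on $\delta$, $C_\psi$, $\beta_1$ and the doubling constants of $V$ and $\psi$, large enough that: (a) $\eta \ge 4$, so that for $z \in B(x, r)$ and $w \in B(y, r)$ the triangle inequality together with $d(x, y) \ge \eta r$ forces $\tfrac12 d(x, y) \le d(z, w) \le 2 d(x, y)$; and (b) $\delta\,\psi(\eta r) \ge n$ for every $n \in \NN$ — indeed the reverse doubling \eqref{eq:A:45} gives $\psi(\eta r) \ge C_\psi^{-1}\eta^{\beta_1}\psi(r) \ge C_\psi^{-1}\eta^{\beta_1} n$, so a fixed $\eta$ works, and an extra lower bound on $\eta$ (still depending only on $\delta$) handles the finitely many $n$ with $\delta n < 2$. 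Then, with $K = \max\{1, \lfloor \delta n / 2\rfloor\} \ge \delta n/4$, I would introduce for $1 \le k \le K$ the events $A_k = \{X_0, \dots, X_{k-1} \in B(x, r)\}$, $B_k = \{X_k \in B(y, r)\}$, $C_k = \{X_k, \dots, X_n \in B(y, \eta r)\}$. Since $d(x, y) \ge \eta r \ge 2r$, on $A_k \cap B_k$ the first exit time of $\mathbf{X}$ from $B(x, r)$ equals $k$, so the sets $A_k \cap B_k$, hence also $A_k \cap B_k \cap C_k \subseteq \{X_n \in B(y, \eta r)\}$, are pairwise disjoint, giving $\PP^x(X_n \in B(y, \eta r)) \ge \sum_{k=1}^{K}\PP^x(A_k \cap B_k \cap C_k)$.

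The next step is to estimate one summand by conditioning forward. Conditioning at time $k$: on $B_k$ we have $B(X_k, \eta r/2) \subseteq B(y, \eta r)$, and since $\delta\,\psi(\eta r) \ge n \ge n - k$, Corollary \ref{cor:A:1} applied at $X_k$ with radius $\eta r$ gives $\PP^{X_k}(\tau_{\mathbf{X}}(X_k, \eta r/2) > n - k) \ge 7/8$, so $\PP^x(A_k \cap B_k \cap C_k) \ge \tfrac78\PP^x(A_k \cap B_k)$. Conditioning at time $k-1$ (note $A_k$ is $\calF_{k-1}$-measurable and $X_{k-1} \in B(x, r)$ on $A_k$): $\PP^x(A_k \cap B_k) = \EE^x[\ind{A_k}\,\PP^{X_{k-1}}(X_1 \in B(y, r))]$, and for any $z \in B(x, r)$, combining \eqref{jump} with $d(z, w) \approx d(x, y)$ for $w \in B(y, r)$ and the doubling properties \eqref{eq:A:44}, \eqref{eq:A:46} yields $\PP^z(X_1 \in B(y, r)) \gtrsim V(y, r)/\big(V(x, d(x, y))\,\psi(d(x, y))\big)$. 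Finally $\PP^x(A_k) = \PP^x(\tau_{\mathbf{X}}(x, r) \ge k) \ge 7/8$ by Corollary \ref{cor:A:1}, since $k \le K \le \delta\,\psi(2r)$. Assembling the three bounds and using $K \ge \delta n/4$ gives the claimed inequality.

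I do not expect a genuine obstacle: this is a standard heat-kernel lower-bound argument and all the ingredients (the near-diagonal exit-time estimate of Corollary \ref{cor:A:1}, the jump-kernel lower bound \eqref{jump}, and volume/scale doubling) are already available. The only points needing care are bookkeeping ones — verifying the disjointness of the events $A_k \cap B_k$, and choosing $\eta$ (a function of the structural constants only) so that every invocation of Corollary \ref{cor:A:1} is legitimate uniformly in $n$, including the small-$n$ regime where $K = 1$ and only the single term $k = 1$ is used.
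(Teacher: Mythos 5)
Your proposal is correct, but it proves the lemma by a genuinely different route than the paper. The paper's proof is a martingale argument: it introduces $N(z)=\PP^z(X_1\in B(y,\psi^{-1}(n)))$, applies the optional stopping theorem to the Dynkin martingale at the stopping time $S=\min\{n,\tau_{\mathbf X}(x,(\eta-1)\psi^{-1}(n)),T_{\mathbf X}(y,\psi^{-1}(n))\}$ to get $\PP^x(X_S\in B(y,\psi^{-1}(n)))\gtrsim N_{\min}\,\EE^x S$, then bounds $\EE^x S\gtrsim n$ from below by invoking \emph{both} Lemma~\ref{lem:A:4} (for the exit time) and Lemma~\ref{lem:A:6} (for the hitting time), and finally passes from ``ever hitting $B(y,\psi^{-1}(n))$ before time $n$'' to ``being in $B(y,\eta\psi^{-1}(n))$ at time $n$'' via the strong Markov property. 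Your single-big-jump decomposition $\{X_n\in B(y,\eta r)\}\supseteq\bigsqcup_k A_k\cap B_k\cap C_k$ replaces the stopping-time bookkeeping by a direct path count, is more elementary, and in particular does \emph{not} use Lemma~\ref{lem:A:6} at all — Corollary~\ref{cor:A:1} (and hence Lemma~\ref{lem:A:4}) plus the jump-kernel lower bound~\eqref{jump} and the volume/scale doubling suffice. This is a genuine simplification of the ingredient list.

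One point that needs to be stated carefully, because the paper's own architecture makes it subtle: the $\delta$ you use must be $\delta_0$ from Corollary~\ref{cor:A:1} (or some other fixed constant $\le\delta_0$ that is independent of $\eta$), not the global $\delta$ defined in \eqref{eq:A:67}. That global $\delta$ is set \emph{after} $\eta$ is known and is proportional to $\eta^{-\beta_1}$, so any constraint of the form $\eta\ge(C_\psi/\delta)^{1/\beta_1}$ — which you need to guarantee $\delta\psi(\eta r)\ge n$ — would become circular and unsatisfiable. Your phrasing ``fix $\delta$ as in Corollary~\ref{cor:A:1}'' is consistent with taking $\delta=\delta_0$, but since $\delta$ is overloaded in this section you should say explicitly that you are using $\delta_0$ here; the $\eta$ that results may then be larger than the paper's, which is harmless since $\eta$ is only existentially quantified in the statement. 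Alternatively, for the $C_k$ factor you can skip Corollary~\ref{cor:A:1} entirely and apply Lemma~\ref{lem:A:4} directly at radius $\eta r/2$ and time $n$: $\PP^{X_k}(\tau_{\mathbf X}(X_k,\eta r/2)\le n)\le cC_\psi(\eta/2)^{-\beta_1}+\tfrac{1}{16}$, which is $\le\tfrac18$ for $\eta$ large depending only on the constant of Lemma~\ref{lem:A:4}, $C_\psi$, and $\beta_1$, avoiding the issue altogether.
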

\begin{proof}
	For $z \in M$, we set
	\[
		N(z) = 
		\begin{cases}
			\PP^{z}\big(X_1 \in B(y, \psi^{-1}(n))\big) & \text{if } z \notin B(y, \psi^{-1}(n) ), \\
			0 & \text{otherwise.}
		\end{cases}
	\]
	Let $S = \min\left\{n, \tau_{\mathbf{X}}(x, (\eta-1) \psi^{-1}(n)), T_{\mathbf{X}}(y, \psi^{-1}(n))\right\}$ and 
	$A = B(y, \psi^{-1}(n))$. Then by \cite[Lemma 4.3]{Murugan2015} the sequence $(J_{n \wedge T_{\mathbf{X}}(A)} : n \in \NN_0)$
	where
	\[
		J_n = \ind{A}(X_n) - \ind{A}(X_0) - \sum_{k = 0}^{n-1} N(X_k)
	\]
	is a martingale. Thus, by the optional stopping theorem, we obtain
	\[
		\PP^x\big(X_S \in B(y, \psi^{-1}(n))\big) \geq \EE^x\Big(\sum_{k = 0}^{S-1} N(X_k) \Big).
	\]
	For $z \in B(x, (\eta-1) \psi^{-1}(n)) \setminus B(y, \psi^{-1}(n))$, we have
	\begin{align*}
		N(z) 
		&= \int_{B(y, \psi^{-1}(n))} J(z, w) \, \mu({\rm d} w) \\
		&\geq \int_{B(y, \psi^{-1}(n))} \frac{C_1}{V(z, d(z, w)) \psi(d(z, w))} \, \mu({\rm d} w) \\
		&\geq C_1 \frac{V(y, \psi^{-1}(n))}{V(x, 2 d(x, y)) \psi(2 d(x, y))}
	\end{align*}
	since by \eqref{eq:A:46}, for $w \in B(y, \psi^{-1}(n))$ we have
	\begin{align*}
		d(z, w) 
		&\leq d(x, y) + d(z, x) + d(y, w) \\
		&\leq d(x, y) + (\eta-1) \psi^{-1}(n) + \psi^{-1}(n) \\
		&\leq 2 d(x, y)
	\end{align*}
	and
	\[
		B(z, d(z, w)) \subset B(z, 2 d(x, y)) \subset B(x, 3 d(x, y)).
	\]
	Consequently, by \eqref{eq:A:44} and \eqref{eq:A:46},
	\begin{equation}
		\label{eq:A:74}
		\PP^x\big(X_S \in B(y, \psi^{-1}(n)) \big) 
		\geq 
		C_4 \frac{V(y, \psi^{-1}(n))}{V(x, d(x, y)) \psi(d(x, y))}
		\EE^x S.
	\end{equation}
	Next, we estimate $\EE^x S$. By Lemma \ref{lem:A:6} together with \eqref{eq:A:44} and \eqref{eq:A:45},
	\begin{align*}
		 \PP^x\big(  T_{\mathbf{X}}(y, \psi^{-1}(n)) < n \big)
		 &\leq
		 C_5 \frac{V(y, \psi^{-1}(n))}{V(x, d(x, y))} \cdot \frac{n}{\psi(d(x, y))} \\
		 &\leq
		 C_5 \frac{V(x, d(x, y) + \psi^{-1}(n))}{V(x, d(x, y))} \cdot \frac{n}{\psi(d(x, y))} \\
		 &\leq
		 C_5 C_V C_\psi \bigg(1 + \frac{\psi^{-1}(n)}{d(x, y)}\bigg)^{\gamma_2} 
		 \bigg(\frac{\psi^{-1}(n)}{d(x, y)}\bigg)^{\beta_1} \\
		 &\leq
		 C C_V C_\psi 2^{\gamma_2} \eta^{-\beta_1}
	\end{align*}
	which is smaller than $\frac{1}{4}$, provided that
	\[
		\eta \geq \big(C_5 C_V C_\psi 2^{2 + \gamma_2}\big)^{\frac{1}{\beta_1}}.
	\]
	Moreover, by Lemma \ref{lem:A:4} and \eqref{eq:A:45}
	\begin{align*}
		\PP^x\Big( \tau_{\mathbf{X}}(x, (\eta-1) \psi^{-1}(n) ) < n\Big)
		&\leq
		c \frac{n}{\psi((\eta-1) \psi^{-1}(n))}
		+ \frac{1}{16 n} \\
		&\leq
		c C_\psi (\eta-1)^{-\beta_1} + \frac{1}{16}
	\end{align*}
	which is smaller than $\frac{1}{4}$, provided that
	\[
		\eta \geq \left( \tfrac{16}{3} c C_\psi \right)^{\frac{1}{\beta_1}} + 1.
	\]
	Hence,
	\[
		 \PP^x\Big(\tau_{\mathbf{X}}(x, (\eta-1) \psi^{-1}(n) ) \geq n; T_{\mathbf{X}}(y, \psi^{-1}(n)) \geq n\Big)
		 \geq 
		 \frac{1}{2},
	\]
	and thus, by the Markov inequality, we obtain
	\begin{align*}
		\frac{1}{2} 
		&\leq \PP^x\Big(\tau_{\mathbf{X}}(x, (\eta-1) \psi^{-1}(n) ) \geq n;
		T_{\mathbf{X}}(y, \psi^{-1}(n)) \geq n\Big) \\
		&\leq
		\PP^x(S \geq n) \\
		&\leq
		\frac{1}{n}
		\EE^x S.
	\end{align*}
	Consequently, by \eqref{eq:A:74}
	\[
		\PP^x\Big(T_{\mathbf{X}}(y, \psi^{-1}(n)) \leq n \Big) \geq C_6
		\frac{V(y, \psi^{-1}(n))}{V(x, d(x, y))} \cdot \frac{n}{\psi(d(x, y))}.
	\]
	Next, by Lemma \ref{lem:A:4}, for all $z \in B(y, \psi^{-1}(n))$ we have
	\[
		\PP^z\Big(\tau_{\mathbf{X}}(y, \eta \psi^{-1}(n)) \geq n\Big)
		\geq
		\PP^z\Big(\tau_{\mathbf{X}}(z, (\eta-1) \psi^{-1}(n)) \geq n\Big)
		\geq
		\frac{3}{4}.
	\]
	Therefore, by the strong Markov property,
	\begin{align*}
		\PP^x\Big(X_n \in B(y, \eta \psi^{-1}(n))\Big)
		&=
		\EE^x\Big[ \ind{\big\{T_{\mathbf{X}}(y, \psi^{-1}(n)) \leq n\big\}}
		\PP^{X_{T_{\mathbf{X}}(y, \psi^{-1}(n))}}
		\Big(\tau_{\mathbf{X}}(y, \eta \psi^{-1}(n)) \geq n \Big)
		\Big] \\
		&\geq
		C_7 \frac{V(y, \psi^{-1}(n))}{V(x, d(x, y))} \cdot \frac{n}{\psi(d(x, y))}
	\end{align*}
	and the lemma follows.
\end{proof}

Let us recall that a function $u$ is parabolic on $D \subset \NN_0 \times M$ if
\[
	\Big( u(Z_{k \wedge \tau_{\mathbf{Z}}(D)}) : k \in \NN \Big)
\]
is a martingale. Let $B$ and $b$ be constants defined in \eqref{eq:A:67}.
\begin{theorem}
	\label{thm:A:3}
	There is $K_0 > 0$ such that for all $R > B$, $z \in M$ and every nonnegative function $u$ 
	which is parabolic on $Q(0; z, bR)$, the following inequality holds
	\[
		\max_{(k, y) \in Q(\delta \psi(R); z, R/B)}
		u(k, y)
		\leq K_0 \min_{y \in B(z, R/B)} u(0, y).
	\]
\end{theorem}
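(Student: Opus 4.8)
The plan is to follow the Bass--Levin method of \cite{BassLevin}, transported to the space--time Markov chain $\mathbf{Z} = \big((V_k, X_k) : k \in \NN_0\big)$, feeding it the hitting and exit-time estimates gathered in Lemmas \ref{lem:A:4}--\ref{lem:A:10} together with the calibration of $B, \delta, b$ in \eqref{eq:A:67}. It suffices to produce a structural constant $c > 0$ (depending only on the constants in \eqref{eq:A:44}--\eqref{eq:A:45} and in \eqref{jump}) such that
\[
	u(0, y_0) \geq c\, u(k_1, y_1)
	\qquad\text{for all } (k_1, y_1) \in Q(\delta\psi(R); z, R/B) \text{ and } y_0 \in B(z, R/B);
\]
taking the maximum over $(k_1,y_1)$ and the minimum over $y_0$ then gives the assertion with $K_0 = 1/c$. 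I therefore fix $(k_1, y_1)$ and $y_0$, normalise $u(k_1, y_1) = 1$, and aim at $u(0, y_0) \geq c$. The engine is optional stopping: since $u$ is nonnegative and parabolic on $Q(0; z, bR)$, for the exit time $\tau = \tau_{\mathbf{Z}}(0; z, bR)$ and the first hitting time $S$ of any $G \subseteq Q(0; z, bR)$ one has
\[
	u(0, y_0) = \EE^{(0, y_0)}\big[u(Z_{S \wedge \tau})\big] \geq \Big(\inf_{G} u\Big)\, \PP^{(0, y_0)}\big(S < \tau\big),
\]
so the problem reduces to (i) locating a set $G$ on which $u \geq \kappa$ for a structural $\kappa > 0$, and (ii) showing the chain started at $(0, y_0)$ hits $G$ before leaving $Q(0; z, bR)$ with probability bounded below. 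The constants $B,\delta,b$ in \eqref{eq:A:67} are precisely what guarantees that the budget $\lfloor\delta\psi(bR)\rfloor$ dominates the time coordinates in the top box, and that --- via Corollary \ref{cor:A:1} and Lemmas \ref{lem:A:4}, \ref{lem:A:6} --- the chain from $B(z,R/B)$ stays inside a definite enlargement of $B(z, R/B)$ throughout the relevant time window with probability close to $1$.

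For step (i) I would prove an expansion-of-positivity (spreading) lemma: there are structural $C_* \geq 1$, $\epsilon_0 \in (0,1]$, $c_1 \in (0,1]$ such that if $u \geq 0$ is parabolic on a box $Q(j; x, C_* s) \subseteq Q(0; z, bR)$ and $u \geq a$ on a space--time set $E \subseteq Q(j; x, s)$ with $\sum_k \mu(E(k)) \geq \epsilon_0\, \delta\psi(s)\, V(x, s)$, then $u \geq c_1 a$ on a comparable box. The proof combines parabolicity (optional stopping for $\mathbf{Z}$ at the hitting time of $E$, using $u\ge 0$ to discard the overshoot) with the lower bound of Lemma \ref{lem:A:8} for the hitting probability of $E$ --- which, after the doubling of $V$ in \eqref{eq:A:44} and the scaling of $\psi$ in \eqref{eq:A:46}, \eqref{eq:A:45}, is bounded below by a structural constant when $s$ and $C_* s$ are comparable --- while the jumps leaving $B(x, C_* s)$ are absorbed via Lemma \ref{lem:A:5} and Corollary \ref{cor:A:1}. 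Starting from the normalisation $u \geq 1$ on the singleton ball $B(y_1, r) = \{y_1\}$ with $r < r_0$ (where $\mu(\{y_1\}) = V(y_1, r_0) > 0$), I would iterate this lemma over a dyadic sequence of scales; the point is that only a \emph{fixed} spatial enlargement (by the structural factor $B$, and a structural shift upward in time into the admissible range near $(k_1,y_1)$) is needed to reach a box of scale $\sim R/B$, so only $O(1)$ iterations occur and $\kappa := c_1^{O(1)}$ stays structural; this produces $G$ as a space--time slice-ball of radius $\sim R/B$ on which $u \geq \kappa$. For step (ii), once $G \supseteq \{m\} \times B(y', R/(8B))$ for an admissible time level $m$, Lemma \ref{lem:A:7} applied with an outer box of radius comparable to $R$ (hence with all the $V$- and $\psi$-ratios taken at comparable arguments) gives $\PP^{(0, y_0)}(S < \tau) \geq c'$ for a structural $c'$.

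The step I expect to be the genuine obstacle is the spreading lemma and its iteration: one must run it so that \emph{no constant is lost per scale beyond a fixed number of scales}, i.e. so that the growth of scale from $r_0$ up to $\sim R$ costs only the fixed factor $B$ rather than a factor growing with $R$ --- this is exactly what \eqref{eq:A:67} is designed to make possible, and it is the only place where uniform discreteness of $(M,d)$ (through $r_0$, which anchors the induction at a scale where $\mu(\{y_1\}) = V(y_1,r_0)$ and bounds how small the scales can get) enters. The rest is bookkeeping: checking that every auxiliary box in the iteration stays inside $Q(0;z,bR)$ (the role of $b$), that the time windows of consecutive boxes fit the budget $\delta\psi(\cdot)$ (the role of $\delta$ and $B$), that all constants stay structural so $K_0$ does not deteriorate with $R$, and --- a minor preliminary --- that a nonnegative parabolic function is finite on the interior boxes $Q(\delta\psi(R);z,R/B)$ so that normalisations like $u(k_1,y_1)=1$ are legitimate. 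Combining (i) and (ii) with the optional-stopping inequality then yields $u(0,y_0) \geq c'\kappa\, u(k_1,y_1)$, hence the theorem with $K_0 = (c'\kappa)^{-1}$.
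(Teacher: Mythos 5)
Your proposal is a direct \emph{expansion-of-positivity} argument --- start from the normalization $u(k_1,y_1)=1$, spread a lower bound $u\ge\kappa$ onto a large set $G$, then use one optional-stopping step from $(0,y_0)$ plus Lemma \ref{lem:A:7} to conclude $u(0,y_0)\ge c'\kappa$. This is genuinely different from what the paper does: the paper runs a Bass--Levin proof \emph{by contradiction}. It supposes $\min_{B(z,R/B)}u(0,\cdot)=1$ while $\max_{Q(\delta\psi(R);z,R/B)}u>K$, and constructs an increasing sequence of time-levels $k_1<k_2<\cdots$ inside $[\delta\psi(R),2\delta\psi(R)]\cap\NN_0$ with points $x_j$ where $u(k_j,x_j)\ge(1+\rho)^{j-1}K$; the radii $r_j\propto K_j^{-1/(\gamma_2+\beta_2)}$ \emph{shrink} as $j$ grows, the normalization at time $0$ (via Lemma \ref{lem:A:7} and \ref{lem:A:8}) forces most exit mass to carry small values, and the geometric growth eventually overruns the finitely many admissible time slices, giving the contradiction. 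Crucially, no quantity in that construction ever traverses a large range of spatial scales at fixed loss per scale.

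The gap in your version is exactly there. You anchor the spreading induction at the singleton $\{y_1\}$, whose mass is $\mu(\{y_1\})\approx V(y_1,r_0)$, a \emph{fixed} quantity, and you need to reach a slice-ball of radius $\sim R/B$, an $R$-dependent scale. A spreading lemma of the form you state can only enlarge the scale by a \emph{bounded} structural factor per application --- the hitting-probability input from Lemma \ref{lem:A:8} degrades like $\sum_k\mu(E(k))/(V(x,C_*s)\psi(C_*s))$, which is bounded below by a structural constant only when the starting set has density comparable to the target box, i.e.\ when $C_*$ is a fixed factor. So the dyadic iteration from $r_0$ up to $R/B$ takes $\Theta(\log(R/(Br_0)))$ steps, not $O(1)$, and $\kappa=c_1^{\Theta(\log R)}$ degenerates polynomially in $R$: the resulting $K_0$ would not be a universal constant, which is what the theorem requires. (If instead you tried a single spreading step from $\{y_1\}$ directly to scale $R/B$, the density hypothesis $\sum_k\mu(E(k))\ge\epsilon_0\,\delta\psi(s)V(x,s)$ with $s\sim R/B$ fails by a factor $\to 0$ as $R\to\infty$, so the lemma gives nothing.) The paper's contradiction architecture is precisely what sidesteps this: it never needs to propagate a pointwise lower bound across $\log R$ scales, only to show that a \emph{finite} time-window cannot accommodate unbounded geometric growth.
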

\begin{proof}
	Suppose, contrary to our claim, that there is $K_0 \geq 2$ such that for all $K \geq K_0$ there are $R > B$, $z \in M$
	and a nonnegative function $u$ which is parabolic in $Q(0; z, bR)$, such that
	\begin{equation}
		\label{eq:A:33}
		\min_{y \in B(z, R/B)} u(0, y) = 1
	\end{equation}
	and
	\begin{equation}
		\label{eq:A:34}
		\max_{(k, y) \in Q(\delta\psi(R); z, R/B)} u(k, y) > K.
	\end{equation}
	We are going to show that if $K_0$ is sufficiently large then we can construct a sequence of points 
	$((k_k, x_k) : k \in \NN) \subset Q(\delta \psi(R); z, R)$ such that
	\begin{equation}
		\label{eq:A:57}
		u(k_k, x_k) \geq (1+\rho)^{k-1} K
	\end{equation}
	for all $k \in \NN$ for certain universal $\rho > 0$. 
	
	First, let us observe that, by \eqref{eq:A:34}, there is $(k_1, y_1) \in Q(\delta \psi(R); z, R/B)$ such that
	\[
		u(k_1, x_1) \geq K.
	\]
	We set $K_1 = K$. Now, let us suppose that we have already constructed $((k_k, x_k) : 1 \leq k \leq j)$ belonging to 
	$Q(\delta \psi(R); z, R)$ and satisfying \eqref{eq:A:57} for
	\[
		\rho = \frac{\delta \theta_1}{2(2-\delta \theta_1)} > 0
	\]
	where $\theta_1$ is from Lemma \ref{lem:A:8}. Let
	\[
		U_j = \{k_j\} \times B(x_j, r_j/B)
	\]
	where
	\[
		r_j = \eta R K_j^{-\frac{1}{\gamma_2+\beta_2}},
		\qquad\text{and}\qquad K_j = u(k_j, x_j)
	\]
	for certain $\eta > 1$ satisfying \eqref{eq:A:40} and \eqref{eq:A:55}.  In view of \eqref{eq:A:33}, there is
	$x_0 \in B(z, R/B)$ such that
	\[
		u(0, x_0) = 1,
	\]
	thus
	\begin{align*}
		1 =  u(0, x_0) 
		&= \EE^{(0, x_0)}\Big[u\big(Z_{T_{\mathbf{Z}}(U_j) \wedge \tau_{\mathbf{Z}}(z, R)}\big)\Big] \\
		&\geq
		\EE^{(0, x_0)}\bigg[u\big(Z_{T_{\mathbf{Z}}(U_j)} \big) 
		\ind{\big\{T_{\mathbf{Z}}(U_j) \leq \tau_{\mathbf{Z}}(z, R)\big\}}
		\bigg] \\
		&\geq
		\Big(
		\min_{y \in B(x_j, r_j/B)} u(k_j, y) \Big)
		\PP^{(0, x_0)}
		\Big(T_{\mathbf{Z}}(U_j) \leq \tau_{\mathbf{Z}}(x_0, R/B)\Big)
	\end{align*}
	because  
	\[
		Q(0; x_0, R/B) \subset Q(0; z, R).
	\]
	Hence, by Lemma \ref{lem:A:7}
	\begin{align*}
		\min_{y \in B(x_j, r_j/B)} u(k_j, y) 
		&\leq
		\left(\PP^{(0, x_0)}  \Big(Z_{T_{\mathbf{Z}}(U_j) \leq \tau_{\mathbf{Z}}(x_0, R/B)}\Big) \right)^{-1} \\
		&\leq
		\frac{1}{\theta_2} C_j
	\end{align*}
	where 
	\[
		C_j = \frac{V(x_j, R/B)}{V(x_j, r_j/(2B))} \cdot \frac{\psi(R/B)}{\psi(r_j/B)}.
	\]
	Observe that by \eqref{eq:A:44} and \eqref{eq:A:46}
	\begin{equation}
		\label{eq:A:40}
		C_j
		\leq
		C_V C_\psi 2^{\gamma_2} \eta^{-\gamma_2-\beta_2} K_j,
	\end{equation}
	thus for
	\[
		\eta \geq \left(C_V C_\psi 2^{2+\gamma_2} \theta_2^{-1}\right)^{\frac{1}{\gamma_2+\beta_2}},
	\]
	we obtain
	\[
		\min_{y \in B(x_j, r_j/B)} u(k_j, y) 
		\leq 
		\frac{1}{4} K_j.
	\]
	In particular,
	\begin{equation}
		\label{eq:A:53}
		r_j > B r_0 \geq 3,
	\end{equation}
	and there is $y_j \in B(x_j, r_j/B)$ such that
	\begin{equation}
		\label{eq:A:37}
		u(k_j, y_j) \leq \frac{1}{\theta_2} C_j.
	\end{equation}
	Next, by Lemma \ref{lem:A:5}
	\begin{align*}
		\EE^{(k_j, x_j)}\bigg[u\big(Z_{\tau_{\mathbf{Z}}(k_j; x_j, r_j)} \big)
		\ind{\big\{X_{\tau_{\mathbf{Z}}(k_j; x_j, r_j)} \notin B(x_j, 2 r_j) \big\}}
		\bigg]
		&\leq
		\theta_3
		\EE^{(k_j, y_j)}\bigg[u\big(Z_{\tau_{\mathbf{Z}}(k_j; x_j, r_j)} \big)
		\ind{\big\{X_{\tau_{\mathbf{Z}}(k_j; x_j, r_j)} \notin B(x_j, 2 r_j) \big\}}
		\bigg] \\
		&\leq
		\theta_3 
		\EE^{(k_j, y_j)}\big[u\big(Z_{\tau_{\mathbf{Z}}(k_j; x_j, r_j)} \big)\big] \\
		&=
		\theta_3 u(k_j, y_j).
	\end{align*}
	Therefore, by \eqref{eq:A:37},
	\[
		\EE^{(k_j, x_j)}\bigg[u\big(Z_{\tau_{\mathbf{Z}}(k_j; x_j, r_j)} \big)
		\ind{\big\{X_{\tau_{\mathbf{Z}}(k_j; x_k, r_j)} \notin B(x_j, 2 r_j) \big\}}
		\bigg]
		\leq
		\frac{\theta_3}{\theta_2} C_j.
	\]
	Let us consider
	\[
		A_j = \left\{
		(k, y) \in Q(k_j+1; x_j, r_j/B) : u(k, y) \geq \frac{2 \sigma}{\theta_2} C_j 
		\right\}
	\]
	for certain $\sigma > 1$ satisfying \eqref{eq:A:56}. Observe that $A_j(0) = A_j(k_j) = \emptyset$. Moreover, for 
	$(k, y) \in A_j$, we have
	\begin{align*}
		d(y, x_0) 
		&\leq d(y, x_j) + d(x_j, z) + d(z, x_0) \\
		&\leq (1+2/B) R \leq b R,
	\end{align*}
	and
	\begin{align*}
		k_j + \delta \psi(r_j/B) + 1 
		&\leq 
		3 \delta \psi(R) \leq \psi(b R),
	\end{align*}
	i.e. $A_j \subset Q(0; x_0, b R)$. Therefore, by Lemma \ref{lem:A:8},
	\begin{align*}
		1 = u(0, x_0) &= \EE^{(0, x_0)}
		\Big[u\big(Z_{T_{\mathbf{Z}}(A_j) \wedge \tau_{\mathbf{Z}}(x_0, b R)} \big)\Big] \\
		&\geq
		\EE^{(0, x_0)}
		\bigg[
		u\big(Z_{T_{\mathbf{Z}}(A_j)} \big)
		\ind{\big\{T_{\mathbf{Z}}(A_j) \leq \tau_{\mathbf{Z}}(x_0, b R)\big\}} \bigg] \\
		&\geq
		\frac{2 \sigma}{\theta_2} C_j \cdot
		\PP^{(0, x_0)} \Big(T_{\mathbf{Z}}(A_j) \leq \tau_{\mathbf{Z}}(x_0, b R)\Big) \\
		&\geq
		\frac{2 \sigma}{\theta_2} C_j
		\frac{\theta_1}{V(x_0, bR) \psi(bR)} 
		\sum_{1\leq k \leq \delta \psi(bR)} \mu(A_j(k)).
	\end{align*}
	Consequently,
	\[
		\frac{\theta_1}{V(x_j, r_j/B)\psi(r_j/B)} \sum_{1\leq k \leq \delta \psi(bR)} \mu(A_j(k))
		\leq
		\frac{\theta_2}{2 \sigma} \frac{C'_j}{C_j}
	\]
	where
	\[
		C'_j = \frac{V(x_j, 2 bR)}{V(x_j, r_j/B)} \cdot \frac{\psi(2 b R)}{\psi(r_j/B)}.
	\]
	Observe that by \eqref{eq:A:44} and \eqref{eq:A:46},
	\begin{equation}
		\label{eq:A:39}
		\frac{C'_j}{C_j} 
		= \frac{V(x_j, 2b R)}{V(x_j, R/B)} \cdot \frac{\psi(2b R)}{\psi(R/B)} 
		\leq C_V C_\psi (2 b B)^{\gamma_2+\beta_2}.
	\end{equation}
	Let
	\[
		D_j = Q(k_j+1; x_j, r_j/B) \setminus A_j, \qquad M_j = \max_{(k, y) \in Q(k_j+1; x_j, 2r_j)} u(k, y).
	\]
	Then
	\begin{align*}
		K_j &= \EE^{(k_j, x_j)} \bigg[u\big(Z_{T_\mathbf{Z}(D_j)}\big) 
		\ind{\big\{T_{\mathbf{Z}}(D_j) \leq \tau_{\mathbf{Z}}(k_j; x_j, r_j)\big\}}\bigg] \\
		&\phantom{=}
		+\EE^{(k_j, x_j)} \bigg[u\big(Z_{\tau_\mathbf{Z}(k_j; x_j, r_j)}\big) 
		\ind{\big\{T_{\mathbf{Z}}(D_j) > \tau_{\mathbf{Z}}(k_j; x_j, r_j)\big\}} 
		\ind{\big\{X_{\tau_{\mathbf{Z}}(k_j; x_j, r_j)} \notin B(x_j, 2 r_j)\big\}}
		\bigg] \\
		&\phantom{=}
		+\EE^{(k_j, x_j)} \bigg[u\big(Z_{\tau_\mathbf{Z}(k_j; x_j, r_j)}\big) 
		\ind{\big\{T_{\mathbf{Z}}(D_j) > \tau_{\mathbf{Z}}(k_j; x_j, r_j)\big\}} 
		\ind{\big\{X_{\tau_{\mathbf{Z}}(k_j; x_j, r_j)} \in B(x_j, 2 r_j)\big\}}\bigg] \\
		&\leq
		\frac{1+\theta_3}{\theta_2} C_j + M_j 
		\left(
		1-
		\PP^{(k_j, x_j)}
		\Big(T_{\mathbf{Z}}(D_j) \leq \tau_{\mathbf{Z}}(k_j; x_j, r_j)\Big)
		\right).
	\end{align*}
	Thanks to Lemma \ref{lem:A:8} we have
	\begin{align*}
		1 - \PP^{(k_j, x_j)} \Big(T_{\mathbf{Z}}(D_j) \leq \tau_{\mathbf{Z}}(k_j; x_j, r_j)\Big)
		&\leq
		1 - \frac{\theta_1}{V(x_j, r_j/B) \psi(r_j/B)} \sum_{1 \leq k \leq \delta \psi(r_j)} \mu(D_j(k))  \\
		&\leq
		1 - \theta_1 \delta + \frac{\theta_2}{2 \sigma} \frac{C'_j}{C_j}.
	\end{align*}
	Therefore, we arrive at
	\[
		K_j\bigg(1 - \frac{1+\theta_3}{\theta_2} \frac{C_j}{K_j}\bigg)
		\leq
		M_j \bigg(1 - \theta_1 \delta + \frac{\theta_2}{2 \sigma} \frac{C'_j}{C_j}\bigg).
	\]
	Now, taking 
	\begin{equation}
		\label{eq:A:55}
		\eta \geq \left(
		2^{2 + \gamma_2} \frac{1+\theta_3}{\delta \theta_1 \theta_2} C_V C_\psi \right)^{\frac{1}{\gamma_2+\beta_2}}
	\end{equation}
	by \eqref{eq:A:44} and \eqref{eq:A:46}, we obtain
	\begin{align*}
		1 - \frac{2(1+\theta_3)}{\theta_2} \frac{C_j}{K_j}
		&\geq 1 - 2^{\gamma_2} C_V C_\psi \frac{1+\theta_3}{\theta_2} \eta^{-\gamma_2-\beta_2} \\
		&\geq 1 - \frac{\theta_1 \delta}{4}.
	\end{align*}
	Moreover, if
	\begin{equation}
		\label{eq:A:56}
		\sigma \geq C_V C_\psi \frac{\theta_2}{\delta \theta_1} (2 b B)^{\gamma_2+\beta_2} 
	\end{equation}
	then by \eqref{eq:A:39} we get
	\begin{align*}
		1 - \theta_1 \delta + \frac{\theta_2}{2 \sigma} \frac{C'_j}{C_j} 
		&\leq
		1 - \theta_1 \delta + C_V C_\psi \frac{\theta_2}{2 \sigma} (2 b B)^{\gamma_2+\beta_2} \\
		&\leq 1 - \frac{\theta_1 \delta}{2}.
	\end{align*}
	Consequently, $K_j (1 + \rho) \leq M_j$, and hence there is $(k_{j+1}, x_{j+1}) \in Q(k_j+1; x_j, 2r_j)$ 
	such that $K_{j+1} = M_j = u(k_{j+1}, x_{j+1})$. Thus
	\[
		K_{j+1} \geq (1 + \rho) K_j.
	\]
	It remains to check that $(k_{j+1}, x_{j+1}) \in Q(\delta \psi(R); z, R)$. Observe that
	\begin{align*}
		k_{j+1} 
		&\leq k_j + 1 + \delta \psi(2 r_j) \\
		&\leq k_j + (2+\delta) \psi(2 r_j) \\
		&\leq k_1 + (2 + \delta) \sum_{k = 1}^j \psi(2 r_k).
	\end{align*}
	In view of \eqref{eq:A:53}, for all $j \geq k \geq 2$, we have $2 r_k \geq 1$, thus by \eqref{eq:A:45},
	\begin{align*}
		\frac{\psi(2 r_k)}{\psi(2 r_{k-1})}
		\leq
		C_\psi \bigg(\frac{r_k}{r_{k-1}}\bigg)^{\beta_1}
		&=
		C_\psi\bigg(\frac{K_{k-1}}{K_k}\bigg)^{\frac{\beta_1}{\gamma_2+\beta_2}} \\
		&\leq
		C_\psi (1+\rho)^{-\frac{\beta_1}{\gamma_2+\beta_2}}.
	\end{align*}
	Hence,
	\begin{align*}
		k_{j+1} 
		&\leq \delta \psi(R) + \delta \psi(R/B) 
		+ C_\psi  (2+\delta) \psi(2 r_1) \sum_{k = 1}^j (1 + \rho)^{-k \frac{\beta_1}{\gamma_2+\beta_2}} \\
		&\leq
		\delta \psi(R) + \delta \psi(R/B)
		+ C_\psi (2 + \delta) \frac{\psi(2 r_1)}{1 + (1+\rho)^{-\frac{\beta_1}{\gamma_2+\beta_2}}}.
	\end{align*}
	If
	\[
		K_0 \geq \eta^{\gamma_2+\beta_2},
	\]
	then $2 r_1 \leq R$, thus by \eqref{eq:A:46}
	\[
		\psi(2 r_1) \leq C_\psi \psi(R) K^{-\frac{\beta_1}{\gamma_2+\beta_2}} \eta^{\beta_1} 2^{\beta_1},
	\]
	therefore, by taking
	\[
		K_0 \geq
		\left(
		\frac{2+\delta}{1-\delta} \cdot \frac{2^{\beta_1} \eta^{\beta_1} C_\psi^2}
		{1-(1+\rho)^{-\frac{\beta_1}{\gamma_2+\beta_2}}} 
		\right)^{\frac{\gamma_2 +\beta_2}{\beta_1}}
	\]
	we obtain
	\[
		k_j+1 \leq k_{j+1} \leq 2 \delta \psi(R).
	\]
	Moreover, 
	\begin{align*}
		d(z, x_{j+1}) 
		&\leq d(z, x_0) + d(x_0, x_1) + \ldots + d(x_j, x_{j+1}) \\
		&\leq R B^{-1} + 2 \eta R K^{-\frac{1}{\gamma_2+\beta_2}} 
		\sum_{k = 0}^\infty (1+\rho)^{-\frac{k}{\gamma_2+\beta_2}} \\
		&\leq R 
		\left(B^{-1} + K^{-\frac{1}{\gamma_2+\beta_2}} \frac{2 \eta}{1-(1+\rho)^{-\frac{1}{\gamma_2+\beta_2}}} \right),
	\end{align*}
	thus taking
	\[
		K_0
		\geq
		\bigg(\frac{2 \eta B}{1-(1+\rho)^{-\frac{1}{\gamma_2+\beta_2}}}\bigg)^{\gamma_2+\beta_2}
	\]
	we guarantee that $x_{j+1} \in B(z, R)$. Hence, $(k_{j+1}, x_{j+1}) \in Q(\delta \psi(R); z, R)$. This completes the
	induction. 

	Now, we notice that the constructed sequence $((k_j, x_j) : j \in \NN)$ is such that $(k_j : j \in \NN)$ is increasing
	and belongs to $[\delta \psi(R), 2 \delta \psi(R)] \cap \NN_0$ which leads to contradiction.
\end{proof}

We start by showing the upper near diagonal estimates on $h$.
\begin{theorem}
	\label{thm:A:2}
	There is $C > 0$ such that for all $x, y \in M$ and $n \in \NN$, if $d(x, y) \leq \psi^{-1}(n)$ then
	\[
		h(n; x, y) \leq \frac{C}{V(x, \psi^{-1}(n))}.
	\]
\end{theorem}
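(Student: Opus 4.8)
The plan is to bootstrap the bound from the continuous-time estimate \eqref{eq:A:32} for the Poissonized process $Y_t = X_{N_t}$. Two ingredients drive the argument: a monotonicity property of the on-diagonal heat kernel, and the Chapman--Kolmogorov identity together with the Cauchy--Schwarz inequality, which reduces the general case to the on-diagonal one at even times. Indeed, since $J$ is symmetric the kernel $h(k;\cdot,\cdot)$ is symmetric for every $k$, and writing $n = n_1 + n_2$ with $n_1 = \lfloor n/2 \rfloor$, $n_2 = \lceil n/2 \rceil$, we obtain for $n \ge 2$
\[
	h(n; x, y) = \int_M h(n_1; x, z)\, h(n_2; z, y)\, \mu({\rm d} z)
	\leq h(2 n_1; x, x)^{1/2}\, h(2 n_2; y, y)^{1/2},
\]
using $\int_M h(m; x, z)^2\, \mu({\rm d} z) = h(2m; x, x)$. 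Hence it suffices to prove $h(2m; x, x) \leq C\, V(x, \psi^{-1}(m))^{-1}$ for all $m \geq 1$: since $2 n_1, 2 n_2 \in [n/2, 2n]$ and, when $d(x, y) \leq \psi^{-1}(n)$, the doubling \eqref{eq:A:44} and the scaling \eqref{eq:A:45}, \eqref{eq:A:46} allow one to replace $V(x, \psi^{-1}(2 n_1))$ and $V(y, \psi^{-1}(2 n_2))$ each by a constant multiple of $V(x, \psi^{-1}(n))$, the product of the two factors is then bounded by $C\, V(x, \psi^{-1}(n))^{-1}$.

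For the on-diagonal estimate I would exploit that the transition operator $P_1 f(x) = \int_M J(x, y) f(y)\, \mu({\rm d} y)$ is a self-adjoint contraction on $L^2(M, \mu)$. Setting $f_x = \ind{\{x\}}/\mu(\{x\})$ one has $h(k; x, x) = \langle f_x, P_1^k f_x \rangle$, so that $h(2m; x, x) = \| P_1^m f_x \|_{L^2}^2$ is non-increasing in $m$. Since $Y_t = X_{N_t}$ has semigroup $e^{t(P_1 - \Id)}$, by \eqref{eq:A:18}
\[
	p(m; x, x) = e^{-m} \sum_{k = 0}^\infty \frac{m^k}{k!}\, h(k; x, x)
	\geq h(2m; x, x)\, e^{-m} \sum_{\substack{k \text{ even} \\ 0 \leq k \leq 2m}} \frac{m^k}{k!}.
\]
The Poisson sum on the right equals $\PP(K \leq 2m,\ K \text{ even})$ for $K \sim \mathrm{Poisson}(m)$; it tends to $\tfrac{1}{2}$ as $m \to \infty$ and stays above a positive absolute constant $c_0$ for every $m \geq 1$ (the finitely many small values are checked by hand, the rest by a Chernoff bound). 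Combining with \eqref{eq:A:32} at $t = m \geq 1$ gives $h(2m; x, x) \leq c_0^{-1} p(m; x, x) \leq C\, V(x, \psi^{-1}(m))^{-1}$; and since $\psi^{-1}(m) \leq \psi^{-1}(2m) \leq (2 C_\psi)^{1/\beta_1} \psi^{-1}(m)$ by \eqref{eq:A:45}, the doubling \eqref{eq:A:44} yields $V(x, \psi^{-1}(2m)) \approx V(x, \psi^{-1}(m))$, so the bound holds at all even times in the required form.

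It remains to assemble the estimates and to dispose of $n = 1$. For $n \geq 2$ the two displays combine, after the volume comparisons described above, to give $h(n; x, y) \leq C\, V(x, \psi^{-1}(n))^{-1}$. For $n = 1$ one simply invokes \eqref{jump}: if $d(x, y) \leq \psi^{-1}(1) = 1$ then $\psi(d(x, y)) \geq \psi(0) = \tfrac{1}{2}$, while $V(x, 1) \leq c\, V(x, d(x, y))$ by \eqref{eq:A:44} and \eqref{eq:A:23} (separating $d(x, y) < r_0$, in which case $x = y$, from $r_0 \leq d(x, y) \leq 1$), whence $h(1; x, y) = J(x, y) \leq C\, V(x, 1)^{-1}$. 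The only mildly delicate points are the uniform lower bound on the even Poisson sum, which forces one to treat small $m$ separately, and the bookkeeping of the numerous applications of volume doubling and of the scaling of $\psi$; there is no substantial obstacle, the whole content of the argument being the transfer of \eqref{eq:A:32} back across the Poissonization.
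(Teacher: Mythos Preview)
Your proof is correct and follows essentially the same approach as the paper's: reduce to the on-diagonal case via Cauchy--Schwarz, use the monotonicity of $m \mapsto h(2m; x, x)$ to compare $h(2m;x,x)$ with $p(t;x,x)$ through the Poisson representation \eqref{eq:A:18}, and then invoke \eqref{eq:A:32}. The only cosmetic difference lies in the Poisson concentration step---the paper takes $t=\tfrac32 n$ and bounds $\PP(n<N_t\le 2n)$ by Markov's inequality, while you take $t=m$ and bound $\PP(K\le 2m,\ K\text{ even})$---but both variants are valid and the remaining volume-doubling bookkeeping is identical.
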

\begin{proof}
	First, let us show that
	\begin{equation}
		\label{eq:A:50}
		h(2n; x, x) \leq \frac{C}{V(x, \psi^{-1}(2 n))}.
	\end{equation}
	Since $h(n; x, z) = h(n; z, x)$, we have
	\begin{align*}
		h(2n; x, x) 
		&= \int_M h(n; x, z) h(n; z, x)  \, \mu({\rm d} z) \\
		&= \int_M h(n; x, z)^2 \, \mu({\rm d} z).
	\end{align*}
	Hence, for each $x \in M$, the function $n \mapsto h(2n; x, x)$ is decreasing. If $t \leq 2 k$, then
	\[
		3 \frac{e^{-t} t^k}{k!} \geq \frac{e^{-t} t^k}{k!} + \frac{e^{-t} t^{k+1}}{(k+1)!}
	\]
	thus, by \eqref{eq:A:18}, for $t \leq 2 n$ we get
	\begin{align*}
		p(t; x, x)
		&\geq
		\sum_{n < k \leq 2n} 
		h(k; x, x) \frac{e^{-t} t^k}{k!} \\
		&\geq
		h(2 n; x, x) \sum_{n/2 < k \leq n} \frac{e^{-t} t^{2k}}{(2k)!} \\
		&\geq
		\frac{1}{3} h(2n; x, x) \sum_{n < k \leq 2n} \frac{e^{-t} t^k}{k!} \\
		&\geq
		\frac{1}{3}
		h(2n; x, x) \PP\big(n < N_t \leq 2n \big).
	\end{align*}
	Now, by the Markov's inequality, for $t = \frac{3}{2} n$ we get
	\[
		1 - \PP\Big(\big|N_t - \tfrac{3}{2} n\big| \leq \tfrac{n}{2} \Big) 
		=
		\PP\Big(\big|N_t - \tfrac{3}{2} n\big| \geq \tfrac{n}{2} \Big)
		\leq 
		\frac{6}{n}.
	\]
	Hence, by Theorem \ref{thm:A:1}, we obtain
	\begin{align*}
		h(2n; x, x) 
		&\leq C_1 p\Big(\tfrac{3}{2} n; x, x\Big) \\
		&\leq \frac{C_2}{V(x, \psi^{-1}(3n/2))}. 
	\end{align*}
	Now, \eqref{eq:A:44} and \eqref{eq:A:45} imply that
	\begin{align*}
		\frac{V(x, \psi^{-1}(2n))}{V(x, \psi^{-1}(3n/2))} 
		&\leq
		C_V
		\bigg(\frac{\psi^{-1}(2n)}{\psi^{-1}(3n/2)}\bigg)^{\gamma_2} \\
		&\leq
		C_V \Big(\tfrac{4}{3}C_\psi \Big)^{\gamma_2/\beta_1},
	\end{align*}
	thus we get \eqref{eq:A:50}.

	Next, let us observe that by the Cauchy--Schwarz inequality
	\begin{align*}
		h(2n+1; x, x) 
		&= 
		\int_M h(n+1; x, z) h(n; z, x) \, \mu({\rm d} z) \\
		&\leq
		\bigg(\int_M h(n+1; x, z)^2 \, \mu({\rm d} z)\bigg)^{\frac{1}{2}}
		\bigg(\int_M h(n; x, z)^2 \, \mu({\rm d} z) \bigg)^{\frac{1}{2}} \\
		&=
		\sqrt{h(2n+2; x, x)} \sqrt{h(2n; x, x)} \\
		&\leq
		h(2n; x, x)
	\end{align*}
	where in the last estimate we have used monotonicity of $n \mapsto h(2n; x, x)$. Hence, \eqref{eq:A:50}, \eqref{eq:A:44}
	and \eqref{eq:A:45}, leads to
	\begin{align*}
		h(2n+1; x, x) 
		&\leq \frac{C}{V(x, \psi^{-1}(2n+1))} \frac{V(x, \psi^{-1}(2n+1))}{V(x, \psi^{-1}(2n))} \\
		&\leq \frac{CC_V}{V(x, \psi^{-1}(2n+1))} \bigg(\frac{\psi^{-1}(2n+1)}{\psi^{-1}(2n)}\bigg)^{\gamma_2} \\
		&\leq \frac{CC_V}{V(x, \psi^{-1}(2n+1))} \bigg(C_\psi \Big(1+\tfrac{1}{2n}\Big)\bigg)^{\gamma_2/\beta_1}.
	\end{align*}
	Thus there is $C_1 > 0$ such that for all $x \in M$ and $n \in \NN$,
	\begin{equation}
		\label{eq:A:51}
		h(n; x, x) \leq \frac{C_1}{V(x, \psi^{-1}(n))}.
	\end{equation}
	Next, we observe that
	\begin{align*}
		h(n; x, y) 
		&= \int_M h\Big(\big\lfloor \tfrac{n}{2} \big\rfloor ; x, z\Big)
		h\Big(n - \big\lfloor \tfrac{n}{2} \big\rfloor; z, y\Big) \, \mu({\rm d} z)  \\
		&\leq 
		\bigg(\int_M h\Big(\big\lfloor \tfrac{n}{2} \big\rfloor; x, z\Big)^2 \, \mu({\rm d} z)\bigg)^{\frac{1}{2}}
		\bigg(\int_M h\Big(n-\big\lfloor \tfrac{n}{2} \big\rfloor; y, z\Big)^2 \, \mu({\rm d} z)\bigg)^{\frac{1}{2}} \\
		&\leq
		C_1 \frac{1}{\sqrt{V(x, \psi^{-1}(n))}} \frac{1}{\sqrt{V(y, \psi^{-1}(n-1))}}
	\end{align*}
	where in the last inequality we have used \eqref{eq:A:51} and
	\[
		n-1 \leq 2 \big\lfloor \tfrac{n}{2}  \big\rfloor \leq n.
	\]
	Now, if $d(x, y) \leq \psi^{-1}(n)$, then
	\[
		V(x, \psi^{-1}(n)) 
		\leq 
		V(y, 2 \psi^{-1}(n)).
	\]
	Thus by \eqref{eq:A:44} and \eqref{eq:A:45},
	\begin{align*}
		\frac{1}{V(y, \psi^{-1}(n-1))} 
		&\leq \frac{C_2}{V(x, \psi^{-1}(n))}
	\end{align*}
	and the theorem follows.
\end{proof}

\begin{theorem}
	\label{thm:A:5}
	There is $C > 0$ such that for all
	\[
		h(n; x, y) \leq C \min\bigg\{\frac{1}{V(x, \psi^{-1}(n))}, \frac{n}{V(x, d(x, y)) \psi(d(x, y))}\bigg\}.
	\]
\end{theorem}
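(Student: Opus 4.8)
The plan is to reduce the whole statement to the near-diagonal estimate of Theorem~\ref{thm:A:2} plus a single first-exit (L\'evy system) decomposition of the chain $\mathbf{X}$, organized as an induction on $n$. First note that the minimum on the right-hand side is decided by the relative size of $d(x,y)$ and $\psi^{-1}(n)$: by \eqref{eq:A:44} and \eqref{eq:A:45}, it equals $1/V(x,\psi^{-1}(n))$ when $d(x,y)\le\psi^{-1}(n)$ and $n/\big(V(x,d(x,y))\psi(d(x,y))\big)$ when $d(x,y)>\psi^{-1}(n)$. Fix a large threshold $C_0\ge 1$, to be chosen at the very end. For $d(x,y)\le C_0\psi^{-1}(n)$ there is essentially nothing new: for $d(x,y)\le\psi^{-1}(n)$ this is Theorem~\ref{thm:A:2}, and for $\psi^{-1}(n)<d(x,y)\le C_0\psi^{-1}(n)$ one combines the on-diagonal bound \eqref{eq:A:51} with the Cauchy--Schwarz argument from the proof of Theorem~\ref{thm:A:2} (which in fact gives $h(n;x,y)\le C/\sqrt{V(x,\psi^{-1}(n))V(y,\psi^{-1}(n-1))}$ for \emph{all} $x,y$), together with volume doubling, to obtain $h(n;x,y)\le C'/V(x,\psi^{-1}(n))$, and then $1/V(x,\psi^{-1}(n))\le C_0^{\gamma_2+\beta_2}C_VC_\psi\, n/\big(V(x,d(x,y))\psi(d(x,y))\big)$ by \eqref{eq:A:44}--\eqref{eq:A:45}.

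It then remains to treat $D:=d(x,y)\ge C_0\psi^{-1}(n)$, by induction on $n$ (the case $n=1$ and every $n$ with $D<C_0\psi^{-1}(n)$ being covered above). Let $\tau=\tau_{\mathbf{X}}(x,D/2)$; since $X_n=y\notin B(x,D/2)$ one has $\tau\le n$, and the first-exit formula for the chain reads
\[
	h(n;x,y)\,\mu(\{y\})
	=\sum_{m=1}^{n}\EE^{x}\Big[\ind{\{\tau\ge m\}}\int_{B(x,D/2)^{c}}J(X_{m-1},z)\,\PP^{z}(X_{n-m}=y)\,\mu({\rm d}z)\Big].
\]
Split the inner integral into $z\in B(y,D/4)$ and $z\in B(x,D/2)^{c}\setminus B(y,D/4)$. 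On $\{\tau\ge m\}$ we have $X_{m-1}\in B(x,D/2)$, so $d(X_{m-1},z)$ is comparable to $D$ whenever $z\in B(y,D/4)$, and \eqref{jump}, \eqref{eq:A:44}, \eqref{eq:A:45} give $J(X_{m-1},z)\le C_1/\big(V(x,D)\psi(D)\big)$ uniformly; pulling this factor out and using $h(k;z,y)=h(k;y,z)$ we get $\int_{B(y,D/4)}\PP^{z}(X_{n-m}=y)\,\mu({\rm d}z)=\mu(\{y\})\,\PP^{y}\big(X_{n-m}\in B(y,D/4)\big)\le\mu(\{y\})$. Summing in $m$ and using $\sum_{m=1}^{n}\ind{\{\tau\ge m\}}=\tau\wedge n\le n$, this near-$y$ part contributes at most $C_1\,n\mu(\{y\})/\big(V(x,D)\psi(D)\big)$ with a universal $C_1$.

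For $z\in B(x,D/2)^{c}\setminus B(y,D/4)$, the induction hypothesis applies (as $n-m<n$) and, since $d(z,y)>D/4$, after comparing volumes centred at the destination $y$ rather than at $z$ and using \eqref{eq:A:44}--\eqref{eq:A:45}, it yields $h(n-m;z,y)\le C_\star C_2\,(n-m)/\big(V(x,D)\psi(D)\big)$ \emph{uniformly in $z$}, where $C_\star$ is the induction constant and $C_2$ is universal. Combining with $\int_{B(x,D/2)^{c}}J(X_{m-1},z)\,\mu({\rm d}z)\le c/\psi(D/2)$ from \eqref{eq:A:7}, summing over $m$ (so that $\sum_{m=1}^{n}(n-m)\le n^{2}/2$), and using $\psi(D/2)\ge C_\psi^{-1}(C_0/2)^{\beta_1}n$ (valid because $D\ge C_0\psi^{-1}(n)$), the far part is at most $\tfrac12 C_\star C_2 c\, C_\psi(2/C_0)^{\beta_1}\,n\mu(\{y\})/\big(V(x,D)\psi(D)\big)$. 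Adding the two contributions, $h(n;x,y)\le\big(C_1+\tfrac12 C_\star C_2 c\, C_\psi(2/C_0)^{\beta_1}\big)\,n/\big(V(x,D)\psi(D)\big)$. Choosing $C_0$ large enough that $C_2 c\, C_\psi(2/C_0)^{\beta_1}\le 1$, then taking $C_\star\ge 2C_1$ and $C_\star$ at least the constant produced in the regime $D\le C_0\psi^{-1}(n)$, the right-hand side is $\le C_\star\,n/\big(V(x,D)\psi(D)\big)$, which closes the induction and proves the theorem.

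\textbf{Where the difficulty lies.} The delicate point is this last closure: the "far jump" term is controlled only by the very constant $C_\star$ one is trying to estimate, so one must force its coefficient below $1$; this is exactly what dictates the threshold $C_0$ and the decision to handle all scales $d(x,y)<C_0\psi^{-1}(n)$ by the Cauchy--Schwarz/on-diagonal route rather than by the L\'evy-system argument. A secondary technical nuisance is obtaining the bound on $h(n-m;z,y)$ uniformly over $z$ outside $B(y,D/4)$ (and in particular over $z$ far from $x$), which is why one normalizes volumes by $V(y,\cdot)$ rather than $V(z,\cdot)$.
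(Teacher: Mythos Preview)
Your argument is correct and genuinely different from the paper's. The paper proceeds by first using Lemma~\ref{lem:A:6} to bound the \emph{average} of $h(kn;x,\cdot)$ over the ball $B(y,\psi^{-1}(n))$, then invokes the parabolic Harnack inequality (Theorem~\ref{thm:A:3}) to upgrade this to a pointwise bound on $h((k-1)n;x,y)$, and finally uses the one-step lower bound $h(n;\cdot,\cdot)\gtrsim h(n-1;\cdot,\cdot)$ to adjust time. In other words, the paper's off-diagonal upper bound already presupposes the whole Harnack machinery built in Lemmas~\ref{lem:A:8}--\ref{lem:A:10} and Theorem~\ref{thm:A:3}.

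Your route is more elementary: you take Theorem~\ref{thm:A:2} (the on/near-diagonal upper bound) as input, and then run a first-exit (L\'evy system) decomposition with a self-improving induction on $n$. The point that makes the induction close---forcing the coefficient in front of $C_\star$ below $1$ by choosing the threshold $C_0$ large, and only then fixing $C_\star$ large enough to absorb the near-diagonal regime $d(x,y)\le C_0\psi^{-1}(n)$---is handled in the right order, and the volume recentring at $y$ (then transferring to $x$ via $d(x,y)=D$ and doubling) is exactly what is needed to make the bound on $h(n-m;z,y)$ uniform in $z\notin B(y,D/4)$. What you gain is that Theorem~\ref{thm:A:5} becomes logically independent of the parabolic Harnack inequality; what the paper's approach buys is modularity, since Theorem~\ref{thm:A:3} is in any case required for the lower bounds (Theorems~\ref{thm:A:4} and \ref{thm:A:6}), so no extra machinery is introduced for the upper bound.
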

\begin{proof}
	First, by Lemma \ref{lem:A:6} we have
	\begin{align*}
		\int_{B(y, \psi^{-1}(n))} h(n; x, z)\, \mu({\rm d} z) 
		&=
		\PP^x\big(X_{n} \in B(y, \psi^{-1}(n)) \big) \\
		&\leq
		\PP^x\big(T_{\mathbf{X}}(y, \psi^{-1}(n)) \leq n \big) \\
		&\leq
		C \frac{V(y, \psi^{-1}(n))}{V(x, d(x, y))} \cdot \frac{n}{\psi(d(x, y))}.
	\end{align*}
	Hence, for any $k \in \NN$,
	\[
		\int_{B(y, \psi^{-1}(n))} h(kn; x, z) \mu({\rm d} z)
		\leq
		C \frac{V(y, \psi^{-1}(k n))}{V(x, d(x, y))} \cdot \frac{k n}{\psi(d(x, y))},
	\]
	and so by \eqref{eq:A:44} and \eqref{eq:A:45},
	\begin{align}
		\nonumber
		\min_{z \in B(y, \psi^{-1}(n))} h(k n; x, z) 
		&\leq C
		\frac{V(y, \psi^{-1}(kn))}{V(y, \psi^{-1}(n))}
		\cdot
		\frac{k n}{V(x, d(x, y)) \psi(d(x, y))} \\
		\nonumber
		&\leq C C_V
		\bigg(\frac{\psi^{-1}(kn)}{\psi^{-1}(n)}\bigg)^{\gamma_2}
		\frac{k n}{V(x, d(x, y)) \psi(d(x, y))} \\
		\label{eq:A:61}
		&\leq C C_V C_\psi k^{\frac{\beta_1+\gamma_2}{\beta_1}} \frac{n}{V(x, d(x, y)) \psi(d(x, y))}.
	\end{align}
	Next, we set
	\[
		R = \psi^{-1} ( \delta^{-1} n ),
		\qquad\text{and}\qquad
		k = \big\lfloor C_\psi b^{\beta_2} \big\rfloor + 1.
	\]
	Hence, by \eqref{eq:A:46},
	\[
		\delta \psi(b R) 
		= \frac{\psi(b R)}{\psi(R)} n \leq k n.
	\]
	By the standard argument, see for example \cite[Lemma 4.2]{Murugan2015}, the function
	\[
		u(j, z) = h(kn-j; x, z)
	\]
	is parabolic on $([0, \delta \psi(b R) ] \cap \NN_0) \times M$. Since $\delta \leq C_\psi^{-1} B^{-\beta_2}$, 
	\begin{align}
		\nonumber
		R \geq \frac{\psi^{-1}(\delta^{-1} n)}{\psi^{-1}(n)} \psi^{-1}(n) 
		&\geq \big(C_\psi^{-1} \delta^{-1}\big)^{\frac{1}{\beta_2}} \psi^{-1}(n) \\
		\label{eq:A:64}
		&\geq B \psi^{-1}(n).
	\end{align}
	Therefore, by \eqref{eq:A:61},
	\begin{align*}
		\min_{z \in B(y, R/B)} u(0, z) 
		&\leq \min_{z \in B(y, \psi^{-1}(n))} h(kn; x, z) \\ 
		&\leq C_2 \frac{n}{V(x, d(x, y)) \psi(d(x, y))}.
	\end{align*}
	By Theorem \ref{thm:A:3}, we obtain
	\begin{align}
		\nonumber
		h((k-1) n; x, y) 
		= u(n, y) &\leq K_0 \min_{z \in B(y, R/B)} u(0, z) \\
		\label{eq:A:62}
		&\leq K_0 C_2 \frac{n}{V(x, d(x, y)) \psi(d(x, y))}.
	\end{align}
	Since by \eqref{jump}
	\begin{align}
		\nonumber
		h(n; x, y) 
		&= 
		\int_M h(n-1; x, z) J(z, y) \, \mu({\rm d} z) \\
		\nonumber
		&\geq
		h(n-1; x, y) J(y, y) V(y, 0) \\
		\label{eq:A:63}
		&\geq
		C_3 h(n-1; x, y), 
	\end{align}
	from \eqref{eq:A:62} we easily deduce the theorem.
\end{proof}

\begin{theorem}
	\label{thm:A:4}
	For each $\eta \geq 1$, there is $c > 0$ such that for all $x, y \in M$ and $n \in \NN$, if 
	$d(x, y) \leq \eta \psi^{-1}(n)$, then
	\[
		h(n; x, y) \geq \frac{c}{V(x, \psi^{-1}(n))}.
	\]
\end{theorem}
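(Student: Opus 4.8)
The plan is to establish the near‑diagonal lower estimate in three stages: first the on‑diagonal case, then the near‑diagonal case at one fixed spatial scale via the parabolic Harnack inequality, and finally the general $\eta$ by a hitting–time (Dynkin) argument; the range of small $n$ is handled separately by an explicit path.

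\textbf{Stage 1 (on‑diagonal bound).} First I would show that $h(n;x,x)\geq c_1/V(x,\psi^{-1}(n))$ for all $x\in M$ and $n\in\NN$. For even $n=2m$ the symmetry of $J$ makes $h(m;x,\cdot)$ symmetric, so by Chapman--Kolmogorov and the Cauchy--Schwarz inequality
\[
	h(2m;x,x)=\int_M h(m;x,z)^2\,\mu({\rm d}z)\geq\frac{1}{V(x,r)}\Big(\int_{B(x,r)}h(m;x,z)\,\mu({\rm d}z)\Big)^2
	=\frac{1}{V(x,r)}\,\PP^x\big(X_m\in B(x,r)\big)^2 .
\]
Choosing $r=A\psi^{-1}(m)$ with $A$ large, Lemma \ref{lem:A:4} and \eqref{eq:A:45} give $\PP^x(\tau_{\mathbf{X}}(x,r)\leq m)\leq cC_\psi A^{-\beta_1}+1/(16m)\leq 1/2$, hence $\PP^x(X_m\in B(x,r))\geq 1/2$, and the doubling property \eqref{eq:A:44} together with $\psi^{-1}(m)\approx\psi^{-1}(2m)$ yields the claim for even $n$. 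For odd $n=2m+1$ one uses $h(2m+1;x,x)\geq h(1;x,x)\,h(2m;x,x)\,\mu(\{x\})$ and the fact that $J(x,x)\mu(\{x\})\approx1$ by \eqref{jump}; the case $n=1$ is immediate since $h(1;x,x)=J(x,x)\gtrsim1/\mu(\{x\})\geq1/V(x,\psi^{-1}(1))$.

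\textbf{Stage 2 (near‑diagonal at a fixed scale).} Next I would prove that there are $\eta_0>0$, $c_2>0$ and $N_0\in\NN$ such that $h(n;x,y)\geq c_2/V(x,\psi^{-1}(n))$ whenever $n\geq N_0$ and $d(x,y)\leq\eta_0\psi^{-1}(n)$. Fixing $x$, the function $u(j,z)=h(n-j;x,z)$ is nonnegative and parabolic on $\{0,\dots,n\}\times M$ (again by symmetry of $J$, as in the standard argument, cf. \cite[Lemma 4.2]{Murugan2015}). Taking $R=c_*\psi^{-1}(n)$ with $c_*>0$ a small structural constant chosen so that $R>B$ for $n\geq N_0$ and $\delta\psi(bR)\leq n$ (possible since $\delta\leq C_\psi^{-1}B^{-\beta_2}$ by \eqref{eq:A:67}), Theorem \ref{thm:A:3} applied at the base point $x$ gives
\[
	\min_{w\in B(x,R/B)}h(n;x,w)\;\geq\;K_0^{-1}\max_{(j,w)\in Q(\delta\psi(R);x,R/B)}h(n-j;x,w)\;\geq\;K_0^{-1}h(n-j^*;x,x)
\]
with $j^*=\lceil\delta\psi(R)\rceil\leq2\delta n$, so that $n-j^*\approx n$; by Stage 1 and the scaling/doubling properties the right‑hand side is $\gtrsim1/V(x,\psi^{-1}(n))$. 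Since $R/B=(c_*/B)\psi^{-1}(n)$, this is Stage 2 with $\eta_0=c_*/B$.

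\textbf{Stage 3 (arbitrary $\eta$) and small $n$.} Fix $\eta\geq1$, let $d(x,y)\leq\eta\psi^{-1}(n)$ with $n\geq N_0$; since a single use of Theorem \ref{thm:A:3} only reaches $\eta_0$, the task is to bridge to $\eta$. Set $r_1=\epsilon\psi^{-1}(n)$ with $\epsilon>0$ so small that $\epsilon(2C_\psi)^{1/\beta_1}\leq\eta_0$. Running the optional‑stopping argument used for Lemma \ref{lem:A:10} with the hitting time $T=T_{\mathbf{X}}(y,r_1)$, the exit time of $B(x,A'\psi^{-1}(n))$ for large $A'$, and the one–step estimate that for $z$ near $x$ one has $\PP^z(X_1\in B(y,r_1))\gtrsim_\eta V(y,r_1)/(n\,V(x,\psi^{-1}(n)))\gtrsim_\eta \epsilon^{\gamma_2}/n$, I would obtain $\PP^x(T\leq\delta'' n)\geq c_3(\eta)>0$ for a small $\delta''$. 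Then, by the strong Markov property at $T$, for $z\in B(y,r_1)$ and $n-T\geq n/2$ we have $d(z,y)\leq r_1\leq\eta_0\psi^{-1}(n-T)$, so Stage 2 gives $\PP^z(X_{n-T}=y)=h(n-T;z,y)\mu(\{y\})\gtrsim_\eta\mu(\{y\})/V(x,\psi^{-1}(n))$, whence $h(n;x,y)\geq c_4(\eta)/V(x,\psi^{-1}(n))$. For the remaining bounded range $n<N_0$ (where $d(x,y)\leq\eta\psi^{-1}(N_0)$ is bounded) one concludes directly from $h(n;x,y)\geq\big(J(x,x)\mu(\{x\})\big)^{n-1}J(x,y)\mu(\{y\})$ together with \eqref{jump}, \eqref{eq:A:44} and \eqref{eq:A:46}. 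The main obstacle is Stage 3: making the hitting estimate for the small target ball $B(y,r_1)$ uniform in $\eta$ while keeping $r_1$ small enough that the concluding step is genuinely licensed by Stage 2 — that is, correctly balancing the landing scale against the weak‑scaling constants of $\psi$, since naive chaining of Harnack balls at scale $n/L$ does not work when $\psi$ grows slowly ($\beta_2<1$).
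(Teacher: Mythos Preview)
Your Stages 1 and 2 coincide with the paper's proof: the on-diagonal bound via Cauchy--Schwarz plus the exit-time estimate (Lemma~\ref{lem:A:4}), and the transfer to a fixed near-diagonal scale via a single application of the parabolic Harnack inequality (Theorem~\ref{thm:A:3}). These are correct, with the small caveat that in the bounded-$n$ clause the factor $\mu(\{y\})$ should be dropped: the path estimate gives $h(n;x,y)\geq\big(J(x,x)\mu(\{x\})\big)^{n-1}J(x,y)$, not $\cdots J(x,y)\mu(\{y\})$.

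Your Stage 3, however, takes a genuinely different route from the paper. The paper does \emph{not} use a hitting-time bridge. Instead, given $\eta\geq 1$ it sets $\ell=\lfloor C_\psi\eta^{\beta_2}\rfloor$ and redoes the Harnack step at the enlarged time scale $\ell n$: with $R=\psi^{-1}(\delta^{-1}\ell n)$ one has $R/B\geq\psi^{-1}(\ell n)\geq\eta\psi^{-1}(n)$, so the hypothesis $d(x,y)\leq\eta\psi^{-1}(n)$ already places $x\in B(y,R/B)$, and a single Harnack application (centered at $y$) yields
\[
	h(k\ell n;x,y)\;\geq\;K_0^{-1}\,h\bigl((k-1)\ell n;x,x\bigr)\;\geq\;\frac{C_5}{V\bigl(x,\psi^{-1}(k\ell n)\bigr)}.
\]
The step-up inequality \eqref{eq:A:63} then fills in the at most $k\ell$ intermediate times, which is where the references to \eqref{eq:A:42} and \eqref{eq:A:46} enter. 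This circumvents entirely the difficulty you flag about chaining when $\beta_2<1$: no subdivision of time is needed, because the spatial reach of one Harnack ball is boosted by enlarging the time instead.

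Your hitting-time argument is also correct and self-contained; the Dynkin-type lower bound $\PP^x\bigl(T_{\mathbf X}(y,r_1)\leq\delta'' n\bigr)\gtrsim_\eta\epsilon^{\gamma_2}$ goes through exactly as you outline, and the subsequent strong-Markov step is licensed by Stage~2. What it buys is robustness (it would work even without the one-step-up inequality \eqref{eq:A:63}), at the cost of a longer argument. What the paper's route buys is brevity: one Harnack application plus \eqref{eq:A:63} replaces your entire Stage~3.
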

\begin{proof}
	We start by showing that there is $C > 0$ such that for all $n \in \NN$ and $x \in M$,
	\begin{equation}
		\label{eq:A:52}
		h(n; x, x) \geq \frac{C}{V(x, \psi^{-1}(n))}.
	\end{equation}
	Let
	\[
		\xi = \max\left\{1, 4 c\right\}
	\]
	where $c$ is the constant from Lemma \ref{lem:A:4}. Then by Lemma \ref{lem:A:4},
	\begin{align*}
		1 - \int_{B(x, \psi^{-1}(\xi n))} h(2 n; x, y) \, \mu({\rm d} y)
		&=
		\PP^x\big(X_{2n} \notin B(x, \psi^{-1}(\xi n)) \big) \\
		&\leq
		\PP^x\big(\tau_{\mathbf{X}}(x, \psi^{-1}(\xi n)) \leq 2n \big)\\
		&\leq
		\frac{2 c n}{\psi(\psi^{-1}(\xi n))} + \frac{1}{32 n} \leq \frac{3}{4},
	\end{align*}
	thus by the Cauchy--Schwarz inequality
	\begin{align*}
		h(2n; x, x) 
		&\geq \int_{B(x, \psi^{-1}(\xi n))} h(n; x, y)^2 \, \mu({\rm d} y) \\
		&\geq \frac{1}{V(x, \psi^{-1}(\xi n))} \bigg( \int_{B(x, \psi^{-1}(\xi n))} h(n; x, y) \, \mu({\rm d} y)\bigg)^2 \\
		&\geq \frac{C_1}{V(x, \psi^{-1}(2n))} 
	\end{align*}
	where in the last estimate we have also used \eqref{eq:A:45}. Next, by \eqref{eq:A:63}, we have
	\begin{align*}
		h(2n+1; x, x) 
		&\geq
		C_3 h(2n; x, x) \\
		&\geq
		\frac{C_4}{V(x, \psi^{-1}(2n+1))}
	\end{align*}
	which completes the proof of \eqref{eq:A:52}. 

	To extend \eqref{eq:A:52} near the diagonal we use the parabolic Harnack inequality, that is Theorem \ref{thm:A:3}. Let
	\[
		R = \psi^{-1}(\delta^{-1} n), \qquad\text{and}\qquad
		k = \lfloor C_\psi b^{\beta_2} \rfloor + 1.
	\]
	Then the function 
	\[
		u(j, z) = h(kn - j; x, z), \qquad 0 \leq j \leq kn,\, \quad z \in M,
	\]
	is parabolic on $([0, \delta \psi(b R)] \cap \NN_0) \times M$. Hence, by Theorem \ref{thm:A:3},
	\begin{align*}
		h((k-1)n; x, x) = u(n, x) 
		&\leq K_0 \min_{z \in B(y, R/B)} u(0, z) \\
		&\leq K_0 \min_{z \in B(y, R/B)} h(kn; x, z).
	\end{align*}
	Given $\eta \geq 1$ we set
	\[
		\ell = \big\lfloor C_\psi \eta^{\beta_2} \big\rfloor.
	\]
	Then by \eqref{eq:A:45}, $\psi^{-1} (\ell n) \geq \eta \psi^{-1}(n)$, and hence, by \eqref{eq:A:52} and \eqref{eq:A:64}, 
	\begin{align*}
		\min_{z \in B(y, \eta \psi^{-1}(n))} h(k \ell n; x, z) 
		&\geq
		\min_{z \in B(y, \psi^{-1}(\ell n))} h(k \ell n; x, z)\\ 
		&\geq
		\min_{z \in B(y, R/B)} h(k\ell n; x, z) \\
		&\geq \frac{C_5}{V(x, \psi^{-1}(k\ell n))}.
	\end{align*}
	Now, by \eqref{eq:A:63}, \eqref{eq:A:42} and \eqref{eq:A:46} we easily conclude the proof.
\end{proof}

\begin{theorem}
	\label{thm:A:6}
	There is $C > 0$ such that for all $x, y \in M$ and $n \in \NN$,
	\[
		h(n; x, y) \geq C \min\bigg\{\frac{1}{V(x, \psi^{-1}(n))}, \frac{n}{V(x, d(x, y))\psi(d(x, y))} \bigg\}.
	\]
\end{theorem}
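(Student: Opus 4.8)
The plan is to combine the near--diagonal lower bound of Theorem \ref{thm:A:4} with the escape estimate of Lemma \ref{lem:A:10} via a single Chapman--Kolmogorov chaining step, after splitting into the near-- and off--diagonal regimes. Fix once and for all the constant $\eta\ge 2$ produced by Lemma \ref{lem:A:10}. In the near--diagonal regime $d(x,y)\le\eta\psi^{-1}(n)$ I would simply invoke Theorem \ref{thm:A:4} with this $\eta$, obtaining $h(n;x,y)\ge c_\eta/V(x,\psi^{-1}(n))$; since $\min\{A,B\}\le A$ with $A=1/V(x,\psi^{-1}(n))$, the claimed bound follows at once. So it remains to treat $d(x,y)>\eta\psi^{-1}(n)$, and there, because $\min\{A,B\}\le B$ with $B=n/\big(V(x,d(x,y))\psi(d(x,y))\big)$, it suffices to prove $h(n;x,y)\ge c\,n\big/\big(V(x,d(x,y))\psi(d(x,y))\big)$.

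For $n\ge 2$ I would set $m=\lfloor n/2\rfloor\ge 1$ and start from
\[
	h(2m;x,y)\ \ge\ \int_{B(y,\eta\psi^{-1}(m))} h(m;x,z)\,h(m;z,y)\,\mu({\rm d}z).
\]
For $z\in B(y,\eta\psi^{-1}(m))$ one has $d(z,y)\le\eta\psi^{-1}(m)$, so Theorem \ref{thm:A:4} gives $h(m;z,y)\ge c_\eta/V(z,\psi^{-1}(m))$; since $B(z,\psi^{-1}(m))\subseteq B(y,(\eta+1)\psi^{-1}(m))$, \eqref{eq:A:44} yields $V(z,\psi^{-1}(m))\le C_V(\eta+1)^{\gamma_2}V(y,\psi^{-1}(m))$, hence $h(m;z,y)\ge c'/V(y,\psi^{-1}(m))$ uniformly in $z$. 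Substituting this and recognizing the remaining integral as $\PP^x\big(X_m\in B(y,\eta\psi^{-1}(m))\big)$ gives
\[
	h(2m;x,y)\ \ge\ \frac{c'}{V(y,\psi^{-1}(m))}\,\PP^x\big(X_m\in B(y,\eta\psi^{-1}(m))\big).
\]
Because $d(x,y)>\eta\psi^{-1}(n)\ge\eta\psi^{-1}(m)$ (monotonicity of $\psi^{-1}$), Lemma \ref{lem:A:10} applies and bounds this probability below by $C\,\dfrac{V(y,\psi^{-1}(m))}{V(x,d(x,y))}\cdot\dfrac{m}{\psi(d(x,y))}$, so that $h(2m;x,y)\ge c''\,\dfrac{m}{V(x,d(x,y))\psi(d(x,y))}$.

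Finally I would pass from $2m=2\lfloor n/2\rfloor\in\{n-1,n\}$ back to $n$: if $n$ is even then $2m=n$ and $m=n/2$, so we are done; if $n$ is odd and $n\ge 3$ then $2m=n-1$, and applying \eqref{eq:A:63} once gives $h(n;x,y)\ge C_3\,h(n-1;x,y)$, while $n-1\ge n/2$ transfers the bound up to a constant. The case $n=1$ is immediate from \eqref{jump}, since then $d(x,y)>\eta=\eta\psi^{-1}(1)$ and $h(1;x,y)=J(x,y)\approx 1/\big(V(x,d(x,y))\psi(d(x,y))\big)$. Collecting the constants from the three situations proves the theorem. There is no genuine analytic obstacle here: the two substantial ingredients (the near--diagonal lower bound and the escape estimate) are already established, and the only care needed is bookkeeping — checking that the threshold $d(x,y)\gtrsim\eta\psi^{-1}(\cdot)$ survives the halving of $n$, and that the volume and $\psi$ ratios $V(z,\psi^{-1}(m))\asymp V(y,\psi^{-1}(m))$, $\psi(d(x,y))$ versus its shifts, and $m\asymp n$ are all controlled by the doubling hypotheses \eqref{eq:A:44}--\eqref{eq:A:45}.
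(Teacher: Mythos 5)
The proposal is correct, and the off--diagonal part takes a genuinely different route from the paper. The paper proves the off--diagonal lower bound by first using Lemma~\ref{lem:A:10} to bound $\int_{B(y,\eta\psi^{-1}(n))} h(n;x,w)\,\mu({\rm d}w)$ from below, deduces that $\max_{w\in B(y,R/B)}h(n;x,w)$ is large, and then transfers this to $h(n;x,y)$ by setting up the parabolic function $u(j,z)=h(kn-j;x,z)$ and applying the parabolic Harnack inequality, Theorem~\ref{thm:A:3}, a second time, with the attendant bookkeeping of the cube radius $R=\psi^{-1}(\delta^{-1}n)$ and the time scale $k$. You instead feed the same Lemma~\ref{lem:A:10} input through the Chapman--Kolmogorov identity at time $2m$, $m=\lfloor n/2\rfloor$: restricting the $z$-integral to $B(y,\eta\psi^{-1}(m))$, you lower bound the factor $h(m;z,y)$ uniformly by $c'/V(y,\psi^{-1}(m))$ using the already-established near-diagonal bound (Theorem~\ref{thm:A:4}) and the doubling inequality~\eqref{eq:A:44}, and what remains is exactly $\PP^x\big(X_m\in B(y,\eta\psi^{-1}(m))\big)$, to which Lemma~\ref{lem:A:10} applies since $d(x,y)>\eta\psi^{-1}(n)\ge\eta\psi^{-1}(m)$. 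This trades a fresh invocation of the parabolic Harnack inequality for a reuse of Theorem~\ref{thm:A:4} (in whose proof Theorem~\ref{thm:A:3} is of course already buried), and it is somewhat shorter because the cube-parameter bookkeeping is avoided; the small price is the halving of the time parameter and the parity issue, which you correctly repair via the one-step estimate~\eqref{eq:A:63} and a direct check at $n=1$ from~\eqref{jump}. Both arguments rest on the same two substantive ingredients (Lemma~\ref{lem:A:10} and the Harnack machinery), so neither is more elementary in substance, but your organization makes the logical dependence on Theorem~\ref{thm:A:4} explicit rather than rederiving the Harnack transfer ad hoc.
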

\begin{proof}
	We are going to use the parabolic Harnack inequality. We set
	\[
		R = \psi^{-1}(\delta^{-1} n), \qquad
		k = \lfloor C_\psi b^{\beta_2} \rfloor + 1.
	\]
	Then for $\eta$ determined in Lemma \ref{lem:A:10}, by \eqref{eq:A:45} and \eqref{eq:A:67}, we get
	\begin{align*}
		\eta \psi^{-1}(n) 
		&\leq R \eta\frac{\psi^{-1}(n)}{\psi^{-1}(\delta^{-1} n)} \\
		&\leq \eta B \big(C_\psi \delta \big)^{\frac{1}{\beta_1}} R/B \leq R/B
	\end{align*}
	where $\eta \geq 1$ is determined in Lemma \ref{lem:A:10}. Then the function 
	\[
		u(j, z) = h(kn - j; x, z), \qquad 0 \leq j \leq kn, \, \quad z \in M,
	\]
	is parabolic on $([0, \delta \psi(b R)] \cap \NN_0) \times M$. Hence, by Lemma \ref{lem:A:10},
	\begin{align*}
		\int_{B(y, \eta \psi^{-1}(n))} h(n; x, w) \, \mu({\rm d} w)
		&= 
		\PP^x\left(X_{n} \in B(y, \eta \psi^{-1}(n)) \right) \\
		&\geq
		C \frac{V(y, \psi^{-1}(n))}{V(x, d(x, y))} \cdot \frac{n}{\psi(d(x, y))}.
	\end{align*}
	Hence,
	\[	
		\max_{w \in B(y, R/B)} h(n; x, w) \geq 
		C_1 \frac{n}{V(x, d(x, y))\psi(d(x, y))}.
	\]
	which together with Theorem \ref{thm:A:3} implies that
	\[
		h(n; x, y) \geq C_2 \frac{n}{V(x, d(x, y))\psi(d(x, y))}.
	\]
	Lastly, in view of Theorem \ref{thm:A:4}, if $d(x, y) \leq \eta \psi^{-1}(n)$ then
	\[
		h(n; x, y) \geq \frac{C_4}{V(x, \psi^{-1}(n))}
	\]
	which completes the proof.
\end{proof}

By Theorems \ref{thm:A:5} and \ref{thm:A:6} we obtain the following statement.
\begin{theorem}
	\label{thm:A:7}
	Suppose that a discrete metric measure space $(M, d, \mu)$ satisfies \eqref{eq:A:44}--\eqref{eq:A:23}. Let $\psi: (0, \infty) 
	\rightarrow (0, \infty)$ be increasing function satisfying \eqref{eq:A:46} and \eqref{eq:A:45}. Then the jump
	function $J$ satisfies \eqref{jump} and the cut-off Sobolev inequality \eqref{cutoff} holds true if and only if
	\begin{equation}
		\label{eq:40}
		h(n; x, y) \approx
		\min\bigg\{\frac{1}{V(x, \psi^{-1}(n))}, \frac{n}{V(x, d(x, y)) \psi(d(x, y))}\bigg\}
	\end{equation}
	uniformly with respect to $x, y \in M$ and $n \in \NN$.
\end{theorem}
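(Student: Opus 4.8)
The plan is to split the equivalence into its two implications, each of which reduces to facts already established in this section.

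For the direction \eqref{eq:40} $\Rightarrow$ (\eqref{jump} and \eqref{cutoff}): since $h(1; x, y) = J(x, y)$ and $\psi(1) = 1$, specialising \eqref{eq:40} to $n = 1$ already yields \eqref{jump}, because uniform discreteness \eqref{eq:A:23} together with the volume doubling \eqref{eq:A:44} forces $V(x, d(x, y)) \psi(d(x, y)) \gtrsim V(x, 1)$ for $x \neq y$, and an analogous comparison holds at $x = y$. For the cut-off Sobolev inequality one follows the chain of implications recalled at the start of Section \ref{sec:3}: from \eqref{eq:40} one first deduces $\PP^{x_0}(\tau_{B(x_0, r)} \leq \lambda) \leq c\lambda/\psi(r)$ as in \cite[Lemma 2.7]{Chen2016}, then, following \cite[Lemma 3.4]{Chen2016}, constructs the auxiliary function supported in $\overline{D_0}$, bounded above by $3\psi(r)$ and below by $c\psi(r)$ on $D_2$, and finally reruns the proofs of \cite[Propositions 3.6, 2.3 and 2.4]{Chen2016} to arrive at \eqref{cutoff}. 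The one thing to verify along the way is that the hypotheses \eqref{eq:A:44}--\eqref{eq:A:23} and the discreteness of $M$ are compatible with those arguments, which they are.

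For the converse, assume \eqref{jump} and \eqref{cutoff}. Then the upper bound in \eqref{eq:40} is exactly Theorem \ref{thm:A:5} and the matching lower bound is Theorem \ref{thm:A:6}, so combining the two gives \eqref{eq:40}; at this level the proof of the theorem is merely the assembly of those two statements. The real content of this direction has already been carried out in the preceding development: the Nash- and Faber--Krahn-type inequalities \eqref{eq:A:1}--\eqref{eq:A:4}, the on-diagonal bound \eqref{eq:A:5} for processes killed on balls, the exit-time estimates \eqref{eq:A:6}, \eqref{eq:A:9} and \eqref{eq:A:19} for the Poissonised process $\mathbf{Y}$, the off-diagonal upper bound \eqref{eq:A:32} of Theorem \ref{thm:A:1}, the near-diagonal upper bound on $h$ of Theorem \ref{thm:A:2}, and the parabolic Harnack inequality on $\NN_0 \times M$ of Theorem \ref{thm:A:3}.

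I expect the genuine obstacle, already absorbed into the results cited above, to be twofold. Analytically, the parabolic Harnack inequality of Theorem \ref{thm:A:3}, obtained by a Bass--Levin-style chaining argument on $\NN_0 \times M$ following \cite{BassLevin}, is the technical heart and is the only place where uniform discreteness of $(M, d)$ is used. Conceptually, one cannot simply quote the continuous state-space results of \cite{Chen2016} or \cite{GrigoryanHuHu2018}; instead one passes through the Poissonised kernel $p(t; x, y) = \sum_{k = 0}^\infty h(k; x, y) \frac{e^{-t} t^k}{k!}$ of \eqref{eq:A:18} and the comparison of exit times $\tau_{\mathbf{X}} \leftrightarrow \tau_{\mathbf{Y}}$ (Lemma \ref{lem:A:4}, Corollary \ref{cor:A:1}), which is precisely what forces the truncation $1/2 \vee t$ in \eqref{eq:A:32} and must be undone when returning from $p$ to the discrete-time kernel $h$.
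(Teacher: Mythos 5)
Your proof follows the same route as the paper: the forward implication \eqref{eq:40} $\Rightarrow$ \eqref{jump} by specialising to $n=1$ and using uniform discreteness, and \eqref{eq:40} $\Rightarrow$ \eqref{cutoff} via the chain of arguments adapted from \cite{Chen2016} recalled at the start of Section~\ref{sec:3}; the converse by combining Theorems~\ref{thm:A:5} and~\ref{thm:A:6}, whose machinery (Poissonisation, Theorems~\ref{thm:A:1}--\ref{thm:A:3}, the exit-time comparisons of Lemma~\ref{lem:A:4}) you correctly identify as the technical content. This is exactly how the paper assembles Theorem~\ref{thm:A:7}.
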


\section{Heat kernel estimates: discrete time}
In this section we apply the results about subordinate processes (Section \ref{sec:2}) together with the stability of heat kernels
estimates (Section \ref{sec:3}) on discrete measure metric space and for discrete time setting. For the counterpart in the case of
continuous time processes we refer \cite{bae2019}. Recall that $h$ is a transition density of the Markov chain defined by the jumping kernel $J$. By $G$ we denote the Green function of this process.
\begin{lemma}
	\label{lem:tg1}
	Assume that there exist $C_1 \geq 1$ and $\gamma_2 \geq \gamma_1 > 0$ such that for all $x\in M$, $\lambda > 1$ and
	$r > 0$,
	\[
		C_1^{-1} \lambda^{\gamma_1} \leq \frac{V(x,\lambda r)}{V(x,r)}\leq C_1 \lambda^{\gamma_2}.
	\]
	Suppose that there exists $C_2 \geq 1$ such that for all $x,y\in M$ and $s > 0$,
	\[
		C_2^{-1}
		\frac{1}{V(x,f^{-1}(s))} 
		\ind{\{f(d(x,y))\leq s\}} 
		\leq 
		p(s; x,y)
		\leq
		C_2
		\frac{1}{V(x,f^{-1}(s))} e^{-g\left(\frac{f(d(x,y))}{s}\right)}
	\]
	where $f$ is positive increasing function belonging to $\WLSCINF{\delta_1}{c_3}{0} \cap \WUSCINF{\delta_2}{C_4}{0}$
	for certain $c_3 \in (0, 1]$, $C_4 \in [1, \infty)$, and $\delta_2 \geq \delta_1 > 0$, and $g$ is positive increasing
	function belonging to $\WLSCINF{\alpha}{c_5}{1}$ for some $c_5 \in (0, 1]$ and $\alpha > 0$. Let $\vphi$ be a Bernstein
	function such that $\vphi(0) = 0$ and $\vphi(1) = 1$. Assume that the L\'{e}vy measure of $\vphi$ has non-increasing density
	which belongs to $\WLSCINF{\eta_1}{c_6}{1}$ and $\WUSCINF{\eta_2}{C_7}{1}$ for some $\eta_1 \leq \eta_2\leq-1$,
	$c_6 \in (0, 1]$ and $C_7 \in [1, \infty)$. Then
	\[
		p_{\varphi}(1; x,y) \approx \min\left\{\frac{1}{V(x,f^{-1}(1))},\frac{f(d(x, y))}{V(x, d(x, y))} \nu\big(f(d(x, y))\big)\right\}
	\]
	uniformly with respect to $x, y \in M$.
\end{lemma}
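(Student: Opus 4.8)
The plan is to estimate $p_\varphi(1;x,y)$ directly from its series expansion. Since $\calT=\NN$ and $T_1=R_1$ with $\PP(R_1=k)=c(\varphi,k)$ (and there is no atomic part when $\calT=\NN$), we have $p_\varphi(1;x,y)=\sum_{k=1}^\infty p(k;x,y)\,c(\varphi,k)$. Because $\nu$ is non-increasing and belongs to $\WLSCINF{\eta_1}{c_6}{1}$, Proposition \ref{prop:1} yields $c(\varphi,k)\approx\nu(k)$, so it suffices to estimate $\sum_{k\ge1}p(k;x,y)\nu(k)$. Note that $\varphi(1)=1$ forces $\varphi^{-1}(1)=1$, so the first entry of the asserted minimum is indeed $1/V(x,f^{-1}(1))$. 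Write $d=d(x,y)$, $m=f(d)$; since $(M,d)$ is uniformly discrete, $d\ge r_0$ whenever $x\ne y$, and after enlarging constants we may assume $m\ge 1$ in the off‑diagonal regime. In the near‑diagonal case $f(d)\lesssim 1$ (which, by discreteness, only involves $d$ in a bounded multiplicative range above $r_0$ together with $x=y$) one has $V(x,d)\le V(x,f^{-1}(1))$; summing $p(k;x,y)\le C_2/V(x,f^{-1}(k))\le C_2/V(x,f^{-1}(1))$ against $c(\varphi,k)$ and using $\sum_k c(\varphi,k)=\varphi(1)=1$ gives the upper bound $1/V(x,f^{-1}(1))$, while the single term $k=1$ (where $p(1;x,y)\ge C_2^{-1}/V(x,f^{-1}(1))$ once $f(d)\le1$, and $c(\varphi,1)>0$) gives the matching lower bound; on this compact range $f(d)\nu(f(d))$ is bounded away from $0$, so the minimum is realized by $1/V(x,f^{-1}(1))$.

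The substance is the off‑diagonal case $m=f(d)\ge1$, which I would split at $k\asymp m$. For the lower bound, keep only $m<k\le 2m$: there $f(d)=m\le k$, so $p(k;x,y)\ge C_2^{-1}/V(x,f^{-1}(k))$; by the scaling of $f^{-1}$, of $V$ and of $\nu$ one has $f^{-1}(k)\asymp d$ and $\nu(k)\asymp\nu(m)$ on this range, which contains $\asymp m$ integers, whence
\[
	p_\varphi(1;x,y)\ \gtrsim\ \sum_{m<k\le 2m}\frac{\nu(k)}{V(x,f^{-1}(k))}\ \gtrsim\ \frac{m\,\nu(m)}{V(x,d)}.
\]
For the upper bound write $\sum_{k\ge1}=\sum_{1\le k\le m}+\sum_{k>m}$. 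In the tail $k>m$ use $p(k;x,y)\le C_2/V(x,f^{-1}(k))$ together with $1/V(x,f^{-1}(k))\lesssim (m/k)^{\gamma_1/\delta_2}/V(x,d)$ (reverse doubling of $V$ and the upper scaling exponent $\delta_2$ of $f$) and $\nu(k)\lesssim (k/m)^{\eta_2}\nu(m)$ (from $\WUSCINF{\eta_2}{C_7}{1}$). Since $\eta_2-\gamma_1/\delta_2\le -1-\gamma_1/\delta_2<-1$, the series $\sum_{k>m}(m/k)^{\gamma_1/\delta_2}(k/m)^{\eta_2}$ converges and is $\asymp m$, so $\sum_{k>m}p(k;x,y)\nu(k)\lesssim m\nu(m)/V(x,d)$.

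The delicate piece — and the step I expect to be the main obstacle — is the bulk $1\le k\le m$. Here the crude bound $p(k;x,y)\lesssim k/(V(x,d)f(d))$ (valid, obtained by absorbing $e^{-g(m/k)}$ into a power) is \emph{not} sharp enough: when the scaling index $\eta_1$ of $\nu$ satisfies $\eta_1\le -2$ the subordinator is essentially drift‑like near $0$ ($\varphi(\lambda)\asymp\lambda$, or $\lambda\log(1/\lambda)$ at $\eta_1=-2$), so the theorems of Section \ref{sec:2} do not apply and summing $\tfrac{k}{V(x,d)f(d)}\nu(k)$ over‑counts. One must instead genuinely use the sub‑Gaussian decay $p(k;x,y)\le C_2 e^{-g(m/k)}/V(x,f^{-1}(k))$, with $g(u)\ge c_5 g(1)u^\alpha$ for $u\ge1$ (from $g\in\WLSCINF{\alpha}{c_5}{1}$). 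Combining it with $1/V(x,f^{-1}(k))\lesssim (m/k)^{\gamma_2/\delta_1}/V(x,d)$ and $\nu(k)\lesssim (k/m)^{\eta_1}\nu(m)$ gives, with $\beta:=\gamma_2/\delta_1-\eta_1>0$,
\[
	\sum_{1\le k\le m}p(k;x,y)\nu(k)\ \lesssim\ \frac{\nu(m)}{V(x,d)}\sum_{1\le k\le m}\Big(\tfrac mk\Big)^{\beta}e^{-g(m/k)},
\]
and grouping $k$ according to the dyadic value of $m/k$ (there are $\asymp m2^{-j}$ indices with $m/k\in[2^j,2^{j+1})$) bounds the last sum by $m\sum_{j\ge0}2^{j(\beta-1)}e^{-c2^{j\alpha}}\lesssim m$, the exponential dominating. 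Hence this piece is again $\lesssim m\nu(m)/V(x,d)$.

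Putting the three estimates together gives $p_\varphi(1;x,y)\approx m\nu(m)/V(x,d)=\tfrac{f(d)}{V(x,d)}\nu(f(d))$ in the off‑diagonal regime. Matching this against the near‑diagonal estimate, and checking (via discreteness and the scaling of $\nu$) that the two competing quantities lie on the correct side of one another, yields the two‑sided bound $p_\varphi(1;x,y)\approx\min\bigl\{1/V(x,f^{-1}(1)),\ \tfrac{f(d)}{V(x,d)}\nu(f(d))\bigr\}$. The remaining work — fixing the precise thresholds separating the regimes, handling the transition zone $k\asymp m$, and keeping the exponents of $V$, $f$, $\nu$ and $g$ explicit throughout — is routine.
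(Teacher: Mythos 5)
Your proof is correct and takes essentially the same route as the paper's: both reduce via Proposition~\ref{prop:1} to estimating $\sum_{k\geq 1} p(k;x,y)\nu(k)$, obtain the lower bound from the range $k \gtrsim f(d(x,y))$ where the indicator in the lower heat kernel bound is active, and split the upper-bound sum at $k=f(d(x,y))$, controlling the tail by pure scaling and the bulk by the sub-Gaussian factor. Your dyadic grouping of the bulk sum is just a rephrasing of the paper's device of bounding $e^{-g(u)}\leq C u^{-\tau}$ with $\tau=\gamma_2/\delta_1-\eta_1+1$ so that the summand collapses to $(k/m)$.
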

\begin{proof}
	By Proposition \ref{prop:1}, we have, for $x,y\in M$ such that $f(d(x,y))\geq 1$,
	\begin{align*}
		p_{\varphi}(1; x, y)
		&\geq 
		C_2^{-1}c_1
		\sum_{k \geq f(d(x,y))} 
		\frac{1}{V(x,f^{-1}(k))} \nu(k) \\
		&= 
		C_2^{-1}c_1 \frac{1}{V(x,d(x,y))} \nu(f(d(x,y))) 
		\sum_{k \geq f(d(x,y))}
		\frac{V(x,f^{-1}(f(d(x,y)))}{V(x,f^{-1}(k))} \cdot \frac{\nu(k)}{\nu(f(d(x,y)))}\\
		&\geq
		c_8 \frac{1}{V(x,d(x,y))}\nu(f(d(x,y)))
		\sum_{k\geq f(d(x,y))} 
		\left(\frac{k}{f(d(x,y))}\right)^{\eta_1-\gamma_2/\delta_1}\\
		&\geq
		c_9
		\frac{f(d(x,y))}{V(x,d(x,y))}\nu(f(d(x,y))).
	\end{align*}
	Moreover, by the lower scaling property of $g$,  there is $C_4 > 0$, such that for $\tau = \gamma_2/\delta_1-\eta_1 +1$, and
	$u > 1$,
	\[
		e^{-g(u)} 
		\leq
		C_4 u^{-\tau}.
	\]
	Therefore, for $x,y\in M$ such that $f(d(x,y))\geq 1$,
		\begin{align*}
		p_{\varphi}(1; x,y)
		&\leq 
		C_2 c_2 
		\sum_{k\geq f(d(x,y))}
		\frac{1}{V(x,f^{-1}(k))}\nu(k)
		+
		C_2c_2\sum_{k< f(d(x,y))}\frac{1}{V(x,f^{-1}(k))}e^{-g(f(d(x,y))/k)}\nu(k) \\
		&=
		C_2c_2\frac{1}{V(x,d(x,y))}\nu(f(d(x,y)))\bigg(
		\sum_{k\geq f(d(x,y))}\frac{V(x,f^{-1}(f(d(x,y)))}{V(x,f^{-1}(k))} \frac{\nu(k)}{\nu(f(d(x,y)))} \\
		&\phantom{\leq}+
		\sum_{k < f(d(x,y))}\frac{V(x,f^{-1}(f(d(x,y)))}{V(x,f^{-1}(k))} \frac{\nu(k)}{\nu(f(d(x,y)))} 
		e^{-g(f(d(x,y))/k)}
		\bigg) \\
		&\leq C_5  \frac{1}{V(x,d(x,y))}\nu(f(d(x,y))) 
		\bigg(\sum_{k\geq f(d(x,y))}
		\left(\frac{k}{f(d(x,y))}\right)^{\eta_2-\gamma_1/\delta_2} \\
		&\phantom{\leq}+
		\sum_{k < f(d(x, y))}
		\left(\frac{k}{f(d(x,y))}\right)^{\eta_1-\gamma_2/\delta_1+\tau} 
		\bigg) \\
		&\leq C_6 \frac{f(d(x,y))}{V(x,d(x,y))}\nu(f(d(x,y)))
	\end{align*}
	and the proposition follows in this case. If $f(d(x,y))< 1$
\[ C_2^{-1} \frac{1}{V(x,f^{-1}(1))} c(\varphi,1)	\leq p_{\varphi}(1; x,y)\leq C_2 \frac{1}{V(x,f^{-1}(1))}\]
which ends the proof.
\end{proof}

\begin{theorem}
	\label{thm:5}
	Let $(M, d, \mu)$ be a discrete measure metric space satisfying \eqref{eq:A:44}--\eqref{eq:A:23} for $r_0 = \frac{1}{2}$. 
	Assume that $\mathbf{S} = (S_n : n \in \NN_0)$ is a Markov chain on $M$ with the transition function
	$p: \NN_0 \times M \times M \rightarrow [0, \infty)$ such that for all $x, y \in M$ and $n \in \NN$,
	\begin{equation}
		\label{eq:62}
		C_2^{-1}
		\frac{1}{V(x,f^{-1}(n))} 
		\ind{\{f(d(x,y))\leq n\}} 
		\leq 
		p(n; x,y)
		\leq
		C_2
		\frac{1}{V(x,f^{-1}(n))} e^{-g\left(\frac{f(d(x,y))}{n}\right)}
	\end{equation}
	for certain $C_2 \geq 1$, where $f$ is positive increasing function which belongs to $\WLSCINF{\delta_1}{c_3}{0}$
	and $\WUSCINF{\delta_2}{C_4}{0}$ for certain $c_3 \in (0, 1]$, $C_4 \in [1, \infty)$, and $\delta_2 \geq \delta_1 > 0$,
	and $g$ is positive increasing function belonging to $\WLSCINF{\alpha}{c_5}{1}$ for some $c_5 \in (0, 1]$ and
	$\alpha > 0$. 
	Let $J: M \times M \rightarrow [0, \infty)$ be a jump kernel satisfying \eqref{jump} for some increasing function 
	$\psi: [0, \infty) \rightarrow (0, \infty)$ such that $\psi(0) = \frac{1}{2}$, $\psi(1) = 1$ satisfying \eqref{eq:A:46}
	and \eqref{eq:A:45} for some $ \beta_2 \geq \beta_1 > 0$. Then $\psi\circ f^{-1}\in \WLSCINF{\eta}{c}{1}$ for some $\eta<1$,
	if and only if
	\begin{equation}
		\label{eq:37}
		h(n; x, y) \approx  \min\left\{\frac{1}{V(x, \psi^{-1}(n))}, \frac{n}{V(x, d(x, y)) \psi(d(x, y))} \right\}
	\end{equation}
	uniformly with respect to $n \in \NN_0$ and $x, y \in M$. Furthermore, if $\psi\circ f^{-1}\in \WLSCINF{\eta}{c}{1}$ 
	for some $\eta<1$ and $\gamma_1 > \eta \delta_2$, then
	\begin{equation}
		\label{eq:tg8}
		G(x,y)\approx \frac{\psi(d(x,y))}{V(x,d(x,y))}, 
		\qquad x,\,y\in M.
	\end{equation}
\end{theorem}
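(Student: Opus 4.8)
The plan is to realise the chain $\mathbf X$, up to a comparable jump kernel, as a subordinate chain $\mathbf S^\varphi$, and then feed this into Theorem~\ref{thm:6} and the stability theorem~\ref{thm:A:7}. First I would manufacture a Bernstein function out of the data $(f,\psi)$: set
\[
	\varphi(u)=c_0\int_{[0,\infty)}\bigl(1-e^{-ut}\bigr)\,\nu({\rm d}t),\qquad
	\nu({\rm d}t)=\frac{{\rm d}t}{t\,(\psi\circ f^{-1})(t)},
\]
with $c_0>0$ chosen so that $\varphi(1)=1$. Since $\nu$ is a non-increasing density and, by the weak scaling of $\psi$ and $f$, satisfies $\int_{[0,\infty)}\min\{1,t\}\,\nu({\rm d}t)<\infty$, $\varphi$ is a Bernstein function with $\varphi(0)=0$ and $\varphi(1)=1$. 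A substitution $t=f(r)u$ in the integral, together with the weak scaling of $\psi\circ f^{-1}$, gives the bookkeeping identities $\varphi(1/f(r))\approx 1/\psi(r)$ and hence $f^{-1}(1/\varphi^{-1}(1/n))\approx\psi^{-1}(n)$ for $r\ge r_0$ and $n\in\NN$; and the correspondence between the weak scaling of a Bernstein function and that of its Lévy density (in the spirit of Proposition~\ref{prop:1} and the dictionary of Section~\ref{sec:6}) turns the scaling hypothesis $\psi\circ f^{-1}\in\WLSCINF{\eta}{c}{1}$, $\eta<1$, into the two-sided weak scaling $\varphi\in\WLSC{\beta_1'}{}{1}\cap\WUSC{\beta_2'}{}{1}$ for suitable $1>\beta_2'\ge\beta_1'>0$ required by Theorem~\ref{thm:6}.

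With $\varphi$ in hand, Theorem~\ref{thm:6} applies to $\mathbf S$ (its hypotheses are exactly \eqref{eq:62}, the scaling of $f$ and $g$, and the scaling of $\varphi$ just arranged), so \eqref{thm:6:1} holds, whence \eqref{thm:6:2} and \eqref{thm:6:3}; substituting the identities above shows that $p_\varphi$ satisfies \eqref{eq:40} and that the jump kernel $p_\varphi(1;\cdot,\cdot)$ of $\mathbf S^\varphi$ satisfies \eqref{jump}, so in particular $p_\varphi(1;\cdot,\cdot)\approx J$. Now apply Theorem~\ref{thm:A:7} to $\mathbf S^\varphi$: since its jump kernel satisfies \eqref{jump} and its heat kernel satisfies \eqref{eq:40}, the cut-off Sobolev inequality \eqref{cutoff} holds for $p_\varphi(1;\cdot,\cdot)$. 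Both sides of \eqref{cutoff} are bilinear in the jump kernel, so \eqref{cutoff} is unaffected (up to constants in $\epsilon$ and $C$) when $p_\varphi(1;\cdot,\cdot)$ is replaced by the comparable kernel $J$; as $J$ also satisfies \eqref{jump}, Theorem~\ref{thm:A:7} returns \eqref{eq:37} for $h$. The converse is obtained by running this chain backwards: \eqref{eq:37} for $h$ gives, via Theorem~\ref{thm:A:7}, the inequality \eqref{cutoff} for $J$, and the key point is that \eqref{cutoff} on a space carrying the sub-Gaussian diffusion $\mathbf S$ of scale $f$ forces $\psi$ to be scaling-dominated by $f$. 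I would extract this by rerunning the argument of Theorem~\ref{thm:6}, (iii)$\Rightarrow$(i), for $w(r)=1/(V(x,r)\psi(r))$ in place of $\varphi(1/f(r))/V(x,r)$: the identity \eqref{eq:41} is immediate from \eqref{jump} and the reverse doubling, while \eqref{cutoff} — through a test-function estimate of the type appearing in Lemma~\ref{lem:A:1} — bounds $\EE^x[f(d(x,X_1))\ind{\{d(x,X_1)<r\}}]$ from above by a constant times $w(r)V(x,r)f(r)$, yielding the analogue of \eqref{eq:tg2}; then \cite[Proposition~A.1]{MR3729529} together with the doubling of $f$ recovers $\psi\circ f^{-1}\in\WLSCINF{\eta}{c}{1}$ for some $\eta<1$.

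For the Green function, once \eqref{eq:37} is in force one has $h\approx p_\varphi$, hence $G\approx G_\varphi$, and it suffices to estimate $G_\varphi$. The lower bound is Proposition~\ref{prop:103} applied to $\mathbf S$ with $\varphi$ (whose hypotheses are met, since $\varphi\in\WLSC{\beta}{}{1}$ and $f\in\WLSCINF{\delta_1}{}{0}$). For the upper bound I would apply Proposition~\ref{prop:102} with $W(s)=V(x_0,f^{-1}(s))$ and $t=f(d(x_0,y_0))\vee\inf\calT$ — the $e^{-g}$-upper bound in \eqref{eq:62} gives $p(s;x_0,y_0)\le C\min\{1/W(s),1/W(t)\}$ because the Gaussian factor beats the polynomial volume ratio — invoking Remark~\ref{rem:tg1}: since $\varphi\in\WUSC{\beta_2'}{}{1}$ and $W\in\WLSCINF{\gamma_1/\delta_2}{}{\inf\calT}$, the hypothesis $\gamma_1>\eta\delta_2$ is precisely what guarantees $\gamma_1/\delta_2>\beta_2'$, so Proposition~\ref{prop:102} yields $G_\varphi(x_0,y_0)\lesssim 1/(V(x_0,f^{-1}(t))\varphi(1/t))$. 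Simplifying via $\varphi(1/f(d))\approx1/\psi(d)$, and handling the range $d(x_0,y_0)$ close to $r_0$ with the normalisations $\psi(1)=1$, $\psi(0)=\tfrac12$, $r_0=\tfrac12$, gives \eqref{eq:tg8}.

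I expect the main obstacle to be the converse implication — showing that \eqref{cutoff} (equivalently \eqref{eq:37}) genuinely restricts the truncated $f$-moment $\EE^x[f(d(x,X_1))\ind{\{d(x,X_1)<r\}}]$ from above, which is exactly what excludes a $\psi$ that grows faster than $f$ and underlies the limitation on $\psi$ advertised in the introduction; this requires a careful test-function argument with cut-off functions, analogous to, but distinct from, the continuous-time results of \cite{Chen2016}. The remaining ingredients — checking that $\varphi$ built from the prescribed Lévy density has the scaling demanded by Theorem~\ref{thm:6}, and the near-diagonal bookkeeping — are routine once the scaling dictionary of Section~\ref{sec:6} is set up.
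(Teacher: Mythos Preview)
Your forward direction and the Green-function estimate are essentially the paper's argument: build $\varphi$ from the L\'evy density $\nu({\rm d}t)=\tfrac{{\rm d}t}{t\,\psi(f^{-1}(t))}$, verify $\varphi(u)\approx 1/\psi(f^{-1}(1/u))$ (which uses the hypothesis $\eta<1$), apply Theorem~\ref{thm:1} (equivalently \eqref{thm:6:1}$\Rightarrow$\eqref{thm:6:2} in Theorem~\ref{thm:6}) to get $p_\varphi\approx h$ in form, transfer $(J_\psi)$ and \eqref{cutoff} between $p_\varphi(1;\cdot,\cdot)$ and $J$, and conclude via Theorem~\ref{thm:A:7}. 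The Green function then follows from $h\approx p_\varphi$ and Theorem~\ref{thm:4} with Remark~\ref{rem:tg1}, exactly as you wrote.

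The gap is in the converse. You propose to extract the bound on the truncated $f$-moment $\EE^x[f(d(x,X_1))\ind{\{d(x,X_1)<r\}}]$ directly from \eqref{cutoff} ``through a test-function estimate of the type appearing in Lemma~\ref{lem:A:1}''. But Lemma~\ref{lem:A:1} controls the Dirichlet energy of truncated subharmonic functions; it does not, and there is no evident mechanism by which \eqref{cutoff} alone would, bound a weighted first moment of $X_1$ against the scale $f$. In Theorem~\ref{thm:6} the analogue \eqref{eq:tg2} is obtained from Lemma~\ref{lem:301}, which uses the \emph{subordinator} structure (the inequality \eqref{eq:tg3} for the original chain $\mathbf S$ together with Lemma~\ref{lem:4}); working with $J$ directly you have no such handle, and the computation $\EE^x[f(d(x,X_1))\ind{\{d(x,X_1)<r\}}]\approx\int_0^r f(s)/(s\psi(s))\,{\rm d}s$ is circular, since its comparability to $f(r)/\psi(r)$ is precisely the scaling you are trying to prove.

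The paper avoids this by \emph{reusing} the subordinator even in the converse. It defines $\varphi$ by the same density~\eqref{eq:38} and observes that this density, regardless of whether $\eta<1$, is non-increasing with two-sided power scaling determined by $\beta_1,\beta_2,\delta_1,\delta_2$. Lemma~\ref{lem:tg1} then gives $p_\varphi(1;x,y)\approx J(x,y)$ directly from the density scaling (no need for $\varphi\approx 1/\psi(f^{-1}(1/\cdot))$ yet). Since \eqref{eq:37} for $h$ yields \eqref{cutoff} for $J\approx p_\varphi(1;\cdot,\cdot)$ via Theorem~\ref{thm:A:7}, the same theorem returns the full heat-kernel estimate \eqref{eq:30} for $p_\varphi$. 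Now the on-diagonal part of \eqref{eq:30} compared with Proposition~\ref{prop:101}, and the off-diagonal part compared with Corollary~\ref{cor:11}, force $\varphi(u)\approx 1/\psi(f^{-1}(1/u))$, so that $p_\varphi(1;\cdot,\cdot)$ satisfies \eqref{thm:6:3}, and Theorem~\ref{thm:6} \eqref{thm:6:3}$\Rightarrow$\eqref{thm:6:1} gives the desired upper scaling of $\varphi$, i.e.\ of $\psi\circ f^{-1}$. The point is that the delicate moment bound \eqref{eq:tg2} is already packaged inside Theorem~\ref{thm:6}, and becomes accessible only after one has recognised the chain as (comparable to) a genuine subordinate process.
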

\begin{proof}
	Our first aim is to construct Bernstein function $\vphi$ such that 
	\begin{equation}
		\label{eq:36}
		\vphi(u) \approx \frac{1}{\psi\big(f^{-1}(1/u)\big)}
	\end{equation}
	uniformly with respect to $u \in (0, 1]$. To do so, we define a measure on $[0, \infty)$ by the formula
	\begin{equation}
		\label{eq:38}
		\nu({\rm d} s) = \frac{1}{s \psi(f^{-1}(s))} {\rm d} s. 
	\end{equation}
	Observe that $\psi \circ f^{-1}$ is increasing and belongs to $\WLSCINF{\beta_1 \delta_2^{-1}}{c_7}{1}$ 
	for certain $c_7 \in (0, 1]$. 
	This and $\psi(f^{-1}(0))=1/2$ imply that the formula \eqref{eq:38} defines a L\'evy measure. Let 
	\[
		\vphi(u) = \int_{[0, \infty)} \big(1 - e^{-s u}\big) \, \nu({\rm d} s).
	\]
	Then \eqref{eq:36} is an easy consequence of $1-e^{-s}\approx \min\{1,s\}$ and scaling properties of $\psi \circ f^{-1}$.

	Now, by Theorem \ref{thm:1}, we obtain
	\begin{align}
		\label{eq:tg7}
		p_\vphi(n; x, y) 
		&\approx \min \left\{
		\frac{1}{V\Big(x, f^{-1}\Big(\frac{1}{\vphi^{-1}(1/n)}\Big)\Big)},
		n \frac{\vphi\Big(\frac{1}{f(d(x, y))}\Big)}{V(x, d(x, y))} \right\}.
	\end{align}
	In view of Theorem \ref{thm:A:7}, the jump function
	\[
		J_\vphi(x, y) = p_\vphi(1; x, y)
	\]
	gives rise to the quadratic form $\Gamma_\vphi$ which satisfy the cuf-off Sobolev inequality. By 
	\eqref{eq:A:44}--\eqref{eq:A:23} and \eqref{eq:36}, we have
	\[
		J_\vphi(x, y) \approx J(x, y)
	\]
	uniformly with respect to $x, y \in M$. Therefore, the quadratic form $\Gamma$ determined by $J$ satisfies the cut-off
	Sobolev inequality. Consequently, by Theorem \ref{thm:A:7}, we conclude \eqref{eq:37}.

	To prove the reverse implication, let us assume that \eqref{eq:37} holds true. We define $\nu$ by the formula \eqref{eq:38}.
	Then the density of $\nu$ is non-increasing and satisfies weak scaling conditions at infinity with exponents
	$-1-\beta_2/\delta_1$ and $-1-\beta_1/\delta_2$. Hence, by Lemma \ref{lem:tg1}
	\[
		p_\varphi(1;x,y) \approx \min\left\{\frac{1}{V(x,1)},\frac{1}{V(x,d(x,y))\psi(d(x,y))}\right\}.
	\]
	Next, Theorem \ref{thm:A:7} implies that
	\begin{equation}
		\label{eq:30}
		p_\varphi(n;x,y)\approx \min\left\{\frac{1}{V(x,\psi^{-1}(n))},\frac{n}{V(x,d(x,y))\psi(d(x,y))}\right\}.
	\end{equation}
	In view of Proposition \ref{prop:101}, the estimates \eqref{eq:30} and \eqref{eq:62} gives
	\[
		\frac{1}{V(x,\psi^{-1}(n))} \lesssim \frac{1}{V(x,f^{-1}(1/\varphi^{-1}(1/n)))}.
	\]
	Consequently, we get
	\[
		\varphi(u) \lesssim \frac{1}{\psi(f^{-1}(1/u))}, \qquad u < 1.
	\]
	Moreover, by Corollary \ref{cor:11}, the estimates \eqref{eq:30} and \eqref{eq:62} leads to
	\[
		\frac{c}{\psi(r)} \lesssim \varphi(1/f(r)), \qquad r>1.
	\]
	which gives
	\[
		\varphi(u) \gtrsim c \frac{1}{\psi(f^{-1}(1/u))},\qquad u<1.
	\]
	Therefore,
	\[
		p_\varphi(1;x,y)
		\approx 
		\min\left\{\frac{1}{V(x,1)}, \frac{\varphi(1/f(d(x,y))}{V(x,d(x,y))}\right\}.
	\]
	By Theorem \ref{thm:6} there is $\eta < 1$ such that $\psi\circ f^{-1}$ satisfies the upper scaling condition with
	$\eta$, which finishes the proof. 

	Lastly, let us observe that \eqref{eq:tg7} together with \eqref{eq:36} implies that
	\[
		h(n;x,y) \approx p_\varphi(n;x,y), 
	\]
	for all $n \in \NN_0$ and $x, y \in M$. Therefore \eqref{eq:tg8} is a consequence of Theorem \ref{thm:4} and Remark
	\ref{rem:tg1}.
\end{proof}

It is remarkable that similar results holds true also for the continuous time counterpart. Let $h$ and $G$ be the heat kernel
and the Green function corresponding to Markov process associated to a jumping kernel $J$. The implication
that the scaling condition on $\psi\circ f^{-1}$ gives the heat kernel estimates is proved in \cite[Theorem 2.19]{bae2019}.
The reverse implication follows from the next theorem.
\begin{theorem}
	\label{thm:tg5}
	Let $(M, d, \mu)$ be a  measure metric space satisfying \eqref{eq:A:44}--\eqref{eq:A:42} for $r_0 = 0$. 
	Assume that $\mathbf{S} = (S_t : t \in [0,\infty))$ is a Markov process on $M$ with the transition function
	$p: [0,\infty) \times M \times M \rightarrow [0, \infty)$ such that for all $x, y \in M$ and $t>0$,
	\[
		C_2^{-1}
		\frac{1}{V(x,f^{-1}(t))} 
		\ind{\{f(d(x,y))\leq t\}} 
		\leq 
		p(t; x,y)
		\leq
		C_2
		\frac{1}{V(x,f^{-1}(t))} e^{-g\left(\frac{f(d(x,y))}{t}\right)}
	\]
	for certain $C_2 \geq 1$, where $f$ is positive increasing function which belongs to $\WLSCINF{\delta_1}{c_3}{0}$
	and $\WUSCINF{\delta_2}{C_4}{0}$ for certain $c_3 \in (0, 1]$, $C_4 \in [1, \infty)$, and $\delta_2 \geq \delta_1 > 0$,
	and $g$ is positive increasing function belonging to $\WLSCINF{\alpha}{c_5}{1}$ for some $c_5 \in (0, 1]$ and
	$\alpha > 0$. 
	Let $J: M \times M \rightarrow [0, \infty)$ be a jump kernel satisfying \eqref{jump} for some increasing function 
	$\psi: [0, \infty) \rightarrow (0, \infty)$ such that  $\psi(1) = 1$ satisfying \eqref{eq:A:46}
	and \eqref{eq:A:45} for all $R\geq r>0$, for some $ \beta_2 \geq \beta_1 > 0$ and
	\[
		\int^1_0 \frac{{\rm d} s}{\psi(f^{-1}(s))} <\infty.
	\]
	Furthermore, $\psi\circ f^{-1}\in \WLSCINF{\eta}{c}{0}$ for some $\eta<1$, if and only if
	\[
		h(t; x, y) \approx  \min\left\{\frac{1}{V(x, \psi^{-1}(t))}, \frac{t}{V(x, d(x, y)) \psi(d(x, y))} \right\}
	\]
	uniformly with respect to $t>0$ and $x, y \in M$. If $\gamma_1 > \eta\delta_2$, then
	\[
		G(x,y)\approx \frac{\psi(d(x,y))}{V(x,d(x,y))}, 
		\qquad x,\,y\in M.
	\]
\end{theorem}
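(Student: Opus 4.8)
The plan is to follow the scheme of Theorem~\ref{thm:5}, with the discrete stability result Theorem~\ref{thm:A:7} replaced by its continuous-time counterpart \cite[Theorem 1.13]{Chen2016}. Since the implication from the scaling of $\psi\circ f^{-1}$ to the heat kernel estimate is \cite[Theorem 2.19]{bae2019}, only the converse implication and the Green function estimate remain to be proved.

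First I would build a Bernstein function $\varphi$ adapted to $(\psi,f)$ by setting $\varphi(u)=\int_{(0,\infty)}(1-e^{-su})\,\nu({\rm d} s)$ with $\nu({\rm d} s)=\frac{{\rm d} s}{s\,\psi(f^{-1}(s))}$; the hypothesis $\int_0^1\frac{{\rm d} s}{\psi(f^{-1}(s))}<\infty$ and the polynomial lower growth of $\psi\circ f^{-1}$ guarantee that $\nu$ is a Lévy measure, and a short computation shows that $\varphi$ has weak lower scaling of index $\beta=1\wedge(\beta_1/\delta_2)\in(0,1]$ near $0$, which by \cite[Lemma 1.1]{Mimica2019} propagates to all scales, and that $\varphi(u)\gtrsim\frac1{\psi(f^{-1}(1/u))}$. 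Next I would estimate the jumping density $J_\varphi(x,y)=\int_{(0,\infty)}p(s;x,y)\,\nu({\rm d} s)$ of the subordinate process $\mathbf{S}^\varphi$: splitting at $s=f(d(x,y))$, on $s\ge f(d(x,y))$ the volume reverse doubling \eqref{eq:A:42} makes the dyadic series summable and yields $\approx\frac1{V(x,d(x,y))\psi(d(x,y))}$, while on $s<f(d(x,y))$ the lower scaling of $g$ (which forces $e^{-g(w)}\lesssim w^{-N}$ for every $N$) together with volume doubling gives a contribution of the same order; hence $J_\varphi\approx J$ uniformly.

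Because $J_\varphi\approx J$, the assumed estimate $h(t;x,y)\approx\min\{\frac1{V(x,\psi^{-1}(t))},\frac t{V(x,d(x,y))\psi(d(x,y))}\}$ yields, through \cite[Theorem 1.13]{Chen2016}, the cut-off Sobolev inequality $(\text{CS}_\psi)$ for the Dirichlet form of $J$, hence for the comparable form of $J_\varphi$; since $\mathbf{S}^\varphi$ is a symmetric pure-jump process whose jumping density satisfies $(J_\psi)$, the converse part of \cite[Theorem 1.13]{Chen2016} gives the same two-sided estimate for $p_\varphi$. Comparing this lower bound with the universal upper bounds for subordinate processes -- Proposition~\ref{prop:101} and Corollary~\ref{cor:11}, applicable thanks to the lower scaling of $\varphi$ -- and using the reverse doubling of $V$ once more, one obtains $\varphi(u)\approx\frac1{\psi(f^{-1}(1/u))}$; equivalently $\int_{(0,\infty)}p(s;x,y)\,\nu({\rm d} s)\approx\frac{\varphi(1/f(d(x,y)))}{V(x,d(x,y))}$. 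At this point I would reproduce the argument in the proof of Theorem~\ref{thm:7}: with $w(r)=\frac{\varphi(1/f(r))}{V(x,r)}$, the exit-time bounds for $\mathbf{S}$ (namely \eqref{eq:tg1} and \eqref{eq:tg3}) give the renewal identities \eqref{eq:41} and \eqref{eq:tg2}, and \cite[Proposition A.1]{MR3729529} then shows that $\varphi$ has weak lower scaling of positive index and weak upper scaling of index strictly below $1$; carrying these over via $\varphi(u)\approx\frac1{\psi(f^{-1}(1/u))}$ yields $\psi\circ f^{-1}\in\WLSCINF{\eta}{c}{0}$ for some $\eta<1$.

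For the Green function, the two-sided estimate for $p_\varphi$ combined with $h(t;x,y)\approx p_\varphi(t;x,y)$ gives $G(x,y)\approx G_\varphi(x,y)=\int_{[0,\infty)}p(s;x,y)\,U({\rm d} s)$. The lower bound $G_\varphi(x,y)\gtrsim\frac{\psi(d(x,y))}{V(x,d(x,y))}$ follows from Proposition~\ref{prop:103}, whose second term is infinite since $\inf\calT=0$. For the upper bound I would apply Proposition~\ref{prop:102} with $t=f(d(x_0,y_0))$ and $W(s)=V(x_0,f^{-1}(s))$: one has $W\in\WLSCINF{\gamma_1/\delta_2}{c}{0}$ and, by the previous step, $\varphi$ has weak upper scaling of index $\eta<1$, so the assumption $\gamma_1>\eta\delta_2$ allows Remark~\ref{rem:tg1} to be used with $\delta=\gamma_1/\delta_2$, giving $G_\varphi(x_0,y_0)\lesssim\frac1{V(x_0,d(x_0,y_0))\,\varphi(1/f(d(x_0,y_0)))}\approx\frac{\psi(d(x_0,y_0))}{V(x_0,d(x_0,y_0))}$. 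I expect the main obstacle to be the twofold use of \cite[Theorem 1.13]{Chen2016}: one must check that the symmetric form of $J_\varphi$ genuinely falls under its standing hypotheses (regularity, conservativeness) and that the cut-off Sobolev inequality is stable under comparability of jumping densities, and that the scaling of $\varphi$ is legitimately read off from $p_\varphi$ via the universal subordinate estimates rather than assumed in advance.
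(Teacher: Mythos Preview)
Your proposal is correct and follows essentially the same route as the paper, which simply says to repeat the proof of Theorem~\ref{thm:5} with \cite[Theorem 1.13]{Chen2016} replacing Theorem~\ref{thm:A:7}, \cite[Lemma 4.2]{bae2019} replacing Lemma~\ref{lem:tg1}, and Theorem~\ref{thm:7} replacing Theorem~\ref{thm:6}. Your direct computation of $J_\varphi$ by splitting at $s=f(d(x,y))$ is precisely the content of \cite[Lemma 4.2]{bae2019}, and your explicit transfer of the cut-off Sobolev inequality from $J$ to the comparable $J_\varphi$ makes overt what the paper leaves implicit in its invocation of the stability theorem.
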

\begin{proof}
	The proof is the same as above, except that we use \cite[Theorem 1.13]{Chen2016} instead of Theorem \ref{thm:A:7},
	\cite[Lemma 4.2]{bae2019} instead of Lemma \ref{lem:tg1} and Theorem \ref{thm:7} instead of Theorem \ref{thm:6}.
\end{proof}


Similarly, based on Theorem \ref{thm:2} one can prove the following statement.
\begin{theorem}
	\label{thm:3}
	Let $(M, d, \mu)$ be a discrete measure metric space satisfying \eqref{eq:A:44}--\eqref{eq:A:23} for $r_0 = \frac{1}{2}$. 
	Assume that $\mathbf{S} = (S_n : n \in \NN_0)$ is a Markov chain on $M$ with the transition function
	$p: \NN_0 \times M \times M \rightarrow [0, \infty)$ such that for all $x, y \in M$ and $n \in \NN$,
	\[
		C_2^{-1}
		\frac{1}{V(x, n^{1/\alpha})} 
		\ind{\{d(x,y)^\alpha \leq n\}} 
		\leq 
		p(n; x,y)
		\leq
		C_2
		\min\bigg\{
		\frac{1}{V(x, n^{1/\alpha})},
		\frac{n}{V(x, d(x,y)) (d(x, y))^\alpha} 
		\bigg\}
	\]
	for certain $C_2 \geq 1$ and $\alpha > 0$. Let $J: M \times M \rightarrow [0, \infty)$ be a jump kernel satisfying
	\eqref{jump} for some increasing function $\psi: [0, \infty) \rightarrow (0, \infty)$ such that $\psi(0) = \frac{1}{2}$,
	$\psi(1) = 1$ satisfying \eqref{eq:A:46} and \eqref{eq:A:45} for some $\alpha > \beta_2 \geq \beta_1 > 0$. Then
	\[
		h(n; x, y) \approx  \min\left\{\frac{1}{V(x, \psi^{-1}(n))}, \frac{n}{V(x, d(x, y)) \psi(d(x, y))} \right\}
	\]
	uniformly with respect to $n \in \NN$ and $x, y \in M$.
	Furthermore, if $\gamma_1>\beta_2$
	\[
		G(x,y)\approx \frac{\psi(d(x,y))}{V(x,d(x,y))}, 
		\qquad x,\,y\in M.
	\]
\end{theorem}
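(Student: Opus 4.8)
The plan is to follow the proof of Theorem~\ref{thm:5}, with Theorem~\ref{thm:2} playing the role of Theorem~\ref{thm:1}. The first step is to manufacture an auxiliary Bernstein function $\varphi$ adapted to the pair $(\psi,\alpha)$. I would set $\nu({\rm d}s) = c\, s^{-1}\psi(s^{1/\alpha})^{-1}\,{\rm d}s$ on $[0,\infty)$, with $c>0$ chosen so that $\varphi(u) = \int_{[0,\infty)}(1-e^{-su})\,\nu({\rm d}s)$ satisfies $\varphi(1)=1$. Since $\psi(0)=\tfrac12>0$ the density is integrable near $0$ against ${\rm d}s$, and since \eqref{eq:A:45} gives $\psi(s^{1/\alpha})\gtrsim s^{\beta_1/\alpha}$ for $s\geq1$ the density is integrable at infinity against $\min\{1,s\}\,{\rm d}s$; hence $\nu$ is a Lévy measure and $\varphi$ is Bernstein with $\varphi(0)=0$, $\varphi(1)=1$, $0<\varphi(u)<1$ on $(0,1)$. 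A short computation with $1-e^{-s}\approx\min\{1,s\}$ and the scaling \eqref{eq:A:46}--\eqref{eq:A:45} of $\psi$ yields $\varphi(u)\approx1/\psi(u^{-1/\alpha})$ uniformly for $u\in(0,1]$, and the same scaling shows that $\varphi$ belongs to $\WLSC{\beta_1/\alpha}{c}{1}\cap\WUSC{\beta_2/\alpha}{c'}{1}$; the hypothesis $\beta_2<\alpha$ is exactly what makes the upper index $\beta_2/\alpha$ strictly smaller than $1$, as needed below.

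Feeding $\varphi$ together with the stable-like hypothesis on $p$ into Theorem~\ref{thm:2} with $f(r)=r^\alpha$ (so $f^{-1}(s)=s^{1/\alpha}$) gives
\[
	p_\varphi(n;x,y)\approx\min\left\{\frac{1}{V\bigl(x,(\varphi^{-1}(n^{-1}))^{-1/\alpha}\bigr)},\ \frac{n\,\varphi\bigl(d(x,y)^{-\alpha}\bigr)}{V(x,d(x,y))}\right\}.
\]
Using $\varphi(u)\approx1/\psi(u^{-1/\alpha})$ --- whence $\varphi(d(x,y)^{-\alpha})\approx1/\psi(d(x,y))$ and $(\varphi^{-1}(n^{-1}))^{-1/\alpha}\approx\psi^{-1}(n)$ --- collapses this to the right-hand side of \eqref{Dheat}. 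Thus the subordinate process $\mathbf{S}^\varphi$, again a discrete-time Markov chain on $M$, has a heat kernel satisfying \eqref{Dheat}. In particular its one-step kernel $J_\varphi(x,y)=p_\varphi(1;x,y)$ satisfies $J_\varphi(x,y)\approx J(x,y)$ uniformly: on the off-diagonal, where $d(x,y)\geq r_0=\tfrac12$ by the uniform discreteness \eqref{eq:A:23}, the minimum above at $n=1$ is realised by the second term after invoking the doubling of $V$, and the diagonal case is handled exactly as in the proof of Theorem~\ref{thm:5}.

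Now one applies the stability theorem twice. Since $p_\varphi$ satisfies \eqref{Dheat}, Theorem~\ref{thm:A:7} shows that $J_\varphi$ satisfies \eqref{jump} and that the cut-off Sobolev inequality \eqref{cutoff} holds for the Dirichlet form of $J_\varphi$. Because $J_\varphi\approx J$, the Dirichlet form and the carré-du-champ measures $\Gamma[\cdot]$ attached to $J$ and to $J_\varphi$ are comparable, so \eqref{cutoff} also holds for $J$; and since $J$ satisfies \eqref{jump} by hypothesis, the forward implication of Theorem~\ref{thm:A:7} yields \eqref{Dheat} for $h$, which is the first assertion. For the Green function, \eqref{Dheat} for $h$ together with the estimate for $p_\varphi$ gives $h(n;x,y)\approx p_\varphi(n;x,y)$ for all $n\in\NN$ and $x,y\in M$, hence $G(x,y)=\sum_{n\geq1}h(n;x,y)\approx\sum_{n\geq1}p_\varphi(n;x,y)=G_\varphi(x,y)$. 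One estimates $G_\varphi$ via Theorem~\ref{thm:4} with $f(r)=r^\alpha$, noting that the stable-like upper bound immediately gives the required hypothesis $p(t;x,y)\lesssim 1/V(x,f^{-1}(t)\vee d(x,y))$, and using Remark~\ref{rem:tg1} to relax the requirement ``$\gamma_1/\delta_2>1$'' of Proposition~\ref{prop:102} to ``$\gamma_1/\alpha>\beta_2/\alpha$'', i.e.\ precisely $\gamma_1>\beta_2$, which is legitimate because $\varphi\in\WUSC{\beta_2/\alpha}{c'}{1}$ with $\beta_2/\alpha<1$. This gives $G_\varphi(x,y)\approx\min\bigl\{1/(V(x,d(x,y))\varphi(d(x,y)^{-\alpha})),\,1/V(x,1)\bigr\}$, and for $x\neq y$ (so $d(x,y)\geq\tfrac12$) the first term dominates and equals, up to constants, $\psi(d(x,y))/V(x,d(x,y))$.

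The routine parts are the Lévy-measure verification and the exponent bookkeeping. The step requiring the most care is the Green-function estimate: the hypothesis $\gamma_1>\beta_2$ is weaker than the naive $\gamma_1>\alpha$ that $f(r)=r^\alpha$ would suggest in Theorem~\ref{thm:4}, and it is exactly this gap that Remark~\ref{rem:tg1} closes; equivalently, $\gamma_1>\beta_2$ is precisely the condition guaranteeing that the tail $\sum_{n>\psi(d(x,y))}V(x,\psi^{-1}(n))^{-1}$ converges, with sum comparable to $\psi(d(x,y))/V(x,d(x,y))$. Transferring the cut-off Sobolev inequality from $\mathbf{S}^\varphi$ to $J$ introduces nothing beyond what already appears in the proof of Theorem~\ref{thm:5}.
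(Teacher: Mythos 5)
Your proposal is correct and is exactly the intended argument: the paper proves Theorem~\ref{thm:3} only by the remark ``Similarly, based on Theorem~\ref{thm:2} one can prove the following statement,'' and what you have written is precisely the proof of Theorem~\ref{thm:5} transposed with $f(r)=r^\alpha$, Theorem~\ref{thm:2} in place of Theorem~\ref{thm:1}, and Remark~\ref{rem:tg1} used to relax $\gamma_1>\alpha$ to $\gamma_1>\beta_2$ in the Green-function step. All the verifications you flag (integrability of $\nu$ using $\psi(0)=\tfrac12$ and \eqref{eq:A:45}, the scaling indices $\beta_1/\alpha,\beta_2/\alpha$ with $\beta_2/\alpha<1$, the off-diagonal comparison $J_\varphi\approx J$ via uniform discreteness, and the transfer of \eqref{cutoff} through $J_\varphi\approx J$) are exactly the same points that appear in the proof of Theorem~\ref{thm:5}.
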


\section{Applications}
\label{sec:5}
In this section we provide examples of the metric measure space and a Markov chain $\mathbf{S}$ satisfying the assumptions
of Theorems \ref{thm:3} and \ref{thm:5}. 

The first example are infinite weighted graphs. In particular, we improve the results of Murugan--Saloff-Coste \cite{Murugan2015}.
\begin{theorem}
	\label{thm:8}
	Let $(G, E)$ be an infinite weighted graph which is Ahlfors $\gamma$-regular. Let $J: G \times G \rightarrow [0, \infty)$ 
	be a jump kernel satisfying \eqref{jump} for some increasing function $\psi: [0, \infty) \rightarrow [0, \infty)$ such
	that $\psi(0) = \frac{1}{2}$ and $\psi(1) = 1$ satisfying \eqref{eq:A:46} and \eqref{eq:A:45} for some 
	$\beta_2 \geq \beta_1 > 0$. Then
	\[
		h(n; x, y)
		\approx
		\min\bigg\{
		\frac{1}{V(x, \psi^{-1}(n))}, \frac{n}{V(x, d(x, y)) \psi(d(x, y))}\bigg\}
	\]
	uniformly with respect to $n \in \NN$, $x, y \in G$.
\end{theorem}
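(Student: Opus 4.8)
The plan is to obtain Theorem \ref{thm:8} as a special case of Theorem \ref{thm:3} (and, for the range of $\psi$ that the stable-like comparison does not reach, of Theorem \ref{thm:5}), taking $M = G$ and $\mu$ the measure attached to the edge weights. First I would record that an infinite weighted graph which is Ahlfors $\gamma$-regular satisfies the standing hypotheses \eqref{eq:A:44}, \eqref{eq:A:42} and \eqref{eq:A:23} with $r_0 = \tfrac12$ and $\gamma_1 = \gamma_2 = \gamma$: Ahlfors regularity gives $V(x,r) \approx r^\gamma$ for $r \ge 1$, whereas $B(x,r) = \{x\}$ for $r < 1$ forces $V(x,r) = \mu(\{x\}) \approx 1$ on that range, and merging the two ranges yields the doubling bound with exponent $\gamma$ for all $R \ge r > 0$, the reverse-doubling bound with exponent $\gamma$ for $R \ge r \ge \tfrac12$, and \eqref{eq:A:23}.

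The substantive input is the comparison Markov chain demanded by Theorem \ref{thm:3}. Fix $\alpha$ with $\beta_2 < \alpha < 2$ and let $\mathbf{S} = (S_n : n \in \NN_0)$ be the symmetric $\alpha$-stable-like random walk on $(G,E)$, i.e.\ the chain whose one-step kernel obeys $p(1;x,y) \approx \big(V(x,d(x,y))\,d(x,y)^\alpha\big)^{-1}$ for $x \ne y$; by the volume bound this kernel is $\mu$-summable, and after the usual normalisation on the diagonal it defines a conservative chain, while $p(1;\cdot,\cdot)$ is itself a jump kernel satisfying \eqref{jump} with $\psi$ replaced by $r \mapsto r^\alpha$. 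The sharp two-sided bound
\[
	p(n;x,y) \approx \min\left\{\frac{1}{V(x,n^{1/\alpha})},\ \frac{n}{V(x,d(x,y))\,d(x,y)^\alpha}\right\},
	\qquad n \in \NN,\ x,y \in G,
\]
is exactly the hypothesis required of $\mathbf{S}$ in Theorem \ref{thm:3} for that value of $\alpha$; on $\ZZ^d$ it is the Bass--Levin theorem \cite{BassLevin}, and for a general Ahlfors-regular graph it is a consequence of Theorem \ref{thm:A:7} applied to this stable jump kernel (the cut-off Sobolev inequality \eqref{cutoff} being routine to verify for stable kernels in the regime $\alpha < 2$). Since $\psi$ is increasing, $\psi(0) = \tfrac12$, $\psi(1) = 1$ and satisfies \eqref{eq:A:46}, \eqref{eq:A:45} with $\alpha > \beta_2 \ge \beta_1 > 0$, all the hypotheses of Theorem \ref{thm:3} now hold with $M = G$, $r_0 = \tfrac12$ and this comparison chain, and its conclusion is precisely
\[
	h(n;x,y) \approx \min\left\{\frac{1}{V(x,\psi^{-1}(n))},\ \frac{n}{V(x,d(x,y))\,\psi(d(x,y))}\right\}
\]
uniformly in $n \in \NN$ and $x,y \in G$, as claimed. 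In the complementary range $\beta_2 \ge 2$ one runs the same argument through Theorem \ref{thm:5}, using in place of $\mathbf{S}$ a sub-Gaussian comparison walk of walk-dimension larger than $\beta_2$ and the sub-Gaussian form of \eqref{eq:62}.

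I expect the only genuinely non-formal point to be the second one: having available a comparison process together with a \emph{sharp} two-sided estimate for it — classical on $\ZZ^d$, and on general Ahlfors-regular graphs a (known) output of non-local heat-kernel stability, or of Theorem \ref{thm:A:7} of the present paper applied to the stable kernel. The rest is bookkeeping: the volume computations of the first step, and the matching of the scaling exponents of $\psi$ against the chosen $\alpha$, where the one point to watch is that Theorem \ref{thm:3} needs the strict inequality $\alpha > \beta_2$.
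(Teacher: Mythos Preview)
Your route differs from the paper's and carries a real gap. The paper does not use a stable-like comparison chain or Theorem \ref{thm:3}; it works entirely through Theorem \ref{thm:5}, taking as comparison process the \emph{simple random walk} on $(G,E)$. The sub-Gaussian bounds for that walk on an Ahlfors-regular graph are classical (Barlow \cite[Theorem 4.39]{Barlow2017}), so no appeal to the paper's own stability machinery is needed to verify the hypothesis \eqref{eq:62}. The only technical point is the parity defect in Barlow's lower bound, $p(n;x,y)+p(n+1;x,y)\gtrsim\cdots$, which the paper removes by passing to the lazy modification $\tilde p(1;x,y)=\tfrac12 p(1;x,y)+\tfrac12 p(2;x,y)$ and checking that its $n$-step kernel $\tilde p(n;x,y)=2^{-n}\sum_j\binom{n}{j}p(n+j;x,y)$ inherits genuine two-sided sub-Gaussian estimates. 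One then invokes Theorem \ref{thm:5} directly.

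The gap in your argument is the input to Theorem \ref{thm:3}: you need sharp two-sided stable-like estimates for your auxiliary $\alpha$-stable walk, and you propose to get them from Theorem \ref{thm:A:7}. But the forward direction of Theorem \ref{thm:A:7} requires the cut-off Sobolev inequality \eqref{cutoff}, and in this paper \eqref{cutoff} is obtained only \emph{from} the heat kernel estimates (the reverse direction). There is no independent verification of \eqref{cutoff} here, and on a general Ahlfors-regular graph it is not ``routine'' --- it is precisely the content of the stability theorem you are invoking, so the argument is circular unless you import the stable bounds from outside (available on $\ZZ^d$ via Bass--Levin, but not on a general graph without the present machinery or its continuous-time analogue). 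Your handling of $\beta_2\ge 2$ has a separate problem: you cannot simply choose a sub-Gaussian walk of walk dimension exceeding $\beta_2$, since the walk dimension of the simple random walk is an invariant of the graph, not a free parameter. The paper's approach sidesteps both issues by using the one comparison chain whose estimates are already known from the literature.
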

\begin{proof}
	We are going to use Theorem \ref{thm:5}. To do so we need to construct a Markov chain satisfying \eqref{eq:62}.
	By \cite[Theorem 4.39]{Barlow2017}, the transition function of the simple random walk $\mathbf{S} = (S_n : n \in \NN_0)$
	satisfies 
	\begin{equation}
		\label{eq:60}
		p(n; x, y) \leq c_1 n^{-\alpha/\beta} \exp\bigg(-c_2 \Big(\frac{d(x, y)^\beta}{n} \Big)^{\frac{1}{\beta-1}} \bigg)
	\end{equation}
	and
	\begin{equation}
		\label{eq:61}
		p(n; x, y) + p(n+1; x, y) \geq 
		c_3 n^{-\alpha/\beta} 
		\exp\bigg(-c_4 \Big(\frac{d(x, y)^\beta}{n} \Big)^{\frac{1}{\beta-1}} \bigg) 
	\end{equation}
	for all $n \in \NN$, $x, y \in G$ and certain $c_1, c_2, c_3, c_4 > 0$. Let $\tilde{\mathbf{S}} = 
	(\tilde{S}_n : n \in \NN_0)$ be a Markov chain driven by the transition function
	\[
		\tilde{p}(x, y) = \frac{1}{2} p(1; x, y) + \frac{1}{2} p(2; x, y), \qquad x, y \in G.
	\]
	Observe that
	\[
		\tilde{p}(n; x, y) = 2^{-n} \sum_{j = 0}^n {n \choose j} p(n+j; x, y),
	\]
	thus by \eqref{eq:61}
	\begin{align*}
		\tilde{p}(n; x, y) &\geq 
		2^{-n-1} \sum_{j = 0}^{n-1}
		{n \choose j} p(n+j; x, y) + {n \choose j+1} p(n+j+1; x, y)  \\
		&\gtrsim
		2^{-n} \sum_{j = 0}^{n-1} {n \choose j} 
		(n+j)^{-\alpha/\beta} \exp\bigg(-c_4 \Big(\frac{d(x, y)^\beta}{n+j} \Big)^{\frac{1}{\beta-1}} \bigg) \\
		&\geq
		c_5
		n^{-\alpha/\beta}
		\exp\bigg(-c_4 \Big(\frac{d(x, y)^\beta}{n} \Big)^{\frac{1}{\beta-1}} \bigg).
	\end{align*}
	Moreover, by \eqref{eq:60}, we easily conclude that
	\[
		\tilde{p}(n; x, y) \leq
		c_6 n^{-\alpha/\beta} \exp\bigg(-c_7 \Big(\frac{d(x, y)^\beta}{n} \Big)^{\frac{1}{\beta-1}} \bigg).
	\]
	Now, to complete the proof it is enough to invoke Theorem \ref{thm:5}.
\end{proof}
Let us notice that Theorem \ref{thm:8} applied to $\ZZ^d$ improves results obtained by Cygan and {\v S}ebek \cite{Cygan2021},
not only by dropping the CBF assumption on $\vphi$ but also by covering the cases when the first step is comparable to
$J_\vphi$.

There are metric measure spaces which fail to possess a canonical diffusion process, however they have processes stable-like
estimates instead. See for example \cite[Example 2.24]{Bendikova2014}. In this cases we can apply Theorem \ref{thm:3}
to study Markov chains and the corresponding discrete time heat kernels.

\begin{bibliography}{dirichlet}
	\bibliographystyle{amsplain}
\end{bibliography}

\end{document}